\let\csname ver@amsthm.sty\endcsname\relax
\let\theoremstyle\relax
\theoremstyle{plain}
\newtheorem{theorem}{Theorem}[section]
\newtheorem{proposition}[theorem]{Proposition}
\newtheorem{lemma}[theorem]{Lemma}
\newtheorem{corollary}[theorem]{Corollary}
\newtheorem{conjecture}[theorem]{Conjecture}
\theoremstyle{definition}
 \newtheorem{Def}[theorem]{Definition}
    \newtheorem{Exa}[theorem]{Example}
    \newtheorem{Rem}[theorem]{Remark}
		\newtheorem{Question}[theorem]{Question}
\numberwithin{figure}{section}
\numberwithin{equation}{section}
\def\@myMR[#1 #2]{\relax\ifhmode\unskip\spacefactor3000 \space\fi
  \MRhref{#1}{MR\,#1}}
\renewcommand\MR[1]{\@myMR[#1 ]}
\renewcommand{\MRhref}[2]{{\tiny%
  \href{http://www.ams.org/mathscinet-getitem?mr=#1}{#2}}%
}
\renewcommand*{\backref}[1]{}
\renewcommand*{\backrefalt}[4]{%
    \tiny%
    ({
    \ifcase #1 not cited%
          \or cit.\ on p.~#2%
          \else cit.\ on pp.~#2%
    \fi%
    })\\[-.6em]}
\def\maketitle{\par
  \@topnum\z@ 
  \@setcopyright
  \thispagestyle{empty}
  \ifx\@empty\shortauthors \let\shortauthors\shorttitle
  \else \andify\shortauthors
  \fi
  \@maketitle@hook
  \begingroup
  \@maketitle
  \toks@\@xp{\shortauthors}\@temptokena\@xp{\shorttitle}%
  \toks4{\def\\{ \ignorespaces}}
  \edef\@tempa{%
    \@nx\markboth{\the\toks4
      \@nx\MakeUppercase{\the\toks@}}{\the\@temptokena}}%
  \@tempa
  \endgroup
  \c@footnote\z@
    \renewcommand{\footnoterule}{%
      \kern -3pt
      \hrule width \textwidth height .5pt
      \kern 2pt
    }
  {
    \renewcommand\thefootnote{}
    \vspace{-2em}
    \footnote{
      \par\vspace{-1.2em}\noindent
      \def\@footnotetext##1{\noindent{\footnotesize##1}\par}%
      \let\@makefnmark\relax  \let\@thefnmark\relax
      \ifx\@empty\@date\else \@footnotetext{\@setdate}\fi
      \ifx\@empty\@subjclass\else \@footnotetext{\@setsubjclass}\fi
      \ifx\@empty\@keywords\else \@footnotetext{\@setkeywords}\fi
      \ifx\@empty\thankses\else \@footnotetext{%
        \def\par{\let\par\@par}\@setthanks}%
      \fi
    }
    \addtocounter{footnote}{-1}
  }
  \@cleartopmattertags
}
\def\@adminfootnotes{\@empty}
\def\@settitle{\begin{center}%
  \baselineskip14\p@\relax
    \bfseries
\Large
  \@title
  \end{center}%
}
\def\@setauthors{%
  \begingroup
  \def\thanks{\protect\thanks@warning}%
  \trivlist
  \centering\footnotesize \@topsep30\p@\relax
  \advance\@topsep by -\baselineskip
  \item\relax
  \author@andify\authors
  \def\\{\protect\linebreak}%
  \large{\authors}%
  \ifx\@empty\contribs
  \else
    ,\penalty-3 \space \@setcontribs
    \@closetoccontribs
  \fi
  \endtrivlist
  \endgroup
}
\def\@setaddresses{\par
  \nobreak \begingroup
\footnotesize
  \def\author##1{\end{minipage}\hskip 1sp \begin{minipage}{.5\textwidth}\raggedright%
    ~\\[2em]{\bf##1}\\[.5em]%
  }%
  \interlinepenalty\@M
  \def\address##1##2{\begingroup
    {\ignorespaces##2}\endgroup\\[.5em]}%
  \def\curraddr##1##2{\begingroup
    \@ifnotempty{##2}{\nobreak\indent\curraddrname
      \@ifnotempty{##1}{, \ignorespaces##1\unskip}\/:\space
      ##2\par}\endgroup}%
  \def\email##1##2{\begingroup
    \@ifnotempty{##2}{\nobreak\indent
      \@ifnotempty{##1}{, \ignorespaces##1\unskip}
      \ttfamily##2\par}\endgroup}%
  \def\urladdr##1##2{\begingroup
    \def~{\char`\~}%
    \@ifnotempty{##2}{\nobreak\indent\urladdrname
      \@ifnotempty{##1}{, \ignorespaces##1\unskip}\/:\space
      \ttfamily##2\par}\endgroup}%
  \setlength{\parindent}{0pt}%
  \vfill%
  {
  \begin{minipage}{0mm}
  \addresses
  \end{minipage}
  }
  \endgroup
}
\renewcommand{\author}[2][]{%
  \ifx\@empty\authors
    \gdef\authors{#2}%
    \g@addto@macro\addresses{\author{#2}}%
  \else
    \g@addto@macro\authors{\and#2}%
    \g@addto@macro\addresses{\author{#2}}%
  \fi
  \@ifnotempty{#1}{%
    \ifx\@empty\shortauthors
      \gdef\shortauthors{#1}%
    \else
      \g@addto@macro\shortauthors{\and#1}%
    \fi
  }%
}
\edef\author{\@nx\@dblarg
  \@xp\@nx\csname\string\author\endcsname}
\def\@secnumfont{\@empty}
\def\section{\@startsection{section}{1}%
  \z@{.7\linespacing\@plus\linespacing}{.5\linespacing}%
  {\large\bfseries\centering}}
\title{A geometric approach to characters of Hecke algebras}
\author{Alex Abreu and Antonio Nigro}
\date{February 2020}
\newcommand{\dyckpathc}[3]{
\draw[line width=2pt, color=#3] (#1) foreach \dir in {#2}{ -- ++(\dir*90:1)};
}
\definecolor{forestgreen}{rgb}{0.13, 0.55, 0.13}
\newcommand{\col}{\colon}
\newcommand{\X}{\mathcal X}
\newcommand{\F}{\mathcal F}
\newcommand{\Y}{\mathcal Y}
\newcommand{\U}{\mathcal U}
\newcommand{\B}{\mathcal B}
\newcommand{\x}{\mathbf x}
\newcommand{\y}{\mathbf y}
\newcommand{\mS}{\mathcal S}
\newcommand{\He}{\mathcal H}
\newcommand{\T}{\mathcal T}
\newcommand{\mP}{\mathcal{P}}
\newcommand{\D}{\mathcal{D}}
\newcommand{\g}{\mathfrak{g}}
\newcommand{\bl}{\mathfrak{b}}
\newcommand{\ul}{\mathfrak{u}}
\newcommand{\hl}{\mathfrak{h}}
\newcommand{\m}{\mathbf{m}}
\newcommand{\ind}{i}
\DeclareMathOperator{\ch}{ch}
\DeclareMathOperator{\ad}{Ad}
\DeclareMathOperator{\hta}{ht}
\DeclareMathOperator{\wt}{wt}
\DeclareMathOperator{\Bin}{Bin}
\DeclareMathOperator{\Good}{Good}
\DeclareMathOperator{\Shf}{Shv}
\DeclareMathOperator{\Supp}{Supp}
\DeclareMathOperator{\spec}{Spec}
\DeclareMathOperator{\LLT}{LLT}
\DeclareMathOperator{\Fr}{Fr}
\DeclareMathOperator{\sign}{sign}
\DeclareMathOperator{\perm}{perm}
\DeclareMathOperator{\Imm}{Imm}
\DeclareMathOperator{\csf}{csf}
\DeclareMathOperator{\asc}{asc}
\DeclareMathOperator{\Poin}{Poin}
\DeclareMathOperator{\Gr}{Gr}
\newcommand{\bi}{\underline b} 
\newcommand{\si}{{\underline s}} 
\newcommand{\flag}{\mathcal{B}} 
\newcommand{\hb}{\mathcal{Y}}  
\newcommand{\bs}{\mathcal{BS}}  
\newcommand{\h}{\mathcal{Y}}     
\newcommand{\hbs}{\mathcal{Y}}  
\newcommand{\Tr}{\Upsilon} 
\newcommand{\dw}{\dot{w}} 
\newcommand{\ds}{\dot{s}} 
\newcommand{\tang}{\mathbf{T}}
\begin{document}

\maketitle

\begin{abstract}
    To any element of a connected, simply connected, semisimple complex algebraic group $G$ and a choice of an element of the corresponding Weyl group there is an associated Lusztig variety. 
    When the element of $G$ is regular semisimple, the corresponding variety  carries an action of the Weyl group on its (equivariant) intersection cohomology. From this action, we  recover the induced characters of an element of the  Kazhdan-Lusztig basis of the corresponding Hecke algebra.  In type $A$, we prove a more precise statement: that the Frobenius character of this action is precisely the symmetric function given by the characters of a Kazhdan-Lusztig basis element. The main idea is to find celular decompositions of desingularizations of these varieties and apply the Brosnan-Chow palindromicity criterion for determining when the local invariant cycle map is an isomorphism. 
    This recovers some results of Lusztig about character sheaves and gives a generalization of the Brosnan-Chow \cite{BrosnanChow} solution to the Sharesian-Wachs \cite{ShareshianWachs} conjecture to non-codominant permutations, where singularities are involved. We also review the connections between Immanants, Hecke algebras, and Chromatic quasisymmetric functions of indifference graphs.
    
\end{abstract}

\setcounter{tocdepth}{2}

\tableofcontents

\section{Introduction}

The first part of this introduction is an exposition for a general audience on several adjacent areas relating to characters of Hecke algebras and the chromatic quasisymmetric function. This includes immanants of Jacobi-Trudi matrices and how they relate to Kazhdan-Lusztig elements, and the equivalence of Stembridge's conjecture on Schur positivity of such immanants with the Stanley-Stembridge conjecture on $e$-positivity of chromatic symmetric functions on indifference graphs. We then review the relation between the Hecke algebra and the flag variety (including Springer's proof of the Kazhdan-Lusztig conjecture) and how the chromatic symmetric function can be recovered from the cohomology of Hessenberg varieties. Some historical remarks are also included but the topics are not presented in chronological order, simply in the order that the authors believe will make the material most accessible to readers of different backgrounds. Of course, specialists who find this tedious may proceed directly to Section \ref{sec:results} to see our main results.

  \subsection{Symmetric functions}

     The algebra of symmetric functions $\Lambda$ is defined as:
    \[
    \Lambda:=\lim_{\leftarrow}\mathbb{C}[x_1,\ldots, x_n]^{S_n}.
    \]
    There are three important sets of generators of $\Lambda$, called the \emph{elementary}, \emph{power sum}, and \emph{complete} symmetric functions:
     \begin{align*}
         e_n:=\sum_{i_1<i_2<\ldots <i_n}x_{i_1}x_{i_2}\ldots x_{i_n},\quad\quad         p_n:=\sum_i x_i^n,\quad\quad         h_n:=\sum_{i_1\leq i_2\leq\ldots \leq i_n}x_{i_1}x_{i_2}\ldots x_{i_n}.
    \end{align*}
    Every element of $\Lambda$ can be written as a  polynomial in one of the sets of generators above. For a partition $\lambda=(\lambda_1,\ldots, \lambda_{\ell(\lambda)})$ we write
    \begin{align*}
         e_\lambda:=\prod_{i=1}^{\ell(\lambda)}e_{\lambda_i},\quad\quad         p_\lambda:=\prod_{i=1}^{\ell(\lambda)}p_{\lambda_i},\quad\quad         h_\lambda:=\prod_{i=1}^{\ell(\lambda)}h_{\lambda_i}.
    \end{align*}
    If $\Lambda_n$ is the subspace of $\Lambda$ given by the homogeneous symmetric functions of degree $n$, then $\Lambda_n$ is generated as a $\mathbb{C}$-vector space by either one of $\{e_{\lambda}\}_{\lambda\vdash n}$, $\{p_{\lambda}\}_{\lambda\vdash n}$, $\{h_{\lambda}\}_{\lambda\vdash n}$.   Moreover, these 3 sets of generators are related by the Girard-Newton identities:
      \begin{equation}
          \label{eq:enhnpmu}
          \begin{aligned}
      n!e_n=&\sum_{\mu\vdash n}c(\mu)\sign(\mu)p_\mu,    \\
      n!h_n=&\sum_{\mu\vdash n}c(\mu)p_\mu,
      \end{aligned}
      \end{equation}
    where the sum runs through all partitions $\mu=(\mu_1,\ldots, \mu_{\ell(\mu)})$ of $n$, $c(\mu)$ is the number of permutations in $S_n$ with cycle type $\mu$, and $\sign(\mu)=(-1)^{n-\ell(\mu)}$. These identities resemble the definitions of the determinant and of the permanent of a matrix, which we now recall.\par
        For a square $n\times n$ matrix $A=(a_{ij})$ 
        its \emph{determinant} is defined as:
      \[
      \det(A)=\sum_{w\in S_n} \sign(w)\cdot a_{1w(1)}a_{2w(2)}\ldots a_{nw(n)},
      \]
      where $\sign(w)$ is equal to $1$ if $w$ can be written as the product of an even number of transpositions and $-1$ otherwise. Notice that if $\lambda(w)$ is the cycle type of $w$, then $\sign(w)=\sign(\lambda(w))$. If we remove the sign function in the summation above, we obtain the \emph{permanent} of $A$:
      \[
      \perm(A)=\sum_{w\in S_n} a_{1w(1)}a_{2w(2)}\ldots a_{nw(n)}.
      \]
      
    If we consider the matrix
       \[
     Z_n=\left(\begin{array}{cccccc}
    p_{1}& p_{2} &p_{3}&\cdots&p_{n-1} &p_{n}\\
    1& p_{1} &p_{2}&\cdots&p_{n-2} &p_{n-1}\\
    0 &2& p_{1} &\cdots&p_{n-3}&p_{n-2}\\
    0 & 0 & 3 &\cdots&p_{n-4}&p_{n-3}\\
    \vdots&\vdots &\vdots& \ddots & \vdots&\vdots\\
    0& 0 &0& \cdots & n-1 &p_{1}
        \end{array}
    \right),
    \]
    then the Girard-Newton identities can be written as
      \begin{equation}
          \label{eq:enhnzn}
          \begin{aligned}
      n!e_n=&\det(Z_n),    \\
      n!h_n=&\perm(Z_n).
      \end{aligned}
      \end{equation}

     Another fundamental basis of the symmetric algebra $\Lambda$ is given by the \emph{Schur symmetric functions} $s_{\lambda}$. For each partition $\lambda=(\lambda_1,\ldots, \lambda_{\ell(\lambda)})$ of $n$, we define $s_{\lambda}$ as the limit of the Schur polynomials $s_{\lambda}(x_1,\ldots, x_m)$, which is given by Cauchy's bialternant formula (\cite[Page 111]{Cauchy})
    \[
    s_{\lambda}(x_1,\ldots, x_m):=\frac{\det((x_i^{\lambda_{j}+m-j})_{i,j=1,\ldots,m})}{\det((x_i^{k-j})_{i,j=1,\ldots,m})},
    \]
    where we set $\lambda_j=0$ for every $j>\ell(\lambda)$.   As an example
    \[
    s_{2,1}(x_1,x_2)=\det\left(\begin{array}{cc}
    x_1^3 & x_1^1 \\
    x_2^3 & x_2^1
    \end{array}
    \right)
    \bigg/ \det\left(\begin{array}{cc}
    x_1^1 & x_1^0\\
    x_2^1 & x_2^0
    \end{array}
    \right)=x_1^2x_2+x_1x_2^2.
    \]
    We note that $s_{1,\ldots, 1}=e_n$ and $s_n=h_n$.\par
    
    A century after Cauchy, Schur related  these symmetric functions with the representation theory of the symmetric group.\par
     A \emph{representation} of $S_n$ (over $\mathbb{C}$) is a group homomorphism $\rho\col S_n\to GL(V)$ for some (finite) $\mathbb{C}$-vector space $V$ and we say that $(V,\rho)$, or simply $V$,  is an \emph{$S_n$-module}. The representation is called \emph{irreducible} if $V$, does not contain a proper non-trivial invariant subspace. The irreducible representations of $S_n$ are indexed by partitions $\lambda$ of $n$ and we write $\rho_\lambda\col S_n\to GL(V_{\lambda})$ to denote these irreducible representations. When $\lambda=n$, we have that $\rho_{n}\col S_n\to GL(\mathbb{C})=\mathbb{C}^*$ is the trivial representation $\rho_n(w)=1$. On the other hand,  when $\lambda=(1,1,\ldots,1)$, we have that $\rho_{1,1,\ldots,1}\col S_n\to GL(\mathbb{C})=\mathbb{C}^*$ is the sign representation $\rho_{1,1,\ldots,1}(w)=\sign(w)$. Every representation of $S_n$ can be written uniquely as a direct sum of irreducible representations. \par 

     To each representation $\rho\col S_n\to GL(V)$ there is an associated \emph{character} $\chi\col S_n\to \mathbb{C}$, defined as $\chi(w)=Tr(\rho(w))$. In particular, $\chi$ is a \emph{class function} of $S_n$, that is, a function $\chi\col S_n\to \mathbb{C}$ satisfying $\chi(uwu^{-1})=\chi(w)$ for every $u,w\in S_n$.  We write $\chi^{\lambda}$ for the irreducible characters of $S_n$, that is, those associated to irreducible representations. Notice that $\chi^{n}(w)=1$ and $\chi^{1,1,\ldots,1}(w)=\sign(w)$ for every $w\in S_n$. Every character is a positive integer linear combination of irreducible characters, while every class function is a $\mathbb{C}$-linear combination of irreducible characters. Since a class function $\chi\col S_n\to \mathbb{C}$ only depends on the class of each element $w\in S_n$, we define, for each partition $\mu$ of $n$,  $\chi(\mu):=\chi(w)$ for any (equivalently, every) $w\in S_n$ such that $\lambda(w)=\mu$. \par

    If $\chi^\lambda$ is the irreducible character of $S_n$ associated to a partition $\lambda$ of $n$,  Schur proved that (see \cite[Equation 49]{Schurthesis}) 
    \begin{align}
    \label{eq:slambdachi} n!s_{\lambda}&=\sum c(\mu)\chi^\lambda(\mu)p_\mu,\\
    \label{eq:pmuchi} p_{\mu}&=\sum_{\lambda\vdash n}\chi^\lambda(\mu) s_{\lambda}.    
    \end{align}
    
   As we have seen above, the characters $\chi^{1,\ldots,1}$ and $\chi^{n}$ are the sign and trivial characters, respectively, so that Equation \eqref{eq:enhnpmu} is a particular instance of equation \eqref{eq:slambdachi}.\par
     
       There is also an isomorphism between class functions of $S_n$ and $\Lambda_n$, given by the \emph{Frobenius character map}
      \begin{align*}
      \ch \col\{\text{Class functions of }S_n\}&\to \Lambda_n\\
           f&\mapsto \frac{1}{n!}\sum_{w\in S_n} f(w)p_{\lambda(w)}
      \end{align*}
      By Equation \ref{eq:slambdachi}, we have that $\ch(\chi^{\lambda})=s_{\lambda}$. In particular, if $V$ is an $S_n$-module given by a representation $\rho\col S_n\to GL(V)$ such that $V$ decomposes into irreducible representations as $V=\bigoplus V_\lambda^{\oplus a_\lambda}$, then 
      \[
      \ch(V):=\ch(\chi^{\rho})=\sum_{\lambda\vdash n}a_\lambda s_{\lambda}.
      \]
      
       If $S_{\lambda}:= S_{\lambda_1}\times S_{\lambda_2}\times\ldots\times S_{\lambda_{\ell(\lambda)}}\subset S_n$ is a \emph{Young subgroup} of $S_n$, we consider the representation $\ind_{S_{\lambda}}^{S_n}$ which is the induced representation of the trivial one, defined as
       \begin{align*}
       \ind_{S_{\lambda}}^{S_n}\col S_n&\to GL\big(\bigoplus_{\overline{w}\in S_n/S_\lambda} \mathbb{C}\cdot \overline{w}\big),
       \end{align*}
        where $S_n$ acts on the basis $S_n/S_\lambda$ via left multiplication.
        If $\chi^{\ind_{S_{\lambda}}^{S_n}}$ is the associated character, then it is well-known that $\ch(\chi^{\ind_{S_{\lambda}}^{S_n}})=h_{\lambda}$.\par

      Every class function $f\col S_n\to \mathbb{C}$ can be extended linearly to a complex function on the group algebra $\mathbb{C}[S_n]$, also called a class function.   The algebra $\mathbb{C}[S_n]$ has  basis $\{T_w\}_{w\in S_n}$ as a $\mathbb{C}$-vector space and its multiplication is given by $T_wT_{w'}=T_{ww'}$.\par
        Dual to the construction above, for every element $a\in \mathbb{C}[S_n]$, we define its \emph{(dual) Frobenius character} as\footnote{In MacDonald's book \cite{Macdonald}, he writes $\Psi(a)$ for what we write as $\ch(a)$.}
      \[
      \ch(a):=\sum \chi^\lambda(a)s_{\lambda}.
      \]
      By Equation \eqref{eq:pmuchi}, we have that $\ch(T_w)=p_{\lambda(w)}$ for every $w\in S_n$.    Via Hall's inner product on $\Lambda$, defined as 
      \[
      \langle s_{\lambda},s_{\mu}\rangle =\begin{cases}
      1& \text{ if }\lambda=\mu\\
      0& \text{ otherwise},
      \end{cases}
      \]
      we have that if $f\col \mathbb{C}[S_n]\to \mathbb{C}$ is a class function, then
      \[
      f(a)=\langle \ch(f),\ch(a)\rangle.
      \]

     \subsection{Immanants}
     \label{sec:immanants}
     In Equation \eqref{eq:enhnzn}, we expressed $e_n$ and $h_n$ in terms of the matrix $Z_n$. In fact, the same can be done for $s_{\lambda}$, by using the matrix function called \emph{immanant} by Littlewood-Richardson. The immanant is a generalization of the determinant and of the permanent. For each matrix $A$ and  class function $\chi$ of $S_n$, the $\chi$-immanant of $A$ is given by
      \[
      \Imm_\chi(A):=\sum_{w\in S_n} \chi(w)\cdot a_{1w(1)}a_{2w(2)}\ldots a_{nw(n)}.
      \]
     With this definition, Equation \eqref{eq:slambdachi} can be rewritten as 
     \[
    n!s_{\lambda}=\Imm_{\chi^\lambda}(Z_n),
    \]
    see \cite{LR34}. \par
    
    Schur himself proved one of the first results about immanants,  the so-called Schur dominance theorem which generalizes Hadamard's inequality. It relates immanants of positive-definite matrices with its determinant.
    \begin{theorem}[Schur]
    \label{thm:schur}
    If $A$ is a Hermitian positive-definite matrix and $\chi$ is a character of $S_n$, then
    \[
    \Imm_{\chi}(A)\geq \chi(1)\cdot \det(A).
    \]
     \end{theorem}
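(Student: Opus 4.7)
Since every character of $S_n$ is a non-negative integer combination of the irreducible characters $\chi^\lambda$ and both sides of the desired inequality are linear in $\chi$, I would first reduce to the case $\chi = \chi^\lambda$ for each $\lambda \vdash n$. Then, using positive-definiteness, I would factor $A = B^{*}B$ with $B = A^{1/2}$ and set $\eta := B e_1 \otimes \cdots \otimes B e_n \in V^{\otimes n}$, where $V = \mathbb{C}^n$ carries the standard Hermitian form and $S_n$ acts on $V^{\otimes n}$ by permuting tensor factors via the unitary operators $\sigma_w$. Direct computation gives $\prod_i a_{i,w(i)} = \prod_i \langle B e_i, B e_{w(i)}\rangle = \langle \sigma_{w^{-1}} \eta,\, \eta\rangle$, whence
$$\Imm_{\chi^\lambda}(A) \;=\; \bigl\langle P_{\chi^\lambda}\,\eta,\, \eta\bigr\rangle, \qquad P_{\chi^\lambda} \;:=\; \sum_{w \in S_n} \chi^\lambda(w)\, \sigma_w.$$

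Schur's orthogonality relations identify $P_{\chi^\lambda} = \tfrac{n!}{\chi^\lambda(1)}\, \pi_\lambda$, where $\pi_\lambda$ is the \emph{orthogonal} projection onto the $\chi^\lambda$-isotypic component of $V^{\otimes n}$; self-adjointness uses unitarity of $\sigma_w$ together with the reality $\chi^\lambda(w^{-1}) = \chi^\lambda(w)$. This already yields $\Imm_{\chi^\lambda}(A) = (n!/\chi^\lambda(1))\,\|\pi_\lambda \eta\|^2 \geq 0$. Specializing to $\lambda = (1^n)$ recovers the classical Gram determinant identity $\det(A) = n!\,\|\pi_{(1^n)} \eta\|^2$. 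The target inequality therefore reduces to
$$\|\pi_\lambda\, \eta\|^2 \;\geq\; \chi^\lambda(1)^2\, \|\pi_{(1^n)}\, \eta\|^2.$$

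\textbf{Main obstacle.} This reduced inequality is the non-trivial step: for $\lambda \neq (1^n)$ the projections $\pi_\lambda$ and $\pi_{(1^n)}$ have \emph{orthogonal} ranges, so it cannot come from any abstract operator bound on $V^{\otimes n}$ and must genuinely exploit the decomposability of $\eta$. I would resolve it via the Schur--Weyl decomposition $V^{\otimes n} = \bigoplus_\mu V_\mu \otimes S^\mu V$: for each standard Young tableau $T$ of shape $\lambda$ (there are $\chi^\lambda(1)$ of them), the Young-symmetrizer image $c_T \eta$ lies in one of the $\chi^\lambda(1)$ isomorphic copies of $V_\lambda$ inside the $\lambda$-isotypic component, and its squared norm is expressible as a product of column-Gram determinants of the $B e_i$'s. \emph{Fischer's inequality} for positive-definite matrices bounds each such product below by $\det(A) = \|v_1 \wedge \cdots \wedge v_n\|^2$; summing the $\chi^\lambda(1)$ contributions with the correct combinatorial normalizations produces precisely the factor $\chi^\lambda(1)^2$. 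As a sanity check, when $A$ is diagonal both sides of Schur's original inequality collapse to $\chi^\lambda(1)\prod_i a_{ii}$, corresponding exactly to the equality case of Fischer's inequality in the reduced form.
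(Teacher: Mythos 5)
The paper does not actually prove Theorem~\ref{thm:schur}; it is quoted as a classical result of Schur within the expository introduction, so there is no in-house argument to compare yours against. Judged on its own, your reduction is sound but the closing step has concrete gaps.

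Steps through the reduction to $\|\pi_\lambda\,\eta\|^2 \geq \chi^\lambda(1)^2\,\|\pi_{(1^n)}\,\eta\|^2$ are correct and standard: linearity in $\chi$, the Gram factorization $A=B^*B$, the identity $\Imm_{\chi^\lambda}(A)=\tfrac{n!}{\chi^\lambda(1)}\|\pi_\lambda\eta\|^2$ via the central idempotent, and the specialization to $\lambda=(1^n)$. (There is a mild bookkeeping slip in $\prod_i a_{i,w(i)}=\langle\sigma_{w^{-1}}\eta,\eta\rangle$ --- one side is the conjugate of the other --- but since $A$ is Hermitian and $\chi^\lambda$ is real this washes out.) You are also right that the reduced inequality cannot follow from any operator bound, since the two projections have orthogonal ranges; the decomposability of $\eta$ is essential. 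That is a good observation.

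The proposed resolution, however, does not close the gap. Three issues: (a) For a genuine Young symmetrizer $c_T=a_Tb_T$, the quantity $\|c_T\eta\|^2$ is \emph{not} a product of column Gram determinants --- that clean product formula holds only for the pure column antisymmetrizer $b_T$; applying the row symmetrizer $a_T$ turns $\eta$ into a sum of decomposable tensors and destroys the factorization. (b) Even granting a per-$T$ lower bound, you cannot write $\|\pi_\lambda\eta\|^2$ as a sum over standard tableaux of suitably normalized $\|c_T\eta\|^2$: the idempotents $c_T$ for distinct $T$ of the same shape are not orthogonal, so the cross terms do not vanish and no ``correct combinatorial normalization'' makes this an identity without further argument. (c) The number of copies of $V_\lambda$ inside the $\lambda$-isotypic component of $V^{\otimes n}$ is $\dim S^\lambda V$ (the number of semistandard tableaux), not $\chi^\lambda(1)$ (the number of standard ones); conflating the two is what makes the factor $\chi^\lambda(1)^2$ appear to come for free, when in fact it is exactly where the work is. There \emph{is} a classical proof in which Fischer's inequality supplies the positivity, but it is organized quite differently: one works with a single column antisymmetrizer $\sum_{\sigma\in S_{\lambda'}}\sign(\sigma)\,\sigma$ for the Young subgroup of the conjugate shape, uses that the associated generalized matrix function is a sum of products of principal minors and hence bounded below by a multiple of $\det A$ via Fischer, and then peels off the constituents of $\mathrm{Ind}_{S_{\lambda'}}^{S_n}\!\sign$ by induction on the dominance order. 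Your sketch collapses that bookkeeping into a heuristic that, as stated, is not correct.
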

    In \cite{Stem91}, Stembridge proved the same result for \emph{totally positive} matrices, that is, real matrices such that each minor is non-negative. 
    \begin{theorem}[Stembridge]
    \label{thm:stemtotally}
    If $A$ is a totally positive matrix and $\chi$ is a character of $S_n$, then
    \[
    \Imm_{\chi}(A)\geq \chi(1)\cdot \det(A).
    \]
    \end{theorem}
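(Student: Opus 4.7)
By linearity in $\chi$ and the fact that every character is a non-negative integer combination of irreducible characters $\chi^\lambda$, the inequality for general $\chi$ reduces to the case $\chi=\chi^\lambda$. Writing $f^\lambda := \chi^\lambda(1)$, it thus suffices to show that the class function $\phi_\lambda := \chi^\lambda - f^\lambda \cdot \chi^{(1^n)}$ satisfies $\Imm_{\phi_\lambda}(A) \geq 0$ for every totally positive $A$.

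The structural input is a classical factorization theorem due to Loewner and Whitney: $A$ is totally positive precisely when it is a limit of products of elementary bidiagonal matrices $I + tE_{i,i+1}$ and $I + tE_{i+1,i}$ (with $t > 0$) and positive diagonal matrices. Thus, by continuity, we may assume $A$ is such a product, in which case $A$ is the weight matrix of an acyclic weighted planar network $N$ with non-negative edge weights: $A_{ij}$ equals the total weight of directed paths from source $i$ to sink $j$. The Lindstr\"om-Gessel-Viennot lemma then expands, for any class function $f$,
\[
\Imm_f(A) = \sum_\pi f(w(\pi))\,\wt(\pi),
\]
where $\pi$ ranges over $n$-tuples of paths joining the sources to the sinks, $w(\pi) \in S_n$ is the induced permutation, and $\wt(\pi) \geq 0$. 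Applied to $\det = \Imm_{\chi^{(1^n)}}$, the sign-reversing involution that uncrosses the first intersection yields $\det(A) = \sum_{\text{non-crossing } \pi} \wt(\pi)$; the same non-crossing families contribute exactly $f^\lambda \det(A)$ to $\Imm_{\chi^\lambda}(A)$, since $w(\pi) = e$ and $\chi^\lambda(e) = f^\lambda$. Consequently,
\[
\Imm_{\chi^\lambda}(A) - f^\lambda \det(A) = \sum_{\text{crossing } \pi} \chi^\lambda(w(\pi))\,\wt(\pi).
\]

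The remaining task is to prove that this sum over crossing path families is non-negative. The approach is to group crossing families by their underlying planar tangle (the isotopy class of the crossing diagram) and to show that the contribution of each tangle is non-negative. This reduces to a representation-theoretic positivity statement: $\phi_\lambda$ should be a non-negative combination of the ``tangle characters'' supported on permutations compatible with a fixed planar tangle, or equivalently $\phi_\lambda$ should pair non-negatively with the basis of the Temperley-Lieb quotient of $\mathbb{C}[S_n]$. The main obstacle is precisely this positivity; it can be established either by Stembridge's original inductive argument, which resolves one crossing at a time via contiguous minor identities, or by invoking Haiman's theorem on non-negativity of Kazhdan-Lusztig immanants on totally positive matrices, which furnishes a manifestly positive basis in which $\phi_\lambda$ has non-negative coordinates.
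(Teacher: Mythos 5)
Your first half is sound and is essentially the combinatorial translation of the paper's algebraic route: writing $A$ as a product of elementary bidiagonal factors \emph{is} the planar network, and the Lindstr\"om--Gessel--Viennot expansion of $\Imm_f(A)$ is the shadow of expanding $I_A = I_{L_1}\cdots I_{L_k}$ inside $\mathbb{C}[S_n]$, where each $I_{L_i}$ is a non-negative combination of $T_e$ and a $C'_{[j,j+1]}$. The reduction to irreducibles, and the observation that the non-crossing families contribute exactly $f^\lambda\det(A)$, are both correct.

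The gap is in the middle and at the end. Grouping crossing families ``by planar tangle'' and reducing to a Temperley--Lieb positivity is not a valid reduction: $\mathbb{C}[S_n]$ does not split along isotopy classes of crossing diagrams, and the Temperley--Lieb quotient discards precisely the information you need to distinguish crossing families with the same tangle but different induced permutations. The structure that actually controls the crossing contribution is the decomposition of $I_A$ as a non-negative integer combination of products $C'_{[i_1,j_1]}\cdots C'_{[i_m,j_m]}$ -- this is Stembridge's structural input -- and the relevant positivity is Greene's theorem that any character of such a product is non-negative; that combination is what the paper cites. You instead punt the crux either to ``Stembridge's original inductive argument,'' which is circular in this context, or to Kazhdan--Lusztig immanant positivity on totally positive matrices. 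The latter route is genuinely available but it is a different, later argument: one writes $I_A = \sum_w \Imm_w(A)\,C'_w(1)$, notes $\Imm_e(A)=\det(A)$ and $\chi^{(1^n)}(C'_w(1))=0$ for $w\neq e$ so that $\Imm_{\phi_\lambda}(A)=\sum_{w\neq e}\chi^\lambda(C'_w(1))\,\Imm_w(A)$, and then invokes two non-trivial inputs -- Haiman's lemma that $\chi^\lambda(C'_w(1))\ge 0$ and the Rhoades--Skandera theorem that $\Imm_w(A)\ge 0$ on totally positive $A$. You gesture at this but do not set up the expansion or isolate these ingredients; as written, the ``manifestly positive basis for $\phi_\lambda$'' is not identified and the proof does not close.
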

    The approach used by Stembridge  to compute the immanants of a matrix $A$ is to consider the following element of the group algebra $\mathbb{C}[S_n]$
     \[
     I_A=\sum_{w\in S_n} T_w\cdot a_{1w(1)}a_{2w(2)}\ldots a_{nw(n)}.
     \]
     Then we have that  
     \[
     \Imm_{\chi}(A)=\chi(I_A).
     \]
     With this point of view, Stembridge  proved (see \cite[Theorem 2.1]{Stem91}) that if $A$ is a totally positive real matrix, then $I_A$ is a non-negative linear combination of monomials in the elements\footnote{Denoted by $x_{[i,j]}$ in \cite{Stem91}.}
    \begin{equation}
    \label{eq:C'ij}
    C'_{[i,j]}=\sum_{w\in S_{[i,j]}}T_w,
    \end{equation}
    where 
    \[
    S_{[i,j]}=\{w\in S_n; w(k)=k,\text{ for every $k\notin[i,j]$}\}.
    \]
    By results of Greene \cite{Greene}, we have that $\chi(C'_{[i_1,j_1]}\cdots C'_{[i_m,j_m]})$ is positive for every character $\chi$. This proves Theorem \ref{thm:stemtotally}.\par
      
    One way to rephrase Theorems \ref{thm:schur} and \ref{thm:stemtotally}, is to consider the Frobenius character 
      \[
      \ch(I_{A})=\sum_{\lambda\vdash n}\Imm_{\chi^\lambda}(A)s_{\lambda}.
      \]
     Then Schur's dominance theorem and Stembridge's theorem say, respectively, that if a matrix $A$ is Hermitian definite positive or totally positive, then $\ch(I_A)$ is \emph{Schur-positive}, that is, it has positive coefficients when written in the Schur basis of $\Lambda$.\par

    Going back to symmetric functions, we can consider the (infinite) Jacobi-Trudi matrix $H=(h_{j-i})_{i,j}$,
    \[
    H=\left(\begin{array}{ccccc}
    1& h_1 &h_2 &h_3& \cdots\\
    0 &1& h_1 &h_2 & \cdots\\
    0&0&1& h_1 & \cdots\\
    0&0&0&1&  \cdots\\
    \vdots &\vdots &\vdots &\vdots&\ddots
    \end{array}\right).
    \]
     For a partition $\lambda$, we consider the Jacobi-Trudi matrix $(h_{\lambda_{i}+j-i})_{i,j=1,\ldots, k}$, which is a minor of the infinite matrix $H$. Then we have the following determinantal formula for the Schur symmetric function, known as the Jacobi-Trudi formula,
    \[
    s_{\lambda}=\det((h_{\lambda_{i}+j-i})_{i,j=1,\ldots, \ell(\lambda)}).
    \]  
    On the other hand, taking a general minor of $H$, we have a \emph{skew} Schur symmetric function
    \[
    s_{\lambda/\mu}:=\det((h_{\lambda_{i}-\mu_j+j-i})_{i,j=1,\ldots, \max\{\ell(\lambda),\ell(\mu)\}}).
    \]
    
    We have that $s_{\lambda/\mu}$ is Schur-positive.\footnote{This follows from the positivity of the Littlewood-Richardson coefficients $c_{\nu\mu}^\lambda$ and the fact that $s_{\lambda/\mu}=\sum c_{\nu\mu}^{\lambda}s_{\nu}$, see \cite{LR34}.}  This means that the matrix $H$ is totally Schur positive, in the sense that every minor of $H$ is a non-negative linear combination of Schur symmetric functions. In particular, every minor $H_{\lambda/\mu}$ is also totally Schur positive. From these, Stembridge made the following conjecture, proved by Haiman shortly after.\footnote{It is also noteworthy to mention that Goulden-Jackson made a similar, but weaker, conjecture in \cite{GouldenJackson}.}
    \begin{theorem}[Conjectured by Stembridge, proved by Haiman]
    \label{thm:Jacobitrudishurpos}
       For every skew partition $\lambda/\mu$ and every character $\chi$ of the symmetric group the following symmetric function
       \[
       \Imm_{\chi}(H_{\lambda/\mu})
       \]
       is Schur-positive.
    \end{theorem}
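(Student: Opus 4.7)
My plan follows the blueprint of Stembridge's argument for Theorem \ref{thm:stemtotally}: lift the immanant to the group algebra, decompose in the Kazhdan-Lusztig basis, and exploit positivity of characters on that basis. Since every character of the symmetric group is a non-negative integer combination of irreducible ones, it suffices to prove Schur-positivity of $\Imm_{\chi^\nu}(H_{\lambda/\mu})$ for each irreducible $\chi^\nu$. Setting $n:=\max\{\ell(\lambda),\ell(\mu)\}$ (with the convention $\mu_j:=0$ for $j>\ell(\mu)$), I form the lifted element
\[
I_{H_{\lambda/\mu}} \;:=\; \sum_{w\in S_n} T_w \prod_{i=1}^{n} h_{\lambda_i - \mu_{w(i)} + w(i) - i} \;\in\; \Lambda\otimes \mathbb{C}[S_n],
\]
so that $\chi^\nu(I_{H_{\lambda/\mu}}) = \Imm_{\chi^\nu}(H_{\lambda/\mu})$.

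The heart of the argument is to establish an expansion
\[
I_{H_{\lambda/\mu}} \;=\; \sum_{v\in S_n} f_v\cdot C'_v
\]
in the Kazhdan-Lusztig basis $\{C'_v\}_{v\in S_n}$ of $\mathbb{C}[S_n]$, with each coefficient $f_v\in \Lambda$ itself Schur-positive. I would proceed in three moves: (i) identify each $h_k$, via the Frobenius isomorphism, with the trivial character of $S_k$, corresponding at the group-algebra level to the parabolic symmetrizer $C'_{[i,i+k-1]}$ of equation \eqref{eq:C'ij}; (ii) expand products of such symmetrizers in the $C'_v$ basis with non-negative integer structure constants, using the Kazhdan-Lusztig positivity of multiplication; (iii) organize the contributions of the different permutations $w$ through a Lindström-Gessel-Viennot/RSK-type bijection matching (permutation, expansion-choice) data with semistandard tableaux of shape $\lambda/\mu$, so that each $f_v$ emerges as a manifestly Schur-positive combination of Schur functions indexed by such tableaux.

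Granted the expansion above, the conclusion is immediate. The Kazhdan-Lusztig positivity theorem (proved geometrically via the intersection cohomology of Schubert varieties, as reviewed later in the paper) yields $\chi^\nu(C'_v)\geq 0$ for every $v$ and $\nu$, and therefore
\[
\Imm_{\chi^\nu}(H_{\lambda/\mu}) \;=\; \sum_{v\in S_n} f_v\cdot \chi^\nu(C'_v)
\]
is a sum of Schur-positive symmetric functions scaled by non-negative integers, hence Schur-positive.

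The main obstacle is precisely the Schur-positive Kazhdan-Lusztig expansion of $I_{H_{\lambda/\mu}}$. Greene's purely combinatorial argument, which sufficed in the totally positive setting of Theorem \ref{thm:stemtotally}, replaced products of $C'_{[i,j]}$ by single $C'_v$'s through elementary Hecke algebra identities; but here the entries of the Jacobi-Trudi matrix are themselves nontrivial symmetric functions, so each coefficient $f_v$ must be produced as a Schur-positive symmetric function rather than a scalar. Matching permutations to tableaux of the skew shape in a way compatible with Kazhdan-Lusztig multiplication is the technical core of the argument, and it is here that one must invoke the full strength of Kazhdan-Lusztig theory rather than purely combinatorial manipulations.
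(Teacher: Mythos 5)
Your reduction is exactly the paper's: to prove Schur-positivity of $\Imm_\chi(H_{\lambda/\mu})$ one reduces to irreducible $\chi^\nu$, lifts to $I_{\lambda/\mu}\in\Lambda\otimes\mathbb{C}[S_n]$, invokes the Schur-positive Kazhdan--Lusztig expansion (which is precisely Theorem~\ref{thm:haiman}), and concludes from $\chi^\nu(C'_v)\geq 0$. The outer scaffolding is therefore fine, and the final deduction --- once the two inputs are granted --- is correct.

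However, the sketch you give of the expansion step does not hold up, and this is where the whole weight of the theorem lies. Step~(i) is a type mismatch: the entries $h_k$ of the Jacobi--Trudi matrix live in the symmetric-function tensor factor $\Lambda$, and under the Frobenius isomorphism they correspond to the trivial representation of $S_k$ sitting inside $S_N$ with $N=|\lambda|-|\mu|$ (the group permuting the \emph{boxes} of the skew shape). The parabolic symmetrizers $C'_{[i,i+k-1]}$ of Equation~\eqref{eq:C'ij}, on the other hand, live in $\mathbb{C}[S_n]$ with $n=\max\{\ell(\lambda),\ell(\mu)\}$ (the group permuting the \emph{rows}). These are different groups; in fact $k$ can exceed $n$ (take $\lambda=(N)$, $\mu=\emptyset$, so $n=1$, $k=N$), in which case there is no interval $[i,i+k-1]\subset[n]$ at all. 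So the ``identification'' in (i) does not make sense as written, and steps~(ii)--(iii), which build on it, never get started. You are candid that the hard part is unproved; that concession is accurate, and what it means is that you have re-derived the reduction to Haiman's Theorem~\ref{thm:haiman} but not Haiman's Theorem itself, whose actual proof is a substantial piece of work that does not follow the Stembridge/Greene template.

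There is a second, smaller imprecision. You justify $\chi^\nu(C'_v)\geq 0$ by appealing to ``the Kazhdan--Lusztig positivity theorem.'' KL positivity is the statement that the polynomials $P_{z,v}(q)$ have non-negative coefficients. From $q^{\ell(v)/2}C'_v=\sum_{z\leq v}P_{z,v}(q)T_z$, non-negativity of the $P_{z,v}$ does not immediately give $\chi^\nu(q^{\ell(v)/2}C'_v)\geq 0$, because the values $\chi^\nu(T_z)$ themselves can be negative. The non-negativity you need is Haiman's Lemma~1.1 (cited in the text just after Theorem~\ref{thm:haiman}): $\chi^\nu(q^{\ell(v)/2}C'_v)$ is a polynomial in $q$ with non-negative coefficients. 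That is a separate result with its own proof. The correct completion of your argument is simply to invoke Theorem~\ref{thm:haiman} and Haiman's Lemma~1.1, as the paper does, rather than attempt to re-derive either one.
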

    More than that, Stembridge conjectured that the \emph{monomial immanants} of $H_{\lambda/\mu}$ are Schur-positive. To give a precise definition, let $n:=\max\{\ell(\lambda),\ell(\mu)\}$ (actually, we can take any $n\geq\max\{\ell(\lambda),\ell(\mu)\}$) and $N:=|\lambda|-|\mu|$.  Consider the following element of the algebra $\mathbb{C}[S_n]\otimes_{\mathbb{C}} \Lambda$
    \[
    I_{\lambda/\mu}=I_{H_{\lambda/\mu}}:=\sum_{w\in S_n}T_w\cdot \prod_{1\leq i \leq n} h_{\lambda_i-\mu_{w(i)}+w(i)-i}.
    \]
    Its Frobenius character is an element of $\Lambda\otimes_{\mathbb{C}} \Lambda$. To avoid confusion we denote by $\x$ and $\y$, respectively, the two sets of variables of $\Lambda\otimes_{\mathbb{C}}\Lambda$. More precisely (keeping the notations of \cite{StanStem}), we have that
    \begin{align*}
    \ch(I_{\lambda/\mu})=F_{\lambda/\mu}(\x,\y):=& \sum_{w\in S_n} p_{\lambda(w)}(\y) \prod_{1\leq i \leq n} h_{\lambda_i-\mu_{w(i)}+w(i)-i}(\x) \\
      =&\sum_{\nu\vdash n} s_{\nu}(\y)\sum_{w\in S_n}\chi^{\nu}(w)\prod_{1\leq i \leq n} h_{\lambda_i-\mu_{w(j)}+w(j)-i}(\x)\\
                                                =&\sum_{\nu\vdash n} s_{\nu}(\y)\Imm_{\chi^{\nu}}(H_{\lambda/\mu}).
    \end{align*}
    
    Writing $F_{\lambda/\mu}$ in the complete symmetric basis $\{h_{\nu}(\y)\}_{\nu\vdash n}$, we have
    \[
        F_{\lambda/\mu}(\x,\y)=\sum_{\nu\vdash n} h_{\nu}(\y)\Imm_{\phi^{\nu}}(H_{\lambda/\mu}),
    \]
    which can be taken as a definition for the monomial immanant $\Imm_{\phi^{\nu}}(H_{\lambda/\mu})$.\par
    \begin{conjecture}[Stembrigde]
       For every skew partition $\lambda/\mu$ and every monomial virtual character $\phi^\nu$, the symmetric function
       \[
       \Imm_{\phi^\nu}(H_{\lambda/\mu})
       \]
       is Schur-positive.
    \end{conjecture}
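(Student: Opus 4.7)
The plan is to realize $\Imm_{\phi^\nu}(H_{\lambda/\mu})$ as the graded Frobenius character of a genuine (non-virtual) $S_n$-representation arising from the cohomology of a suitable desingularized Lusztig-type variety; Schur positivity would then be automatic, since the Frobenius character of any honest graded $S_n$-module is a non-negative integer combination of Schur functions.

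First, I would reformulate the problem by extracting the coefficient of $h_\nu(\y)$ rather than $s_\nu(\y)$ from $F_{\lambda/\mu}(\x,\y)$. Since $\phi^\nu$ is dual to $h_\nu$ under Hall's inner product, and since $\ch$ sends the character of the representation induced from the trivial one on the Young subgroup $S_\nu$ to $h_\nu$, extracting the $h_\nu$-coefficient corresponds via Frobenius reciprocity to a controlled restriction--induction operation on $I_{\lambda/\mu}$, which can potentially be implemented by choosing an appropriate parabolic subgroup on the geometric side.

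Second, following the core strategy developed earlier in the paper, I would expand $I_{\lambda/\mu}\in \mathbb{C}[S_n]\otimes \Lambda$ as a positive combination of Kazhdan-Lusztig basis elements $C'_w$ with coefficients that are products of complete symmetric functions $h_k(\x)$ (this is essentially the content of Haiman's theorem, refined). Each $C'_w$ is then interpreted through the $S_n$-action on the equivariant intersection cohomology of the associated Lusztig variety. Third, using the Brosnan-Chow palindromicity criterion, I would verify that the local invariant cycle map is an isomorphism for a suitable desingularization, so that the intersection cohomology is identified with the cohomology of a smooth projective variety. The $S_n$-action on the latter is a genuine graded representation whose Frobenius character has the prescribed image.

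The main obstacle is that $\phi^\nu$ is only a virtual character: its expansion in irreducibles involves genuine minus signs, and these must somehow be absorbed geometrically. The hard step is therefore to produce either a cellular decomposition, or an $S_n$-equivariant filtration, of the desingularized variety whose associated graded pieces realize the $h_\nu$-component directly, rather than passing through the $s_\nu$-basis --- otherwise the required cancellation between positive and negative contributions becomes invisible on the cohomological side, and one cannot conclude positivity from a mere Euler-characteristic identity. Achieving this is essentially equivalent to promoting Haiman's character-level positivity to a module-level positivity compatible with the $h_\nu$-decomposition, and it is here that the geometric input of Hessenberg/Lusztig varieties must do genuine work beyond what is available purely in the Hecke algebra.
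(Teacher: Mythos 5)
The statement you were asked to prove is not a theorem of the paper: it is Stembridge's \emph{conjecture} on monomial immanants, stated in a \texttt{conjecture} environment and never proved there. The authors only record that it is equivalent to Conjecture~\ref{conj:StanStem} (Stanley--Stembridge $h$-positivity of the $E^{\nu'}_{\lambda/\mu}$), whose case $\nu'=N$ is the chromatic-symmetric-function $e$-positivity conjecture, which remains open. There is therefore no proof in the paper against which to compare your proposal, and you should treat any claimed ``proof'' of this statement with great suspicion.

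Your sketch is also not a proof, and the gap is exactly where you locate it, but it is a much larger gap than your phrasing suggests. What the geometry of the paper delivers (Theorem~\ref{thm:mainsimply}) is the identity $\ch(q^{\ell(w)/2}C'_w)=\ch(IH^*(\h_w(X)))$; from this, Schur-positivity of $\ch(q^{\ell(w)/2}C'_w)$ follows formally, because the Frobenius character of any genuine graded $S_n$-module is Schur-positive. That recovers Haiman's \cite[Lemma~1.1]{Haiman} and hence Theorem~\ref{thm:Jacobitrudishurpos}, but it is strictly weaker than the monomial-immanant conjecture. To obtain Schur-positivity of $\Imm_{\phi^\nu}(H_{\lambda/\mu})$ one must show that the coefficient of $h_\nu(\y)$ (not $s_\nu(\y)$) in $F_{\lambda/\mu}$ is Schur-positive, which amounts to showing that $IH^*(\h_w(X))$ is, degree by degree, a \emph{permutation module} (a direct sum of $\ind_{S_\lambda}^{S_n}\mathbf 1$'s), equivalently that $\ch(q^{\ell(w)/2}C'_w)$ is $h$-positive. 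Knowing only that $IH^*(\h_w(X))$ is an honest $S_n$-module does not give this; one needs an $S_n$-stable cellular or filtration structure whose associated graded pieces are themselves permutation modules, and no such structure is known. This is precisely Conjecture~\ref{conj:haimanhpos}, which via Theorem~\ref{thm:haiman} implies Conjecture~\ref{conj:StanStem}. In other words, the ``hard step'' you identify in your last paragraph --- promoting character-level positivity to $h$-compatible module-level positivity --- is not a technical difficulty to be cleared by choosing a clever filtration: it \emph{is} the entire open content of the conjecture, and it lies strictly beyond what the decomposition theorem, palindromicity, and the dot action can supply.
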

     The statement that $\Imm_{\phi^{\nu}}(H_{\lambda/\mu})$ is Schur positive for every $\nu$ is equivalent to saying that the coefficient of $h_{\nu}(\y)s_{\nu'}(\x)$ in $F_{\lambda/\mu}(\x,\y)$ is non-negative for every $\nu\vdash n$ and every $\nu'\vdash N$. Expanding $F_{\lambda/\mu}(\x,\y)$ in the Schur basis $\{s_{\nu'}(\x)\}_{\nu'\vdash N}$, we can write
     \begin{equation}
         \label{eq:FEnu}
     F_{\lambda/\mu}=\sum_{\nu'\vdash N}E_{\lambda/\mu}^{\nu'}(\y)s_{\nu'}(\x).
    \end{equation}
     Then the Stembridge conjecture about monomial immanants is equivalent to the following conjecture.
     \begin{conjecture}[Stanley-Stembridge]
     \label{conj:StanStem}
     The symmetric functions $E^{\nu'}_{\lambda/\mu}$ are $h$-positive for every $\lambda$, $\mu$ and $\nu'$.
     \end{conjecture}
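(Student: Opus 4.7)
The plan is to interpret $F_{\lambda/\mu}$ geometrically and translate $h$-positivity into a statement about characters of Kazhdan-Lusztig basis elements. First I would expand
\[
I_{\lambda/\mu} = \sum_{w\in S_n} T_w \prod_{i} h_{\lambda_i - \mu_{w(i)} + w(i) - i}(\x)
\]
in the Kazhdan-Lusztig basis $\{C'_w\}_{w\in S_n}\subset \mathbb{C}[S_n]$ generalizing the elements $C'_{[i,j]}$ of \eqref{eq:C'ij}. Because the $T_w$-coefficient of $I_{\lambda/\mu}$ is a monomial in the $h_k(\x)$ and because Kazhdan-Lusztig positivity forces the transition matrix from $\{T_w\}$ to $\{C'_w\}$ to have non-negative entries, one expects an expansion
\[
I_{\lambda/\mu} = \sum_{w\in S_n} a_w(\x)\, C'_w, \qquad a_w(\x)\in \mathbb{Z}_{\geq 0}[h_1(\x), h_2(\x),\ldots].
\]
Modulo this expansion, the $h$-positivity of $E^{\nu'}_{\lambda/\mu}$ reduces to the assertion that the Frobenius character $\ch(C'_w)$, viewed as a symmetric function in $\y$, is itself $h$-positive for every $w\in S_n$.

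The second step is to interpret $\ch(C'_w)$ as the Frobenius character of the Weyl group action on the intersection cohomology of the Lusztig variety attached to $w$ and a regular semisimple element, in type $A$. This is the identification promised in the abstract and can be expected to follow from a Springer-type argument combined with the Kazhdan-Lusztig conjecture, so that each irreducible summand contributes $\chi^{\nu'}(C'_w)$ with the correct multiplicity. When $w$ is codominant, the corresponding variety is smooth, admits an affine paving compatible with the Weyl group action, and the induced representation on each cell is a permutation representation on cosets of a Young subgroup; this makes $h$-positivity manifest and is (in essence) the Brosnan-Chow geometric solution to the Shareshian-Wachs conjecture.

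The hardest step, and the main obstacle, is the singular case: for non-codominant $w$ the Lusztig variety is singular and there is no a priori reason for its intersection cohomology to carry a permutation representation on Young cosets. To circumvent this I would construct an explicit desingularization $\widetilde{L}_w \to L_w$, exhibit an affine paving on $\widetilde{L}_w$ whose cells are permuted by Young subgroup cosets under the Weyl group action, and then invoke the Brosnan-Chow palindromicity criterion to conclude that the local invariant cycle map is an isomorphism. Granting palindromicity, the $h$-positive structure visible on $\widetilde{L}_w$ descends to $IH^*(L_w)$ and delivers the desired expansion of $\ch(C'_w)$. The verification of palindromicity beyond the codominant case is precisely the technical heart of the program advertised in the abstract, and — since the full Stanley-Stembridge conjecture remains open — I should be upfront that this strategy extends the known cases to a broader (but not necessarily complete) family of permutations $w$, namely those for which such a desingularization and palindromicity statement can be established.
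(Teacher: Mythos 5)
You are attempting to prove a statement that is, in fact, an open conjecture; the paper does not prove it, and indeed presents it as a conjecture whose resolution remains out of reach. Your write-up acknowledges this at the very end, which is good, but two of its intermediate steps contain concrete errors that go beyond ``the hard step is still open.''

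First, it is not true that ``Kazhdan--Lusztig positivity forces the transition matrix from $\{T_w\}$ to $\{C'_w\}$ to have non-negative entries.'' Positivity of the polynomials $P_{z,w}$ gives non-negativity of the coefficients expressing $q^{\ell(w)/2}C'_w$ in terms of the $T_z$; the \emph{inverse} transition matrix, expressing $T_w$ in terms of the $C'_z$, has entries with alternating sign $(-1)^{\ell(w)-\ell(z)}$. The actual positivity statement you want—that $I_{\lambda/\mu}$ is a non-negative integer combination of $C'_w(1)\otimes s_{\nu'}(\x)$—is Theorem~\ref{thm:haiman} (Haiman), which is a deep result using the Kazhdan--Lusztig conjecture and properties of Lusztig's canonical basis, not a sign bookkeeping argument. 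Moreover your stated form of the expansion, with $a_w(\x)\in\mathbb{Z}_{\geq 0}[h_1,h_2,\ldots]$, is stronger than what Haiman proves (Schur-positivity of the coefficients) and is not needed for the reduction; all one needs is non-negativity of the $s_{\nu'}(\x)$-coefficients.

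Second, and more fatally, your codominant case is wrong. You assert that when $w$ is codominant the Hessenberg variety ``admits an affine paving compatible with the Weyl group action, and the induced representation on each cell is a permutation representation on cosets of a Young subgroup; this makes $h$-positivity manifest and is (in essence) the Brosnan--Chow geometric solution to the Shareshian--Wachs conjecture.'' This is not what Brosnan--Chow (or Guay-Paquet) proved. Theorem~\ref{thm:brosnanchow} establishes the identity $\omega(\csf_q(G_\m))=\ch(H^*(\h_\m(X)))$, i.e., that the Frobenius character of Tymoczko's dot action equals the chromatic quasisymmetric function—it does \emph{not} establish $e$-positivity of $\csf_q$, equivalently $h$-positivity of $\ch(H^*(\h_\m(X)))$. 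The dot action does not permute the cells of the affine paving, and no permutation-module structure on Young cosets is known even in the codominant case. The Stanley--Stembridge conjecture therefore remains open already for codominant $w$, and your reduction has not landed in known territory. The paper's actual contribution, Theorem~\ref{thm:mainsimply}, generalizes the Brosnan--Chow \emph{identity} to all $w$ via intersection cohomology of Lusztig varieties, but like Brosnan--Chow it leaves the positivity question untouched; the authors even announce a counter-example to the stronger Conjecture~\ref{conj:haimansingular}, which cuts against the decomposition approach you sketch for singular $w$.
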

     
     The easiest coefficient $E^{\nu'}_{\lambda/\mu}$ to compute is when $\nu'=N$. We know that
     \begin{align*}
         F_{\lambda/\mu}(\x,\y)=&\sum_{w \in S_n}p_{\lambda(w)}(\y)\prod_{1\leq i \leq n} h_{\lambda_i-\mu_{w(i)}+w(i)-i}(\x).
     \end{align*}
    Since the coefficient of $s_N(\x)$ in the Schur expansion of $\prod_{1\leq i \leq n} h_{\lambda_i-\mu_{w(i)}+w(i)-i}(\x)$ is $1$ if $\lambda_i-\mu_{w(i)}+w(i)-i\geq0$ for every $i$, and $0$ otherwise, we have that
    \begin{equation}
        \label{eq:Enup}
    E_{\lambda/\mu}^N(\y)=\sum_{w\in S_{\lambda/\mu}}p_{\lambda(w)}(\y).
    \end{equation}
    where $S_{\lambda/\mu}$ is the subset of $S_n$ consisting of permutations $w$ such that $\lambda_i-\mu_{w(i)}+w(i)-i\geq 0$ for every $i$. Alternatively, if we define the function $\m\col[n]\to [n]$ by
    \begin{equation}
        \label{eq:mHess}
    \m(j):=\max\{i;i-\lambda_{i}+\mu_j-j\leq 0\},
    \end{equation}
    then 
    \[
    S_{\lambda/\mu}=S_{\m}:=\{w\in S_n; w^{-1}(j)\leq \m(j)\text{ for every }j\in [n]\}.
    \]
    Moreover, if $S_{\lambda/\mu}$ is non-empty, then $\m\col[n]\to [n]$ is a \emph{Hessenberg function}, that is $\m(j)\geq j$ and $\m(j+1)\geq \m(j)$ for every $j\in[n]$. In fact, $E_{\lambda/\mu}^N$ only depends on the Hessenberg function $\m$.\par
        Following the notation in \cite{StanStem}, we can actually define a partition $\sigma=(\sigma_1,\ldots, \sigma_{\ell(\sigma)})$ by $\sigma_k=\max\{i; \m(i)\leq n-k\}$, and let $B_\sigma$ be the board obtained from an $n\times n$ board by removing the Young diagram of $\sigma$ from the upper right corner. Then the set $S_{\lambda/\mu}$ is precisely the set of permutations whose associated rook placements lay in $B_\sigma$. Stanley-Stembridge considered $Z[B_\sigma]:=\sum p_{\lambda(w)}$, the \emph{cycle indicator} of $B_\sigma$, and noted that $E_{\lambda/\mu}^N=Z[B_\sigma]$.
    
    \begin{figure}[htb]
            \begin{tikzpicture}
\begin{scope}[scale=0.4]
\draw[fill=black, fill opacity=0.1, draw opacity=0] (0,2) -- (0,6) -- (4,6) -- (4,5) -- (3,5) -- (3,4) -- (1,4) -- (1,2) -- (0,2);
\draw[help lines] (0,0) grid +(6,6);
\dyckpathc{0,0}{1,1,0,1,1,0,0,1,0,1,0,0}{blue}
\node at  (0.5,1.5) [shape=circle, fill=black, inner sep=1pt] {};
\node at  (1.5,0.5) [shape=circle, fill=black, inner sep=1pt] {};
\node at  (2.5,3.5) [shape=circle, fill=black, inner sep=1pt] {};
\node at  (3.5,4.5) [shape=circle, fill=black, inner sep=1pt] {};
\node at  (4.5,5.5) [shape=circle, fill=black, inner sep=1pt] {};
\node at  (5.5,2.5) [shape=circle, fill=black, inner sep=1pt] {};
\end{scope}
\end{tikzpicture}
\caption{The graph of $\m=(2,4,4,5,6)$ is depicted in blue. The Young diagram of $\sigma=(4,3,1,1)$ is depicted as the shaded area. The bullets correspond to the permutation $214563$. The board $B_\sigma$ is the set of  boxes below the blue line.}
    \end{figure}
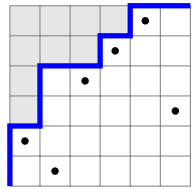

      Alternatively, since $\ch(T_w)=\ch(T_{w^{-1}})$ for every $w\in S_n$ and $\ch(T_w)=p_{\lambda(w)}$, we observe that
     \[
     E_{\lambda/\mu}^N=\ch(C'_\m),
     \]
    where $C'_\m:=\sum_{w \in S_\m} T_w$. \par
    
    Stanley-Stembridge conjectured that every symmetric function $E_{\lambda/\mu}^{\nu}$ is a positive integer linear combination of cycle-indicators.
    \begin{conjecture}[Stanley-Stembridge]
    \label{conj:StanStemsum}
      For every triple $\lambda,\mu,\nu'$ of partitions, there exist partitions $\sigma_1,\ldots, \sigma_k$ such that
      \[
      E_{\lambda/\mu}^{\nu`}=\sum_{1\leq j\leq k} Z[B_{\sigma_j}].
      \]
      Equivalently, there exist Hessenberg functions $\m_1,\ldots, \m_k$ such that
      \[
      E_{\lambda/\mu}^{\nu'}=\sum_{1\leq j\leq k} \ch(C'_{\m_j}).
      \]
    \end{conjecture}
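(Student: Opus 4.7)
The strategy is geometric, following the program outlined in the abstract: realize each $E_{\lambda/\mu}^{\nu'}$ as the Frobenius character of an $S_n$-action on the (intersection) cohomology of a Lusztig variety, and then decompose that variety into Hessenberg-type pieces whose individual contributions are $\ch(C'_\m)$.

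First, I would rewrite $F_{\lambda/\mu}$ in a Kazhdan-Lusztig form. Since $\ch(T_w) = p_{\lambda(w)}$ we have $F_{\lambda/\mu} = \ch(I_{\lambda/\mu})$ with $I_{\lambda/\mu} = \sum_{w \in S_n} T_w \prod_i h_{\lambda_i - \mu_{w(i)} + w(i) - i}(\x)$. Rewriting in the Kazhdan-Lusztig basis $\{C'_w\}$ should yield an expansion $I_{\lambda/\mu} = \sum_w b_w(\x)\, C'_w$ whose coefficients $b_w(\x)$ are Schur-positive in $\x$; such positivity is forced by Theorem \ref{thm:Jacobitrudishurpos} together with Kazhdan-Lusztig positivity of the transition between $\{T_w\}$ and $\{C'_w\}$. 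Picking off the $s_{\nu'}(\x)$-coefficient reduces the conjecture to showing that each $\ch(C'_w)$ is a non-negative integer combination of elements $\ch(C'_{\m})$ indexed by Hessenberg functions.

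Second, I would use the geometric identification $\ch(C'_w) = \ch\bigl(\mathrm{IH}^*(X_w)\bigr)$ for the Lusztig variety $X_w$, established earlier in the paper via the Brosnan-Chow palindromicity criterion for the local invariant cycle map. When $w$ is codominant, $X_w$ is smooth, the $S_n$-action on $H^*(X_w)$ matches the Brosnan-Chow action on the corresponding regular semisimple Hessenberg variety, and one recovers $\ch(C'_{\m_w})$ directly. For general $w$, I would build a Bott-Samelson-style resolution $\pi\colon \widetilde{X}_w \to X_w$ from a reduced expression of $w$; the total space $\widetilde{X}_w$ should admit an affine paving indexed by subwords which groups into closed pieces that are themselves Lusztig varieties of codominant type, with Frobenius characters $\ch(C'_{\m_1}), \ldots, \ch(C'_{\m_k})$.

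Third, apply palindromicity to conclude that the local invariant cycle map $H^*(\widetilde{X}_w) \twoheadrightarrow \mathrm{IH}^*(X_w)$ is $S_n$-equivariantly surjective, so that $\ch(C'_w)$ is a direct summand of $\sum_j \ch(C'_{\m_j})$. Combined with the Decomposition Theorem one then argues inductively on the dimension of the singular locus of $X_w$ that the complementary perverse summands also contribute terms of the form $\ch(C'_{\m'})$, completing an expression $E^{\nu'}_{\lambda/\mu} = \sum_j \ch(C'_{\m_j})$.

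The principal obstacle is precisely this inductive step. The Decomposition Theorem applied to $\pi$ produces, besides $\mathrm{IH}^*(X_w)$, shifted intersection cohomology complexes supported on strata of $X_w$ that need not themselves be Lusztig varieties of codominant type, and a priori there is no mechanism guaranteeing that their $S_n$-characters are expressible as $\ch(C'_{\m'})$. Equivalently, one must show that arbitrary products $C'_{[i_1,j_1]} \cdots C'_{[i_m,j_m]}$ expand non-negatively not merely in the Kazhdan-Lusztig basis $\{C'_w\}$ (which is Greene's theorem) but in the much coarser \emph{Hessenberg cone} spanned by $\{C'_\m\}$. This strengthening is the core combinatorial difficulty beyond Haiman's Theorem \ref{thm:Jacobitrudishurpos}, and is what keeps Conjecture \ref{conj:StanStemsum} open at present.
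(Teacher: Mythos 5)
What you are asked to prove is stated as Conjecture \ref{conj:StanStemsum} in the paper and is left open there; no proof exists to compare with, and you yourself close by conceding that this ``keeps Conjecture \ref{conj:StanStemsum} open at present.'' The more substantive problem, though, is that the intermediate target you reduce to in your second step --- that each $\ch(C'_w)$ is a non-negative integer combination of $\ch(C'_{\m})$ ranging over Hessenberg functions --- is exactly the joint content of Conjectures \ref{conj:haimansmooth} and \ref{conj:haimansingular}, and in the paragraph immediately following Conjecture \ref{conj:haimansingular} the paper reports a counterexample to the latter. Your geometric machinery (intersection cohomology of $\h_w(X)$, the decomposition theorem for the resolution $\hbs_\si(X)\to\h_w(X)$) is intrinsically $q$-graded: by Proposition \ref{prop:springer} and Equation \eqref{eq:HTw} it produces the expansion of $\prod_i(1+T_{s_i})$ into $q^{\ell(u)/2}C'_u$ over \emph{all} $u\leq w$, codominant or not, and if those pieces could be further collapsed into codominant ones you would recover precisely the $q$-graded identity that the reported counterexample falsifies. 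A proof of Conjecture \ref{conj:StanStemsum} along your lines would therefore have to discard the $q$-grading before attempting the Hessenberg-cone expansion, which is exactly the information the perverse-sheaf framework is built to retain; so the route is not merely incomplete but blocked.

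Two smaller misattributions worth flagging: the Schur-positivity of the coefficients $b_w(\x)$ in $I_{\lambda/\mu}=\sum_w b_w(\x)\,C'_w$ is Theorem \ref{thm:haiman} itself, not a consequence of Theorem \ref{thm:Jacobitrudishurpos} combined with Kazhdan--Lusztig positivity of the transition matrix (which by itself does not localize the positivity to the individual $b_w$); and the identity established in the paper is $\ch(q^{\ell(w)/2}C'_w)=\ch(IH^*(\h_w(X)))$ (Theorem \ref{thm:mainsimply}), with the $q^{\ell(w)/2}$ normalization essential for the bookkeeping against the affine-paving count of Theorem \ref{thm:hbspaving} --- dropping it, as you do when you write $\ch(C'_w)=\ch(\mathrm{IH}^*(X_w))$, would break that comparison.
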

    If Conjecture \ref{conj:StanStemsum} holds, then Conjecture \ref{conj:StanStem} is reduced to the case $\nu'=N$.

     \subsection{Immanants and Hecke algebras} 
      The elements $C'_{[i,j]}$ and $C'_{\m}$ appearing in the previous section are actually particular instances of a more general construction due to Kazhdan-Lusztig (see \cite{KL}). Since every $w\in S_n$ can be written as a product of simple transpositions $(i,i+1)$, we have that the group algebra $\mathbb{C}[S_n]$ can also be described as the $\mathbb{C}$-algebra generated by $\{T_{s}\}$, where $s$ runs through all simple transpositions, with the following relations
      \begin{align*}
          T_s^2=&1&&\text{ for every simple transposition $s$,}\\
          T_sT_{s'}=&T_{s'}T_s&&\text{ for every $s=(i,i+1)$ and $s'=(j,j+1)$ such that $|i-j|>1$,}\\
          T_sT_{s'}T_s=&T_{s'}T_sT_{s'}&&\text{ for every  $s=(i,i+1)$ and $s'=(j,j+1)$ such that $|i-j|=1$.}
      \end{align*}
      The group algebra $\mathbb{C}[S_n]$  admits a $q$-deformation called the \emph{Hecke algebra} and denoted by $H_n$, constructed as follows. Since $H_n$ will have the same generators as $\mathbb{C}[S_n]$ but with slightly different relations, we will abuse the notation and still write $T_s$ for the generators of $H_n$ and write $T_s(1)$ for the generators of $\mathbb{C}[S_n]$. More precisely, the algebra $H_n$ is a $\mathbb{C}(q^{\frac{1}{2}})$-algebra\footnote{Usually the definition is over $\mathbb{Z}[q^{\frac{1}{2}},q^{-\frac{1}{2}}]$.} generated by $T_s$, with the following relations
    \begin{align*}
          T_s^2=&(q-1)T_s+q&&\text{ for every simple transposition $s$,}\\
          T_sT_{s'}=&T_{s'}T_s&&\text{ for every $s=(i,i+1)$ and $s'=(j,j+1)$ such that $|i-j|>1$,}\\
          T_sT_{s'}T_s=&T_{s'}T_sT_{s'}&&\text{ for every  $s=(i,i+1)$ and $s'=(j,j+1)$ such that $|i-j|=1$.}
      \end{align*}
      When we set $q=1$, we recover the group algebra $\mathbb{C}[S_n]$. For elements $a\in H_n$, we write $a(1)$ for the image of $a$ via the specialization $H_n\to\mathbb{C}[S_n]$ given by sending $q$ to $1$ and $T_s$ to $T_s(1)$.\par
       We know that each $w\in S_n$ has a (non-unique) reduced expression $w=s_1s_2\ldots s_{\ell(w)}$ as product of simple transpositions. We define 
      \[
      T_w:=T_{s_1}T_{s_2}\ldots T_{s_{\ell(w)}},
      \]
      which does not depend on the choice of the reduced expression of $w$.
      As a $\mathbb{C}(q^{\frac{1}{2}})$-vector space, we have that $\{T_w\}_{w\in S_n}$ is a basis of $H_n$. We say that $\ell(w)$ is the \emph{length} of $w$, and it is equal to the number of inversions of $w$.\par

        Before we introduce the Kazhdan-Lusztig basis, we need to define the \emph{Bruhat order} of $S_n$. Given $z,w\in S_n$, we say that $z\leq w$ if for some (equivalently, for every) reduced expression $w=s_1\ldots s_{\ell(w)}$ there exist $1\leq i_1<i_2<\ldots< i_k\leq \ell(w)$ such that $z=s_{i_1}\ldots s_{i_k}$. \par

      The Hecke algebra $H_n$ has an involution $\iota$ given by
      \begin{align*}
      \iota\col H_n&\to H_n\\
      q^{\frac{1}{2}}&\mapsto q^{-\frac{1}{2}}\\
      T_w&\mapsto T_{w^{-1}}^{-1}.
      \end{align*}
       The Kazhdan-Lusztig basis $\{C'_w\}_{w\in S_n}$ of $H_n$ is defined by the properties
      \begin{equation}
          \label{eq:C'wdef}
            \begin{aligned}
      \iota(C'_w)&=C'_w,\\
      q^{\frac{\ell(w)}{2}}C'_w&=\sum_{z\leq w}P_{z,w}(q)T_z,    
      \end{aligned}
      \end{equation}
      where $P_{z,w}(q)\in \mathbb{Z}[q]$, $P_{w,w}(q)=1$, and $\deg(P_{z,w})<\frac{\ell(w)-\ell(z)}{2}$ for every $z\neq w$. The existence of such a basis is established in \cite{KL} and the polynomials $P_{z,w}(q)$ are called \emph{Kazhdan-Lusztig polynomials}.\par
       We have that the elements considered in Equation \eqref{eq:C'ij} satisfy
       \[
       C'_{[i,j]}=C'_{w_{[i,j]}}(1)
       \]
      where 
      \[
      w_{[i,j]}(k)=\begin{cases}
       k& \text{ if } k\notin [i,j]\\
       j-k+i& \text{ if } k\in [i,j].
      \end{cases}
      \]
     and the elements $C'_\m=\sum_{w\leq\m} T_w$ satisfy
     \[
     C'_\m=C'_{w_\m}(1)
     \]
     where $w_\m$ is  the lexicographically greatest permutation satisfying $w_\m(i)\leq \m(i)$ for all $i\in [n]$. 
     \begin{Def}
     \label{def:codominant}
     The permutations of the form $w_\m$ for some Hessenberg function $\m$ are called \emph{codominant} permutations.
     \end{Def}
     
     Recall that in Section \ref{sec:immanants} we have defined the element $I_{\lambda/\mu}$ of $\mathbb{C}[S_n]\otimes \Lambda$ associated to the Jacobi-Trudi matrix $H_{\lambda/\mu}$.    Haiman proved the following in \cite{Haiman}.

    \begin{theorem}[Haiman]
    \label{thm:haiman}
    The element $I_{\lambda/\mu}$ is a non-negative integer linear combination of the basis elements $C'_{w}(1)\otimes s_{\nu'}(x)$, where $\{C'_w(q)\}$ is the Kazhdan-Lusztig basis of the Hecke algebra $H_n$.
    \end{theorem}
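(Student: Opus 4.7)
The plan is to lift $I_{\lambda/\mu}$ to a $q$-analog in the Hecke algebra, prove the corresponding positivity statement over $\mathbb{Z}[q^{\pm 1/2}]$, and then specialize at $q=1$. Define
$$\widetilde{I}_{\lambda/\mu} := \sum_{w \in S_n} T_w \cdot \prod_{i=1}^{n} h_{\lambda_i - \mu_{w(i)} + w(i) - i}(\x) \in H_n \otimes \Lambda,$$
which reduces to $I_{\lambda/\mu}$ under $q \mapsto 1$. Since this specialization sends $C'_w \mapsto C'_w(1)$, it suffices to prove that $\widetilde{I}_{\lambda/\mu}$ is a non-negative $\mathbb{Z}[q^{1/2}, q^{-1/2}]$-linear combination of elements of the form $C'_w \otimes s_{\nu'}(\x)$.

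The combinatorial heart of the approach is a Hecke-algebra version of the Lindström--Gessel--Viennot interpretation of the Jacobi--Trudi formula. Classically, $s_{\lambda/\mu} = \det(h_{\lambda_i - \mu_j + j - i})$ equals a sum over non-crossing $n$-tuples of lattice paths, while $\prod_i h_{\lambda_i - \mu_{w(i)} + w(i) - i}(\x)$ is a sum over \emph{all} $n$-tuples of paths whose endpoints are permuted by $w$. I would organize the terms of $\widetilde{I}_{\lambda/\mu}$ by their underlying path configurations, and expect the contribution of each fixed configuration (summed over all compatible endpoint-permutations) to assemble into a product of parabolic Kazhdan--Lusztig elements $C'_{w_0^{I_j}} = q^{-\ell(w_0^{I_j})/2} \sum_{w \in S_{I_j}} T_w$, tensored with a monomial $\x^T$ indexed by a semistandard tableau $T$. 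Summing over tableaux sharing the same crossing skeleton would then collect the $\x$-weights into an honest Schur function $s_{\nu'}(\x)$.

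Positivity of the resulting expansion then reduces to two classical inputs: every skew Schur function is Schur-positive, and any product $C'_{w_0^{I_1}} C'_{w_0^{I_2}} \cdots C'_{w_0^{I_k}}$ of parabolic KL elements expands as a non-negative $\mathbb{Z}_{\geq 0}[q^{\pm 1/2}]$-linear combination of KL basis elements $C'_w$, by Kazhdan--Lusztig positivity for the symmetric group \cite{KL}. Specializing $q=1$ then gives the theorem. The main obstacle is the combinatorial identification in the middle paragraph: one must verify that the $T$-weighted sum over endpoint-permutations for a fixed path skeleton actually equals (rather than being merely dominated by) a product of parabolic KL elements. This identification is delicate because the quadratic relation $T_s^2 = (q-1) T_s + q$ produces correction terms whenever uncrossing a pair of paths would create a non-reduced expression, so these corrections need to be matched term-by-term against the contributions coming from coarser crossing patterns. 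This matching is where the bulk of the technical work will lie; if a clean identity proves out of reach, a fallback would be to establish only a non-negative (rather than exact) expansion of each skeleton's contribution into products of parabolic KL elements, which would still suffice for the theorem but requires careful sign analysis.
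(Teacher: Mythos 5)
The paper does not prove Theorem~\ref{thm:haiman}; it attributes the result to Haiman~\cite{Haiman} and moves on, having already recorded, in the discussion surrounding Theorem~\ref{thm:stemtotally}, the key combinatorial input: by \cite[Theorem~2.1]{Stem91}, the element $I_A$ is, for $A$ totally positive, a non-negative linear combination of products of the parabolic elements $C'_{[i,j]}$ of Equation~\eqref{eq:C'ij}. Combined with the observation that every minor of the Jacobi--Trudi matrix $H_{\lambda/\mu}$ is a (Schur-positive) skew Schur function, this expresses $I_{\lambda/\mu}$ as a Schur-positive linear combination of products $C'_{[i_1,j_1]}\cdots C'_{[i_m,j_m]}$. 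Haiman's theorem then follows from the non-negativity of the structure constants of the Kazhdan--Lusztig basis, specialized at $q=1$.

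Your outline identifies the same two pillars---a lattice-path decomposition of $I_{\lambda/\mu}$ into products of parabolic elements with Schur coefficients, followed by KL product positivity---so it reconstructs the right argument in spirit. Two cautions. First, the $q$-lift $\widetilde{I}_{\lambda/\mu}$ is not part of Haiman's proof and creates a genuine obstacle where none is needed: Stembridge's decomposition is an identity in the group algebra $\mathbb{C}[S_n]$, not a specialization of an identity in $H_n$, and the quadratic relation $T_s^2=(q-1)T_s+q$ obstructs the clean uncrossing bijection exactly as you note. Since the KL structure constants $h^w_{u,v}$ defined by $C'_uC'_v=\sum_w h^w_{u,v}C'_w$ are non-negative polynomials in $q^{\pm 1/2}$, their values at $q=1$ are non-negative integers; hence you may work at $q=1$ throughout, invoke Stembridge's decomposition directly, and never leave $\mathbb{C}[S_n]$. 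Keeping the $q$-lift means proving a strictly stronger statement whose combinatorial core (your middle paragraph) you have not verified and which may well fail. Second, the attribution of product positivity to \cite{KL} is imprecise: that reference establishes non-negativity of KL polynomial coefficients, not of the structure constants $h^w_{u,v}$. The latter is a consequence of the decomposition theorem of Beilinson--Bernstein--Deligne--Gabber (the framework the paper reviews in Section~\ref{sec:decomp}, via Theorem~\ref{thm:springer}); this is the genuinely geometric ingredient. Finally, the content of your middle paragraph---that each crossing skeleton assembles into a product of parabolic KL elements tensored with a tableau weight---is precisely what \cite[Theorem~2.1]{Stem91} supplies at $q=1$; it would be cleaner to cite it than to rederive it.
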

    Each irreducible $\mathbb{C}$-representation of $S_n$ lifts to an irreducible $\mathbb{C}(q^{\frac{1}{2}})$-representation of $H_n$ (see \cite[Theorem 8.1.7]{GeckPf}). Hence if $\chi$ is an irreducible character of $S_n$ and, abusing notation, $\chi$ is the corresponding character of $H_n$, we have that 
    \[
    \chi(a(1))=\chi(a)(1),
    \]
    for every $a\in H_n$. We define the \emph{(dual) Frobenius character} of an element $a\in H_n$ by
    \[
    \ch(a)=\sum \chi^{\lambda}(a)s_{\lambda}(x)\in \mathbb{C}(q^{\frac{1}{2}})\otimes \Lambda.
    \]
    In \cite[Lemma 1.1]{Haiman} Haiman proved that $\chi^{\lambda}(q^{\frac{\ell(w)}{2}}C'_w)$ is a symmetric unimodal  polynomial in $q$ with non-negative integer coefficients. This result, together with Theorem \ref{thm:haiman}, gives a proof of Theorem \ref{thm:Jacobitrudishurpos}. We note that \cite[Lemma 1.1]{Haiman} says that $\ch(q^{\frac{\ell(w)}{2}}C'_w)$ is Schur-positive, in the sense that its coefficients in the Schur-basis are polynomials in $q$ with non-negative integer coefficients.

    Haiman also made some conjectures. The first regarding positivity of monomial characters of Kazdhan-Lusztig elements.
     
    \begin{conjecture}[Haiman]
    \label{conj:haimanhpos}
     For every $w\in S_n$, the Frobenius character of the Kazhdan-Lusztig elements $\ch(q^{\frac{\ell(w)}{2}}C'_w)$ is $h$-positive.
    \end{conjecture}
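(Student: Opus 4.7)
The approach is geometric, following the framework sketched in the abstract. For a regular semisimple $s\in GL_n(\mathbb{C})$ and $w\in S_n$, the starting point is the Lusztig variety $\mc{Y}_{s,w}$, whose (equivariant) intersection cohomology carries a natural $S_n$-action whose Frobenius character equals $\ch(q^{\ell(w)/2}C'_w)$. To prove $h$-positivity it then suffices to exhibit $IH^*(\mc{Y}_{s,w})$ as an $S_n$-module isomorphic to a direct sum of permutation modules $\mathbb{C}[S_n/S_\lambda]$ for Young subgroups $S_\lambda$, since such modules have Frobenius characters $h_\lambda$.

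Fix a reduced expression $w = s_{i_1}\cdots s_{i_{\ell(w)}}$ and construct an iterated Bott--Samelson style desingularization $\pi\colon \widetilde{\mc{Y}}_{s,w}\to \mc{Y}_{s,w}$. Built as a tower of $\mathbb{P}^1$-bundles, $\widetilde{\mc{Y}}_{s,w}$ admits a natural cellular decomposition indexed by subwords $J\subseteq\{1,\ldots,\ell(w)\}$. The residual $S_n$-action on cohomology permutes eigenvalues of $s$, and the key combinatorial task is to show that each cell contributes precisely a permutation representation $\mathbb{C}[S_n/S_\lambda]$, where $S_\lambda$ is the Young subgroup determined by how the subword $J$ clusters into blocks of consecutive collapsing simple reflections. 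Summed over cells, $H^*(\widetilde{\mc{Y}}_{s,w})$ is a direct sum of Young permutation representations, and is therefore $h$-positive.

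Now invoke the Brosnan--Chow palindromicity criterion: verify that $IH^*(\mc{Y}_{s,w})$ is palindromic, using Verdier self-duality of the IC sheaf together with the projectivity of $\mc{Y}_{s,w}$, and conclude that the natural local invariant cycle map
\[
H^*(\widetilde{\mc{Y}}_{s,w}) \longrightarrow IH^*(\mc{Y}_{s,w})
\]
is an isomorphism of $S_n$-modules. Combined with the previous step, $IH^*(\mc{Y}_{s,w})$ is itself a direct sum of Young permutation representations, so its Frobenius character $\ch(q^{\ell(w)/2}C'_w)$ is $h$-positive.

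The principal obstacle is the non-codominant case. For codominant $w$ the Lusztig variety is already smooth and the argument reduces essentially to the Brosnan--Chow proof of the Shareshian--Wachs conjecture. For general $w$ the variety $\mc{Y}_{s,w}$ is singular, and two points require care: first, the desingularization and its cells must be arranged so that each cell stabilizer is \emph{exactly} a Young subgroup rather than some other parabolic, which pins down the choice of reduced expression and forces a delicate use of the braid and quadratic relations; second, the palindromicity hypothesis must be established intrinsically on the singular Lusztig variety, where Poincaré duality on a smooth ambient space is unavailable and one must argue through the self-dual structure of the IC sheaf.
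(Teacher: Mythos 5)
This statement is presented in the paper as a \emph{conjecture} of Haiman, and remains open; what the paper actually proves (Theorem~\ref{thm:mainsimply}) is a geometric \emph{interpretation} of $\ch(q^{\ell(w)/2}C'_w)$ as the Frobenius character of the monodromy $W$-action on $IH^*(\h_w(X))$, not its $h$-positivity. Your proposal does not close the gap; it restates the conjecture in geometric language and then asserts the hard part.

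There are two concrete failures in the argument. First, you claim each cell of the Bott--Samelson resolution contributes a Young permutation module, so that $H^*(\widetilde{\mc{Y}}_{s,w})$ is a direct sum of $\mathbb{C}[S_n/S_\lambda]$'s. But the $S_n$-action here is the \emph{monodromy} (dot) action coming from $\pi_1(G^{rs})$, established in Proposition~\ref{prop:pi1actW}; it does not preserve the affine cells of Theorem~\ref{thm:hbspaving} individually, so there is no cell-by-cell decomposition into permutation modules. Establishing that $H^*(\widetilde{\mc{Y}}_{s,w})$ is a sum of Young permutation modules is itself equivalent (after applying $\omega$) to the Stanley--Stembridge $e$-positivity conjecture in the codominant case, and to its generalization in general; you cannot assume it.

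Second, even granting $h$-positivity of the resolution, the deduction for the singular Lusztig variety fails. There is no ``local invariant cycle map'' $H^*(\widetilde{\mc{Y}}_{s,w})\to IH^*(\mc{Y}_{s,w})$, and the two spaces are not isomorphic in general. What the local invariant cycle theorem (Theorem~\ref{thm:invmap}) and the Brosnan--Chow criterion (Theorem~\ref{thm:invmappalin}) give is an isomorphism $H^*(\hbs_\si(X_J))\cong H^*(\hbs_\si(X))^{W_J}$ between fibers of the family over $G$; this is how the paper computes induced characters, not how it compares a resolution to intersection cohomology. The decomposition theorem (Equation~\eqref{eq:HTw}) exhibits $IH^*(\h_w(X))$ only as a \emph{direct summand} of $H^*(\hbs_\si(X))$, with complementary summands $\bigoplus_{u<w}IH^*(\h_u(X))\otimes L_u[\,\cdot\,]$. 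Since $h$-positivity does not pass to summands, even $h$-positivity of the Bott--Samelson side would not yield $h$-positivity of $\ch(IH^*(\h_w(X)))=\ch(q^{\ell(w)/2}C'_w)$. Both steps of the plan therefore require genuinely new input that is not supplied.
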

     Conjecture \ref{conj:haimanhpos} imply Conjecture \ref{conj:StanStem} via Theorem \ref{thm:haiman}. Before we state the others, we need a definition. 
    \begin{Def}
    \label{def:smoothperm}
    A permutation $w\in S_n$ is \emph{smooth} if $P_{e,w}(q)=1$, or equivalently, if $w$ avoids the patterns $3412$ and $4231$. A  permutation that is not smooth will be called \emph{singular}.
    \end{Def}

    \begin{conjecture}[Haiman]
    \label{conj:haimansmooth}
    If $w$ is a smooth permutation, then there exists a codominant permutation $w'$ such that 
    \[
    \ch(C'_w)=\ch(C'_{w'}).
    \]
    \end{conjecture}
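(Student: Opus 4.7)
The plan is to apply the geometric interpretation of $\ch(C'_w)$ announced in the abstract: for every $w \in S_n$, the Frobenius character $\ch(q^{\ell(w)/2} C'_w)$ equals the graded Frobenius character of the Weyl group representation on the intersection cohomology of the Lusztig variety $\mathcal{L}_{w,g}$ associated to $w$ and a regular semisimple element $g$. The strategy is to use smoothness to reduce intersection cohomology to ordinary cohomology, and then match the resulting graded $W$-representation to one arising from a Hessenberg variety corresponding to a codominant permutation.

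First, I would exploit the hypothesis that $w$ avoids $3412$ and $4231$: by Definition \ref{def:smoothperm} together with classical results, the Schubert variety $X_w$ is smooth, so the singularities of $\mathcal{L}_{w,g}$ are mild enough that $IH^*(\mathcal{L}_{w,g}) = H^*(\mathcal{L}_{w,g})$ as graded $W$-representations. Moreover, smooth Schubert varieties in type $A$ are known (Gasharov--Reiner, Ryan, Wolper) to be iterated $\mathbb{P}^k$-bundles. I would propagate this iterated bundle structure to $\mathcal{L}_{w,g}$ and thereby obtain an explicit Bia\l{}ynicki-Birula type affine paving whose cell dimensions and Weyl group actions on fundamental classes are governed by the combinatorics of~$w$.

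Next, I would construct the codominant permutation $w'$. From the cell data of the paving above (equivalently, from the combinatorial upper envelope of $w$), I would extract a Hessenberg function $\m$ — roughly, one whose graph tracks the fibration dimensions at each step of the iterated bundle — and then set $w' := w_\m$. Because codominant Lusztig varieties are precisely regular semisimple Hessenberg varieties for $\m$, the analogous Bia\l{}ynicki-Birula paving on $\mathcal{L}_{w',g}$ is computable, and one expects cell-by-cell matching with the paving of $\mathcal{L}_{w,g}$, with the Weyl group acting the same way on the fixed points. The main theorem of the paper then yields $\ch(C'_w) = \ch(C'_{w'})$.

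The hardest step is verifying that the Weyl group acts by isomorphic graded representations on the two cohomologies, rather than merely that the graded Betti numbers agree. My approach would be either (a) to exhibit a one-parameter degeneration from $\mathcal{L}_{w,g}$ to $\mathcal{L}_{w',g}$ that is $W$-equivariant on cohomology in the limit, or (b) to compute both representations locally at $T$-fixed points, using the Brosnan--Chow palindromicity criterion highlighted in the abstract to conclude that the local invariant cycle map is an isomorphism on both sides — so that the global $W$-character is determined by local data that match by the pattern-avoidance combinatorics. The avoidance of $3412$ and $4231$ is exactly what prevents the kind of configurations that would obstruct this local matching, which is why the conjecture is restricted to smooth~$w$.
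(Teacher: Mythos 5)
This statement is labeled as a \emph{Conjecture} in the paper, not a theorem, and the paper does not prove it: the surrounding text explicitly attributes its first proof to Clearman--Hyatt--Shelton--Skandera \cite{CHSS} (a combinatorial argument), and the authors announce a geometric proof only in ``an upcoming work.''  So there is no internal proof of the paper to compare your proposal against, and you should not present a purported proof of a statement the source itself leaves open.

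That said, your strategy is the natural geometric one suggested by Theorem~\ref{thm:mainsimply}, so let me point out where it has real gaps.  First, your key reduction $IH^{*}(\h_w(X))=H^{*}(\h_w(X))$ for smooth $w$ is asserted without argument.  It does in fact hold, but not for the reason you give (``iterated $\mathbb{P}^k$-bundle structure''): one must observe that for smooth $w$ the relative variety $\h_w^{rs}$ is smooth (being a $B$-bundle over the smooth $\Omega_w$), and then use the full force of Proposition~\ref{prop:hGsubmersion} stratum-by-stratum to conclude that $\h_w^{rs}\to G^{rs}$ is submersive everywhere, so the individual fibers $\h_w(X)$ are smooth.  Your appeal to ``mild singularities'' skips exactly this step, which is not automatic (the intersection of two smooth subvarieties of $\flag\times\flag$ need not be smooth).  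Second, the construction of the codominant $w'$ from ``the combinatorial upper envelope of $w$'' or ``from the cell data of the paving'' is not carried out; identifying the right $\m$ is a nontrivial combinatorial lemma that your proposal replaces with a gesture.  Third, the heart of the matter --- showing the two graded $W$-representations (not merely their Betti numbers) agree --- is precisely where you acknowledge the difficulty, and your two candidate routes (a $W$-equivariant degeneration, or fixed-point local matching plus the Brosnan--Chow criterion) are outlines rather than proofs.  Your route (b) is also somewhat misaimed: the Brosnan--Chow criterion (Theorem~\ref{thm:invmappalin}) computes the invariants $H^*(\cdot)^{W_J}$ from the special fibers $\hbs_\si(X_J)$, and through Theorem~\ref{thm:mainsimply} this already determines $\ch(q^{\ell(w)/2}C'_w)$ completely; it does not, by itself, produce a codominant $w'$ realizing the same character.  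In short, after the geometric input from Theorem~\ref{thm:mainsimply}, what remains is a purely combinatorial/representation-theoretic identity (equality of the induced characters $\chi^{\ind_{W_J}^W}(q^{\ell(w)/2}C'_w)$ and $\chi^{\ind_{W_J}^W}(q^{\ell(w')/2}C'_{w'})$ for all $J$, with $\ell(w)=\ell(w')$ forced), and that is what needs an actual argument and what the CHSS proof supplies.
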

    
    \begin{conjecture}[Haiman]
    \label{conj:haimansingular}
    Let $w$ be a singular permutation, then there exists codominant permutations $w_1,\ldots, w_k$ such that
    \[
    \ch(C'_w)=\ch(C'_{w_1})+\ch(C'_{w_2})+\dots, \ch(C'_{w_k})
    \]
    and\footnote{The condition on the Kazhdan-Lusztig polynomials is a consequence of the character equality.}
    \[
    P_{e,w}(q)=\sum_{1\leq i\leq k}q^{\frac{\ell(w)-\ell(w_i)}{2}}.
    \]
    \end{conjecture}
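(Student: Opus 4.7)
The plan is to prove this geometrically, using the main theorem of the paper that identifies $\ch(q^{\ell(w)/2} C'_w)$ with the Frobenius character of the $S_n$-action on the equivariant intersection cohomology $IH^*_T(\mathcal{Y}_w)$ of the Lusztig variety at a regular semisimple element. Given this, the strategy is to construct a desingularization $\pi: \tilde{\mathcal{Y}} \to \mathcal{Y}_w$ together with a cellular decomposition of $\tilde{\mathcal{Y}}$ whose cells are Lusztig varieties (or open pieces thereof) attached to codominant permutations, so that the ordinary cohomology $H^*_T(\tilde{\mathcal{Y}})$ decomposes as a direct sum $\bigoplus_i H^*_T(\tilde{\mathcal{Y}}_i)$ with each summand yielding, via the codominant case, a character $\ch(q^{\ell(w_i)/2} C'_{w_i})$ for some codominant $w_i$.

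The first step is to produce such a resolution, for instance via a Bott--Samelson-type construction built from a reduced expression of $w$ (or an iterated projection using simple reflections), and to identify the natural stratification of $\tilde{\mathcal{Y}}$ whose closed strata are themselves Lusztig varieties of codominant type. The second step is to compare $H^*_T(\tilde{\mathcal{Y}})$ with $IH^*_T(\mathcal{Y}_w)$ via the decomposition theorem for $\pi$. By the Brosnan--Chow palindromicity criterion, the local invariant cycle map $H^*_T(\pi^{-1}(y)) \to IH^*_T(\mathcal{Y}_w)_y$ is an isomorphism precisely when the fiber cohomology is palindromic as a polynomial in $q$. The crucial point is to show that, for the resolution we construct, this palindromicity holds fiberwise --- in other words, that although the individual fibers may be singular (since $w$ is singular), the resolution is chosen so that the decomposition theorem has no nontrivial summands supported on proper strata. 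Equivalently, $R\pi_* \mathbb{Q}[\dim]$ equals $IC(\mathcal{Y}_w)$ on the nose.

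If this vanishing can be established, then the character identity $\ch(q^{\ell(w)/2} C'_w) = \sum_i \ch(q^{\ell(w_i)/2} C'_{w_i})$ follows immediately from taking Frobenius characters of $H^*_T(\tilde{\mathcal{Y}}) = IH^*_T(\mathcal{Y}_w)$ together with the codominant cellular decomposition; the claimed identity of Kazhdan--Lusztig polynomials $P_{e,w}(q) = \sum_i q^{(\ell(w)-\ell(w_i))/2}$ then follows by extracting the coefficient of the trivial character (equivalently, the Poincar\'e polynomial of the fiber at the identity) on both sides, using that for codominant $w_i$ the variety is smooth and contributes $q^{(\ell(w)-\ell(w_i))/2}$ to this polynomial.

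The main obstacle is the palindromicity of the fibers of $\pi$: a generic resolution will have non-palindromic fibers at singular points, producing unwanted IC summands supported on smaller strata, which would destroy the clean identity. The paper's novelty is to exhibit a specific resolution whose fibers are palindromic --- in effect, to find a ``good'' choice guided by the combinatorics of the reduced expression and the Kazhdan--Lusztig polynomial. Verifying palindromicity combinatorially (or geometrically, via iterated projective bundles or affine pavings of fibers) is where the real work lies, and is the step that must be engineered carefully enough that the codominant combinatorial data on $\tilde{\mathcal{Y}}$ exactly matches the polynomial $P_{e,w}(q)$.
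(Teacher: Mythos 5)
The statement you are attempting to prove is a \emph{conjecture} (attributed to Haiman), not a theorem proved in this paper, and in fact the paper explicitly states that it is false: immediately after stating Conjectures~\ref{conj:haimansmooth} and \ref{conj:haimansingular}, the authors write that ``In an upcoming work we provide a geometric proof of the same result and a counter-example to Conjecture~\ref{conj:haimansingular}.'' So no proof of this statement is possible, and your proposal cannot succeed. You should verify whether the target statement is presented as proven before outlining a proof strategy.

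Beyond this overriding issue, the specific mechanism you propose does not exist geometrically. The decomposition theorem for a Bott--Samelson-type resolution $\alpha_\si\colon \bs_\si \to \Omega_w$ (and correspondingly for $\beta_\si\colon \hbs_\si \to \h_w$) produces summands $IC_{\Omega_u}\otimes V_u$ indexed by $u \leq w$ in Bruhat order, not by codominant permutations. There is no resolution for which the IC summands are cleanly supported on Lusztig varieties of \emph{codominant} type; that is an independent combinatorial coincidence which happens to hold for some singular $w$ but, by the authors' counterexample, not all. Moreover, the condition ``$R\pi_*\mathbb{Q}[\dim]$ equals $IC(\mathcal{Y}_w)$ on the nose'' would force $\alpha_\si$ to be a small resolution, which fails whenever $P_{e,w}(q)\neq 1$ (i.e., precisely for singular $w$). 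Your proposal conflates two distinct things: the true theorem that $\ch(q^{\ell(w)/2}C'_w) = \ch(IH^*(\h_w(X)))$ (which the paper proves and which holds for all $w$), and the false conjecture that this class is always a nonnegative sum of codominant characters. The former does not imply the latter, and the Brosnan--Chow palindromicity criterion enters the paper's argument for passing from a regular semisimple element $X$ to degenerate fibers $X_J$ over the adjoint quotient, not for controlling the IC decomposition of a resolution fiberwise in the way you suggest.
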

 
 Together with Theorem \ref{thm:haiman}, Conjectures \ref{conj:haimansmooth} and \ref{conj:haimansingular} imply the Stanley-Stembridge conjecture \ref{conj:StanStemsum} by taking $q=1$ and recalling that if $w$ is a codominant permutation then $(C'_w)(1)=C'_{\m}$ for some Hessenberg function $\m$. Conjecture \ref{conj:haimansmooth} was first proved combinatorially by Clearman-Hyatt-Shelton-Skandera in \cite{CHSS}. In an upcoming work we provide a geometric proof of the same result and a counter-example to Conjecture \ref{conj:haimansingular}.

    \subsection{Chromatic symmetric functions}
    Another interpretation of the symmetric functions $E_{\lambda/\mu}^N$ (recall equations \eqref{eq:FEnu} and \eqref{eq:Enup}) can be given in terms of colorings of particular graphs.\par
    A graph is called an \emph{indifference graph} if its vertex set is $[n]:=\{1,2,\ldots, n\}$ for some natural number $n$ and such that if $\{i,j\}$ is an edge with $i<j$, then $\{i,k\}$ and $\{k,j\}$ are also edges for every $k$ such that $i<k<j$.  Indifference graphs are naturally associated with Hessenberg functions. The graph $G_{\m}$ induced by a Hessenberg function $\m$ is the graph with vertex set $[n]$ and edge set $\{(i,j); i<j\leq \m(i)\}$. Every indifference graph arises in this way.\par
     
     The chromatic polynomial of a graph admits a symmetric function generalization introduced by Stanley in \cite{Stan95}, called the \emph{chromatic symmetric function}. Given a graph $G$, it is defined as
     \[
     \csf(G):=\sum_{\kappa}x_{\kappa},
     \]
     where $x_{\kappa}:=\prod_{v\in V(G)} x_{\kappa(v)}$ and the sum runs through all proper colorings of the vertices  $\kappa\col V(G)\to \mathbb{N}$. A coloring $\kappa$ is proper if $\kappa(v)\neq\kappa(v')$ whenever $v$ and $v'$ are adjacent. We have that $\csf(G)$ lies in $\Lambda$.\par
     For $\m$ defined as in Equation \eqref{eq:mHess}, the chromatic symmetric function of the graph $G_{\m}$ is related to the symmetric function $E_{\lambda/\mu}^N$ via the following equality (see \cite[Proposition 4.1 and Section 5]{StanStem})
      \[
      E_{\lambda/\mu}^N=\omega(\csf(G_{\m})),
      \]
      where $\omega\col \Lambda\to \Lambda$ is the involution defined by $\omega(e_n)=h_n$.\footnote{Equivalently, we could define $\omega$ by $\omega(p_n)=(-1)^{n-1}p_n$, or alternatively, by $\omega(s_{\lambda})=s_{\lambda^{t}}$, where $\lambda^t$ is the transposed partition of $\lambda$.}
     The special case of the Stanley-Stembridge conjecture (Conjecture \ref{conj:StanStem}) corresponding to $\nu'=N$ can hence be restated as:
     \begin{conjecture}
     For every Hessenberg function $\m$, we have that $\csf(G_{\m})$ is $e$-positive.
     \end{conjecture}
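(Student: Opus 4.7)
The plan is to reduce $e$-positivity of $\csf(G_\m)$ to an $h$-positivity statement at the level of Kazhdan-Lusztig basis elements, and then attack the latter via the Lusztig-variety framework of this paper. Applying the involution $\omega$, the conjecture is equivalent to $h$-positivity of $\omega(\csf(G_\m)) = E^N_{\lambda/\mu}$, and by Equation \eqref{eq:Enup} together with $\ch(T_w) = p_{\lambda(w)}$ this quantity equals $\ch(C'_\m) = \ch(C'_{w_\m})(1)$, where $w_\m$ is the codominant permutation of Definition \ref{def:codominant}. In other words, the statement is the specialisation at $q=1$ of the codominant case of Haiman's Conjecture \ref{conj:haimanhpos}, so it is natural to attempt the full $q$-analog and then set $q=1$ at the end.

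First I would invoke the main theorem announced in the abstract: for a regular semisimple element and a Weyl group element $w$, the Frobenius character of the dot action of $S_n$ on the equivariant intersection cohomology of the associated Lusztig variety computes $\ch(q^{\ell(w)/2}C'_w)$. In the codominant case $w = w_\m$ the Lusztig variety is smooth, and after identifying Bruhat cells it coincides with the regular semisimple Hessenberg variety $\mathcal{H}(s,\m)$; this recovers the Brosnan-Chow identification. Thus the task becomes the sharper geometric statement that $H^*(\mathcal{H}(s,\m))$, with the dot action, decomposes as a direct sum of representations induced from trivial characters of Young subgroups $S_\lambda \subset S_n$, since such a decomposition has Frobenius character a non-negative integer combination of the $h_\lambda$.

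To produce such a decomposition I would construct an $S_n$-equivariant cellular paving of $\mathcal{H}(s,\m)$ whose cells are permuted by the dot action with Young-subgroup stabilisers, indexed combinatorially by data attached to the indifference graph $G_\m$ (acyclic orientations, $P$-tableaux, or an analogue suited to the Hessenberg condition). A natural starting point is the Białynicki-Birula stratification coming from a generic one-parameter subgroup compatible with $\m$, suitably upgraded so that its cells are permuted on the nose by $S_n$ rather than only on cohomology.

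The main obstacle is exactly this matching step: Tymoczko's paving already gives the right additive structure on $H^*(\mathcal{H}(s,\m))$, but a priori the BB cells are not $S_n$-stable as subvarieties, so one must either engineer a refinement whose cell orbits carry visibly Young stabilisers or replace the BB decomposition with a resolution-theoretic model. I would expect the correct bookkeeping to come from the cellular decompositions of desingularisations alluded to in the abstract, combined with the Brosnan-Chow palindromicity criterion to ensure that the local invariant cycle map is an isomorphism, thereby transporting the $h$-positive structure from the resolution back to $\mathcal{H}(s,\m)$. A fallback, if a uniform geometric model is out of reach, would be to induct on $\m$, handle abelian cases directly where $\csf(G_\m)$ is a single $e_\lambda$, and reduce general $\m$ via modular-law type recursions along edges of $G_\m$.
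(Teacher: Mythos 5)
Your opening reduction is correct and matches the paper's framing: applying $\omega$ turns $e$-positivity of $\csf(G_{\m})$ into $h$-positivity of $E^{N}_{\lambda/\mu}=\ch(C'_{\m})=\ch(C'_{w_{\m}})(1)$, which is the codominant, $q=1$ case of Conjecture~\ref{conj:haimanhpos}, and the geometric identification $\omega(\csf_q(G_{\m}))=\ch(H^*(\h_{\m}(X)))$ for $X$ regular semisimple is exactly Theorem~\ref{thm:brosnanchow}, which the paper recovers as a special case of Theorem~\ref{thm:mainsimply}. But you should be aware that the statement you are trying to prove is \emph{not} proved anywhere in this paper: it is the Stanley--Stembridge conjecture, presented deliberately as a conjecture and still open. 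The machinery of Sections~\ref{sec:hbspaving}--\ref{sec:proofs} gives an equality of Frobenius characters, not an $h$-positive decomposition, and nothing in the paper purports to close that gap.

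The concrete hole in your proposal is the step you yourself flag as ``the main obstacle.'' You need an $S_n$-equivariant cell structure on $\h_{\m}(X)$ in which each cohomological degree is a permutation module on cosets of Young subgroups. Tymoczko's paving (and the paving of Theorem~\ref{thm:hbspaving}) gives additive cells, but the Bia{\l}ynicki-Birula cells are \emph{not} stable under the dot action, and the dot action on cohomology is not induced by a geometric action on a CW-structure with the required stabilizers. The Brosnan--Chow invariant-cycle argument (Theorems~\ref{thm:invmap} and~\ref{thm:invmappalin}) identifies $H^*(\h_{\m}(X))^{S_\lambda}$ with $H^*(\h_{\m}(X_\lambda))$, hence extracts the \emph{monomial}-basis coefficients via Equation~\eqref{eq:chL}; it does not produce an $h$-expansion, and in fact a graded $S_n$-module whose $S_\lambda$-invariants have the computed Poincar\'e polynomials need not be a direct sum of $\ind_{S_\lambda}^{S_n}\mathbf{1}$'s. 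Your ``fallback'' by modular-law induction likewise does not close: the modular law (as in Guay-Paquet's reduction) rewrites $\csf_q$ of a general indifference graph in terms of simpler ones, but the base cases it bottoms out at are not all abelian/single-$e_\lambda$ cases, and no known terminating recursion of this shape establishes $e$-positivity. So what you have written is a correct framing of the problem inside the paper's geometric picture, together with an honest description of where every known approach gets stuck, rather than a proof.
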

     The conjecture above was also extended (see \cite{Stan95}) to more general graphs, namely  incomparability graphs of $3+1$-free posets, but it was reduced to the case of indifference graphs by Guay-Paquet in \cite{GPmodular}.\par
     A $q$-analogue of the function $\csf$ is the \emph{chromatic quasisymmetric function} introduced in \cite{ShareshianWachs} by Shareshian-Wachs. For a graph $G$ with vertex set $[n]$, the chromatic quasisymmetric function $\csf_q(G)$ is defined as
    \[
    \csf_q(G):=\sum_{\kappa}q^{\asc_G(k)} x_\kappa,
    \]
    where the sum runs through all proper colorings of $G$  and
    \[
    \asc_G(\kappa):=|\{(i,j);i<j, \kappa(i)<\kappa(j), \{i,j\}\in E(G)\}|
    \]
    is the number of ascents of the coloring $\kappa$. For a general graph the quasisymmetric chromatic function is usually  not  symmetric, however when $G$ is an indifference graph, then $\csf_q(G)$ is symmetric. By results of Clearman-Hyatt-Shelton-Skandera \cite{CHSS}, we have that if $w_{\m}\in S_n$ is the codominant permutation associated to the Hessenberg function $\m$, then
    \begin{equation}
        \label{eq:CHSS}
    \ch(q^{\frac{\ell(w)}{2}}C'_{w_\m})=\omega(\csf_q(G_{\m})).
    \end{equation}
    In a subsequent work we will give a proof of this equality using the modular law for the chromatic quasisymmetric function.

\subsection{The Hecke algebra and the flag variety}
\label{subsec:heckeflag}
        Although in the previous section we defined the Hecke algebra as a $q$-deformation of the group algebra $\mathbb{C}[S_n]$, historically the Hecke algebra was constructed as follows.\par
        
        Let $G$ be any finite group and $B$ a subgroup of $G$. We define $\He(G,B)$ as the set of $B$-bi-invariant functions $\phi\col G\to\mathbb{C}$, that is
        \[
        \He(G,B)=\{\phi\col G\to \mathbb{C}; \phi(hgh')=\phi(g)\text{ for every }g\in G,h,h'\in B\}.
        \]
        We equip $\He(G,B)$ with a multiplication given by the convolution
        \[
        (\phi_1\cdot \phi_2)(g)=\frac{1}{|G|}\sum_{a\in G}\phi_1(ga^{-1})\phi_2(a).
        \]
        As $\mathbb{C}$-vector space, we can find a basis for $\He(G,B)$ given by the characteristic functions of the double cosets $B\backslash G/ B$. That is, for each $g\in G$, we consider the set $BgB$, and define
        \[
        \phi_g(g')=\begin{cases}
          1&\text{ if }g'\in BgB\\
          0& \text{ otherwise}.
        \end{cases}
        \]
        Of course, $\phi_g$ only depends on the double coset $BgB$. \par
        
         Iwahori described the Hecke algebra when $G=SL_n(\mathbb{F}_q)$, for some finite field $\mathbb{F}_q$ and $B$ the Borel subgroup of upper-triangular matrices, although with a slightly different normalization for the convolution given by
       \[
        (\phi_1\cdot \phi_2)(g)=\frac{1}{|B|}\sum_{a\in G}\phi_1(ga^{-1})\phi_2(a).
        \]

        For each permutation $w\in S_n$, we choose a matrix $M_w\in SL_n(\mathbb{F}_q)$ with all entries $0$ except for the entries $(M_w)_{w(j)j}$ (the matrix $M_w$ is essentially the permutation matrix associated to $w$, but modified in such a way to satisfy $\det(M_w)=1$). Then the set $\{M_w\}_{w\in S_n}$ is a complete set of representatives for $B\backslash G/ B$ (this is the \emph{Bruhat decomposition} of $G$). This means that for each $w\in S_n$, we have an associated characteristic function $\phi_w:=\phi_{M_w}$. Iwahori proved that $\He(G,B)$ has the following multiplication rules (see \cite{Iwahori}).
        \begin{align*}
          \phi_s^2=&(q-1)\phi_s+q&&\text{ for every simple transposition $s$,}\\
          \phi_s\phi_{s'}=&\phi_{s'}\phi_s&&\text{ for every $s=(i,i+1)$ and $s'=(j,j+1)$ such that $|i-j|>1$,}\\
          \phi_s\phi_{s'}\phi_s=&\phi_{s'}\phi_s\phi_{s'}&&\text{ for every  $s=(i,i+1)$ and $s'=(j,j+1)$ such that $|i-j|=1$,}\\
          \phi_w=&\phi_{s_1}\phi_{s_2}\ldots \phi_{s_{\ell(w)}}&&\text{ where }w=s_1s_2\ldots s_{\ell(w)}\text{ is a reduced expression for $w$.}
      \end{align*}
        We just point out that $q$ in the equation above is a power of a prime number instead of being a variable as in the previous section. \par
        
        Unsurprisingly, the Hecke algebra $H_n$ is related to the general linear group $G=GL_n(\mathbb{C})$ and its Borel subgroup $B$ of upper triangular matrices. The remainder of this section is devoted to describe this relation.\par
        We begin by defining the \emph{flag variety} $\B:=G/B$. It can be identified as the projective variety parametrizing flags of vector subspaces, that is
       \[
       \B=\{V_1\subset V_2\subset\ldots\subset V_n=\mathbb{C}^n; \dim_{\mathbb{C}}(V_i)=i\},
       \]
       often writing $V_\bullet$ instead of $V_1\subset\ldots \subset V_n$.   This identification is constructed as follows: For each matrix $g\in G$ with columns $(v_1\; v_2\;\ldots\; v_n)$, we associate the flag $V_\bullet$ given by $V_i=\langle v_1,\ldots, v_i\rangle$. It is clear that every matrix in $gB$ gives rise to the same flag.
       
       If $F_\bullet$ is the flag given by the identity matrix, that is $F_i=\langle e_1,\ldots, e_i\rangle$ and $V_\bullet^{w}$ is the flag given by the permutation matrix $M_w$, then 
       \[
       \dim V_i^w\cap F_j= |\{k;k\leq i,w(k)\leq j\}|=:r_{i,j}(w).
       \]
       Moreover, the same equality holds for every flag associated to a matrix $g\in BM_wB$.  For a permutation $w$ and a fixed flag $F_\bullet$, the \emph{Schubert variety} associated to $w$ and $F_\bullet$ is defined as
       \begin{align*}
       \Omega_{w,F_\bullet}&:=\{V_\bullet; \dim V_i\cap F_j\geq r_{i,j}(w)\}.
       \end{align*}
       We also set
       \begin{align*}
       \Omega^\circ_{w,F_\bullet}&:=\{V_\bullet; \dim V_i\cap F_j= r_{i,j}(w)\}=BM_wB/B.
       \end{align*}
       We have that $\Omega_{w,F_\bullet}=\overline{\Omega_{w,F_\bullet}^\circ}$ and $\Omega_{w,F_\bullet}=\bigcup_{z\leq w}\Omega_{z,F_\bullet}^\circ$. In particular,  $\{\Omega_{w,F_\bullet}^\circ\}_{w\in S_n}$ stratify $\B$.
       
       There are also relative versions, which we call the \emph{relative Schubert varieties}:
       \begin{align*}
       \Omega_w&:=\{(F_\bullet,V_\bullet); \dim V_i\cap F_j\geq r_{i,j}(w)\}\subset \B\times \B,\\
       \Omega^\circ_w&:=\{(F_\bullet,V_\bullet); \dim V_i\cap F_j= r_{i,j}(w)\}\subset \B\times \B.
       \end{align*}
         As before, we have that $\Omega_{w}=\overline{\Omega_{w}^\circ}$, $\Omega_{w,}=\bigcup_{y\leq w}\Omega_{y,}^\circ$, and  $\{\Omega_{w}^\circ\}_{w\in S_n}$ stratify $\B\times \B$. \par
        
       \begin{Exa}
       \label{exa:schub}
       We note that the inequalities defining Schubert varieties are somewhat redundant. For example, for $n=4$, we have that
       \begin{align*}
       \Omega_{3412}&=\{(F_\bullet,V_\bullet); V_1\subset F_3, F_1\subset V_3\},\\
       \Omega_{2341}&=\{(F_\bullet,V_\bullet); V_1\subset F_2, V_2\subset F_3\},\\
       \Omega_{4231}&=\{(F_\bullet,V_\bullet); \dim V_2\cap F_2\geq 1\}.
       \end{align*}
       \end{Exa}
        In Definition \ref{def:smoothperm} we defined smooth permutations, terminology derived from the fact that the Schubert variety $\Omega_w$ is smooth if and only if the permutation $w$ is smooth. More generally, the Kazhdan-Lusztig polynomial $P_{z,w}(q)$ is a ``measure'' of how singular $\Omega_{w}$ is at each point in $\Omega_{z}^\circ$.    This can be made precise with the following construction described in Springer's article \cite{Springer}.
        
        We recall that if $\X$ is a complex algebraic variety, its singular cohomology (with complex coefficients) is the same as the (\v Cech) cohomology of the trivial sheaf $\mathbb{C}_\X$, that is
        \[
        H^i(\X,\mathbb{C})=H^i(\mathbb{C}_\X).
        \]
        However, singular cohomology is not well behaved for singular varieties, for instance Poincaré duality does not hold in general. In \cite{GoreskyMacphersonI} and  \cite{GoreskyMacphersonII}, Goresky-Macpherson introduced the notion of intersection cohomology, which agrees with singular cohomology for smooth varieties and  is better behaved in presence of singularities. Sheaf theoretically, this corresponds to replacing the trivial sheaf $\mathbb{C}_\X$ with the so-called \emph{intersection cohomology complex} $IC_\X$.  The intersection cohomology group $IH^*(\X,\mathbb{C})$ is then defined as the hypercohomology $\mathcal{H}^*(IC_\X)$ of the intersection cohomology sheaf. We refer to Section \ref{sec:decomp} for a brief review of bounded derived categories, perverse sheaves, and intersection cohomology. 
        
        \par
        
        We consider the bounded derived category $\D=D^{b}_{cons}(\B\times \B)$ of sheaves on $\B\times \B$ of $\mathbb{C}$-vector spaces cohomologically constructible with respect to the stratification given by relative Schubert varieties. For each relative Schubert variety $\Omega_w$ its intersection cohomology complex $IC_{\Omega_w}$ is an object of $\D$ (after pushing forward to $\B\times \B$).
        
         We have a map
         \[
         h\col \mathbf{Objects}(\D)\to H_n
         \]
         given by
         \[
         h(A_\bullet)=\sum_{w\in S_n}\sum_{i\in\mathbb{Z}}\dim(H^i(A_\bullet)_{(V^{e}_\bullet,V^w_{\bullet})})q^{\frac{i}{2}}T_w,
         \]
         where $V^{w}_\bullet$ is the flag associated to the permutation matrix $M_w$ and $H^i(A_\bullet)_{(V_\bullet^{e},V_\bullet^w)}$ is the stalk at the point $(V_\bullet^{e},V_\bullet^w)\in \Omega_w^\circ\subset  \flag\times\flag$ of the $i$-th cohomology sheaf $H^i(A_\bullet)$ of the complex $A_\bullet$.\par
         It is worth noticing that if 
         \begin{equation}
         \label{eq:AC}
         A_\bullet= \ldots 0 \to \mathbb{C}_{\Omega_w}\to 0 \to \ldots
         \end{equation}
         is the complex associated to the trivial sheaf $\mathbb{C}_{\Omega_w}$ supported in degree 0, then 
         \[
         \dim H^i(A_\bullet)_{(V^{e}_\bullet, V^{z}_{\bullet})}=\begin{cases}
         1& \text{ if $i=0$ and $z\leq w$,}\\
         0& \text{ otherwise}.
         \end{cases}
         \]
         Consequently,
         \[
         h(A_\bullet)=\sum_{z\leq w} T_z.
         \]
        The right-hand side above is precisely $q^{\frac{\ell(w)}{2}}C'_w$ when $w$ is smooth. In general, we have the following theorem.
        \begin{theorem}[Springer]
        \label{thm:springer}
        If $IC_{\Omega_w}$ is the intersection cohomology sheaf of the relative Schubert variety $\Omega_w$, then
        \begin{equation}
            \label{eq:springerthm}
        h(IC_{\Omega_w})=q^{-\frac{\dim \B}{2}}C'_w.
        \end{equation}
        In particular, the Kazhdan-Lusztig polynomials $P_{z,w}(q)$ are equal (up to multiplication by a power of $q$\footnote{This power of $q$ and the  one appearing in Equation \eqref{eq:springerthm} vary depending on how we index the degrees on which perverse sheaves are supported. See \cite[Section 2.3]{GoreskyMacphersonII} for a list of possibilities on  how to  index  complexes of sheaves and see Section \ref{sec:decomp} for our choice of indexing scheme.}) to $\sum_{i\in\mathbb{Z}}\dim(H^i(IC_{\Omega_w})_{(V^{e}_\bullet,V^z_{\bullet})})q^{\frac{i}{2}}$.
        \end{theorem}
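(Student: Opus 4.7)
The plan is to invoke the uniqueness of the Kazhdan--Lusztig basis, which is characterized by the two properties in Equation \eqref{eq:C'wdef}. So it suffices to verify that $q^{\dim\B/2} h(IC_{\Omega_w})$ is fixed by $\iota$ and that, writing
\[
q^{(\dim\B+\ell(w))/2} h(IC_{\Omega_w}) = \sum_{z\leq w} Q_{z,w}(q)\,T_z,
\]
one has $Q_{w,w}=1$ and $\deg Q_{z,w} < (\ell(w)-\ell(z))/2$ for $z<w$; these are then forced to be the Kazhdan--Lusztig polynomials $P_{z,w}$.

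First I would observe that $IC_{\Omega_w}$ is constructible with respect to the Whitney stratification $\{\Omega_z^\circ\}_{z\leq w}$, so its cohomology stalks are locally constant on each stratum, and the sum defining $h(IC_{\Omega_w})$ involves only $T_z$ for $z\leq w$. The coefficient at $T_z$ is precisely the Poincaré polynomial of the stalk of $IC_{\Omega_w}$ at any point of $\Omega_z^\circ$. The defining normalization $IC_{\Omega_w}|_{\Omega_w^\circ}\simeq \mathbb{C}_{\Omega_w^\circ}[\dim\Omega_w]$, combined with the identity $\dim\Omega_w=\dim\B+\ell(w)$, produces a one-dimensional stalk in cohomological degree $-(\dim\B+\ell(w))$ at $(V_\bullet^e,V_\bullet^w)$; after the shift this gives $Q_{w,w}=1$, matching $P_{w,w}=1$.

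Next, the degree bound is a direct translation of the middle-perversity support axiom. At a point of $\Omega_z^\circ$, whose complex dimension is $\dim\B+\ell(z)$, the strict support condition for the $IC$ complex yields $H^i(IC_{\Omega_w})_y = 0$ whenever $i\geq -(\dim\B+\ell(z))$ and $z<w$. Hence the coefficient of $T_z$ in $h(IC_{\Omega_w})$ is supported in exponents $i<-(\dim\B+\ell(z))$, and after multiplication by $q^{(\dim\B+\ell(w))/2}$ becomes a polynomial in $q^{1/2}$ of degree strictly less than $(\ell(w)-\ell(z))/2$. The parity property for Schubert varieties (all stalk cohomology concentrated in a single parity of degree) ensures that the resulting polynomial is in $q$, not merely $q^{1/2}$.

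Finally, the $\iota$-invariance comes from Verdier self-duality $\mathbb{D}(IC_{\Omega_w})\simeq IC_{\Omega_w}$, which exchanges stalks with costalks and inverts the cohomological grading, thereby inducing the substitution $q^{1/2}\mapsto q^{-1/2}$ on the Poincaré polynomials. The main obstacle is that the Hecke involution also acts as $T_w\mapsto T_{w^{-1}}^{-1}$, which is \emph{not} a simple rescaling: one must therefore show that the Verdier dual, computed at a point $(V_\bullet^e,V_\bullet^z)\in\Omega_z^\circ$, matches precisely the effect of $\iota$ on the basis vector $T_z$. The cleanest route is to work on $\B\times\B\times\B$ and realize the Hecke product as a convolution via the middle projection, so that the algebraic involution $\iota$ is visibly implemented by the geometric Verdier duality functor, which commutes with proper pushforward and preserves intersection cohomology complexes. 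This step is the technical heart of Springer's original argument and is where careful bookkeeping of the dimension shifts is essential.
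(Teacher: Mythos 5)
Your proposal tackles Theorem~\ref{thm:springer} by directly verifying the two defining axioms of the Kazhdan--Lusztig basis from \eqref{eq:C'wdef}, whereas the paper (following Springer \cite{Springer}) instead pushes forward the constant sheaf along the Bott--Samelson resolution $\alpha_\si\colon\bs_\si\to\Omega_w$, applies the decomposition theorem \eqref{eq:decompCBS}, and sets up an induction on the Bruhat order. These are genuinely different strategies. Your degree-bound step is sound: the middle-perversity support axiom on the stratum $\Omega_z^\circ$ (of complex dimension $\dim\B+\ell(z)$) kills stalk cohomology in degrees $\geq -(\dim\B+\ell(z))$, and the normalization $IC_{\Omega_w}|_{\Omega_w^\circ}\simeq\mathbb{C}[\dim\Omega_w]$ gives $Q_{w,w}=1$, so you do obtain the required inequality $\deg Q_{z,w} < (\ell(w)-\ell(z))/2$ after the shift.

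However, there are two substantive gaps. First, you assert the parity of the stalk cohomology (needed to land in $\mathbb{Z}[q]$ rather than $\mathbb{Z}[q^{1/2}]$) without argument; this fact is itself classically deduced from the Bott--Samelson resolution or from counting points over $\mathbb{F}_q$, so invoking it here is somewhat circular relative to the machinery you want to avoid. Second, and more seriously, the $\iota$-invariance step is not carried out: you correctly diagnose that Verdier self-duality of $IC_{\Omega_w}$ does not immediately give bar-invariance, because $\iota(T_z)=T_{z^{-1}}^{-1}$ is not a rescaling of $T_z$, and you propose to remedy this by realizing the Hecke product as convolution on $\B\times\B\times\B$ so that $\iota$ is geometrically implemented by $\mathbb{D}$ — but you leave this as ``the technical heart'' and do not execute it. This requires showing that $h$ intertwines convolution with multiplication in $H_n$ and that proper pushforward commutes with Verdier duality in the relevant shifted form, none of which is in place. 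The paper sidesteps all of this: the source element $q^{-\ell(w)/2}\prod(1+T_{s_i})$ is manifestly $\iota$-fixed (each factor is $C'_{s_i}$), the multiplicities $Q_{z,\si}$ are palindromic because the decomposition-theorem summands are self-dual, and the inductive hypothesis supplies $\iota$-invariance of the lower $q^{\dim\B/2}h(IC_{\Omega_z})=C'_z$; thus bar-invariance of the top term follows by subtraction. Your route can be made to work (it is essentially the textbook argument via convolution in, e.g., the modern perverse-sheaf literature), but as written it requires substantially more formalism than the paper's inductive argument, and the hardest half of it is missing.
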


        Since the argument in the proof of Springer's theorem plays a crucial role in our results, we will give an idea of the proof. As usual when working with singular varieties, we will consider a desingularization, which in this case is the Bott-Samelson resolution of $\Omega_w$. It is constructed as follows. Let $\si=(s_1,\ldots, s_{\ell(w)})$ be a reduced word for $w$, that is $s_i=(k_i,k_i+1)$ are simple transpositions and $w=s_1\ldots s_{\ell(w)}$. Define
        \[
        \bs_{\si}:=\{(V_{\bullet}^0,V_{\bullet}^1,\ldots, V_{\bullet}^{\ell(w)}); V_j^{i-1}=V_j^{i} \text{ for every }i,j \text{ with }j\neq k_i\}\subset \B\times \B\times\ldots \times \B.
        \]
      
      We have a natural map $\alpha_\si\col \bs_\si\to \flag\times\flag$ given by projecting to the first and to the last factors whose image is precisely $\Omega_w$. We will also denote by $\alpha_\si$ the induced morphism $\alpha_\si\col \bs_\si\to \Omega_w$. 
      \begin{Exa}
      If $w=2341\in S_4$, then we can choose $\si=((1,2),(2,3),(3,4))$, to which it corresponds the resolution
      \[
      \bs_\si=\left\{ \begin{array}{c}
                        F_1\subset F_2\subset F_3,\\
                        V_1\subset F_2\subset F_3,\\
                        V_1\subset V_2\subset F_3,\\
                        V_1\subset V_2\subset V_3
                        \end{array}\right\}.
      \]
      From this description we can see that the image $\alpha_\si(\bs_\si)\subset \B\times \B$ is precisely the set of pairs $(F_\bullet, V_\bullet)$ such that $V_1\subset F_2$ and $V_2\subset F_3$, that is, $\alpha_\si(\bs_\si)=\Omega_{2341}$.
      \end{Exa}

      The map $\alpha_\si$ has a Whitney stratification given by $\Omega_z^\circ$ for $z\leq w$. Since the varieties $\Omega_z^\circ$ are simply connected, we have that the decomposition theorem (see Theorem \ref{thm:decomp}) reads
   \begin{equation}
 \label{eq:decompCBS}
   \alpha_{\si*}(\mathbb{C}_{\bs_\si}[\dim \bs_\si])=\bigoplus_{z\leq w} IC_{\Omega_z}\otimes L_z,
   \end{equation}
   where $\mathbb{C}_{\bs_\si}[\dim \bs_\si]$ is the trivial complex (as in Equation \eqref{eq:AC}) shifted $\dim \bs_\si$ times to the left, and $L_z=\bigoplus_{i\in \mathbb{Z}}L_{z,i}$ are graded vector spaces. Let
   \[
   \Poin(L):=\sum_{i\in\mathbb{Z}} \dim(L_i)q^{\frac{i}{2}}
   \]
   be the \emph{Poincaré-Hilbert series} of a graded vector space $L=\bigoplus_{i\in \mathbb{Z}} L_i$ and define
   \[
   Q_{z,\si}(q^{\frac{1}{2}}):=\Poin(L_z).
   \]
   Applying the map $h$ to Equation \eqref{eq:decompCBS}, we have
   \begin{equation}
   \label{eq:hCBS}
   h(\alpha_{\si*}(\mathbb{C}_{\bs_\si}[\dim \bs_\si]))=\sum_{z\leq w} h(IC_{\X_z}) Q_{z,\si}(q^{\frac{1}{2}}).
   \end{equation}
   Since $\alpha_\si$ is proper and birational, we have that $Q_{w,\si}(q^{\frac{1}{2}})=1$ and $Q_{z,\si}(q^{-\frac{1}{2}})= Q_{z,\si}(q^{\frac{1}{2}})$ (see \cite{Springer} for more details). Moreover, the left-hand side of Equation \eqref{eq:hCBS}  can be explicitly computed as
    \[
    h(\alpha_{\si*}(\mathbb{C}_{\bs_\si}[\ell(w)]))=q^{-\frac{\ell(w)}{2}}\prod_{1\leq i\leq \ell(w)}(1+T_{s_i}).
    \]
    Substituting in Equation \eqref{eq:hCBS}, we have that
    \[
    q^{-\frac{\ell(w)}{2}}\prod_{1\leq i\leq n}(1+T_{s_i})=q^{\frac{\dim \B}{2}}\big(h(IC_{\Omega_w})+\sum_{z<w}h(IC_{\Omega_z})Q_{z,\si}(q^{\frac{1}{2}})\big).
    \]
    Proceeding by induction, we can assume that $q^{\frac{\dim \B}{2}}h(IC_{\Omega_z})=C'_z$ for $z<w$, which gives us that 
    \[
    q^{\frac{\dim \B}{2}}h(IC_{\Omega_w})=q^{-\frac{\ell(w)}{2}}\prod_{1\leq i\leq \ell(w)}(1+T_{s_i})-\sum C'_zQ_{z,\si}(q^{\frac{1}{2}}).
    \]
    This proves that $q^{\frac{\dim \B}{2}}h(IC_{\Omega_w})$ satisfies the conditions in Equation \eqref{eq:C'wdef}, from which we conclude that $q^{\frac{\dim \B}{2}}h(IC_{\Omega_w})=C'_w$.\par
    
     Theorem \ref{thm:springer} provides a deep connection between Schubert varieties and the Kazhdan-Lusztig elements $C'_w$. A natural question would be:
     \begin{Question}
     What is the relation between the Schubert varieties and the characters $\ch(q^{\frac{\ell(w)}{2}}C'_w)$?
     \end{Question} 
     Haiman observed that the coefficient of $s_{n}$ in $\ch(q^{\frac{\ell(w)}{2}}C'_w)$, which is the evaluation at $q^{\frac{\ell(w)}{2}}C'_w$ of the trivial character $\chi^n$,  satisfies
    \begin{equation}
    \label{eq:chPoin}
    \chi^n(q^{\frac{\ell(w)}{2}}C'_w)=\Poin(IH^*(\Omega_{w,F_{\bullet}})).
    \end{equation}
     In the next section  we will introduce Lusztig varieties, whose geometry completely characterizes $\ch(q^{\frac{\ell(w)}{2}}C'_w)$.

    \subsection{Subvarieties of the flag varieties}
    \label{sec:HesseDL}
 
     We begin with an example of how the chromatic symmetric function can be recovered from the geometry of certain varieties.
     
          \begin{Exa}
          \label{exa:losevmanin}
          Let $n$ be a positive integer and consider $\h_n$  the toric variety given by the Weyl chambers of $S_n$, the Weyl group of type $A_{n-1}$ (see  \cite{Procesi}). Another description of $\h_n$ is to begin with $\mathbb{P}^{n-1}=\{(x_1:\ldots:x_n)\}$ and perform a sequence of blowups in the following order.
          \begin{enumerate}
              \item Blow up $\mathbb{P}^{n-1}$ along the points $P_i:=(0:\ldots:0:1:0:\ldots:0)=V(x_1,\ldots, \widehat{x_i},\ldots, x_n)$ for $i=1,\ldots, n$.
              \item Then blow up the resulting variety along the strict transforms of the lines 
              \[
              \ell_{ij}:=V(x_1,\ldots, \widehat{x_i},\ldots, \widehat{x_j},\ldots, x_n)
              \]
              for $1\leq i<j\leq n$.
              \item  Now blow up the strict transforms of planes and so on.
          \end{enumerate}

         Since $S_n$ acts on the Weyl chambers of $S_n$, we have that $S_n$ acts on the variety $\h_n$ and hence on its cohomology ring as well. More explicitly, $S_n$ acts on $\mathbb{P}^{n-1}$ by permuting the coordinates, and all the loci we are blowing up are invariant under this action, so this action lifts to an action of $S_n$ on $\h_{\m}(X)$.\par

        Stanley in \cite[Proposition 12]{Stanley86}, based on a recurrence of Procesi in \cite{Procesi},
        observed that the Frobenius character of the action of $S_n$ on $H^*(\h_n)$ coincides with $\omega(\csf(G_\m))$, where $\m=(2,3,4,\ldots, n-1,n,n)$ is a Hessenberg function (in this case $G_\m$ is the path graph with $n$ vertices). Moreover Shareshian-Wachs noted that if we take the grading of $H^*(\h_n)$ into account we have that
       \[
       \omega(\csf_q(G_\m))=\sum_{ i} \ch\big(H^{2i}(\h_n)\big)q^i.
       \]
       
       If $n=4$, then $\h_n$ is obtained by blowing up $\mathbb{P}^3$ along the four points $(1:0:0:0)$, $(0:1:0:0)$, $(0:0:1:0)$, $(0:0:0:1)$, and then along the strict transforms of the six lines $(*:*:0:0)$, $(*:0:*:0)$, $(*:0:0:*)$, $(0:*:*:0)$, $(0:*:0:*)$, and $(0:0:*:*)$. The cohomology of $\h_{n}$ has basis described in Table \ref{tab:Hgen} below.
        \begin{table}[h]
  \begin{tabular}{c|c|c}
    Codimension   &  Generators & Frob. Char.\\
    \hline
    0    &   $\h_{n}$ & $h_4$, \\
    2  & $H$, $E_{ij}$, $E_i$ & $h_4+h_{2,2}+h_{3,1}$\\
    4  & $L$, $L_{ij}$, $L_i$ & $h_4+h_{2,2}+h_{3,1}$\\
    6 & $pt$ & $h_4$.
  \end{tabular}
  \caption{Generators of $H^*(\h_n)$ and the Frobenius character of the action of $S_n$ in each codimension. Here $H$ and $L$ are, respectively, the pullback of the hyperplane and line classes of $\mathbb{P}^3$. The classes $E_{ij}$ and $E_i$ are, respectively, the classes of the exceptional divisors over the lines $\ell_{ij}$ and over the points $P_i$. The classes $L_{ij}$ and $L_i$ are, respectively, the classes of lines inside  $E_{ij}$ and $E_i$.}
  \label{tab:Hgen}
  \end{table}
       
       The group $S_4$ acts on $H^*(\h_n)$ by fixing $\h_{n}(X)$, $H$, $L$ and $pt$ and permuting the indices of $E_{ij}$, $E_{i}$, $L_{ij}$ and $L_i$. We can decompose this action as a sum of representations $\ind_{S_{\lambda}}^{S_n}$, from which we get
    \[
    \sum_i \ch(H^{i}(\h_\m(X)))q^{\frac{i}{2}}=(1+q+q^2+q^3)h_4+(q+q^2)h_{3,1}+(q+q^2)h_{2,2},
    \]
    while 
    \[
    \csf_q(G_{(2,3,4,4)})=(1+q+q^2+q^3)e_4+(q+q^2)e_{3,1}+(q+q^2)e_{2,2}.
    \]
    \end{Exa}

    The variety $\h_n$ in Example \ref{exa:losevmanin} can be seen as a subvariety of the flag variety. Indeed, if $X$ is any regular semisimple $n\times n$ matrix, then $\h_n$ is isomorphic to the variety
    \[
    \{V_\bullet; XV_j\subset V_{j+1}\text{ for }j=1,\ldots, n-1\}.
    \]
    
    Generalizing the definition above, a \emph{Hessenberg variety} associated to the Hessenberg function $\m$ and the $n\times n$ matrix $X$ is the subvariety $\h_\m(X)$ of the flag variety $\flag$ defined by
     \[
     \h_\m(X):=\{V_{\bullet}; XV_j\subset V_{\m(j)}\}.
     \]

     Hessenberg varieties where introduced by De Mari-Prochesi-Shayman in \cite{MPS}. When $X$ is regular semisimple (that is, when $X$ has distinct eigenvalues), we have that $\h_\m(X)$ is smooth. 
    
        To avoid cumbersome notations in the sequel, we will define
        \begin{equation}
            \label{eq:chgraded}
        \ch(L):=\sum \ch(L_i)q^{\frac{i}{2}},
        \end{equation}
        where $L=\bigoplus_{i\in \mathbb{Z}} L_i$ is a graded $S_n$-module (equivalently, a graded representation).\par

     Given a more general Hessenberg function, we still have a natural action of $S_n$ on $H^*(\h_\m(X))$ for $X$ a regular semisimple matrix. This action was described in \cite{Tym08} by Tymozcko and  is called the \emph{dot action}. The Shareshian-Wachs conjecture, now proved by Brosnan-Chow in \cite{BrosnanChow} and by Guay-Paquet in \cite{GP}, states that:
     \begin{theorem}
     \label{thm:brosnanchow}
      For a Hessenberg function $\m$ and a regular semisimple $n\times n$ matrix $X$, we have that
      \[
      \omega(\csf_q(G_{\m}))=\ch(H^*(\h_{\m}(X))).
      \]
     \end{theorem}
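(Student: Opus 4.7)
The plan is to reduce the statement to a Kazhdan--Lusztig identity on the Hecke algebra side and then prove that identity geometrically by pulling back the Springer/Bott--Samelson picture reviewed in Section \ref{subsec:heckeflag} to the Hessenberg variety. First, by the Clearman--Hyatt--Shelton--Skandera identity \eqref{eq:CHSS}, one has $\omega(\csf_q(G_\m)) = \ch(q^{\ell(w_\m)/2} C'_{w_\m})$, where $w_\m$ is the codominant permutation associated to $\m$ (Definition \ref{def:codominant}). The task therefore reduces to establishing
\[
\ch\bigl(H^*(\h_\m(X))\bigr) = \ch\bigl(q^{\ell(w_\m)/2}\, C'_{w_\m}\bigr),
\]
under the dot action of Tymoczko on the left-hand side.

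Next, I would realize $\h_\m(X)$ as a preimage of a relative Schubert variety. The graph map $\psi_X\col \flag \to \flag\times\flag$, $V_\bullet \mapsto (V_\bullet, XV_\bullet)$, is a closed embedding, and the definition of the Hessenberg variety together with the codominant description of $w_\m$ yield $\h_\m(X) = \psi_X^{-1}(\Omega_{w_\m})$. Choosing a reduced word $\si$ for $w_\m$, I pull back the Bott--Samelson resolution $\alpha_\si\col \bs_\si \to \Omega_{w_\m}$ along $\psi_X$ to obtain a proper map $\tilde\alpha\col \tilde\h_\si \to \h_\m(X)$. Because $X$ is regular semisimple, its centralizer is a maximal torus $T$ which acts with isolated fixed points on both $\bs_\si$ and $\tilde\h_\si$, so a Bia{\l}ynicki--Birula decomposition gives an explicit cellular structure on $\tilde\h_\si$, whose cells are indexed by subwords of $\si$. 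This computes $h(\tilde\alpha_* \mathbb{C}_{\tilde\h_\si}[\dim\tilde\h_\si])$ on the nose, and identifies it with $q^{-\ell(w_\m)/2}\prod_i(1+T_{s_i})$ in a fashion exactly parallel to \eqref{eq:hCBS}.

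The third step is to run Springer's argument on $\tilde\alpha$ in place of $\alpha_\si$: apply the decomposition theorem
\[
\tilde\alpha_* \mathbb{C}_{\tilde\h_\si}[\dim \tilde\h_\si] \cong \bigoplus_{z\leq w_\m} IC_{Y_z}\otimes L_z,
\]
where the $Y_z$ are the strata of $\h_\m(X)$ cut out by $\psi_X$ from $\Omega_z^\circ$. Since $\h_\m(X)$ is smooth (regular semisimple $X$), the top stratum contributes $H^*(\h_\m(X))$ directly, and its Frobenius character is obtained by subtracting the contributions of the $L_z$, $z<w_\m$, from the explicit subword formula. Applying the map $h$ and comparing with \eqref{eq:C'wdef} as in the proof of Theorem \ref{thm:springer} shows that the Frobenius character of the top stratum agrees with $\ch(q^{\ell(w_\m)/2}C'_{w_\m})$, provided we know that the decomposition above is $S_n$-equivariant for the dot action and that the correction terms indexed by $z<w_\m$ contribute exactly the $Q_{z,\si}$-polynomials from Springer's computation.

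The main obstacle is precisely this equivariance, since $S_n$ does not act on $\bs_\si$ nor on $\tilde\h_\si$ in any naive way; Tymoczko's dot action is constructed via equivariant cohomology (GKM theory) and only exists after passage to cohomology. To overcome this I would invoke the Brosnan--Chow palindromicity criterion mentioned in the abstract: in the degeneration from a regular semisimple $X$ to a more special matrix, the local invariant cycle map is an isomorphism whenever the nearby-cycle cohomology is palindromic. Palindromicity holds on the geometric side by Poincaré duality (smoothness of $\h_\m(X)$) and on the Hecke-algebra side by the defining symmetry $\iota(C'_w)=C'_w$. This forces the correction terms in the decomposition theorem to match the $Q_{z,\si}(q^{1/2})$ appearing in Springer's inductive argument, so that $\ch(H^*(\h_\m(X)))=\ch(q^{\ell(w_\m)/2}C'_{w_\m})$, completing the proof via \eqref{eq:CHSS}.
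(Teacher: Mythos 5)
Your outline correctly isolates the obstacle---the lack of a geometric $S_n$-action on $\h_\m(X)$---but the move you make to circumvent it is circular. Theorem \ref{thm:invmappalin} requires palindromicity of the Betti numbers of the \emph{special} fiber: in a degeneration from regular semisimple $X$ to a regular but non-semisimple $X_\lambda$, you must know that $\dim H^i(\h_\m(X_\lambda))=\dim H^{2d-i}(\h_\m(X_\lambda))$. These special fibers are singular in general (the paper remarks that regular nilpotent Hessenberg varieties can fail even to be normal), so Poincar\'e duality does not apply to them. Your sentence claiming palindromicity ``by Poincar\'e duality (smoothness of $\h_\m(X)$)'' concerns only the \emph{generic} fiber, where it is trivially true and plays no role in the criterion; and invoking $\iota(C'_w)=C'_w$ on the Hecke-algebra side is an algebraic fact that remains disconnected from geometry until you have proved the combinatorial bridge $\Poin(H^*(\hbs_\si(X_\lambda)))=\chi^{\ind_{S_\lambda}^{S_n}}\bigl(\prod_i(1+T_{s_i})\bigr)$. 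That bridge---obtained in the paper from an affine paving of $\hbs_\si(X_J)$ for every regular $X_J$ (Theorem \ref{thm:hbspaving}) together with the matching Hecke combinatorics (Theorem \ref{thm:TrTr}, Corollary \ref{cor:poinchi})---is the technical heart of the argument and is entirely absent from your proposal; without it, neither palindromicity of the special fibers nor the identification $H^*(\h_\m(X))^{S_\lambda}\cong H^*(\h_\m(X_\lambda))$ is available.

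Two further points. First, the Bia{\l}ynicki--Birula cells of $\hbs_\si(X)$ at regular semisimple $X$ are not indexed by subwords of $\si$ but by pairs $(w,\bi)\in W\times\{0,1\}^{\ell(w_\m)}$ satisfying the goodness condition of Definition \ref{def:good}; moreover, that decomposition does not extend naively to the special fibers $\hbs_\si(X_\lambda)$, since $Z_G(X_\lambda)$ is no longer a torus, which is why Section \ref{sec:hbspaving} builds the paving directly from the Lie-theoretic data. Second, one must separately establish that the monodromy action of $\pi_1(G^{rs},X)$ factors through $W$ and is compatible with the decomposition-theorem isomorphism (Proposition \ref{prop:pi1actW}), since this is precisely what makes the Frobenius character of $H^*(\h_\m(X))$ well-defined and the invariant-cycle comparison meaningful. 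A corrected argument needs: (i) the paving and the Poincar\'e--Hecke identity, giving palindromicity; (ii) Theorems \ref{thm:invmap} and \ref{thm:invmappalin} plus Proposition \ref{prop:pi1actW} to identify $H^*(\hbs_\si(X))^{S_\lambda}\cong H^*(\hbs_\si(X_\lambda))$; and (iii) Proposition \ref{prop:springer} to descend from $\hbs_\si$ to $\h_\m=\h_{w_\m}$ and isolate $\ch(H^*(\h_\m(X)))=\ch(q^{\ell(w_\m)/2}C'_{w_\m})$, then combine with \eqref{eq:CHSS}.
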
        
     Before giving a brief idea of Brosnan-Chow's proof, we need to introduce yet another basis of $\Lambda$. The monomial symmetric functions $m_{\lambda}$ are defined as the sum of all distinct monomials whose sequence of exponents is a permutation of $\lambda$. For example
     \[
     m_{211}=x_1^2x_2x_3+x_1x_2^2x_3+x_1x_2x_3^2+x_1^2x_2x_4+\ldots.
     \]
     This basis relates to the Frobenius character map  and to its dual as follows: If $L$ is a graded $S_n$-module and $L^{S_{\lambda}}$ denotes the subspace of $L$ invariant by the action of $S_{\lambda}$, then
     \begin{equation}
         \label{eq:chL}
     \ch(L)=\sum_{\lambda\vdash n}\Poin(L^{S_{\lambda}})m_{\lambda}.
     \end{equation}
     Moreover, if $a\in H_n$, then 
     \begin{equation}
         \label{eq:cha}
     \ch(a)=\sum_{\lambda\vdash n} \chi^{\ind_{S_{\lambda}}^{S_n}}(a)m_{\lambda}.
    \end{equation}
     Brosnan-Chow's proof of the Shareshian-Wachs conjecture has two steps. First, they prove that (see \cite[Theorem 35]{BrosnanChow}) 
     \[
     \omega(\csf_q(G_\m))=\sum_{\lambda\vdash n}\Poin(H^*(\h_{\m}(X_{\lambda})))m_{\lambda},
     \]
     where $X_{\lambda}$ is a regular matrix with Jordan block decomposition $\lambda$, that is, each eigenvalue has a single Jordan block, and the sizes of the blocks make up the partition $\lambda$. To do that, they give a combinatorial description of the coefficients of $m_{\lambda}$ in $\omega(\csf_q(G_{\m})))$ and check that they agree with the Poincaré polynomial of $\h_{\m}(X_{\lambda})$, which was computed by Tymoczko  in \cite{Tym06}.\par
     For the second step, they prove a deep palindromicity result about invariant cycles (see Theorem \ref{thm:invmappalin}), which implies the isomorphism
     \[
     H^*(\h_{\m}(X_{\lambda}))\cong H^*(\h_{\m}(X))^{S_{\lambda}}.
     \]

       Via equation \eqref{eq:CHSS}, Theorem \ref{thm:brosnanchow} gives a geometric description of $\ch(q^{\frac{\ell(w)}{2}}C'_{w_\m})$. This can be extended to every permutation $w$. Let $X\in GL_n(\mathbb{C})$\footnote{Here it is important that $X$ is an invertible matrix, while in the definition of Hessenberg varieties we could consider any matrix $X$. In general, this means that Lusztig varieties are defined for elements of the algebraic group, while Hessenberg varieties are defined for elements of the Lie algebra.}, $w\in S_n$ be an invertible matrix and a permutation, then the \emph{Lusztig variety} associated to $X$ and $w$ is defined as
    \begin{align*}
    \h_w(X)^\circ&:=\{V_\bullet; \dim XV_i\cap V_j=r_{i,j}(w)\},\\
    \h_w(X)&:=\overline{\h_w(X)^\circ}.
    \end{align*}
    When $w=w_{\m}$ is the codominant permutation associated to $\m$, then the Lusztig variety $\h_w(X)$ coincides with the Hessenberg variety $\h_\m(X)$.

     The varieties $\h_w(X)$ are usually singular (even when $X$ is regular semisimple), so it is natural to look for an analogue of the Bott-Samelson resolution. Let $X\in GL_n(\mathbb{C})$ and let $\si=(s_1,\ldots, s_{\ell(w)})$ be a reduced word for $w$ with $s_i=(k_i,k_i+1)$, and define
     \begin{equation}
         \hbs_\si(X):=\{(V_\bullet^0,\ldots, V_\bullet^{\ell(w)}); V_j^{i-1}=V_j^{i}  \text{ for every }i,j \text{ with }j\neq k_i, XV_\bullet^0=V_\bullet^n\}.
     \end{equation}

     Lusztig actually considered the relative versions 
      \[
      \h_w:=\overline{\{(X,V_\bullet);\dim XV_i\cap V_j=r_{i,j}(w)\}},
      \]
      which comes with a forgetful map
      \begin{align*}
      f\col \h_w &\to GL_n(\mathbb{C})\\
             (X,V_\bullet)&\mapsto X.
      \end{align*}
      and studied how $f_{*}(IC_{\h_w})$  decomposes into simple perverse sheaves, which are called \emph{character sheaves} (see Section \ref{sec:chashv}).

   These varieties are characteristic 0 analogues of the so-called Deligne-Lusztig varieties. If instead of working over the complex numbers we were over an algebraically closed field $K$ of positive characteristic $p$, and if instead of $X$ we would have considered the so-called \emph{Frobenius morphism} $\Fr$, then the variety $\h_w(\Fr)^\circ$ is a \emph{Deligne-Lusztig variety} introduced by Deligne-Lusztig in \cite{DeligneLusztig} to study representations of $GL_n(\mathbb{F}_q)$. We briefly recall their construction. \par

      Let $q=p^r$ be a power of $p$, then the Frobenius homomorphism $\Fr\col K\to K$ is defined by $\Fr(x)=x^q$. The fixed locus of $\Fr$ is precisely $\mathbb{F}_q$, the finite field with $q$ elements. We also denote by $\Fr$ the map 
      \begin{align*}
          K^n&\to K^n\\
          (x_1,\ldots, x_n)&\mapsto (x_1^q,\ldots, ,x_n^q).
      \end{align*} 
        If we consider the flag variety $\flag_n(K)=\{V_1\subset V_2\subset\ldots\subset V_n=K^n\}$, we have an induced Frobenius morphism $ \flag_n(K)\to \flag_n(K)$, that takes a flag $V_\bullet$ to the flag $\Fr(V_\bullet)$ obtained by applying $\Fr$ to each vector space $V_i$. \par
        For a permutation $w\in S_n$, the Deligne-Lusztig variety associated to $w$ is defined as
        \[
        DL_w:=\{V_\bullet; \dim \Fr(V_i)\cap V_j=r_{i,j}(w)\}.
        \]
    It is a locally closed subvariety of $\flag_n(K)$.  Since for each element $X\in GL_n(\mathbb{F}_q)$, we have that $\Fr(XV)=X\Fr(V)$ for every subspace $V\subset K^n$, we have an action of $GL_n(F_q)$ on $DL_w$. This action lifts to an action of $GL_n(\mathbb{F}_q)$ on the $l$-adic cohomology with compact supports of $DL_w$, which produces virtual representations of $GL_n(\mathbb{F}_q)$ (see \cite[Definition 1.5]{DeligneLusztig}).
     \begin{Exa}
       As in Example \ref{exa:losevmanin}, if $w=234\ldots n1$ is the codominant permutation associated to the Hessenberg function $\m=(2,3,4,\ldots, n-1,n,n)$, then $DL_w$ is isomorphic to the Drinfeld's halfspace $\mathbb{P}^{n-1}_K\setminus \bigcup H$, where the union is over all $\mathbb{F}_q$-rational hyperplanes, see \cite[Section 2.2]{DeligneLusztig}. Actually, the Deligne-Lusztig construction is indeed a generalization of a construction of Drinfeld  for $SL_2$ (see \cite{Drinfeld}).
       
       Going back to Example \ref{exa:losevmanin}, we can see that the open locus $\h_w(X)^\circ$ is isomorphic to $\mathbb{P}^{n-1}_{\mathbb{C}}\setminus\bigcup_{i=1}^n H_i$, where $H_i=V(x_i)$.
     \end{Exa}
    
      Although the varieties $\h_w(X)^\circ$ and $DL_w$ are similarly defined, there is not a direct translation of results. For instance, one of the tools in the study of Deligne-Lusztig varieties is Lang's theorem, which states that the map
       \begin{align*}
       GL_n(K)&\to GL_n(K)\\
         g&\mapsto g^{-1}F(g)
       \end{align*}
      is finite and surjective. On the other hand, the image of the map
       \begin{align*}
       GL_n(\mathbb{C})&\to GL_n(\mathbb{C})\\
         g&\mapsto g^{-1}Xg
       \end{align*}
     is the conjugacy class of $X$ and its fibers are isomorphic to the centralizer $Z_G(X)$.

    Also noteworthy is that when $e\in S_n$ is the identity and $X$ unipotent (that is, $X-I$ is nilpotent), then the variety $\h_{e}(X)$ is the so-called \emph{Springer fiber} at $X$.

\subsection{Results}
\label{sec:results}

   The purpose of this paper is to use the techniques in Brosnan-Chow's proof of the Shareshian-Wachs conjecture to recover the relation between the Frobenius characters of the Kazhdan-Lusztig basis elements of the Hecke algebra and the geometry of Lusztig varieties.

    Since some of the results hold more generally, we make the following definition.
    \begin{Def}
    \label{def:twisted}
        Let $G$ be a connected, simply-connected,  semisimple algebraic group, $B$ its  Borel subgroup, $W$ its associated Weyl group $W$  and $S$ the set of simple reflections of $W$. Denote by $\flag:=G/B$ the flag variety of $G$. For each $X\in G$, $w\in W$, and $\si=(s_1,\ldots, s_n)$ a word in $S$, define the \emph{Lusztig varieties} and their Bott-Samelson resolutions by\footnote{When $X$ is regular we have that $\h_w(X)=\overline{\h_w(X)^\circ}.$}
          \begin{align*}
          \h_w(X)^\circ&:=\{gB; g^{-1}Xg\in B\dw B\} \subset \flag\\
          \h_w(X)&:=\{gB; g^{-1}Xg\in \overline{B\dw B}\}\\
          \hbs_\si(X)&:=\{(g_0B,g_1B,\ldots, g_nB); g_0^{-1}Xg_n\in B, g_{i+1}^{-1}g_i\in B\cup B\ds_i B\}
          \end{align*}
         where $\dw, \ds_i\in G$ are representatives of $w$ and $s_i$ respectively.\par
    \end{Def}

         \begin{theorem}
         \label{thm:mainsimply}
         Let $G$ be a connected, simply connected, semisimple algebraic group over $\mathbb{C}$, $W$ its associated Weyl group, and $S$ the set of simple reflections in $W$. Given $X$ a regular semisimple element of $G$ and $w\in W$, there exists a natural monodromy action of $W$ on the intersection cohomology group $IH^*(\h_w(X))$. Moreover we have
         \[
         \chi^{\ind_{W_J}^W}(q^{\frac{\ell(w)}{2}}C'_w)=\Poin(IH^*(\h_w(X))^{W_J})
         \]
         for every $J\subset S$.  In particular, when $G=SL_n(\mathbb{C})$ and $W=S_n$, we have
         \[
         \ch(q^{\frac{\ell(w)}{2}}C'_w)=\ch(IH^*(\h_w(X)))
         \]
         for every $w\in S_n$.
         \end{theorem}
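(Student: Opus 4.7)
The plan is to follow the strategy of Brosnan-Chow's proof of the Shareshian-Wachs conjecture (Theorem \ref{thm:brosnanchow}) extended to the intersection cohomology setting, combined with the Springer-style use of the decomposition theorem on a Bott-Samelson desingularization (as in the proof of Theorem \ref{thm:springer}).

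\textbf{Construction of the $W$-action.} First I would assemble the Lusztig varieties into the family $f\col \h_w\to G$ sending $(X,gB)\mapsto X$. The Grothendieck-Springer simultaneous resolution restricts to a $W$-Galois étale cover $\tilde G^{rs}\to G^{rs}$ over the regular semisimple locus (since the set of Borels containing a regular semisimple element is a $W$-torsor). Pulling back $f$ along this cover, the resulting family is canonically trivialized and carries a tautological $W$-action, producing a $W$-equivariant structure on $Rf_*IC_{\h_w}|_{G^{rs}}$. Taking stalks yields the monodromy action on each $IH^*(\h_w(X))$.

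\textbf{Reduction to Poincaré identities.} By Equations \eqref{eq:cha} and \eqref{eq:chL}, both $\ch(q^{\ell(w)/2}C'_w)$ and $\ch(IH^*(\h_w(X)))$ are determined by the $m_\lambda$-coefficients $\chi^{\ind_{W_J}^W}(q^{\ell(w)/2}C'_w)$ and $\Poin(IH^*(\h_w(X))^{W_J})$, respectively. It therefore suffices to prove the single Poincaré identity $\chi^{\ind_{W_J}^W}(q^{\ell(w)/2}C'_w)=\Poin(IH^*(\h_w(X))^{W_J})$ for each $J\subset S$, a formulation that also covers the general (non-Type $A$) part of the statement.

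\textbf{Palindromicity and the computation.} Fix $J\subset S$ and pick $X_J\in G$ whose connected centralizer is the standard Levi $L_J$ (with Weyl group $W_J$). By the Brosnan-Chow palindromicity criterion (Theorem \ref{thm:invmappalin}), the local invariant cycle map gives an isomorphism
\[
IH^*(\h_w(X_J)) \xrightarrow{\sim} IH^*(\h_w(X))^{W_J}
\]
whenever $IH^*(\h_w(X_J))$ is palindromic. I would verify palindromicity by constructing an affine paving of a Bott-Samelson-type desingularization $\alpha_\si\col \hbs_\si(X_J)\to \h_w(X_J)$. It then remains to show $\Poin(IH^*(\h_w(X_J)))=\chi^{\ind_{W_J}^W}(q^{\ell(w)/2}C'_w)$. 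For this I apply the decomposition theorem to $\alpha_\si$ and mimic Springer's argument: the multiplicity spaces are the same polynomials $Q_{z,\si}(q^{1/2})$ appearing in Springer's proof, the Bott-Samelson side admits an explicit $\prod(1+T_{s_i})$-type expansion, and an induction on $\ell(w)$ paired against $\chi^{\ind_{W_J}^W}$ via Springer's identity $h(IC_{\Omega_w})=q^{-\dim\flag/2}C'_w$ yields the desired equality.

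\textbf{Main obstacle.} The central difficulty is constructing the affine paving of $\hbs_\si(X_J)$ required for palindromicity. In the codominant case of Brosnan-Chow, $\h_w(X_J)=\h_{\m}(X_J)$ is already smooth and carries an explicit Tymoczko paving; for non-codominant $w$ the Lusztig variety is singular, so the paving must be produced on the desingularization itself and palindromicity transferred through the decomposition theorem. This is precisely the \emph{cellular decompositions of desingularizations of these varieties} that the abstract highlights as the main technical innovation.
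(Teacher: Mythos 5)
Your high-level plan matches the paper: Bott--Samelson desingularization $\hbs_\si(X)\to\h_w(X)$, affine paving to get palindromicity, Brosnan--Chow to compare regular with regular semisimple, Springer-style induction on $\ell(w)$ through the decomposition theorem to extract the $C'_w$ contribution, and the reduction of the $\ch$ identity to Poincar\'e identities of $W_J$-invariants via Equations \eqref{eq:chL} and \eqref{eq:cha}. That much is right and is exactly how the paper proceeds.

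There is, however, a genuine gap in your \emph{construction of the $W$-action}. Pulling back $\h_w^{rs}\to G^{rs}$ along the $W$-Galois cover $\h_e^{rs}\cong G/T\times T^r\to G^{rs}$ does \emph{not} ``canonically trivialize'' the family: $T^r$ (the torus with root hyperplanes removed) has nontrivial fundamental group, so $\pi_1(\h_e^{rs})$ is nontrivial and may act nontrivially a priori on $IH^*(\h_w(X))$. The deck transformations of the Galois cover do give a $W$-action on the pulled-back family, but that is \emph{not} the same as the monodromy action factoring through $W$; for the latter you must prove that $\pi_1(\h_e^{rs})\subset\pi_1(G^{rs})$ acts trivially on $IH^*(\h_w(X))$, and that is the hard content. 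The paper proves it by a torus-equivariant localization argument: using the relative torus bundle $\T\to G^{rs}$ of Section \ref{sec:torusbundle}, one shows that the fixed locus $\hbs_\si(X)^{\T_X}$ is a finite set on which $\pi_1(G^{rs},X)$ acts through $W$, that the $\pi_1$-action on $H^*_T(\hbs_\si(X))$ injects into $H^*_T$ of the fixed locus where it visibly factors through $W$ (Lemma \ref{lem:pi1hb}, Corollary \ref{cor:pi1actW}), and then passes to ordinary cohomology and, via the decomposition theorem (Proposition \ref{prop:pi1actW}), to $IH^*(\h_w(X))$. Nothing in your sketch supplies this.

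A second, smaller imprecision: you apply the Brosnan--Chow criterion (Theorem \ref{thm:invmappalin}) directly to $\h_w\to G$ to conclude $IH^*(\h_w(X_J))\cong IH^*(\h_w(X))^{W_J}$, but $\h_w$ is singular, and the criterion as stated governs singular cohomology of fibers of a projective map whose palindromicity is verified at the smooth level. The paper instead applies it to the smooth family $\hbs_\si^{rs}\to G^{rs}$ (Corollary \ref{cor:bettipalin} giving palindromicity) to get $H^*(\hbs_\si(X_J))\cong H^*(\hbs_\si(X))^{W_J}$, and then deduces the $IH^*(\h_w)$ statement from the decomposition-theorem identity of Proposition \ref{prop:pi1actW} item (3) via an induction on $\ell(w)$ using Proposition \ref{prop:springer}. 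You do eventually describe this (``construct an affine paving of a Bott--Samelson-type desingularization''), so this is more a matter of rearranging the logical order than a missing idea, but your displayed $IH^*$-isomorphism is not directly licensed by the cited theorem.
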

         Note that the second part of Theorem \ref{thm:mainsimply} follows directly from the first part and from Equations \eqref{eq:chL} and \eqref{eq:cha}. When $w = w_\m$ is codominant, we recover Theorem \ref{thm:brosnanchow}. 
        
        \begin{Exa}
        \label{exa:coxeter}
         Let $w\in S_n$ be a Coxeter element of $S_n$, that is, $w$ is the product of all simple transpositions of $S_n$ in some order, and let $X$ be a regular semisimple matrix. Then $\h_w(X)$ is isomorphic to the toric variety of Weyl chambers of $S_n$ as in Example \ref{exa:losevmanin}. In particular, for each Coxeter element, we have that $\h_w(X)$ is isomorphic to $\h_{\m}(X)$ for $\m=(2,3,\ldots, n-1,n,n)$.\par
           For instance, assume that $X$ is a diagonal matrix and consider $w=3142=(2,3)\cdot (1,2)\cdot (3,4)$, then
          \[
          \h_{w}(X)=\{V_\bullet;V_1\subset XV_2, XV_2\subset V_3\}.
          \]
          We consider the varieties $\widehat{\h_w}(X)=\{(V_1,V_2); V_1\subset V_2, V_1\subset XV_2\}$, $\mathbb{P}^3=\text{Gr}(1,4)=\{V_1\subset \mathbb{C}^4\}$ and the forgetful map
          \begin{align*}
          f_1\col \widehat{\h_w}(X)&\to \mathbb{P}^3\\
              (V_1,V_2)&\mapsto V_1.
          \end{align*}
          We have that $f_1^{-1}(V_1)=\{(V_1,V_2); V_2\supseteq V_1+X^{-1}V_1\}$, and $f_1$ is an isomorphism on the locus where $X^{-1}V_1\neq V_1$, since in this case $\dim(V_1+X^{-1}V_1)=2$. If $X^{-1}V_1=V_1$, then $f_1^{-1}(V_1)$ is isomorphic to $\mathbb{P}^2=\{V_2; V_1\subset V_2\subset \mathbb{C}^4\}$. The locus in $\mathbb{P}^3$ where $X^{-1}V_1=V_1$ is precisely the set of points $\{(1:0:0:0),(0:1:0:0),(0:0:1:0),(0:0:0:1)\}$ corresponding to the eigenvectors of $X$. This means that $\widehat{\h_w}(X)$ is isomorphic to the blow-up of $\mathbb{P}^3$ along these points.\par
            Now consider the forgetful map
          \begin{align*}
          f_2\col \h_w(X)&\to \widehat{\h_w}(X)\\
              (V_1,V_2,V_3)&\mapsto (V_1,V_2).
          \end{align*}
        As before, we have $f_2^{-1}(V_1,V_2)=\{(V_1,V_2,V_3);V_3\supset V_2+XV_2\}$ and $f_2$ is an isomorphism on the locus where $XV_2\neq V_2$. Notice that $V_1\subset XV_2\cap V_2$ for every $(V_1,V_2)\in \widehat{\h_w}(X)$, and hence $\dim(V_2+XV_2)\leq 3$ with equality if and only if $XV_2\neq V_2$. Moreover, if $XV_2=V_2$, then $f_2^{-1}(V_1,V_2)$ is isomorphic to $\mathbb{P}^1=\{V_3; V_2\subset V_3\subset \mathbb{C}^4\}$. The condition $XV_2=V_2$ implies that $V_2=\langle e_i,e_j\rangle $ for some distinct $i,j\in \{1,2,3,4\}$. This means that $V_1\in \mathbb{P}^3$ belongs to one of the lines 
        \[
        (*:*:0:0), (*:0:*:0), (*:0:0:*), (0:*:*:0), (0:*:0:*), (0:0:*:*)\subset \mathbb{P}^3.
        \]
        Moreover, the locus $\{(V_1,V_2);XV_2=V_2\}$ will be precisely the union of the strict transforms of these lines. This means that $f_2$ is the blow-up of $\widehat{\h_w}(X)$ at these strict transforms.\par
         For Coxeter elements $w\in S_n$, one can compute $\ch(q^{\frac{\ell(w)}{2}}C'_w)$ as in \cite[Proposition 4.2]{Haiman}. It does not depend on the Coxeter element and agrees with $\omega(\csf_q(G_{\m}))$ for $\m=(2,3,\ldots, n-1,n,n)$.
        \end{Exa}

        \begin{Exa}
        \label{exa:twisted3412}
       Let us make an example for a singular permutation. Let $w=3412$ be a permutation in $S_4$ and let  $X$ be a  regular semisimple diagonal matrix. Let us describe the variety $\h_w(X)$. In this case,  we have that (see Example \ref{exa:schub})
        \[
        \h_w(X)=\{V_\bullet; XV_1\subset V_3, V_1\subset XV_3\}.
        \]
       Consider the image $\widehat{\h_w}(X)$ of $\h_w(X)$ in the partial flag variety $\{(V_1,V_3); V_1\subset V_3\subset \mathbb{C}^4\}$, that is
       \[
        \widehat{\h_w}(X)=\{(V_1,V_3);  XV_1\subset V_3, V_1\subset XV_3\}.
       \]
      Then the map $\h_w(X)\to\widehat{\h_w}(X)$ is a $\mathbb{P}^1$-bundle.\par
      We take a similar approach as in example \ref{exa:coxeter} and consider the forgetful map
     \begin{align*}
     f_1\col \widehat{\h_w}(X)&\to \text{Gr}(1,4)=\mathbb{P}^3=\{(x_1:x_2:x_3:x_4)\}\\
     (V_1,V_3)&\mapsto V_1.
      \end{align*}
    The fibers of $f_1$ over a point $V_1\in \text{Gr}(1,4)$ can be described as
      \[
     f_1^{-1}(V_1)=\{(V_1,V_3); V_3\supset X^{-1}V_1+V_1+XV_1\}.
     \]
       However, we have that $\dim(X^{-1}V_1+V_1+XV_1)\leq 2$ if and only if $V_1\subset \langle e_i,e_j\rangle$ for some distinct $i,j\in\{1,2,3, 4\}$. The condition  $V_1 \subset \langle e_i,e_j \rangle$ means that $V_1$ lies inside one of the lines  $\ell_{ij}$
       \[
        (*:*:0:0), (*:0:*:0), (*:0:0:*), (0:*:*:0), (0:*:0:*), (0:0:*:*)\subset \mathbb{P}^3.
        \]
        This proves that $f_1$ is an isomorphism outside of the lines $\ell_{ij}$.
     We leave to the reader to see that $f_1$ is the blowup of $\mathbb{P}^3$ along the union of all these lines.  We note that the natural action of $S_4$ on $\mathbb{P}^3$ lifts to an action of $S_4$ on $\widehat{\h_w}(X)$, since we are blowing-up an invariant locus.\par
       If $w'=3142$ as in Example \ref{exa:coxeter}, then we have seen that
       \[
       \h_{w'}(X)=\{V_\bullet;V_1\subset XV_2; XV_2\subset V_3\}.
       \]
      We can consider the forgetful morphism
      \begin{align*}
          f_2\col \h_{w'}(X)&\to \{(V_1,V_3); V_1\subset V_3\subset \mathbb{C}^4\}.
      \end{align*}
      Then $f_2(\h_{w'}(X))=\widehat{\h_w}(X)$, because $X^{-1}V_1\subset V_2\subset V_3$ and $XV_1\subset XV_2\subset V_3$. The fibers of $f_2$ can be described as 
      \[
      f_2^{-1}(V_1,V_3)=\{(V_1,V_2,V_3); V_2\supset V_1+X^{-1}V_1,\, V_2\subset V_3\cap X^{-1}V_3\},
      \]
     In particular, $f_2$ is a birational map, and it is an isomorphism outside of the locus $\{(V_1,V_3); X^{-1}V_1=V_1,X^{-1}V_3=V_3\}$. This locus is a finite set of points and the fibers of $f_2$ over these points are isomorphic to $\mathbb{P}^1$. This means that $f_2$ is a small resolution, in the sense of \cite[Section 1.1]{BorhoMacpherson}.\par

    Since $f_2$ is a small resolution, we have that $IH^*(\widehat{\h_w}(X))=H^*(\h_{w'}(X))$, and the latter cohomology group was described in Example \ref{exa:coxeter} and \ref{exa:losevmanin}. Then
     \[
     \ch(IH^*(\widehat{\h_w}(X))=\ch(H^*(\h_{w'}(X)))=(1+q+q^2+q^3)h_4+(q+q^2)h_{3,1}+(q+q^2)h_{2,2}
     \]
  Since $\h_w(X)\to \widehat{\h_w}(X)$ is a $\mathbb{P}^1$-bundle, we have that
  \begin{align*}
  \ch(IH^*(\h_w(X)))&=(1+q)\ch(IH^*(\widehat{\h_w}(X)))\\
                    &=(1+2q+2q^2+2q^3+q^4)h_4+(q+2q^2+q^3)h_{3,1}+(q+2q^2+q^3)h_{2,2}
  \end{align*}
  which agrees with $\ch(q^{2}C'_w)$ (see for instance \cite[Table 1]{Haiman}).
  \end{Exa}

        To prove Theorem \ref{thm:mainsimply}, we would like to follow Brosnan-Chow's proof of Theorem \ref{thm:brosnanchow}. However, several problems appear. For example, even when $X$ is regular semi-simple we have that $\h_w(X)$ is singular whenever $w$ is singular. This drives us to substitute usual cohomology for intersection cohomology. Even so, we would have to find affine pavings for $\h_w(X)$, which is simply not possible when $w$ is singular.
        
        To overcome these difficulties, the idea is to work with the  variety $\hbs_\si(X)$ (which is a desingularization of $\h_w(X)$ when $X$ is regular semisimple).  Via arguments similar to Springer's proof of the Kazhdan-Lusztig conjecture (see Sections \ref{subsec:heckeflag} and \ref{subsec:flag}), Theorem \ref{thm:mainsimply} becomes a consequence of the following theorem.
        \begin{theorem}
        \label{thm:hbsall}
        Let $G$ be a connected, simply connected, semisimple algebraic group over $\mathbb{C}$, $W$ its associated Weyl group, and $S$ the set of simple reflections in $W$. Let $X$ be a regular element of $G$ and $\si=(s_1,\ldots, s_\ell)$ a reduced word of simple reflections of $W$. The following hold:
        \begin{enumerate}
            \item The  variety $\hbs_\si(X)$ is paved by affines. In particular $H^i(\hbs_\si(X))=0$ for every odd $i$.
            \item If the semisimple  part $X_s$ of $X$ has Levi subgroup $G_J$ for some $J\subset S$, then 
            \[
        \chi^{\ind_{W_J}^W}(\prod_{i=1}^\ell (1+T_{s_i}))=\Poin (H^*(\hbs_\si(X))).
        \]
        In particular, the  variety $\hbs_\si(X)$ has palindromic Betti numbers.
        \item If $X$ is regular semisimple, then there exists a natural monodromy action of $W$ on the cohomology group $H^*(\hbs_\si(X))$ and
        \[
        \chi^{\ind_{W_J}^W}(\prod_{i=1}^{\ell} (1+T_{s_i}))=\Poin (H^*(\hbs_\si(X))^{W_J}).
        \]
        In particular, if $G=SL_n(\mathbb{C})$ and $W=S_n$, then
        \[
        \ch(\prod_{i=1}^\ell(1+T_{s_i}))=\ch (H^*(\hbs_\si(X))).
        \]
        \end{enumerate}
         \end{theorem}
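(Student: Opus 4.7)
The plan is to find an explicit Białynicki--Birula (BB) paving of $\hbs_\si(X)$, read off its Poincaré polynomial from the cell dimensions, match it term-by-term with a Hecke-algebra computation of $\chi^{\ind_{W_J}^W}(\prod_i(1+T_{s_i}))$, and then use the Brosnan--Chow palindromicity criterion (Theorem \ref{thm:invmappalin}) to identify the $W_J$-invariants of the monodromy action with the cohomology of a degenerate fibre.

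For part (1), write $X = X_s X_u$ with $X_s \in T$ semisimple and $X_u$ a regular unipotent element in the connected centralizer $L = Z_G(X_s)^\circ$, whose Weyl group is $W_J$. A transversality check of $\phi\colon \bs_\si \to \flag\times \flag$ against the graph $\Gamma_X = \{(XF,F) : F \in \flag\}$ shows that $\hbs_\si(X)$ is a smooth projective subvariety of the smooth $\bs_\si$. Pick a generic one-parameter subgroup $\lambda\colon \mathbb{G}_m \to Z(L)^\circ \subset T$; since $\lambda$ commutes with $X$ it preserves $\hbs_\si(X)$, and the BB decomposition yields an affine paving, killing odd cohomology. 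A $\lambda$-fixed point corresponds to a sequence $(w_0B,\ldots,w_\ell B)$ with $w_i = w_{i-1}s_i^{\epsilon_i}$ for some $\epsilon\in\{0,1\}^\ell$, and the Lusztig constraint $Xw_\ell B = w_0 B$ unpacks, via the commutation $X = X_s X_u$, into $w_0^{-1}w_\ell \in W_J$ together with a residual unipotent condition coming from $X_u$ inside $L$. The fixed points, and hence the BB cells, are therefore labelled by compatible pairs $(\overline{u},\epsilon) \in (W/W_J)\times \{0,1\}^\ell$.

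For part (2), the Poincaré polynomial equals $\sum_{(\overline{u},\epsilon)} q^{d(\overline{u},\epsilon)}$ with $d(\overline{u},\epsilon)$ determined by the positive $\lambda$-weights on the tangent space at the fixed point. On the Hecke side, I would expand $\prod_i(1+T_{s_i}) = \sum_\epsilon T_{s_1^{\epsilon_1}}\cdots T_{s_\ell^{\epsilon_\ell}}$ and compute $\chi^{\ind_{W_J}^W}$ as the trace on the parabolic module $\mathbb{C}(q^{1/2})[W/W_J]$, where each $T_s$ either swaps two adjacent cosets or contributes a diagonal scalar $q$ or $q-1$ via the Hecke relation $T_s^2 = (q-1)T_s + q$. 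Tracking through the $\epsilon$-expansion and grouping by the underlying coset $\overline{u}$ yields exactly the sum over $(\overline{u},\epsilon)$ of the BB-cell weights $q^{d(\overline{u},\epsilon)}$, proving the identity. Palindromicity of the Betti numbers is then automatic from Poincaré duality on the smooth projective variety $\hbs_\si(X)$.

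For part (3), take $X$ regular semisimple and construct the monodromy $W$-action via the smooth proper family $\{\hbs_\si(X')\}_{X' \in T^{reg}}$ and the $W$-Galois cover $T^{reg} \to T^{reg}/W$, using the diagonal conjugation action on $\flag^{\ell+1}$ to trivialize the family over $T^{reg}$. To compute the $W_J$-invariants, arrange a one-parameter degeneration of $X$ inside $G$ towards a regular element $X_J$ with Levi $G_J$; the local monodromy around the central fibre factors through $W_J$, and Theorem \ref{thm:invmappalin} combined with the palindromicity of both fibres (from part (1) at each endpoint) upgrades the local invariant cycle map to an isomorphism $H^*(\hbs_\si(X))^{W_J} \cong H^*(\hbs_\si(X_J))$. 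Combining this with part (2) applied to $X_J$ gives the Poincaré polynomial identity, and in type $A$ the Frobenius character identity follows by assembling the $W_J$-invariant formulae over $J$ using \eqref{eq:chL} and \eqref{eq:cha}. The main obstacle is the combinatorial matching in part (2): aligning the $\lambda$-weight computation at each BB fixed point (with the corrections coming from $X_u$) with the term-by-term trace of the $T_{s_i}$ operators on the parabolic module.
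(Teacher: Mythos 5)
Your proposal takes a genuinely different tack from the paper (which constructs an explicit cell decomposition by intersecting $\hbs_\si(X)$ with the combinatorially-indexed affine cells $\bs_{\si,\bi}^w$ of the Bott--Samelson variety, then matches those cells against ``good'' pairs via BCH-formula computations and Tarski's fixed-point theorem). However, there are two genuine gaps in the Bia\l ynicki--Birula strategy as you set it up.

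First, the one-parameter subgroup $\lambda\colon\mathbb G_m\to Z(L)^\circ$ cannot exist in the case you most need it. When $X$ is regular unipotent --- which is precisely the degenerate endpoint $X_J$ with $J=S$ that Part~(3) reduces to --- you have $L=Z_G(X_s)^\circ=G$, so $Z(L)^\circ=Z(G)^\circ$ is trivial for semisimple~$G$, and there is no non-trivial $\lambda$ commuting with $X$. More generally, for $J\subsetneq S$ the $\lambda$-fixed locus of $\bs_\si$ is not discrete: it is a union of Bott--Samelson-type varieties for the Levi $L$, so the BB ``cells'' are affine \emph{bundles} over these fixed loci rather than affine spaces. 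Producing an actual paving still requires decomposing those fixed loci further using the residual $X_u$-condition inside $L$, which is exactly the hard computational core that the paper handles via the BCH expansion (Propositions~\ref{prop:recurf}--\ref{prop:gf}, Lemma~\ref{lem:adu0X}). In other words, the torus action does not reduce the problem to a trivial one; the ``regular unipotent inside the Levi'' difficulty remains untouched.

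Second, the claim that $\hbs_\si(X)$ is smooth for \emph{all} regular $X$, and hence that palindromicity follows from Poincar\'e duality, is not justified and is almost certainly not what you want to assert. The map $\hbs_\si\to G$ fails to be a submersion over the non-semisimple locus (Propositions~\ref{prop:hGsubmersion} and~\ref{prop:fibrationHBS} establish the submersion property only over $G^{rs}$), so the transversality check you allude to genuinely breaks down there. The paper itself only \emph{conjectures} that $\hbs_\si(X)$ is \emph{rationally} smooth for regular $X$ (Conjecture~\ref{conj:bott}); it derives palindromicity in Corollary~\ref{cor:bettipalin} purely from the fact that the Hecke-algebra expression $\chi^{\ind_{W_J}^W}(\prod(1+T_{s_i}))$ is palindromic, after first matching it to the Poincar\'e polynomial. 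Your logic in Part~(2) does independently reach palindromicity via that matching, so the Poincar\'e-duality clause is not fatal on its own; but the reliance on smoothness is still a dubious claim worth flagging, and it cannot be used to shortcut the argument.

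The rest of the outline --- the Hecke-side expansion of $\prod(1+T_{s_i})$ as a trace on $\mathbb C(q^{1/2})[W/W_J]$, the use of Theorems~\ref{thm:invmap} and~\ref{thm:invmappalin} with a degeneration $X\rightsquigarrow X_J$, and the passage to the Frobenius character via \eqref{eq:chL}/\eqref{eq:cha} --- is aligned with the paper's strategy and is sound in spirit. You would, however, want to ground the degeneration step in a statement like Proposition~\ref{prop:conjugate} (identifying the local monodromy group with $W_{X_s}$) rather than merely asserting that monodromy ``factors through $W_J$.''
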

     
          For Hessenberg varieties,   item (1) of Theorem \ref{thm:hbsall}  is done in \cite{Tym06} for type $A$ and \cite{Precup13} for general algebraic groups (the palindromicity of Betti numbers is also proved in the latter work). \par

      To prove Theorem \ref{thm:hbsall}, we proceed as follows.  In Theorem \ref{thm:TrTr} we give a combinatorial interpretation of $\chi^{\ind_{W_J}^W}(\prod (1+T_{s_i}))$ as the generating function of pairs $(w,\bi)\in W\times \{0,1\}^\ell$ satisfying ``good'' properties (see Definition \ref{def:good}). In Theorem \ref{thm:hbspaving} we prove that the varieties $\hbs_\si(X)$ have a paving by affines where each affine is indexed by a ``good'' element $(w,\bi)$, and in particular, its Poincaré polynomial coincides with $\chi^{\ind_{W_J}^W}(\prod (1+T_{s_i}))$. This proves items (1) and (2) of Theorem \ref{thm:hbsall}.\par
        To prove item (3) of Theorem \ref{thm:hbsall}, we apply the same arguments in Brosnan-Chow proof of Theorem \ref{thm:brosnanchow}. Via the local invariant cycle theorem (see Theorem  \ref{thm:invmap}) and Brosnan-Chow's palindromicity result (see Theorem \ref{thm:invmappalin}), we have that $H^*(\hbs_\si(X))^{W_J}\cong H^*(\hbs_\si(X_{J}))$, where $X_J$ is a regular element of $G$ such that its semisimple part has Levi subgroup $G_J$. This is done in Section \ref{sec:proofs}. The last statement of item (3) of Theorem \ref{thm:hbsall} follows from Equations \eqref{eq:chL} and \eqref{eq:cha}.\par

    \subsection{Outline} This paper is organized as follows. In Section \ref{sec:decomp} we recall some  results about perverse sheaves and intersection cohomology. Among other things we recall the Beilinson-Bernstein-Deligne-Graber decomposition theorem (Theorem \ref{thm:decomp}), their local invariant cycle map (Theorem \ref{thm:invmap}), and the Brosnan-Chow palindromicity theorem (Theorem \ref{thm:invmappalin}). Section \ref{sec:alggroups} is devoted to the theory of algebraic groups, including discussions about Grothendieck-Springer simultaneous resolution, torus actions, the Baker-Campbell-Hausdorff formula, and flag varieties.
    In section \ref{sec:chashv} we briefly review the work of Lusztig on character sheaves.
    In Section \ref{sec:hecke} we recall some  preliminaries about Coxeter groups and Hecke algebras, and give a combinatorial interpretation for the induced characters $\chi^{\ind_{W_J}^W}((1+T_{s_1})\ldots(1+T_{s_n}))$. Section \ref{sec:pavingbs} is devoted to describe an affine paving of the Bott-Samelson variety that closely resembles the combinatorics in Section \ref{sec:hecke}. In Section \ref{sec:twisted} we introduce the varieties $\h_w(X)$ and $\hbs_{\si}(X)$, and establish some of their basic properties. In section \ref{sec:hbspaving} we construct a paving by affines for the  varieties $\hbs_{\si}(X)$ and prove that their Poincaré polynomial agrees with the induced characters. Section \ref{sec:fibrations} is devoted to the study of the monodromy of $\h_w(X)$ and $\hbs_{\si}(X)$, including a proof that the Weyl group acts on the intersection cohomology of such varieties.  In Section \ref{sec:proofs} we use all the previous results to prove our main Theorems, \ref{thm:mainsimply} and \ref{thm:hbsall}. Finally, Section \ref{sec:questions} is devoted to further directions.

\section{Decomposition theorem}
\label{sec:decomp}
In this section we recall the basic definitions and results concerning intersection homology, perverse sheaves, and the Decomposition Theorem of Bernstein-Beilinson-Deligne-Gabber. We refer to \cite{KirwanWoolf}, \cite{BBD}, \cite{GoreskyMacphersonII} for more details.\par
We will restrict ourselves to algebraic varieties over $\mathbb{C}$. Let $\X$ be a scheme of finite type over $\mathbb{C}$ and denote by $\X(\mathbb{C})$ the topological space of its geometric points with the analytic topology. We denote by $\Shf(\X)$ the abelian category of sheaves of $\mathbb{C}$-vector spaces over $\X(\mathbb{C})$. An object $F$ in $\Shf(\X)$ will simply be called a sheaf over $\X$. As usual we will denote by $F_x$ the stalk of $F$ at the point $x\in \X(\mathbb{C})$ and the \emph{support} of $F$ is defined as $\Supp(F)=\{x\in \X(\mathbb{C}); F_x\neq\{0\}\}$. \par
 As usual, if $f\col \X\to \Y$ is a morphism we have left exact functors $f_*, f_!\col \Shf(\X)\to \Shf(\Y)$, called the \emph{direct image} and \emph{direct image with compact support},  and an exact functor $f^*\col \Shf(\Y)\to \Shf(\X)$, called the \emph{inverse image}. Also, if $f\col \Y\to \X$ is an immersion, we write $F|_\Y:=f^*F$ for $F\in \Shf(\X)$.\par
  If $L$ is a vector space, the constant sheaf $L_\X$ is the sheaf defined by $L_\X(\emptyset)=\{0\}$ and $L_\X(\U)=L$ for every non-empty connected open set $\U\subset \X(\mathbb{C})$. A sheaf $F$ is locally constant if there exists a covering $(\U_i)_{i\in I}$ of $\X(\mathbb{C})$ such that $F|_{\U_i}$ is isomorphic to a constant sheaf. If $\X$ is connected, we call a locally constant sheaf $F$ a local system. If $\X$ is connected, $x\in \X(\mathbb{C})$ and $L$ is a vector space, we have a bijection
  \begin{equation}
  \label{eq:localsystem}
  \{\text{Local systems $F$ with $F_x\cong L$}\}\leftrightarrow \{\text{homomorphisms $\pi_1(\X,x)\to GL(L)$}\}.
  \end{equation}
  In particular, if $\X$ is simply connected every local system is a constant sheaf.\par
    A sheaf $F$ on $\X$ is constructible if there exists a stratification $\X=\bigsqcup \X_{i}$ with each $\X_i$ a smooth locally closed subscheme such that $F|_{\X_i}$ is a local system on $\X_i$ for every $i$.\par
  As usual, we denote by $D^b_c(\X)$ the bounded derived category of sheaves of complex vector spaces on $\X$ with  constructible cohomology. Its objects $\mathcal{F}$ are bounded complex of sheaves of vector spaces, that is
  \[
  \mathcal{F}=\ldots 0\to 0\to F_{n}\to F_{n+1}\to\ldots\to  F_m\to 0\to 0\ldots
  \]
  for some integers $n<m$. If $F$ is a sheaf on $\X$, we view $F$ as the complex $0\to F\to 0$. As usual, we write $\mathcal{F}[n]$ for the shift of $\mathcal{F}$ to the left, $H^i(\mathcal{F})$ for its $i$-th cohomology sheaf, and $\mathcal{H}^i(F)$ for its $i$-th hypercohomology vector space. Moreover, if $L$ is a vector space, we write $\mathcal{F}\otimes L$ for the complex
  \[
  \ldots 0\to 0\to F_{n}\otimes L\to F_{n+1}\otimes L\to\ldots\to  F_m\otimes L\to 0\to 0\ldots.
  \]
  If $L=\bigoplus {L^n}$ is a graded vector space, we define
\[
\mathcal{F}\otimes L:=\bigoplus F\otimes L^n [-n].
\]
This is the same as $\mathcal{F}\otimes \mathcal{L}$, where 
\[
\mathcal{L}=\ldots \to L^{j}_\X\to L^{j+1}_\X\to \ldots
\]
is the complex of constant sheaves with every differential map being the zero map.  \par

   Let $f\col \X\to \Y$ be a morphism. Below we list the usual functors between the categories $D^b_c(\X)$ and $D^b_c(\Y)$. The first three are the derived functors of the usual functors between $\Shf(\X)$ and $\Shf(\Y)$.
   \begin{enumerate}
       \item The (derived) direct image $f_*\col D^b_c(\X)\to D^b_c(\Y)$.
       \item The (derived) direct image with proper supports $f_!\col D^b_c(\X)\to D^b_c(\Y)$. There exists a natural transformation $f_!\to f_*$ that is an isomorphism if $f$ is proper.
       \item The (derived) inverse image $f^*\col D^b_c(\Y)\to D^b_c(\X)$. 
       \item The extraordinary inverse image $f^!\col D^b_c(\Y)\to D^b_c(\X)$. The functor $f^!$ is the right adjoint of $f_!$.
       \item The Verdier duality functor $\mathcal{D}_\X\col D^b_c(\X)\to D^b_c(\X)$.
   \end{enumerate}
For simplicity we will write $f_*$ instead of the usual notation $Rf_*$ for the derived functor, and use $R^0f_*\col \Shf(\X)\to \Shf(\Y)$ for the direct image functor in the category of sheaves. Here, $R^jf_*(\mathcal{F}):=H^j(f_*\mathcal{F})$ for $\mathcal{F}\in D^b_c(\X)$, and similarly for $f_!$.

Below we list some properties of these functors:
\begin{enumerate}
    \item $H^j(\mathcal{F}[m])=H^{j+m}(\mathcal{F})$
    \item $R^jf_*(\mathcal{F}[m])=R^{j+m}f_*(\mathcal{F})$
    \item (\cite[Section 3.6]{KirwanWoolf}) If $f\col \X\to \spec(\mathbb{C})$, we have that $\mathcal{H}^j(\mathcal{F})=R^jf_*(\mathcal{F})$.
    \item (\cite[Page 92 item 13]{GoreskyMacphersonII})  If we have a fiber diagram
    \[
    \begin{tikzcd}
    \X_2\ar[d,"f^2"]\ar[r,"g_\X"] & \X_1\ar[d,"f^1"]\\
    \Y_2\ar[r,"g_\Y"] & \Y_1
    \end{tikzcd}
    \]
    then $g_\Y^*f^1_*\mathcal{F}\cong f^{2}_*g_\X^*\mathcal{F}$ for any $F\in D^b_c(\X)$.
\end{enumerate}

If $\X$ is smooth of complex dimension $n$, we have that its singular homology can be recovered as the hypercohomology of the trivial sheaf $\mathbb{C}_\X[n]$:
\[
H^i(\X,\mathbb{C})=\mathcal{H}^{i-n}(\mathbb{C}_\X[n]).
\]
Since usual singular homology does not behave well for singular varieties, we resort to perverse sheaves and intersection homology.\par

A complex $\mathcal{F}$ in $D^b_c(\X)$ is a perverse sheaf if
\begin{align*}
    \dim_{\mathbb{C}} \Supp(H^{-i}(\mathcal{F}))&\leq i\\
    \dim_{\mathbb{C}} \Supp(H^{-i}(\mathcal{D}_\X(\mathcal{F})))&\leq i.
\end{align*}
Here we follow Beilinson-Bernstein-Deligne-Gabber indexing scheme.  The category of perverse sheaves over $\X$ is a full subcategory of $D^b_c(\X)$ and, moreover, an abelian category. We note that if $\X$ is smooth of dimension $n$ then $\mathbb{C}_\X[n]$ is a perverse sheaf.
If $\U\subset \X$ is a smooth open dense subvariety of $\X$, then there exists a unique simple perverse sheaf $IC_\X$ such that $IC_\X|_\U=\mathbb{C}_\U[n]$. More generally, for any local system $L$ on any smooth open dense subvariety $\U\subset \X$, there exists a unique simple perverse sheaf $IC_\X(L)$ such that $IC_\X(L)|_\U=L[n]$. This is called the \emph{intermediate extension} of $L$ (see \cite[Definition 1.4.22 and Corollaries 1.4.24 and 1.4.25]{BBD}). Alternatively, if we write $j\col \U\to \X$ for the inclusion, we have that $IC_\X(L)$ is the image of the natural morphism
\[
j_!(L)\to j_*(L).
\]
 Moreover, we have that if $\U\subset \X$ is an open immersion then $IC_\X|_\U=IC_\U$.\par 
 If $\X$ is compact, the \emph{intersection cohomology of $\X$ with coefficients in $\mathbb{C}$}, denoted by $IH^i(\X)$, is the $(i-n)$-th hypercohomology $\mathcal{H}^{i-n}(IC_\X)$. Moreover, we let $IH^*(\X)=\bigoplus_j IH^j(\X)$.

  If $i\col \Y\hookrightarrow \X$ is a closed immersion, $\U\subset \Y$ an open dense subvariety, and $L$ a local system on $\Y$, we will abuse notation and write $IC_\Y(L)$ for $i_*(IC_\Y(L))$. We can now enunciate the decomposition theorem.

\begin{theorem}[BBDG]
\label{thm:decomp}
Let $\X$ be an smooth algebraic variety of dimension $n$ and $f\col \X\to \Y$ a proper map to a possibly singular algebraic variety $\Y$. Then there exists a stratification of $\Y=\bigcup \Y_i$, a local system $\mathcal{L}_{\Y_i,b}$ on each stratum $\Y_i$, and an integer $b$ such that there exists an isomorphism
\[
f_*(\mathbb{C}_\X[n])\cong\bigoplus_{\Y_i,b}IC_{\overline{\Y_i}}(\mathcal{L}_{S,b})[-b].
\]
\end{theorem}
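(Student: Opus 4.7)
This is the Beilinson--Bernstein--Deligne--Gabber decomposition theorem, one of the deepest results in the theory of perverse sheaves. My proof plan would follow the Hodge-theoretic approach of de Cataldo--Migliorini, since we are working over $\mathbb{C}$ and this avoids the reduction to positive characteristic used in the original \cite{BBD}. The argument splits naturally into three steps: reduction to the projective case, a Lefschetz-type decomposition that separates the perverse cohomology degrees, and the semisimplicity of each perverse cohomology sheaf.

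First, by Chow's lemma and proper base change, I would reduce to the case where $f\colon \mathcal{X}\to \mathcal{Y}$ is projective; choose a relative very ample line bundle giving a relative Kähler class $\eta$, viewed as a map $\eta\colon f_*(\mathbb{C}_\mathcal{X}[n]) \to f_*(\mathbb{C}_\mathcal{X}[n])[2]$. Next I would establish the Relative Hard Lefschetz theorem: cup product with $\eta^i$ induces an isomorphism of perverse cohomology sheaves
\[
\eta^i\colon {}^p H^{-i}\bigl(f_*\mathbb{C}_\mathcal{X}[n]\bigr) \;\xrightarrow{\sim}\; {}^p H^{i}\bigl(f_*\mathbb{C}_\mathcal{X}[n]\bigr).
\]
An argument of Deligne then shows that whenever the Lefschetz operator satisfies hard Lefschetz, the perverse Leray spectral sequence degenerates at $E_2$ and moreover the complex splits as a direct sum of its perverse cohomology sheaves, yielding
\[
f_*(\mathbb{C}_\mathcal{X}[n]) \;\cong\; \bigoplus_{i} {}^p H^{i}\bigl(f_*\mathbb{C}_\mathcal{X}[n]\bigr)[-i].
\]

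The third step is to decompose each ${}^pH^i(f_*\mathbb{C}_\mathcal{X}[n])$ as a direct sum of simple perverse sheaves. By the classification of simple perverse sheaves, every simple summand has the form $IC_{\overline{\mathcal{Y}_i}}(\mathcal{L})$ for a local system $\mathcal{L}$ on a smooth locally closed stratum $\mathcal{Y}_i$ of $\mathcal{Y}$; thus what must be shown is semisimplicity. One would choose a Whitney stratification of $f$, so that the perverse cohomology sheaves are cohomologically locally constant along the strata, and then argue that on each stratum the monodromy of the associated local system is semisimple, extending this to semisimplicity of the full perverse sheaf via an inductive argument over closures of strata.

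The main obstacle is precisely this semisimplicity statement. In the Hodge-theoretic approach it is proved by exhibiting a polarized Hodge structure on the cohomology of the fibers (using a simultaneous resolution and classical Hodge theory) and showing that the intersection form on cycles vanishing in the generic fiber is definite up to sign, which rules out non-split extensions. Alternatively, one may invoke M.~Saito's theory of pure Hodge modules, in which case the statement is essentially built into the formalism: pure Hodge modules form a semisimple abelian category and the direct image of a pure Hodge module under a projective morphism is pure, so the decomposition follows formally. Either route is substantial, and any honest proof of Theorem~\ref{thm:decomp} would have to engage with this semisimplicity in depth; for the applications in this paper, however, we only use the theorem as a black box.
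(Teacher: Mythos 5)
The paper does not prove Theorem~\ref{thm:decomp}; it states the decomposition theorem and cites \cite{BBD}, \cite{KirwanWoolf}, and \cite{GoreskyMacphersonII}, using it entirely as a black box (as you yourself anticipate in your last sentence). So there is no in-text argument to compare against.

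That said, your outline is an accurate high-level summary of the de Cataldo--Migliorini Hodge-theoretic proof: reduction to the projective case via Chow's lemma and proper base change; relative Hard Lefschetz for the cup product with a relative Kähler class; Deligne's lemma that Hard Lefschetz forces $E_2$-degeneration of the perverse Leray spectral sequence and a splitting $f_*(\mathbb{C}_\X[n])\cong\bigoplus_i {}^{p}H^i(f_*\mathbb{C}_\X[n])[-i]$; and finally semisimplicity of each ${}^{p}H^i$, which you rightly flag as the deep step. You are also correct that over $\mathbb{C}$ one can bypass the characteristic-$p$ weight argument of \cite{BBD} by either the polarization/definiteness argument of de Cataldo--Migliorini or by invoking Saito's pure Hodge modules. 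The only caveat is that this is a roadmap rather than a proof --- the reduction to the projective case, the relative Hard Lefschetz, and above all the semisimplicity each conceal substantial work (in particular, the passage from semisimplicity of monodromy on open strata to semisimplicity of the full perverse sheaf requires the vanishing and nearby cycle machinery, not just an ``inductive argument over closures''). For the purposes of this paper that is fine, since the result is only ever cited, not reproved.
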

\begin{Rem}
If the all the strata $\Y_i\subset \Y$ are simply connected, then the local systems are constant and we can write
\[
f_*(\mathbb{C}_\X[n])\cong\bigoplus_{\Y_i}IC_{\overline{\Y}_i}\otimes V_{\Y_i}.
\]
where $V_{\Y_i}$ is a graded vector space.
\end{Rem}
\begin{Rem}
The strata $\Y_i$ can be chosen as a stratification where the map $f$ is a topological fibration. See \cite[Theorem 2.1.1 (c)]{CatMig05}.
\end{Rem}
\begin{Rem}
\label{rem:locsys}
When $f\colon \X\to \mS$ is a smooth projective map of relative dimension $d$ with $\mS$ and $\X$ smooth, then 
\[
f_*(\mathbb{C}_\X)\simeq \bigoplus R^jf_*(\mathbb{C}_\X)[-j]
\]
and $R^jf_*(\mathbb{C}_\X)$ are semisimple local systems on $\mS$. This version of the decomposition theorem was proven by Deligne in \cite{DLef69} and \cite{Dhodge2}.\par

More generally if $f\col \X\to \mS$ is a proper topological fibration with $\mS$ smooth, we have that
\[
f_*(IC_\X)\cong\bigoplus R^jf_*(IC_\X)[-j]
\]
and $R^jf_*(IC_\X)$ are local systems on $\mS$. Indeed, since the claim is local on $\mS$, we can assume that $\X=\mS\times \Y$ and $f$ is the first projection. We consider the fiber diagram
\[
\begin{tikzcd}
\X=\mS\times \Y\ar[r,"p_2"]\ar[d,"f"]& \Y \ar[d, "g"]\\
\mS\ar[r,"h"] & \spec(\mathbb{C})
\end{tikzcd}
\]
By \cite[Theorem 5.4.2]{GoreskyMacphersonII} we have $IC_\X=p_2^*(IC_\Y)[-\dim \mS]$, so that
\begin{align*}
  f_*(IC_\X)&\cong f_*p_2^*(IC_\Y)[-\dim \mS]    \\
           &\cong h^*g_*(IC_\Y)[-\dim \mS]\\
           &\cong h^*(\bigoplus R^jg_*(IC_\Y)[-j-\dim \mS])\\
           &\cong \bigoplus h^*( R^jg_*(IC_\Y)[-j-\dim \mS])\\
           &\cong \bigoplus R^jf_*(p_2^*(IC_\Y)[-\dim \mS])[-j]\\
           &\cong \bigoplus R^jf_*(IC_\X)[-j]
\end{align*}
and from $R^jf_*(IC_\X)\cong h^*(R^jg_*(IC_\Y)[-\dim \mS])$ we have that $R^jf_*(IC_\X)$ is a constant sheaf. We used the fact that $f_*p_2^*=h^*g_*$, and that for each complex $\mathcal{F}$ over a point we have $\mathcal{F}\cong \oplus H^j(\mathcal{F})[-j]$.
\end{Rem}

\begin{proposition}
\label{prop:fibrations}Let  $f\col \X\to \mS$ and $g\col \Y\to \mS$ be topological fibrations over $\mS$, with $\mS$ and $\X$ smooth and let $h\col \X\to \Y$ be an $\mS$-morphism.
  \[
  \begin{tikzcd}
    \X \ar[r, "h"] \ar[rd, "f"] & \Y \ar[d, "g"]\\
     & \mS
  \end{tikzcd}
  \]
Moreover, assume that $h$ has a stratification $\Y_j$ such that $\overline{\Y_j}\to \mS$ is a fibration. Also, suppose that the local systems appearing in the decomposition theorem for $h$ are trivial, that is
  \[
  h_*(\mathbb{C}_\X)\cong \bigoplus IC_{\overline{\Y}_j}\otimes L_j,
  \]
  where $L_j$ are graded vector spaces. Then for a point $s\in \mS$, we have
  \[
  H_*(\X_s)\cong \bigoplus IH_*(\Y_{i,s})\otimes (L_i[-n+\dim \Y_i]).
  \]
 Moreover the action of $\pi_{\mS,s}$ on both sides is compatible with this isomorphism.
 \end{proposition}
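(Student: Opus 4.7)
The plan is to apply $g_*$ to the given decomposition of $h_*(\mathbb{C}_\X)$, then invoke Remark \ref{rem:locsys} to further decompose each factor $g_*(IC_{\overline{\Y}_j})$ into shifts of local systems on $\mS$, and finally read off the fiberwise statement by taking stalks at $s$ via proper base change.

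Concretely, since $f = g \circ h$, applying $g_*$ to the hypothesis gives
\[
f_*(\mathbb{C}_\X) \;\cong\; g_*(h_*(\mathbb{C}_\X)) \;\cong\; \bigoplus_j g_*(IC_{\overline{\Y}_j}) \otimes L_j.
\]
For each stratum, the hypothesis that $\overline{\Y_j} \to \mS$ is a topological fibration over the smooth base $\mS$ lets us apply Remark \ref{rem:locsys} to obtain
\[
g_*(IC_{\overline{\Y}_j}) \;\cong\; \bigoplus_k R^k g_*(IC_{\overline{\Y}_j})[-k],
\]
where each $R^k g_*(IC_{\overline{\Y}_j})$ is a local system on $\mS$.

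Now I would take stalks at $s \in \mS$. By proper base change, the stalk of $f_*(\mathbb{C}_\X)$ recovers $H^*(\X_s)$, while the stalk of $R^k g_*(IC_{\overline{\Y}_j})$ is the corresponding hypercohomology of $IC_{\overline{\Y}_j}$ restricted to the fiber $\overline{\Y}_{j,s}$. The Goresky--MacPherson product formula recalled in Remark \ref{rem:locsys} identifies $IC_{\overline{\Y_j}}$ locally with the pullback of $IC_{\overline{\Y}_{j,s}}$ shifted by $\dim \mS$, so this stalk is $IH^*(\overline{\Y}_{j,s})$ up to a shift determined by $\dim \mS$ and $\dim \Y_j$. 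Combining everything and collecting the shifts yields the asserted isomorphism $H_*(\X_s) \cong \bigoplus IH_*(\Y_{j,s}) \otimes (L_j[-n + \dim \Y_j])$. Compatibility with the $\pi_1(\mS, s)$-action is then automatic, since every isomorphism above lives at the level of complexes of sheaves on $\mS$ and the monodromy on fibers of a local system is exactly the action encoded by these decompositions.

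The main technical obstacle is bookkeeping with the degree shifts: one must keep straight the shift $[\dim \mS]$ relating $IC_{\overline{\Y_j}}$ to $IC_{\overline{\Y}_{j,s}}$ via the local product structure, together with the shift $[n]$ implicit in passing between $h_*(\mathbb{C}_\X)$ and the perverse normalization $h_*(\mathbb{C}_\X[n])$ used in Theorem \ref{thm:decomp}, so that the final shift $[-n+\dim \Y_j]$ in the statement comes out correctly. A secondary point is to confirm that $\overline{\Y_j} \to \mS$ is proper, which is used implicitly both for proper base change and for invoking Remark \ref{rem:locsys}; this will hold in the applications of interest in the paper.
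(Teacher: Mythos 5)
Your proposal follows the same route as the paper's own proof: apply $g_*$ to the hypothesized decomposition of $h_*(\mathbb{C}_\X)$, use Remark \ref{rem:locsys} to recognize $R^k g_*(IC_{\overline{\Y}_j})$ as local systems on $\mS$, take stalks at $s$, and bookkeep the shifts to land on $H^*(\X_s)\cong\bigoplus IH^*(\Y_{i,s})\otimes L_i[-n+\dim\Y_i]$, with $\pi_1(\mS,s)$-compatibility following because the whole identification is an isomorphism of local systems. The only cosmetic difference is that you decompose $g_*(IC_{\overline{\Y}_j})$ into shifted local systems before taking stalks, whereas the paper computes the cohomology sheaves $R^j f_*(\mathbb{C}_\X)$ directly and then expands; these are the same manipulation.
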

  \begin{proof}
  From
  \[
  h_*(\mathbb{C}_\X)\cong \bigoplus IC_{\overline{\Y}_j}\otimes L_j,
  \]
  we see that
    \[
  f_*(\mathbb{C}_\X)\cong g_*(\bigoplus IC_{\Y_j}\otimes L_j).
  \]
  Taking cohomology, we have
  \begin{equation}
      \label{eq:RjC}
    \begin{aligned}
  R^jf_*(\mathbb{C}_\X)=H^j(f_*(\mathbb{C}_\X))&\cong H^j(g_*(\bigoplus_i IC_{\Y_i}\otimes L_i))\\
   &\cong \bigoplus_i H^j(g_*(IC_{\Y_i})\otimes L_i)\\
   &\cong \bigoplus_i \bigoplus_k H^j(g_*(IC_{\Y_i})\otimes L_i^k[-k])\\
   &\cong \bigoplus_i \bigoplus_k H^{j-k}(g_*(IC_{\Y_i}))\otimes L_i^k\\
   &\cong \bigoplus_i \bigoplus_k R^{j-k}g_*(IC_{\Y_i})\otimes L_i^k
  \end{aligned}
  \end{equation}
  Where both sides are local systems by Remark \ref{rem:locsys}. Taking the stalks at the point $s\in \mS$, we have
  \[
  H^{n+j}(\X_s)=(R^jf_*(\mathbb{C}_\X)_s)\cong \bigoplus_i \bigoplus_k (R^{j-k}g_*(IC_{\Y_i}))_s\otimes L_i^k \cong\bigoplus_i \bigoplus_k IH^{\dim \Y_i+j-k}(\Y_{i,s})\otimes L_i^k,
  \]
  which implies
  \[
  H^*(\X_s)\cong \bigoplus IH^*(\Y_{i,s})\otimes (L_i[-n+\dim \Y_i]).
  \]
  The action of $\pi_1(\mS,s)$ on both sides is compatible with the isomorphism because Equation \eqref{eq:RjC} is an isomorphism of local systems (which are identified with representations of $\pi_1(\mS,s)$).
  
  \end{proof}

 \begin{proposition}
 \label{prop:smooth_pullback_perverse}
 If $f\col \X\to \mS$ is a smooth map of relative dimension $d$ and $\F$ a simple perverse sheaf on $\mS$, then $f^*(\F)[d]$ is a simple perverse sheaf on $\X$. In particular, a simple perverse sheaf $\F$ on $\mS$ is a summand of a complex $\F'$ if and only if $f^*(\F)[d]$ is a summand of $f^*(\F')$.
 \end{proposition}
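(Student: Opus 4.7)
The plan is to reduce to the case where $\F = IC_{\overline{Y}}(L)$ for some smooth locally closed irreducible subvariety $Y\subset \mS$ and some simple local system $L$ on $Y$, since every simple perverse sheaf has this form, and then to identify the pullback with a single IC sheaf on $\X$.

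First I would establish that $f^*[d]$ is t-exact for the perverse t-structure. The bounds on dimensions of supports of the cohomology sheaves translate directly via the equality $\dim f^{-1}(Z)=\dim Z+d$ for any subvariety $Z\subset \mS$, applied both to $\F$ and to its Verdier dual using the smooth-case identity $f^{!}\cong f^{*}[2d]$. So $f^*(\F)[d]$ is perverse. Writing $j\col Y\hookrightarrow \overline{Y}$ for the inclusion of the smooth open locus and recalling that $IC_{\overline{Y}}(L)$ is the image of the natural map $j_!(L[\dim Y])\to j_*(L[\dim Y])$ inside perverse sheaves, the compatibility of $j_!$, $j_*$, and images with smooth pullback (up to shift by $d$) yields a canonical identification
\[
f^*(\F)[d]\;\cong\;IC_{f^{-1}(\overline{Y})}\bigl(f^*L|_{f^{-1}(Y)}\bigr).
\]
In the situations relevant to this paper the fibers of $f$ are connected, so $f^*L$ remains a simple local system on the smooth variety $f^{-1}(Y)$, and consequently $IC_{f^{-1}(\overline{Y})}(f^*L)$ is simple in the perverse category on $\X$.

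For the \emph{in particular} clause, the forward direction is immediate from additivity of $f^*$. For the converse, the complexes $\F'$ to which the statement will be applied decompose as direct sums $\bigoplus_i \F_i[n_i]$ with each $\F_i$ simple perverse (for instance, via the decomposition theorem applied in the ambient context), hence $f^*(\F')$ decomposes as $\bigoplus_i f^*(\F_i)[d][n_i-d]$, a direct sum of shifts of simple perverse sheaves by the first part. Since $f^*[d]$ sends non-isomorphic simple perverse sheaves to non-isomorphic ones (once more using connectedness of fibers, so that distinct $(Y,L)$ give distinct $(f^{-1}(Y),f^*L)$), matching multiplicities in the two decompositions shows that $\F$ is a summand of $\F'$ if and only if $f^*(\F)[d]$ is a summand of $f^*(\F')$. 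The principal technical obstacle is the t-exactness of $f^*[d]$, which ultimately rests on smooth base change together with the support estimates above; once this is in place, the remaining steps are formal compatibilities of intermediate extensions with smooth pullback.
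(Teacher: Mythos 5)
The paper's own ``proof'' is a one-line citation to \cite[4.2.5 and 4.2.6]{BBD}; you have instead sketched the argument that underlies that citation, and the outline is essentially sound. The t-exactness of $f^*[d]$ via the support estimate $\dim f^{-1}(Z)=\dim Z+d$ together with $f^!\cong f^*[2d]$ for the dual support condition is correct, and the identification $f^*IC_{\overline{Y}}(L)[d]\cong IC_{f^{-1}(\overline{Y})}(f^*L)$ via compatibility of intermediate extension with smooth pullback is the right mechanism. You are also right to flag that the statement as printed is silently assuming that $f$ has connected fibers: without this, $f^*L$ need not remain simple (the monodromy image can shrink since $\pi_1(f^{-1}(Y))\to\pi_1(Y)$ need not be surjective), and indeed BBD 4.2.5/4.2.6 carry that hypothesis. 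In the paper's applications ($\hbs_\si^{rs}\to\Omega_w$ and its relatives) the fibers are connected, so this is a genuine omission in the paper's phrasing that your argument makes visible.

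Two small points deserve sharpening. First, your claim that distinct $(Y,L)$ yield distinct $(f^{-1}(Y),f^*L)$ tacitly also needs $f$ to be surjective onto the relevant strata, not only to have connected fibers; otherwise $f^{-1}(Y_1)=f^{-1}(Y_2)$ can fail to distinguish $Y_1$ from $Y_2$. Second, for the converse of the ``in particular'' clause, your multiplicity-matching argument only treats semisimple complexes $\F'=\bigoplus_i\F_i[n_i]$; you acknowledge this, and it suffices for every application in the paper (which invokes the proposition only after the decomposition theorem), but it does not prove the proposition as literally stated for an arbitrary $\F'$. The clean way to close this gap, and what BBD 4.2.5 actually supplies, is full faithfulness of $f^*[d]$ on perverse sheaves: a projector onto a direct summand $f^*\F[d]$ of $f^*\F'$ must descend to a projector onto $\F$ in $\F'$ once one knows $\operatorname{Hom}(\F,\F')\xrightarrow{\sim}\operatorname{Hom}(f^*\F[d],f^*\F'[d])$. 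Your outline of t-exactness is the first half of the proof of that full faithfulness; mentioning this bridge would make the argument self-contained rather than special to the semisimple case.
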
 
 \begin{proof}
 This is \cite[4.2.5 and 4.2.6]{BBD}.
 \end{proof}
  
\subsection{Local invariant cycle map}
We finish this section on perverse sheaves with a few results about invariant cycles. 

    Let $f\col \X\to \Y$ be a smooth projective morphism between smooth varieties and $y\in \Y$. The global invariant cycle map (see \cite[Theorem 4.1.1]{Dhodge2}) says that we have a surjective map
    \[
    H^*(\X,\mathbb{C})\to H^*(\X_y,\mathbb{C})^{\pi_1(\Y,y)}.
    \]
    There is a local version, called the local invariant cycle map, which we now state, see [\cite[Corollary 6.2.9]{BBD}, \cite{CatMig09} and \cite[Theorem 54]{BrosnanChow}.
    \begin{theorem}
    \label{thm:invmap}
    Let $f\col \X\to \Y$ be a proper surjective map of algebraic varieties with $\X$ smooth. Let $\U\subset \Y$ be a Zariski open set where the restriction $f^{-1}(\U)\to \U$ is a smooth morphism. Let $y\in \Y\setminus \U$ and $B_y$ be a small Euclidean ball centered at $y$ and $y_0\in B_y\cap \U$. Then $H^*(\X_y,\mathbb{C})=H^*(f^{-1}(B_y),\mathbb{C})$ and there is a surjective map
    \[
    H^*(\X_y,\mathbb{C})\to H^*(\X_{y_0},\mathbb{C})^{\pi_1(B_y\cap \U,y_0)}.
    \]
    \end{theorem}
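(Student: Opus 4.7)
The equality $H^*(\X_y)=H^*(f^{-1}(B_y))$ I would get from proper base change combined with a Thom-Mather style retraction: since $f$ is proper, picking a Whitney stratification of $\Y$ adapted to $f$ and taking $B_y$ sufficiently small, Thom's first isotopy lemma produces a deformation retraction $f^{-1}(B_y)\simeq \X_y$. Sheaf-theoretically this is the chain of identifications $H^*(f^{-1}(B_y))=\mathcal{H}^*(B_y,f_*\mathbb{C}_\X)=(f_*\mathbb{C}_\X)_y=H^*(\X_y)$, the middle equality using constructibility on $B_y$ small, the last using proper base change.

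For the surjection statement, I would first show the image of $r\colon H^*(f^{-1}(B_y))\to H^*(\X_{y_0})$ lies in the invariants. Factoring $r$ through the restriction to $f^{-1}(B_y\cap\U)$, the morphism $f^{-1}(B_y\cap\U)\to B_y\cap\U$ is smooth and proper, hence a $C^\infty$-fiber bundle over a connected open manifold (connectedness of $B_y\cap\U$ follows from $\Y\setminus\U$ having real codimension at least two). By Deligne's degeneration theorem, its Leray spectral sequence degenerates at $E_2$, so the image of $H^*(f^{-1}(B_y\cap\U))\to H^*(\X_{y_0})$ is exactly $H^*(\X_{y_0})^{\pi_1(B_y\cap\U,y_0)}$, and the image of $r$ is contained in this invariant subspace.

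The remaining, and harder, surjectivity direction is where I would invoke the decomposition theorem (Theorem \ref{thm:decomp}) applied to $f_*(\mathbb{C}_\X[n])$, giving
\[
f_*(\mathbb{C}_\X[n])\cong \bigoplus_\alpha IC_{\overline{\Y_\alpha}}(\mathcal{L}_\alpha)[-b_\alpha].
\]
Taking stalks at $y$ and $y_0$ splits the surjectivity question summand by summand. Summands whose support does not meet a neighborhood of $y_0$ in $\U$ contribute nothing to $H^*(\X_{y_0})$ and can be discarded. For the remaining summands, the smoothness and properness of $f|_{f^{-1}(\U)}$ force (via Deligne again) that their restriction to $\U$ is a shifted semisimple local system. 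I would then use the defining property of the intermediate extension, $IC=\operatorname{Im}(j_!\to j_*)$, combined with the perverse support bounds: the canonical inclusion $IC_{\overline{\Y_\alpha}}(\mathcal{L}_\alpha)\hookrightarrow (j_\alpha)_*(\mathcal{L}_\alpha[\dim\Y_\alpha])$ induces, on stalks at $y$, a surjection in the extremal admissible degree onto $H^0(B_y\cap\Y_\alpha,\mathcal{L}_\alpha)=(\mathcal{L}_\alpha)_{y_0}^{\pi_1(B_y\cap\U,y_0)}$. Assembling these summand-wise surjections recovers the full invariant subspace.

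\emph{Main obstacle.} The delicate step is the stalk computation for the intermediate extension: verifying that $IC_{\overline{\Y_\alpha}}(\mathcal{L}_\alpha)$, at a boundary point $y$, hits exactly the monodromy invariants of $\mathcal{L}_\alpha$ in the correct cohomological degree. This is standard but the perverse-degree bookkeeping (keeping track of the shifts $[-b_\alpha]$, the dimensions of the strata $\Y_\alpha$ containing $y$, and the exact range in which $j_{!*}\hookrightarrow j_*$ is an isomorphism on cohomology sheaves) is the main technical burden, and is what forces the use of the full decomposition theorem rather than a more elementary mixed Hodge argument.
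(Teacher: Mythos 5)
The paper does not actually give a proof of Theorem~\ref{thm:invmap}: it records the statement with citations to \cite[Corollary 6.2.9]{BBD}, \cite{CatMig09}, and \cite[Theorem 54]{BrosnanChow}. Your proposal is a correct reconstruction of the standard decomposition-theorem argument run in those sources, so you are in effect re-deriving the cited result rather than diverging from the paper. Two remarks on the details. First, the containment of the image of $r$ inside the invariants needs no degeneration theorem at all: the restriction $H^*(f^{-1}(B_y\cap\U))\to H^*(\X_{y_0})$ always factors through the Leray edge map to $H^0(B_y\cap\U,R^*f_*\mathbb{C}_{\X})$, which is the monodromy-invariant subspace since $H^0$ of a local system on a connected base is its invariants; Deligne's degeneration is the statement that this edge map is surjective, but you cannot conclude surjectivity of $r$ from it because $H^*(f^{-1}(B_y))\to H^*(f^{-1}(B_y\cap\U))$ is in general not onto, exactly as you note. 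Second, what you flag as the ``main obstacle'' is in fact the cleanest step: for each summand $IC_{\overline{\Y_\alpha}}(\mathcal{L}_\alpha)$, writing $m_\alpha=\dim\overline{\Y_\alpha}$, the perverse support condition gives $\mathcal{H}^{i}(IC_{\overline{\Y_\alpha}}(\mathcal{L}_\alpha))=0$ for $i<-m_\alpha$, and the adjunction unit $IC_{\overline{\Y_\alpha}}(\mathcal{L}_\alpha)\to (j_\alpha)_*(\mathcal{L}_\alpha[m_\alpha])$ is an isomorphism on $\mathcal{H}^{-m_\alpha}$, so $\mathcal{H}^{-m_\alpha}(IC_{\overline{\Y_\alpha}}(\mathcal{L}_\alpha))\cong R^0(j_\alpha)_*\mathcal{L}_\alpha$; its stalk at $y$ is therefore equal to, not merely surjects onto, $(\mathcal{L}_\alpha)_{y_0}^{\pi_1(B_y\cap\Y_\alpha,y_0)}$. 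Since each shifted summand $IC_{\overline{\Y_\alpha}}(\mathcal{L}_\alpha)[-b_\alpha]$ contributes to $H^*(\X_{y_0})$ in a single cohomological degree, the summand-by-summand surjectivity is immediate from this; the genuine difficulty of the theorem is entirely absorbed in Theorem~\ref{thm:decomp} itself, whose semisimplicity output is what allows you to discard the boundary-supported summands.
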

    We have the following characterization, due to Brosnan-Chow, of the cases where the local invariant cycle map is an isomorphism.
    
\begin{theorem}[{\cite[Theorems 57 and 102]{BrosnanChow}}]
\label{thm:invmappalin}
Let $f\col \X\to \Y$ be a projective morphism of reduced schemes of finite type over $\mathbb{C}$ with $\Y$ equidimensional. Let $y\in \Y$ be a closed point and define $d:=\dim \X-\dim \Y$.
Then the local invariant cycle maps are isomorphisms for all $i$ if and only if the fiber $\X_y$ have palindromic Betti numbers, that is, $\dim H^i(\X_y)=\dim H^{2d-i}(\X_y)$ for every $i=0,\ldots, n$.
\end{theorem}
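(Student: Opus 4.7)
My plan is to prove this palindromicity criterion by applying the decomposition theorem to $f$, identifying the local invariant cycle map with the projection onto the summand coming from the open smooth locus, and then using the perverse support and cosupport conditions to translate the vanishing of all other summands into palindromicity of the fiber.

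First, apply the decomposition theorem (Theorem \ref{thm:decomp}) to $f_*(\mathbb{C}_\X[\dim \X])$ (when $\X$ is singular one passes through a resolution and extracts the main summand via semisimplicity) to obtain
\[
f_*(\mathbb{C}_\X[\dim \X]) \;\cong\; \bigoplus_\alpha IC_{\overline{\Y_\alpha}}(\mathcal{L}_\alpha)\otimes V_\alpha
\]
along a Whitney stratification $\Y = \bigsqcup \Y_\alpha$ refining the open locus $\U$ on which $f$ is smooth. Let $\alpha_0$ denote the index with $\Y_{\alpha_0} = \U$, so that $\mathcal{L}_0 := \mathcal{L}_{\alpha_0}$ is the semisimplification of $\bigoplus_j (R^jf_*\mathbb{C}_\X)|_\U[-j]$. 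Taking stalks at $y$ gives a (non-canonical) decomposition $H^*(\X_y) \cong \bigoplus_\alpha (IC_{\overline{\Y_\alpha}}(\mathcal{L}_\alpha))_y \otimes V_\alpha$.

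Second, identify the local invariant cycle map with the composition of the projection onto the $\alpha_0$-summand with the natural map from $(IC_\Y(\mathcal{L}_0))_y$ to the monodromy invariants of $\mathcal{L}_0$ on a punctured neighborhood of $y$. By the universal property of the intermediate extension this composite map equals the local invariant cycle map of Theorem \ref{thm:invmap} (this is the content of \cite[Corollary 6.2.9]{BBD} in Deligne's formulation). Consequently the local invariant cycle map is an isomorphism in every degree if and only if $(IC_{\overline{\Y_\alpha}}(\mathcal{L}_\alpha))_y = 0$ for every $\alpha \neq \alpha_0$ with $y \in \overline{\Y_\alpha}$. Hard Lefschetz on the smooth projective fibration $f|_\U$ then gives palindromicity of $H^*(\X_{y_0})^{\pi_1}$ centered at $d = \dim \X - \dim \Y$, and relative hard Lefschetz makes each graded space $V_\alpha$ palindromic; this immediately gives the reverse implication. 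For the forward implication, if $\alpha \neq \alpha_0$ and $\dim \overline{\Y_\alpha} < \dim \Y$, the perverse support condition forces $H^i((IC_{\overline{\Y_\alpha}}(\mathcal{L}_\alpha))_y) = 0$ for $i > -\dim \overline{\Y_\alpha}$, and Verdier duality gives the dual vanishing for costalks; after accounting for the shifts, the contribution of each such summand to $H^*(\X_y)$ is concentrated strictly outside the symmetric range about the middle degree $d$. Any non-zero such contribution therefore produces a non-palindromic term, and palindromicity of $H^*(\X_y)$ forces all of them to vanish.

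The main obstacle is ruling out cancellation between asymmetric contributions of several deep strata: \emph{a priori}, sums of individually non-palindromic terms coming from different $\alpha$ could combine to a palindromic total. I would resolve this by induction on $\dim \overline{\Y_\alpha}$, peeling off the deepest strata first and exploiting the \emph{strict} inequality in the perverse support condition at each step to produce an unavoidable excess in a definite degree below $d$ that cannot be cancelled by contributions of strictly smaller-dimensional strata. This stratum-by-stratum argument, combined with the semisimplicity in the decomposition theorem, is the essential content of Brosnan--Chow's proof; the extension to Theorem 102 (the projective rather than proper smooth case) adds a further layer where one uses the global hard Lefschetz on $\X$ to reduce to the situation where the above analysis applies.
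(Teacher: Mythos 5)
This statement is cited by the paper from Brosnan--Chow (\cite[Theorems 57 and 102]{BrosnanChow}) and is not proved in the paper, so there is no in-paper proof to compare against; I will therefore assess your sketch on its own terms.

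Your overall strategy (decomposition theorem, relative hard Lefschetz, support conditions, and a ``centering'' argument to rule out cancellation) is the right one and is close in spirit to what Brosnan--Chow actually do. However, there is a genuine gap in the middle step. You assert that the local invariant cycle map is an isomorphism if and only if $(IC_{\overline{\Y_\alpha}}(\mathcal L_\alpha))_y = 0$ for all $\alpha \neq \alpha_0$. This is not correct. In degree $i$, the LIC map is the projection of $H^i(\X_y) = \bigoplus_{\alpha,k} H^{\,i-\dim\X-k}\big(IC_{\overline{\Y_\alpha}}(\mathcal L_\alpha^k)\big)_y$ onto the single term with $\alpha = \alpha_0$, $k = i-d$, and stalk degree $j = -\dim\Y$ (whose value is exactly the local monodromy invariants). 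Injectivity therefore requires \emph{two} things to vanish: (a) all summands with $\overline{\Y_\alpha}\subsetneq\Y$, as you say, and also (b) all stalk cohomology of the $IC_\Y(\mathcal L_0)$-summand in degrees strictly above the bottom degree $-\dim\Y$. Condition (b) is not automatic: if $f$ is a small resolution of a variety $\Y$ that is not rationally smooth at $y$, then $f_*\mathbb{C}_\X[\dim\X]\cong IC_\Y$ with no other summands, so your criterion would declare the LIC map an isomorphism, but $H^*(\X_y)$ equals the (shifted) local intersection cohomology of $\Y$ at $y$, which is not concentrated in degree $0=2d$ and hence not palindromic; the LIC map is not an isomorphism in that case. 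You also state the support condition backwards: for $y$ in a stratum of dimension $e$ inside $\overline{\Y_\alpha}$, the stalk $H^j\big(IC_{\overline{\Y_\alpha}}(\mathcal L_\alpha)\big)_y$ vanishes outside the range $-\dim\overline{\Y_\alpha}\leq j\leq -e-1$, not for $j > -\dim\overline{\Y_\alpha}$.

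The resolution is that palindromicity of $H^*(\X_y)$ about $d$ kills both types of extra contributions at once, not just the $\alpha\neq\alpha_0$ ones. Each summand $(\alpha,j,k)$ contributes in degree $i = \dim\X + j + k$, and relative hard Lefschetz makes that contribution palindromic about the center $c = \dim\X + j$; since $j \geq -\dim\overline{\Y_\alpha} \geq -\dim\Y$ with equality only for $\alpha=\alpha_0$ in bottom degree, every extra contribution is a nonnegative palindromic sequence centered at some $c > d$. Examining the top degree $M$ at which the sum of all extra contributions is nonzero, palindromicity of $H^*(\X_y)$ about $d$ forces a nonzero value at $2d - M < d$, but a nonnegative palindromic sequence centered at $c > d$ with support not exceeding $M$ must have support starting at $2c - M > 2d - M$, giving a contradiction. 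This single ``top-degree'' argument replaces your stratum-by-stratum induction and, more importantly, also handles the intermediate-degree stalks of $IC_\Y(\mathcal L_0)$, which your framing in terms of ``deeper strata'' cannot see.
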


We refer to \cite[Sections 5 and 6]{BrosnanChow} for a more detailed discussion about the local invariant cycle map.

\section{Algebraic groups and Lie algebras}
\label{sec:alggroups}
In this section we review some results about algebraic groups and Lie algebras necessary to the following sections. We refer the reader to \cite{MalleTesterman} and \cite{Hall} for more details.

Let $G$ be a connected, simply-connected, semisimple algebraic group. Let $T$, $B$, and $U$ be, respectively, a maximal torus, a Borel subgroup, and a unipotent subgroup of $G$ such that $B=TU=UT$. Also, we let $U^{-}$ be the opposed unipotent subgroup of $U$. As in the introduction we let $\flag=G/B$ be the flag variety of $G$.\par

We denote by $\g$, $\hl$, $\bl$, $\ul$, and $\ul^{-}$ the Lie algebras associated with $G$, $T$, $B$, $U$, and $U^{-}$. Any subalgebra $\hl'\subset \g$ that is the Lie algebra of a maximal torus $T'\subset G$ is called a \emph{Cartan subalgebra of $\g$}. Also, we write $\g_\hl$ for $\ul\oplus \ul^{-}$ (which depends only on $\hl$ and not on the choice of $\ul$).\par

Denote by $W=N_G(T)/T$ the Weyl group associated to $T$ and let $\Phi$ and $\Phi^+$ be the set of roots and  positive roots, respectively, associated with the pair $(T,B)$. Recall that there is a natural action of $W$ on $\Phi$. Let $\Delta\subset \Phi^+$ be the set of simple roots. We recall that there exists a natural bijection between $\Delta$ and the $S$ of simple reflections of $W$. For each simple root $\alpha\in \Delta$ we write $s_{\alpha}$ for the associated simple reflection in $S$ and, conversely, $\alpha_s$ for the simple root associated to $s\in S$. \par

   For each positive root $\gamma\in \Phi^+$ we define its height as $\hta(\gamma):=\sum_{\alpha\in \Delta} a_\alpha$ where $\gamma=\sum_{\alpha\in\Delta} a_{\alpha}\alpha$. We will, once and for all, choose representatives in $N_G(T)$ for each $w\in W$ and call it $\dw$. Moreover, for $w\in W$, we let $\Phi(w):=\{\gamma\in \Phi^+;w^{-1}(\gamma)\in \Phi^-\}$.\par
    
    For each $\gamma \in \Phi\setminus\{0\}$, we denote by $\ul_\gamma$ the associated root subspace in $\g$ and by $U_\gamma$ the unipotent subgroup of $G$ with Lie algebra $\ul_\gamma$.         For every ordering of the roots in $\Phi^{+}$, we  have that
      \begin{equation}
      \label{eq:Uprod}
      U=\prod_{\gamma\in \Phi^{+}} U_{\gamma}.
      \end{equation}
       If $w\in W$, we define $\ul^w:=\bigoplus_{\gamma\in \Phi(w)} \ul_\gamma$ and $\ul_w:=\bigoplus_{\gamma\in \Phi^+\setminus \Phi(w)} \ul_\gamma$ and let $U^w$ and $U_w$ be the associated unipotent subgroups. Alternatively, we have that $U^w=U\cap \dw U^{-}\dw^{-1}$ and $U_w=U\cap \dw U \dw^{-1}$.
    
       The algebraic group $G$ has a stratification $G=\bigsqcup_{w\in W} B\dw B$ in Bruhat cells $B\dw B=U^w\dw B$, and correspondingly, the flag variety $\flag=G/B$ is stratified by the Schubert strata $U^w\dw B/B$.\par
       
       Given a simple reflection $s$ of $W$ and its associated simple root $\alpha_s\in \Delta$, we denote by $P_s$ the parabolic subgroup associated to $s$, that is the subgroup generated by $B$ and $U_{-\alpha_s}$.  It is well known that 
       \begin{equation}
           \label{eq:PsB}
           P_s=B\sqcup B\dot{s}B\quad\text{ and }\quad P_s=\dot{s}B\sqcup \dot{s}B\dot{s}B.
       \end{equation}
      More generally, if $J\subset S$, we define the parabolic subgroup $P_J$ as the subgroup generated by $B$ and $U_{-\alpha_s}$ for $s\in J$. The parabolic subgroup also has a Bruhat decomposition given by $P_J=\bigsqcup_{w\in W_J}B\dw B$. Here, $W_J$ is the subgroup of $W$ generated by $J$. Moreover, we define $\Phi_J\subset \Phi$ as the subset of roots that are linear combinations of the roots in $J$. We also write $G_J$ for the subgroup of $G$ generated by $T, U_{\pm\alpha}$, and $s_\alpha\in J$.  \par

    A element $g\in G$ is \emph{regular} if its centralizer $Z_g(G)$ has the same rank as $T$. We denote by $G^r$ the locus of regular elements, which is an open subset of $G$. Recall that every element $g\in G$ can be written uniquely as $g=g_sg_u=g_ug_s$ where $g_s$ is semisimple and $g_u$ is unipotent, in which case $g$ is regular if and only if $g_u$ is regular as an element of $Z_{g_s}(G)$.  We write $G^{rs}$ for the set of regular semisimple elements of $G$, also an open subset of $G$. Analogously we write $B^r$ and $B^{rs}$ for the regular and regular semisimple locus of $B$, and $T^r$ for the regular locus of $T$.
    \begin{Exa}
     \label{exa:Uw}
      If $G=GL_n(\mathbb{C})$, we can take $T$, $B$, $U$, and $U^{-}$ to be the subgroups of diagonal matrices, upper triangular matrices, upper triangular matrices with all entries in its diagonal equal to $1$, and lower triangular matrices with all entries in its diagonal equal to $1$, respectively. In this case $W=S_n$, and for each $w\in S_n$, $\dw$ can be chosen as the permutation matrix given by $\dw e_i=e_{w(i)}$, where $e_i$ is the canonical basis of $\mathbb{C}^n$.\par
        We have that $\g=\text{Mat}_n(\mathbb{C})$, the space of $n\times n$ matrices with complex entries, and $\hl$, $\bl$, $\ul$, and $\ul^{-}$ are the subspaces of $\g$ of diagonal matrices, upper triangular matrices, upper triangular matrices with all entries in its diagonal equal to $0$, and lower triangular matrices with all entries in its diagonal equal to $0$. The Lie bracket of $\g$ is the usual commutator $[X,Y]=XY-YX$.\par
       
      The set of roots $\Phi$ lies inside the subspace $x_1+x_2+\ldots +x_n=0$ of $\mathbb{R}^n$ and its elements are $\gamma_{i,j}=e_i-e_j$, for $i,j=1,\ldots, n$. Moreover we have that $\Phi^{+}=\{\gamma_{i,j}\in \Phi, i<j\}$ and $\Delta=\{\gamma_{i,i+1}; i=1,\ldots, n\}$, where the identification between $\Delta$ and $S$ is given by $\gamma_{i,i+1}\to (i,i+1)$. Since $\gamma_{i,j}=\gamma_{i,i+1}+\gamma_{i+1,i+2}+\ldots+\gamma_{j-1,j}$, we have that $\hta(\gamma_{i,j})=j-i$.\par
      The unipotent subgroup $U_{\gamma_{i,j}}$ is the set of matrices of the form $I+\lambda E_{i,j}$, where $E_{i,j}$ is the matrix with $1$ in the entry $(i,j)$ and $0$ everywhere else, while the nilpotent subspace $\ul_{\gamma_{i,j}}$ is the set of matrices of the form $\lambda E_{i,j}$.\par

      For $w\in W$, we have that the action of $W$ on $\Phi$ is given by $w(\gamma_{i,j})=\gamma_{w(i),w(j)}$. This means that for each $\gamma_{i,j}\in\Phi^{+}$ we have that $w^{-1}(\gamma_{i,j})\in \Phi^{-}$ if and only if $w^{-1}(i)>w^{-1}(j)$, or equivalently, $\ell(w^{-1}(j,i))<\ell(w^{-1})$.\par
      If $n=4$ and  $w=4213$, then $\Phi(w)=\{\gamma_{1,2},\gamma_{1,4},\gamma_{2,4},\gamma_{3,4}\}$, hence
       \[
       U^w=\left\{\left(\begin{array}{cccc}
       1 & * &0 & *\\
       0 & 1 &0 & *\\
       0 & 0 & 1 & *\\
       0 & 0 & 0 &1
        \end{array}\right)\right\}
        \quad\text{ and }\quad 
        U_w=\left\{\left(\begin{array}{cccc}
       1 & 0 &* &0\\
       0 & 1 &* &0\\
       0 & 0 & 1 & 0\\
       0 & 0 & 0 &1
        \end{array}\right)\right\}.
       \]
      If $s=(2,3)$, then 
      \[
       P_s=\left\{\left(\begin{array}{cccc}
       1 & * &* & *\\
       0 & 1 &* &*\\
       0 & * & 1 & *\\
       0 & 0 & 0 &1
        \end{array}\right)\right\}.
      \]
    If we take $J=\{(1,2),(3,4)\}$, then 
    \[
    G_J=\left\{\left(\begin{array}{cccc}
       1 & * &0 & 0\\
       * & 1 &0 &0\\
       0 & 0 & 1 & *\\
       0 & 0 & * &1
        \end{array}\right)\right\}.
    \]
      
    An element $g \in GL_n(\mathbb{C})$ is semisimple if it is diagonalizable, and it is unipotent if $g-I$ is nilpotent. The element $g$ is regular if each distinct eigenvalue has a single Jordan block.\par
     The Bruhat decomposition of $GL_n(\mathbb{C})$ is equivalent to the fact that every invertible matrix can be written as a product $b_1\dw b_2$, where $b_1$ and $b_2$ are upper triangular and $\dw$ is a permutation matrix. Moreover, this permutation matrix is unique.
    \end{Exa}

 We state some lemmas that will be needed later on. 
 
 \begin{lemma}
\label{lem:UJnormal}
If $J\subset S$ with associated parabolic subgroup $P_J$, we have that 
\[
U^J:=\prod_{\gamma\in\Phi^+\setminus\Phi_J} U_\gamma
\]
is a normal subgroup of $P_J$. In particular, if $U_J=L_J\cap U$, then $U=U_J\cdot U^J$ and the projection $U\to U_J$ is a homomorphism of groups. Moreover, both $U_J$ and $U^J$ are fixed under conjugation by elements $t\in T$.
\end{lemma}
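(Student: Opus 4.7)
The plan is to establish normality of $U^J$ in $P_J$ by passing to Lie algebras and checking that $\ul^J:=\bigoplus_{\gamma\in\Phi^+\setminus\Phi_J}\ul_\gamma$ is an ideal in $\operatorname{Lie}(P_J)=\hl\oplus\bigoplus_{\gamma\in\Phi^+}\ul_\gamma\oplus\bigoplus_{\gamma\in\Phi_J^+}\ul_{-\gamma}$. The key combinatorial input is the elementary fact that every root has all its coefficients (in the basis of simple roots) of the same sign, so that sign considerations force a sum $\alpha+\gamma$ to land in $\Phi^+\setminus\Phi_J$ as soon as its coefficient on some simple root outside $J$ is strictly positive.

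To carry this out, I would first verify that $\ul^J$ is a Lie subalgebra: for $\alpha,\gamma\in\Phi^+\setminus\Phi_J$ with $\alpha+\gamma\in\Phi$, the sum is positive and its coefficient on any simple root outside $J$ is $\ge$ the corresponding coefficient of $\gamma$, hence strictly positive for at least one such simple root, so $\alpha+\gamma\in\Phi^+\setminus\Phi_J$. Next, for the ideal property, I check the three types of generators of $\operatorname{Lie}(P_J)$ against $\ul_\gamma$ with $\gamma\in\Phi^+\setminus\Phi_J$: for $\hl$ the bracket preserves $\ul_\gamma$; for $\ul_\alpha$ with $\alpha\in\Phi^+$ the sum $\alpha+\gamma$ (if a root) has non-negative coefficient on simple roots outside $J$, with a positive one inherited from $\gamma$; for $\ul_{-\alpha_s}$ with $s\in J$ the sum $\gamma-\alpha_s$ changes only the coefficient on $\alpha_s\in J$, so the coefficients on simple roots outside $J$ are unchanged (still non-negative with at least one positive), forcing $\gamma-\alpha_s$, if a root, to be positive and not in $\Phi_J$. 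Exponentiating, this yields $gU^Jg^{-1}\subset U^J$ for $g$ in each of the three generating families of $P_J$, hence $U^J\triangleleft P_J$.

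For the semidirect decomposition, the analogous sign argument shows $\ul_J:=\bigoplus_{\gamma\in\Phi_J^+}\ul_\gamma$ is a Lie subalgebra, so $U_J=L_J\cap U$ is a closed subgroup with $U_J\cap U^J=\{e\}$. Equation~\eqref{eq:Uprod} applied to an ordering of $\Phi^+$ in which the roots of $\Phi_J^+$ precede those of $\Phi^+\setminus\Phi_J$ gives $U=U_J\cdot U^J$ as a set. Combined with the normality $U^J\triangleleft U$ (a special case of $U^J\triangleleft P_J$), this exhibits $U$ as a semidirect product $U_J\ltimes U^J$, and the resulting projection $U\to U/U^J\cong U_J$ is then a homomorphism of algebraic groups.

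Finally, the $T$-invariance of both $U_J$ and $U^J$ is immediate from the fact that for any $\gamma\in\Phi$ and $t\in T$ one has $tU_\gamma t^{-1}=U_\gamma$ (conjugation acts on $U_\gamma\cong\mathbb{G}_a$ by the character $\gamma$), so each root subgroup is stable under $T$ and hence so is any product of root subgroups. No single step seems to present a real obstacle; the only point requiring care is to keep straight the convention that a root with one positive simple-root coefficient must have all coefficients non-negative, which is what underlies every sign argument above.
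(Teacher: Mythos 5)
Your proof is correct, but it takes a genuinely different route from the paper's. The paper disposes of the core claim — normality of $U^J$ in $P_J$ — simply by citing a textbook reference (\cite[Proposition 12.6]{MalleTesterman}), and then reads off the remaining statements from Equation~\eqref{eq:Uprod} and the standard $T$-action on root subgroups, just as you do. You instead give a self-contained Lie-algebra verification: you show $\ul^J=\bigoplus_{\gamma\in\Phi^+\setminus\Phi_J}\ul_\gamma$ is an ideal in $\operatorname{Lie}(P_J)$ via sign arguments on the simple-root coefficients of potential sums $\alpha+\gamma$ and differences $\gamma-\alpha_s$ (using that every root has all coefficients of one sign), and then exponentiate and use connectedness of $P_J$ plus Zariski-closedness of the normality condition. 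This is more work but requires no external reference; it also makes transparent exactly which root-combinatorial fact drives the normality, which the citation conceals. One small point worth making explicit when exponentiating: the condition $gU^Jg^{-1}\subseteq U^J$ is Zariski-closed in $g$, holds on a neighborhood of the identity of each one-parameter generating subgroup of $P_J$ by the ideal computation, hence holds on all of the connected group $P_J$.
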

\begin{proof}
The statement that $U^J$ is a normal subgroup of $P_J$ is contained in \cite[Proposition 12.6]{MalleTesterman}. The fact that $U=U_J\cdot U^J$ follows from Equation \eqref{eq:Uprod}. Since $U^J$ is normal in $P_J$ and hence in $U$, the projection $U\to U_J$ is just the quotient $U\to U/U^J$. The last statement follows from the fact that each $U_\gamma$ is fixed under conjugation by elements of $T$.
\end{proof}
    
\begin{lemma}
\label{lem:bb'}
If $\bl$ and $\bl'$ are two Borel subalgebras of a Lie algebra $\g$, then there exists a Cartan subalgebra $\hl''$ inside $\bl\cap \bl'$.
\end{lemma}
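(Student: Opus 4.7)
The plan is to translate the question into the algebraic group side and use the Bruhat decomposition of $G$, which has already been introduced in the excerpt.

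First I would integrate Borel subalgebras to Borel subgroups: let $B, B' \subset G$ be the connected Borel subgroups whose Lie algebras are $\bl$ and $\bl'$. Finding a Cartan $\hl''$ inside $\bl \cap \bl'$ is equivalent to finding a maximal torus $T''$ contained in $B \cap B'$, since Cartan subalgebras of $\g$ are exactly Lie algebras of maximal tori (this is how Cartan subalgebras were defined above the lemma). So the goal becomes: produce a maximal torus of $G$ sitting inside both $B$ and $B'$.

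Next, since any two Borel subgroups of $G$ are conjugate, write $B' = gBg^{-1}$ for some $g \in G$. Apply the Bruhat decomposition $G = \bigsqcup_{w \in W} B \dw B$ from the start of Section \ref{sec:alggroups} to write $g = b_1 \dw b_2$ with $b_1, b_2 \in B$ and $w \in W$. Substituting, since $b_2 B b_2^{-1} = B$, we get
\[
B' = b_1 \dw B \dw^{-1} b_1^{-1}.
\]
Now $\dw \in N_G(T)$, so $\dw T \dw^{-1} = T$, and because $T \subset B$ we have $T \subset \dw B \dw^{-1}$. Consequently
\[
b_1 T b_1^{-1} \subset b_1 \dw B \dw^{-1} b_1^{-1} = B'.
\]
On the other hand, $b_1 \in B$ and $T \subset B$ give $b_1 T b_1^{-1} \subset B$. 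Thus $T'' := b_1 T b_1^{-1}$ is a maximal torus (it is conjugate to $T$) sitting inside $B \cap B'$, and its Lie algebra $\hl'' = \Ad(b_1)\hl$ is a Cartan subalgebra contained in $\bl \cap \bl'$.

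This argument is entirely formal once Bruhat is on the table, so there is no serious obstacle; the one subtle point is invoking the fact that $\dw$ normalizes $T$, which justifies why the conjugation by $\dw$ preserves the maximal torus and allows one to transport it into both Borels simultaneously. If one wanted a purely Lie-algebraic statement (without appealing to $G$), the same argument can be carried out at the level of the connected algebraic subgroups associated with $\bl$ and $\bl'$, but for the present paper the group-theoretic formulation is the natural one.
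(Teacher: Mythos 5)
Your proof is correct and follows essentially the same route the paper sketches: it reduces the statement to showing two Borel subgroups of $G$ share a maximal torus and deduces this from the Bruhat decomposition. The only difference is that the paper simply cites \cite[Section 2.4]{BT} for that group-theoretic fact, whereas you spell out the conjugation argument via $g = b_1\dw b_2$ explicitly; the spelled-out version is accurate.
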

\begin{proof}
This is equivalent to the fact that two Borel subgroups $B'$ and $B$ of $G$ contain a maximal torus, which is a consequence of the Bruhat decomposition, see \cite[Section 2.4]{BT}.
\end{proof}

\begin{lemma}
\label{lem:bh'}
Let $\bl$ and $\hl'$ be a Borel subalgebra and Cartan subalgebra of $\g$, respectively. If $\g=\hl'\oplus\g_{\hl'}$, then the quotient map $\g_{\hl'}\to \frac{\g}{\bl}$ is surjective.
\end{lemma}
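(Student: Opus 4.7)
The plan is to reformulate the claim as the assertion $\bl + \g_{\hl'} = \g$ and to prove it simultaneously for every Borel subalgebra by a semicontinuity plus fixed-point argument on the flag variety $\flag$. Identify $\flag$ with the variety of Borel subalgebras of $\g$ (via $gB \leftrightarrow \text{Ad}(g)(\text{Lie}(B))$) and let $T'\subset G$ denote the maximal torus with $\text{Lie}(T')=\hl'$; it acts on $\flag$ by left multiplication, which corresponds to the adjoint action on Borel subalgebras.

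First I would introduce the ``bad locus''
\[
Z := \{\bl'\in\flag : \bl' + \g_{\hl'}\subsetneq \g\}.
\]
The function $\bl' \mapsto \dim(\bl' \cap \g_{\hl'})$ is upper semicontinuous on $\flag$, being the kernel dimension of the varying linear map $\bl' \hookrightarrow \g \twoheadrightarrow \g/\g_{\hl'} \cong \hl'$. Consequently $\dim(\bl'+\g_{\hl'})$ is lower semicontinuous, so $Z$ is closed. Moreover $Z$ is $T'$-invariant, since $\text{Ad}(t)\g_{\hl'} = \g_{\hl'}$ for every $t \in T'$ (the $T'$-action preserves the root-space decomposition of $\g$ relative to $\hl'$).

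Next I would identify the $T'$-fixed points of $\flag$ as precisely the Borel subalgebras containing $\hl'$. For any such $\bl'\supseteq\hl'$, the adjoint action of $\hl'$ preserves $\bl'$ and decomposes it into $\hl'$-weight spaces; the zero-weight piece is $\bl' \cap Z_{\g}(\hl') = \hl'$, and the non-zero weight pieces lie inside $\g_{\hl'}$. Hence $\bl' = \hl' \oplus (\bl'\cap \g_{\hl'})$ and $\bl' + \g_{\hl'} = \g$, so no $T'$-fixed point lies in $Z$.

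To conclude, if $Z$ were nonempty it would be a closed (hence projective) $T'$-invariant subvariety of $\flag$, and Borel's fixed point theorem applied to the connected solvable group $T'$ would supply a $T'$-fixed point inside $Z$, contradicting the previous step. Therefore $Z = \emptyset$, which is precisely the surjectivity of the projection $\g_{\hl'} \to \g/\bl$ claimed in the lemma. The main point to check carefully is the $T'$-equivariance of $Z$ together with the identification of the $T'$-fixed points on $\flag$; once those are in place, Borel's fixed point theorem closes the argument immediately.
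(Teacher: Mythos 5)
Your argument is correct and takes a genuinely different route from the paper's. The paper proves $\g = \bl + \g_{\hl'}$ by invoking Lemma~\ref{lem:bb'} (any two Borel subalgebras contain a common Cartan subalgebra $\hl''$, itself a consequence of the Bruhat decomposition), then using $\hl''$ to replace $\hl'$ inside a Borel $\bl'$ containing $\hl'$ so that $\g = \hl'' + \ul' + \ul'^- \subset \bl + \g_{\hl'}$. You instead work on the flag variety: the bad locus $Z \subset \flag$ of Borels $\bl'$ with $\bl' + \g_{\hl'} \subsetneq \g$ is closed by semicontinuity of $\dim(\bl' \cap \g_{\hl'})$, it is $T'$-invariant because $\mathrm{Ad}(T')$ preserves $\g_{\hl'}$, and it misses every $T'$-fixed point (those are exactly the Borels containing $\hl'$, for which $\bl' \supseteq \hl'$ gives $\bl' + \g_{\hl'} = \g$ outright); Borel's fixed-point theorem then forces $Z = \emptyset$. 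Your approach sidesteps Lemma~\ref{lem:bb'} entirely, substituting a standard geometric principle, and in fact proves the statement uniformly for all Borel subalgebras at once rather than for the specific $\bl$. The paper's route is shorter given that Lemma~\ref{lem:bb'} was already on hand, while yours is self-contained modulo Borel's fixed-point theorem and makes the mechanism (deformation of $\bl'$ to a $T'$-fixed Borel) more transparent; both are valid.
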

\begin{proof}
Let $\bl'$ be an associated Borel subalgebra of $\g$ containing $\hl'$, and write $\bl'=\hl'\oplus \ul'$. By Lemma \ref{lem:bb'} there exists a Cartan subalgebra $\hl''\subset \bl\cap \bl'$. Since the elements of $\ul'$ are nilpotent and the elements of $\hl''$ are semisimple, we have that $\hl''\cap \ul'=\{0\}$. In particular, $\bl'=\hl''\oplus\ul'$. This means that 
\[
\g=\bl'\oplus\ul'^{-}=\hl''+\ul'+\ul'^{-}\subset \bl+ \g_{\hl'}.
\]
This proves that $\g=\bl+\g_{\hl'}$ and hence the quotient map $\g_{\hl'}\to \frac{\g}{\bl}$ is surjective.
\end{proof}

Let us write $\ul_k:=\oplus_{\hta(\gamma)=k} \mathfrak{g}_\gamma$.  Since $\ul=\oplus_k\ul_k$, we define $p_k\col \ul\to \ul_k$ as the projection. Also, we choose generators $E_\gamma$ of $\g_\gamma$, so that every element of $\ul$ is of the form $\sum_{\gamma\in \Phi^+}x_\gamma E_\gamma$. Moreover, we define $\lambda_{\gamma,\beta}$ via
\[
[E_\gamma,E_\beta]=\lambda_{\gamma,\beta}E_{\gamma+\beta}
\]
if $\gamma+\beta\in \Phi^+$ and $\lambda_{\gamma,\beta}=0$ if $\gamma+\beta\notin \Phi$.
\begin{lemma}
\label{lem:fsurj}
Let $X=\sum_{\alpha\in \Delta} E_\alpha$. Then the map $f\col \ul_k\to \ul_{k+1}$ given by $f(u)=p_{k+1}([X,u])$ is surjective.
\end{lemma}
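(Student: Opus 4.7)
The plan is to deduce surjectivity from the representation theory of the principal $\mathfrak{sl}_2$-triple containing $X$. Let $H \in \hl$ be the unique element with $\alpha(H) = 2$ for every $\alpha \in \Delta$. Since $\gamma(H) = 2\hta(\gamma)$ for every $\gamma \in \Phi$, we have $[H, E_\gamma] = 2\hta(\gamma) E_\gamma$, so for $k \geq 1$ the subspace $\ul_k$ is precisely the $2k$-eigenspace of $\ad H$ on $\ul$.

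By the Jacobson--Morozov theorem applied to the regular nilpotent $X$, or equivalently by Kostant's explicit construction for principal nilpotents, there exists $Y \in \ul^-$ such that $(X, H, Y)$ is an $\mathfrak{sl}_2$-triple, i.e., $[X, Y] = H$ and $[H, Y] = -2Y$. Via the adjoint action this makes $\g$ into a finite-dimensional $\mathfrak{sl}_2$-module, and Kostant's theorem on principal $\mathfrak{sl}_2$-triples identifies its decomposition: $\g \cong \bigoplus_{i=1}^{\rk G} V(2e_i)$, where $e_1, \ldots, e_{\rk G}$ are the exponents of $G$ and $V(m)$ denotes the irreducible $\mathfrak{sl}_2$-representation of highest weight $m$. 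In particular, $\ker(\ad X)$ meets the $2k$-weight space of $\g$ in a subspace of dimension $\#\{i : e_i = k\}$.

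Because $X \in \ul_1$, we have $[X, \ul_k] \subseteq \ul_{k+1}$ already, so $p_{k+1}$ acts as the identity on the image of $\ad X|_{\ul_k}$ and the map in question is simply $f = \ad X|_{\ul_k}$. Within each irreducible summand $V(2e_i)$, the raising operator $\ad X$ sends the $(2k)$-weight space to the $(2k+2)$-weight space, and this map is either an isomorphism of one-dimensional spaces (when $0 \leq k < e_i$) or has trivial target (when $k \geq e_i$); in either case the restriction to $V(2e_i)$ is surjective. Since for $k \geq 1$ the entire $2k$-weight space of $\g$ is contained in $\ul$, summing over the $\mathfrak{sl}_2$-summands yields surjectivity of $f : \ul_k \to \ul_{k+1}$.

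The main obstacle is simply invoking the standard principal $\mathfrak{sl}_2$ theory (the existence of $Y$ and Kostant's decomposition); these are classical and can be cited, e.g., from Collingwood--McGovern or Carter. A more self-contained alternative would be to prove the dimension identity $\dim \ul_k - \dim(\ker(\ad X) \cap \ul_k) = \dim \ul_{k+1}$ directly using that $\dim \ul_k = \#\{i : e_i \geq k\}$, which suffices since $\ad X(\ul_k) \subseteq \ul_{k+1}$ implies the rank-nullity count forces surjectivity.
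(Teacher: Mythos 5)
Your proof is correct, but it takes a genuinely different and heavier route than the paper. You invoke the principal $\mathfrak{sl}_2$-triple through $X$ (Jacobson--Morozov plus Kostant's classification of the adjoint representation as $\bigoplus_i V(2e_i)$ with the $e_i$ the exponents), and then deduce surjectivity summand-by-summand from the standard fact that the raising operator is surjective onto each positive weight space in an irreducible $\mathfrak{sl}_2$-module. The paper instead uses a one-line rank--nullity argument on the ungraded map $c_X = \ad X\colon \ul\to\ul$: the image lies in $[\ul,\ul]=\bigoplus_{k\geq 2}\ul_k$, which has codimension $|\Delta|$ in $\ul$; regularity of $X$ gives $\dim\ker(c_X)\leq\dim Z_\g(X)=|\Delta|$; rank--nullity then forces $c_X(\ul)=[\ul,\ul]$, and since $c_X(\ul_k)\subseteq\ul_{k+1}$, comparing graded pieces gives surjectivity of each $\ul_k\to\ul_{k+1}$. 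What your approach buys is additional structure (it computes the kernel dimensions in each degree explicitly, via the exponents) that the lemma does not need; what it costs is the citation of Kostant's principal $\mathfrak{sl}_2$ theory, whereas the paper's proof needs only the fact that a regular element has centralizer of dimension $\mathrm{rk}\,G$. Your ``more self-contained alternative'' at the end is closer in spirit to the paper's argument but still routes through the identity $\dim\ul_k=\#\{i: e_i\geq k\}$, which the paper avoids entirely by doing rank--nullity on all of $\ul$ at once rather than degree by degree.
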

\begin{proof}
The map 
\begin{align*}
c_X\col\ul&\to \ul    \\
u&\mapsto [X,u]
\end{align*}
satisfies $c_X(\ul)\subset [\ul,\ul]$. Since $E_\alpha\neq 0$ for every $\alpha\in \Delta$, we have that $X$ is regular, and hence $\dim Z_{\g}(X)=|\Delta|$. Since $\ker (c_X)\subset Z_{\g}(X)$, we have that $\dim \ker(c_X)\leq |\Delta|$. Hence, we have 
\[
\dim(\ul)=\dim \ker(c_X)+\dim (c_X(\ul))\leq |\Delta|+\dim [\ul,\ul]=|\Delta|+\dim(\ul)-|\Delta|=\dim(\ul)
\]
which proves that $c_X(\ul)=[\ul,\ul]=\oplus_{k\geq2} \ul_k$. Since $c_X(\ul_k)\subset \ul_{k+1}$ for every $k$, the result follows. 
\end{proof}

In the corollary below we consider linear polynomial in the polynomial ring $\mathbb{C}[x_{\gamma}]_{\gamma\in \Phi^+}$.

\begin{corollary}
\label{cor:LI}
Let $\gamma_1,\gamma_2,\ldots,\gamma_{m_k}$ be all positive roots of height $k$ and let $\gamma_1',\ldots ,\gamma_{m_{k+1}}'$ be all roots of height $k+1$. Consider the following linear polynomials
\[
F_{j}:=\sum_{\alpha\in \Delta}\lambda_{\alpha, \gamma_j'-\alpha}x_{\gamma_j'-\alpha}
\]
for $j=1,\ldots, m_{k+1}$, where $x_{\gamma_j'-\alpha}=0$ if $\gamma_j'-\alpha\notin \Phi$. Then $(F_j)_{1\leq j\leq m_{k+1}}$ is linearly independent.
\end{corollary}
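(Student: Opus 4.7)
The plan is to identify the linear forms $F_j$ as the matrix entries of the map $f\colon \ul_k\to \ul_{k+1}$ from Lemma \ref{lem:fsurj} and deduce linear independence from surjectivity.

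First I would expand $f(u) = p_{k+1}([X,u])$ explicitly. Writing a general element $u\in \ul_k$ as $u = \sum_{i=1}^{m_k} x_{\gamma_i} E_{\gamma_i}$, one computes
\[
[X,u] = \sum_{\alpha\in\Delta}\sum_{i=1}^{m_k} x_{\gamma_i}\,\lambda_{\alpha,\gamma_i}\,E_{\alpha+\gamma_i}.
\]
After projecting to $\ul_{k+1}$, the only surviving terms are those with $\alpha+\gamma_i$ of height $k+1$, i.e.\ equal to some $\gamma_j'$. Collecting by $\gamma_j'$ and substituting $\gamma_i = \gamma_j'-\alpha$, one obtains
\[
f(u) = p_{k+1}([X,u]) = \sum_{j=1}^{m_{k+1}} \Bigl(\sum_{\alpha\in\Delta} \lambda_{\alpha,\gamma_j'-\alpha}\,x_{\gamma_j'-\alpha}\Bigr) E_{\gamma_j'} = \sum_{j=1}^{m_{k+1}} F_j\,E_{\gamma_j'}.
\]
Thus the $F_j$ are exactly the coefficient functions of $f$ with respect to the basis $\{E_{\gamma_j'}\}$ of $\ul_{k+1}$, i.e.\ the rows of the matrix of $f$ in the bases $\{E_{\gamma_i}\}$ and $\{E_{\gamma_j'}\}$.

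Next I would invoke Lemma \ref{lem:fsurj}, which asserts that $f\colon\ul_k\to\ul_{k+1}$ is surjective. Surjectivity of a linear map is equivalent to the linear independence of its rows: indeed, a linear dependence $\sum_j c_j F_j = 0$ with $c=(c_j)\neq 0$ would exhibit a nonzero linear functional $\sum_j c_j E_{\gamma_j'}^{*}$ on $\ul_{k+1}$ vanishing on the image of $f$, contradicting surjectivity. Hence the $F_j$ are linearly independent.

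There is no real obstacle here beyond correctly unwinding the definition of $F_j$ and recognizing it as the coordinate expression of $f$; once that identification is made, Lemma \ref{lem:fsurj} does all the work. The only care needed is the convention that $x_{\gamma_j'-\alpha}=0$ whenever $\gamma_j'-\alpha\notin\Phi$, which is consistent with $\lambda_{\alpha,\gamma_j'-\alpha}=0$ in that case, so every step above is well defined.
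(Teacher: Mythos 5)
Your proof is correct and follows exactly the same route as the paper: expand $f(u)=p_{k+1}([X,u])$ in the root-space bases, identify the coefficient of each $E_{\gamma_j'}$ with $F_j$, and invoke the surjectivity of $f$ from Lemma~\ref{lem:fsurj}. The only difference is that you spell out the last implication (surjectivity $\Rightarrow$ independence of the coordinate functionals), which the paper leaves implicit.
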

\begin{proof}
We have that 
\begin{align*}
f(\sum_{1\leq i\leq m_k} x_{\gamma_i}E_{\gamma_i})&=\sum_{\alpha\in\Delta}\sum_{1\leq i\leq m_k}\lambda_{\alpha,\gamma_i+\alpha}x_{\gamma_i}E_{\gamma_i+\alpha}\\
   &=\sum_{1\leq j\leq m_{k+1}} F_jE_{\gamma'_j}.
\end{align*}
By the surjectivity of $f$ proved in Lemma \ref{lem:fsurj}, we have that the $F_j$ must be linearly independent.
\end{proof}

\begin{Exa}
   If $G=GL_n(\mathbb{C})$ and $E_{i,j}$, $\gamma_{i,j}$ are as in Example \ref{exa:Uw}, we have that $\lambda_{\gamma_{i,j},\gamma_{j,l}}=1$ for $i<j<l$. Moreover, set $X=\sum_1^{n-1} E_{i,i+1}$ and $f$ as in Lemma \ref{lem:fsurj}. Then
   \[
   f(\sum_{1\leq i\leq n-k-1} x_iE_{i,i+k})=\sum_{1\leq i\leq n-k-2} (x_i-x_{i+1})E_{i,i+k+1}
   \]
   which means that $F_j=x_j-x_{j+1}$. Clearly, these polynomials are linearly independent.
\end{Exa}

   The Lie algebra $\g$ is canonically identified with the tangent space $\tang_eG$. There are also identifications $\tang_eB=\bl$, $\tang_eT=\hl$, $\tang_eU=\ul$, and $\tang_eU^{-}=\ul^{-}$.  Actually, for all $g_0\in G$, if $l_{g_0}\col G\to G$ is the left multiplication by $g_0$, we have a canonical identification $\tang_{g_0}G\cong \g$ given by $dl_{g_0}\col \tang_eG\to \tang_{g_0}G$. If we consider the map $c_{g_0}\col G\to G$ given by the conjugation $c_{g_0}(g)=g_0gg_0^{-1}$, the differential
    \[
    dc_{g_0}\col \tang_eG\to \tang_eG
    \]
    induces the adjoint action $\ad(g_0)\col \g\to \g$. Similarly, the multiplication map $G\times G\to G$ induces the sum map on the Lie algebras $\g\oplus \g\to \g$ and the diagonal map $G\to G\times G$ induces the diagonal map $\g\to \g\oplus \g$.

\begin{lemma}
\label{lem:TXB}
Let $g\in B\dw B\subset G$. Via the natural identification $\tang_gG=\g$, we have that $\bl\subset \tang_gB\dw B$.
\end{lemma}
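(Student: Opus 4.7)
The plan is to exploit the right $B$-invariance of the Bruhat cell: since $B \cdot B = B$, we have $(B\dw B)\cdot B = B\dw B$, so right multiplication by any element of $B$ preserves $B\dw B$. This lets me transport tangent directions at the identity of $B$ to tangent directions at $g$ inside $B\dw B$ using right translation, and then convert those to elements of $\g$ via the standard left-translation identification $\tang_g G \cong \g$.

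Concretely, for each $X \in \bl = \tang_e B$, I would consider the smooth curve
\[
\gamma_X \colon \mathbb{R} \longrightarrow G, \qquad \gamma_X(t) := g\cdot \exp(tX).
\]
Because $\exp(tX) \in B$ for all $t$, the curve lies entirely in $B\dw B$, so $\gamma_X'(0) \in \tang_g(B\dw B)$. On the other hand, $\gamma_X = l_g \circ (t\mapsto \exp(tX))$, so differentiating at $t=0$ gives $\gamma_X'(0) = dl_g(X)$. Under the identification $\tang_g G \cong \g$ given by $dl_g$, this tangent vector is precisely $X$. Hence every $X \in \bl$ lies in $\tang_g(B\dw B)$, as desired.

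There is no real obstacle here beyond unwinding the definition of the identification $\tang_g G \cong \g$; the argument is purely formal once one notices that right multiplication by $B$ preserves $B\dw B$ and that this right action, viewed through $dl_g$, corresponds to the inclusion $\bl \hookrightarrow \g$. Notice also that the same argument never uses $g \in B\dw B$ in a special way beyond containment, so it shows more generally that $\bl \subset \tang_g Y$ for any right-$B$-invariant locally closed subvariety $Y \subset G$ containing $g$.
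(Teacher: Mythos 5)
Your proof is correct and is essentially the paper's own argument: the paper likewise observes that $gB \subset B\dw B$ (right $B$-invariance of the cell) and concludes $dl_g(\bl) \subset \tang_g B\dw B$, which under the identification $\tang_g G \cong \g$ via $dl_g$ is exactly the claim. Your curve $\gamma_X(t) = g\exp(tX)$ is just a concrete way of computing that differential, so the two arguments coincide.
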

\begin{proof}
Just note that $gB\subset B\dw B$. Hence, via the left multiplication $l_g$, we have that $l_g(B)\subset B\dw B$, which means that $dl_g(\bl)\subset \tang_g B\dw B$. Since $\tang_gG$ is identified with $\g$ via $dl_g$, we have $\bl\subset \tang_g B\dw B$.
\end{proof}

      The Lie algebra $\g$ also comes with an exponential map $\exp\col \g\to G$ which is not usually algebraic. However, on the nilpotent Lie algebra $\ul$, the restriction $\exp\col \ul\to U$ is an algebraic map, and moreover an isomorphism. The Baker-Campbell-Hausdorff (BCH) formula allows us to write the multiplication of $U$ in terms of the commutator of $\mathfrak{u}$, that is, if we give $\mathfrak{u}$ the operation
\begin{equation}
    \label{eq:BCH}
X\otimes Y=X+Y+\frac{1}{2}[X,Y]+\frac{1}{12}([X,[X,Y]]+[Y,[Y,X]])+\ldots,
\end{equation}
then $(\mathfrak{u},\otimes)$ is a group and $\exp$ a group isomorphism. We note that since $\ul$ is nilpotent the sum above is actually finite. For the explicit definition of the BCH formula we refer to \cite[Chapter 5]{Hall}, but we will only need the terms appearing in Equation \eqref{eq:BCH}.

\begin{Rem}
\label{exa:bch}
In the case that $\ul$ is the subspace of $\text{Mat}_n(\mathbb{C})$ consisting of upper triangular matrices with all entries in the diagonal equal to 0, then for $X,Y\in \ul$ we have that $X\otimes Y$ is given by 
\[
\exp(X\otimes Y)=\exp(X)\exp(Y),
\]
where $\exp$ is the usual matrix exponentiation.
\end{Rem}

\subsection{Grothendieck-Springer simultaneous resolution  }
Here, we recall some facts about the quotient of $G$ by its adjoint action. In this section we follow  \cite[Chapter 4]{Slodowy}, \cite{Stein74} and \cite{Popov}.\par
It is a well known result of Steinberg that the adjoint action of $G$ on itself has a quotient that is isomorphic to $T/W$. Each element semisimple element $g_s\in G$ is conjugate to some element $t\in T$, and there is a map $\eta\col G\to T/W$ which takes an element $g=g_sg_u\in G$ to the class of $t\in T$, where $t$ is conjugate to $g_s$.\par

\begin{Exa}
 If $G=GL_n(\mathbb{C})$, then the map $\eta\col G\to T/S_n$ takes an invertible matrix $g$ to the diagonal matrix of its eigenvalues (even if $g$ is not diagonalizable). Of course, this diagonal matrix is only defined up to permutation of its entries.  One can think of $\eta$ as the map that takes each matrix to its characteristic polynomial.
\end{Exa}

Recall that $B=TU=UT$ and that for every $t\in T$, we have that $tUt^{-1}=U$. Let $p\col B\to T$ be the quotient of the right action of $U$ on $B$. Consider the $B$-action on $B$ given by conjugation and the trivial $B$-action on $T$. 

\begin{lemma}
\label{lem:pmap}
The map $p\col B\to T$ is $B$-equivariant, where $B$ acts on $B$ by conjugation and trivially on $T$. Moreover, we have that $b\in B$ is regular semisimple if and only if $p(b)$ is regular.
\end{lemma}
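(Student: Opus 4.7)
The plan has two parts corresponding to the two statements. For $B$-equivariance, I would carry out a direct computation using the semi-direct product structure $B = T \ltimes U$. Given $b = tu$ and $b' = t'u'$ with $t,t' \in T$ and $u,u' \in U$, I would expand
\[
b'bb'^{-1} = (t'u')(tu)(u'^{-1}t'^{-1})
\]
and repeatedly use that $T$ normalizes $U$ to push each torus factor to the left past unipotent factors. The fact that $T$ is abelian then makes the middle pair $t't \cdot t'^{-1}$ collapse to $t$, so that $b'bb'^{-1} \in tU$, which gives $p(b'bb'^{-1}) = t = p(b)$. This is routine bookkeeping and I would not spell it out in detail.

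For the characterization of regular semisimple elements, the key tool is the Jordan decomposition $b = b_s b_u$ in $B$. The plan is to show that $p(b)$ is always $B$-conjugate (in fact $U$-conjugate) to $b_s$, and then deduce the claimed equivalence from the definition of regularity. Concretely: $b_s$ lies in some maximal torus $T''$ of $B$, and all maximal tori of $B$ are $U$-conjugate to $T$, so there exists $u_0 \in U$ with $u_0 T'' u_0^{-1} = T$. Then
\[
u_0 b u_0^{-1} = (u_0 b_s u_0^{-1})(u_0 b_u u_0^{-1}) \in T \cdot U,
\]
hence $p(u_0 b u_0^{-1}) = u_0 b_s u_0^{-1}$. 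Combining with $B$-equivariance already proved, $p(b) = u_0 b_s u_0^{-1}$, so $p(b)$ and $b_s$ are conjugate.

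From here the equivalence is essentially formal. If $b$ is regular semisimple, then $b_u = 1$ and $b = b_s$ is regular, hence its conjugate $p(b)$ is regular. Conversely, if $p(b)$ is regular, then the conjugate element $b_s$ is regular semisimple, so its centralizer $Z_G(b_s)$ is a maximal torus; since $b_u$ commutes with $b_s$ it lies in this torus, but the only unipotent element of a torus is the identity, forcing $b_u = 1$ and hence $b = b_s$ regular semisimple.

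The only mildly delicate point is invoking the $U$-conjugacy of maximal tori of $B$ (a standard consequence of the Bruhat decomposition of $B$, or equivalently of the fact that $B/T \cong U$ acts simply transitively on the set of maximal tori of $B$), but this is entirely standard and not really an obstacle. The rest is direct computation and the definition of regularity.
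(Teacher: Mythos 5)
Your proof is correct. For the $B$-equivariance, your computation is essentially the paper's: the paper writes $b = u_0 t_0$ with the unipotent part on the left and pushes torus factors to the right, while you write $b' = t'u'$ and push torus factors to the left, but both arguments rest on exactly the same two observations (that $T$ normalizes $U$ and that $T$ is abelian), so this is the same route up to a choice of side.

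For the second assertion, the paper simply cites Steinberg \cite[Corollary 2.13]{Stein65}, whereas you supply an actual argument via the Jordan decomposition $b = b_s b_u$ inside $B$. Your argument is sound: the maximal tori of a connected solvable group are conjugate under the unipotent radical, so $p(b)$ is $U$-conjugate to $b_s$ (using the $B$-equivariance already established), and the equivalence follows because regularity is conjugation-invariant, $b_s$ is always semisimple, and a regular semisimple element has a maximal torus as its centralizer (so any unipotent element commuting with it must be trivial, forcing $b_u = 1$). The one place to be slightly careful is the assertion that $Z_G(b_s)$ is a torus — this uses that $G$ is simply connected (giving connectedness of the centralizer), but since the ambient hypotheses include this, and in any case unipotent elements land in the identity component of any centralizer, the argument goes through. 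So you have effectively reconstructed the content of the cited reference; there is no gap, just more self-containment than the paper opted for.
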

\begin{proof}
Indeed, we have to prove that given any $t\in T$, $u\in U$, and $b\in B$, there exists $u'\in U$ such that
\[
btub^{-1}=tu'.
\]
Writing $b=u_0t_0$, we have 
\begin{align*}
btub^{-1}&=u_0tt_0ut_0^{-1}u_0^{-1}\\
         &=u_0tu_1 && \text{ for }u_1=t_0ut_0^{-1}u_0^{-1}\in U,\\
         &=tu_2u_1 && \text{ for }u_2=t^{-1}u_0t\in U,\\
         &=tu'.
\end{align*}
In particular $B$ acts trivially on the quotient $B/U$. The second statement is well known, see \cite[Corollary 2.13]{Stein65}.
\end{proof}

\begin{Exa}
 If $G=GL_n(\mathbb{C})$, then $B$ and $T$ are the subgroups of upper triangular matrices and diagonal matrices. The map $p\col B\to T$ takes an upper triangular matrix $b$ to the diagonal matrix with same diagonal as $b$. The diagonal of an upper triangular matrix is invariant by conjugation of upper triangular matrices, and moreover an upper triangular matrix is regular semisimple if its eigenvalues are all distinct, which is the same as saying that the diagonal matrix has distinct eigenvalues.
\end{Exa}

Since $B$ acts on $G$ by right multiplication and also on itself by conjugation, we write $G\times^BB$ for the contracted product of $G$ and $B$, i.e., the quotient variety $(G\times B)/ \sim$, where $(g,b)\sim (gb', b'^{-1}bb')$ for every $b'\in B$.\par

We then have morphisms
\begin{equation}
\begin{tikzcd}[row sep=0cm]
G\times^B B\ar[r] &  G\times^BT   \ar[r,"\sim"] & \flag\times T\\
(g,b) \ar[r,maps to] & (g,p(b)) &\\
                   & (g,t) \ar[r,maps to] & (gB,t)\\
\end{tikzcd}
\end{equation}

Composing with the projection on the second factor, we have a smooth morphism (see \cite[Page 47]{Slodowy})
\[
\theta\col G\times^BB\to T.
\]

Consider the map $\phi \col G\times^BB\to G$ given by $\phi(g,b)=gbg^{-1}$. It is well defined as $\phi(gb',b'^{-1}bb')=gbg^{-1}$. We have a commutative diagram (see \cite[Page 50]{Slodowy}), that is called the Grothendieck-Springer simultaneous resolution: 
\[
\begin{tikzcd}
G\times^B B \ar[r,"\phi"] \ar[d,"\theta"] & G\ar[d,"\chi"]\\
T\ar[r,"\psi"] & T/W.
\end{tikzcd}
\]
There are other characterizations of $G\times^BB$. Consider the variety $\h_{e}\subset G\times \flag$ given by 
  \begin{equation}
      \label{eq:h1}
      \h_{e}=\{(X,gB); g^{-1}Xg\in B\}.
  \end{equation}
  This is the relative variety $\h_e$ associated to the identity element $e\in W$ (see Definition \ref{def:twisted}).  Denote by $\h_{e}^{rs}:= (G^{rs}\times \flag) \cap \h_{e}$.
 \begin{Exa}
 \label{exa:h1GLn}
 If $G=GL(\mathbb{C},n)$, then $\h_e^{rs}=\{(X,V_\bullet); XV_\bullet=V_\bullet\}$. This can be though as the set of regular semisimple matrices together with an ordering of its eigenvectors.
 \end{Exa}
 \begin{proposition}
 \label{prop:iso}
 We have the following isomorphisms
 \begin{enumerate}
     \item\label{item:2} $G\times^BB\to \h_{e}$,
     \item\label{item:3} $G\times^B B^{rs}\to \h_{e}^{rs}$,
     \item\label{item:4} $G/T\times T^{r}\to \h_{e}^{rs}$.
\end{enumerate}     
 \end{proposition}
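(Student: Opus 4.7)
The plan is to construct explicit candidate morphisms in both directions for each of the three isomorphisms and verify that they are mutually inverse. Throughout, the only non-formal ingredients we need are (i) the equivalence relation defining the contracted products and (ii) the fact that a regular semisimple element of $B$ is $U$-conjugate to a (unique) element of $T$, which comes from Lemma \ref{lem:pmap} together with $Z_G(t)=T$ for regular $t\in T$.

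For (\ref{item:2}), I would define
\[
\alpha\col G\times^B B\to \h_e,\qquad (g,b)\mapsto (gbg^{-1},gB).
\]
This is well defined: replacing $(g,b)$ by $(gb',b'^{-1}bb')$ does not change either coordinate of the image, and the image satisfies $(gB)^{-1}(gbg^{-1})(gB)=b\in B$. For the inverse, given $(X,gB)\in \h_e$ set $b:=g^{-1}Xg\in B$ and send $(X,gB)\mapsto [g,b]$; this is well defined because a different representative $gb'$ of the coset $gB$ changes $b$ by conjugation by $b'^{-1}$, which is precisely the equivalence in $G\times^B B$. Both maps are visibly algebraic, and a direct composition shows they are mutually inverse.

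For (\ref{item:3}) I would simply restrict $\alpha$. The point is that $(gbg^{-1},gB)\in \h_e^{rs}$ if and only if $gbg^{-1}$ is regular semisimple, which, since regularity and semisimplicity are conjugation invariant, happens exactly when $b\in B^{rs}$; hence $\alpha$ restricts to a bijection $G\times^B B^{rs}\to \h_e^{rs}$.

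For (\ref{item:4}) I would define
\[
\beta\col G/T\times T^r\to \h_e^{rs},\qquad (gT,t)\mapsto (gtg^{-1},gB),
\]
which is well defined since for $t'\in T$ we have $gt'\cdot t\cdot (gt')^{-1}=gtg^{-1}$ (as $T$ is abelian) and $gt'B=gB$ (as $T\subset B$). Surjectivity uses Lemma \ref{lem:pmap}: given $(X,gB)\in \h_e^{rs}$, the element $b:=g^{-1}Xg\in B^{rs}$ projects to a regular element $t=p(b)\in T^r$, and the fiber of $p$ through $b$ meets $T$ because a regular semisimple element of $B$ is $U$-conjugate to its semisimple part; writing $b=utu^{-1}$ with $u\in U$, the pair $(guT,t)$ maps to $(X,gB)$. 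The main subtlety is injectivity. If $(g_1T,t_1)$ and $(g_2T,t_2)$ have the same image, then $g_2=g_1b$ for some $b\in B$ and $t_1=bt_2b^{-1}$; writing $b=t'u$ with $t'\in T$, $u\in U$ and using $T$-abelianness reduces this to $t_1=u't_2u'^{-1}$ with $u'\in U$. The key lemma to invoke is that for regular $t_2\in T$ one has $Z_G(t_2)=T$; combined with $T\cap U=\{e\}$ and the fact that $u't_2u'^{-1}=t_2\cdot (\text{element of }U)$ lies in $T$, this forces $u'=e$, hence $b\in T$ and $t_1=t_2$. The inverse morphism can then be written down explicitly, showing $\beta$ is an isomorphism of varieties. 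The main obstacle is precisely this last rigidity step, which is entirely driven by the regularity assumption on $t$; everything else is formal manipulation of the contracted product.
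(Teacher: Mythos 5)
Your proof is correct and follows essentially the same route as the paper: items (1) and (2) are verbatim the paper's argument, and item (3) rests on exactly the same two facts the paper invokes — that $p\colon B\to T$ is invariant under $B$-conjugation (Lemma \ref{lem:pmap}, which your injectivity computation effectively re-derives inline) and that $Z_G(t)=T$ for regular $t\in T$. The one place worth tightening is your closing claim that the inverse of $\beta$ ``can be written down explicitly'': producing the $U$-conjugator as an algebraic function of the input is essentially the content of the isomorphism one is proving, so the paper instead observes that the map is a bijective morphism onto the smooth (hence normal) variety $G\times^B B^{rs}$ and is therefore automatically an isomorphism, sidestepping that circularity.
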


 \begin{proof}
The isomorphism in item \eqref{item:2} is given by the embedding
\begin{align*}
    G\times^B B&\to G\times \flag\\
    (g,b)&\mapsto (gbg^{-1}, gB).
\end{align*}
It is easy to see that the image of $(gb',b'^{-1}bb')$ is the same as that of $(g,b)$, so this map is well defined, and its image is precisely $\h_e$, with inverse map
\begin{align*}
    \h_e&\to G\times^BB\\
    (X,gB) &\mapsto (g, g^{-1}Xg).
\end{align*}
The isomorphism in item \eqref{item:3} is just a restriction of the isomorphism in item \eqref{item:2}. To prove item \eqref{item:4} we will show that the map
\begin{align*}
G/T\times T^r &\to \h_e^{rs}\\
(gT,t) &\mapsto (gtg^{-1},gB)
\end{align*}
is an isomorphism. Actually, we will prove that the map
\begin{align*}
G/T\times T^r &\to G\times^BB^{rs}\\
(gT,t) &\mapsto (g,t)
\end{align*}
is an isomorphism, which is equivalent to the statement before via item \eqref{item:3}. Since $G\times^BB^{rs}$ is smooth, it suffices to prove that the map is a bijection. We begin with injectivity: If $(g,t)\sim (g',t')$ for $g,g'\in G$ and $t,t'\in T^r$, then there exists $b'\in B$ such that
\begin{align*}
    g&=g'b',\\
    t&=b'^{-1}t'b'.
\end{align*}
By Lemma \ref{lem:pmap}, we have that $t=p(t)=p(b'^{-1}t'b')=p(t')=t'$. This means that $b'\in Z_G(t)=T$ (recall that $G$ is simply connected), and hence $gT=g'b'T=g'T$. This proves injectivity.
 To prove the surjectivity, note that every $b\in B^{rs}$ is conjugated to $p(b)\in T^{r}$. Hence, every point $(g,b)\in G\times^BB^{rs}$ has a representative of the form $(g',t)$ with $t\in T^{r}$, so in particular $(g,b)$ is the image of $(g'T,t)$.
\end{proof}

If we restrict the Grothendieck-Springer simultaneous resolution to regular semisimple elements, that is, if we consider the commutative diagram
\[
\begin{tikzcd}
G\times^B B^{rs} \ar[r,"\phi"] \ar[d,"\theta"] & G^{rs}\ar[d,"\chi"]\\
T^r\ar[r,"\psi"] & T^r/W,
\end{tikzcd}
\]
then we have a fiber diagram (see \cite[Lemma 5.5]{Popov}). In particular, the map $H_{e}^{rs}\to G^{rs}$ (which is the same map as $G\times^BB^{rs}\to G^{rs}$) is a $W$-Galois cover. This means that we have a surjective morphism $\pi_1(G^{rs},g)\to W$. 
  
  We now define the local monodromy groups (following the definitions and notation in \cite[Section 7.1]{BrosnanChow}): For $X\in G^r$, let $B_X$ be a small Euclidean ball centered at $X$ and let $g\in G^{rs}\cap B_x$. Choosing a path $\xi$ from $X$ to $g$, we have a homomorphism
  \begin{align*}
  \pi_1(G^{rs}\cap B_x,X)&\to \pi_1(G^{rs},g)\\
                    \tau &\mapsto \xi\cdot \tau \cdot\xi^{-1}.
  \end{align*}
  Composing with the map $\pi_1(G^{rs},g)\to W$, we have a map $\pi_1(G^{rs}\cap B_X,X)\to W$. The image of this last map is the \emph{local monodromy group} at $X$, which we denote by $LM(X)$. Note that this is defined only up to conjugation.\par
  For an element $X=X_sX_u\in G^r$, $X_s$ is conjugate to an element $X_s'\in T$, and we denote by $W_{X_s}$ the stabilizer of $X'_s$ in the action of $W$ on $T$. Again, $W_{X_s}$ is only defined up to conjugation.
\begin{proposition}
\label{prop:conjugate}
 For $X\in G^r$, $LM(X)$ and $W_{X_s}$ are conjugate.
\end{proposition}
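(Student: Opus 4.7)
The plan is to use the Grothendieck--Springer resolution and reduce the computation of $LM(X)$ to the (well understood) local monodromy of the cover $\psi\col T \to T/W$ at $X_s$. After conjugating $X$ (which only changes $W_{X_s}$ and $LM(X)$ by conjugation in $W$, both being defined only up to such conjugation), we may assume $X_s \in T$, so that $W_{X_s}$ is literally the stabilizer of $X_s$ under the $W$-action on $T$. Since $G$ is simply connected, a theorem of Steinberg guarantees that $Z_G(X_s)$ is connected; it is a Levi subgroup containing $T$ with Weyl group $W_{X_s}$, and $X_u$ is regular unipotent in $Z_G(X_s)$.

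By Proposition \ref{prop:iso}, the cover $\h_e^{rs}\to G^{rs}$ from which $LM(X)$ is defined is isomorphic to the restriction of $\phi\col G\times^B B \to G$ to the regular semisimple locus. The starting observation is that the Grothendieck--Springer diagram
\[
\begin{tikzcd}
G\times^B B \ar[r,"\phi"] \ar[d,"\theta"] & G\ar[d,"\chi"]\\
T \ar[r,"\psi"] & T/W
\end{tikzcd}
\]
commutes, and over the regular semisimple locus both vertical maps are \'etale $W$-Galois covers (so the diagram becomes cartesian there, by Popov's Lemma~5.5 cited earlier). We pick a distinguished lift of $X$ to $G\times^B B$ as follows: after replacing $X$ by a conjugate we may assume $X\in B$; taking $g_0 = e$ and $b_0 = X$ gives $\phi(g_0,b_0)=X$ and $\theta(g_0,b_0)=p(X)=X_s$, using Lemma \ref{lem:pmap}.

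The key technical step is to transport local monodromy from the bottom of the diagram to the top. Since $\theta$ is smooth (as recalled from Slodowy), we can pick an analytic section $\sigma\col U \to G\times^B B$ of $\theta$ through $(g_0,b_0)$ defined on a neighborhood $U$ of $X_s$ in $T$. Set $\eta := \phi\circ\sigma\col U \to G$; by commutativity of the diagram, $\chi\circ\eta = \psi|_U$. Shrinking if necessary, $\eta$ is a holomorphic map sending $X_s$ to $X$ and sending $U\cap T^r$ into $G^{rs}$ (the regular semisimple locus of $T$ maps into the regular semisimple locus of $G$). Thus $\eta$ induces a map $\pi_1(U\cap T^r,t_0)\to \pi_1(G^{rs}\cap B_X,\eta(t_0))$ compatible with the two $W$-covers $\psi$ and $\phi^{rs}$, and hence a containment of their images in $W$: the local monodromy of $\psi$ at $X_s$ is contained in $LM(X)$.

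Now the local monodromy of $\psi\col T\to T/W$ at $X_s$ is classically equal to $W_{X_s}$: the ramification of $\psi$ at $X_s$ is governed by the reflections fixing $X_s$, whose group generated is exactly the stabilizer. This gives the inclusion $W_{X_s}\subseteq LM(X)$ up to conjugation. For the reverse inclusion, count sheets: by Step~2 the fiber $\phi^{-1}(X)$ is in $W$-equivariant bijection with $W/W_{X_s}$ (Borels containing the regular element $X$ are in bijection with $W/W_{X_s}$ since $Z_G(X_s)$ is connected and $X_u$ is regular unipotent in it, whose Springer fiber is a point), so over $B_X$ the cover $\phi^r$ has at least $[W:W_{X_s}]$ connected components, whence $|LM(X)|\le |W_{X_s}|$. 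Combined with the containment, we get equality, so $LM(X)$ and $W_{X_s}$ are conjugate in $W$. The main obstacle in making this rigorous is step three: showing that the smooth section $\sigma$ can be chosen so that the induced map $\eta$ realizes a genuine surjection on local fundamental groups (rather than a merely injective one); this is what forces the separate sheet-count argument to pin down the index, rather than concluding directly from the monodromy of $\psi$.
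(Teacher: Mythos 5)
Your overall framework (the Grothendieck--Springer diagram, the cartesian property over the regular semisimple locus, the reduction via conjugation, the Springer-fiber count) matches the paper, but you replace the paper's key device --- the Steinberg section $\epsilon\col T/W\to G^r$ of $\chi$ --- by an analytic section $\sigma$ of $\theta$. That substitution is where a genuine gap appears.

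Concretely: your map $\eta=\phi\circ\sigma$ is defined on $U\subset T$ and satisfies $\chi\circ\eta=\psi|_U$. A loop $\gamma$ in $U\cap T^r$ therefore maps to $\eta_*\gamma$ in $G^{rs}\cap B_X$ with $\chi_*\eta_*\gamma=\psi_*\gamma$. But $\gamma$ already lives in $T^r$, the \emph{source} of the $W$-Galois cover $\psi\col T^r\to T^r/W$, so $\psi_*\gamma$ is a loop in $T^r/W$ that \emph{lifts to a loop}; its monodromy in $W$ is trivial. Since the square is cartesian over $G^{rs}$, the monodromy of $\eta_*\gamma$ under $\phi^{rs}$ is also trivial. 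In other words, the image of $\pi_1(U\cap T^r)\xrightarrow{\eta_*}\pi_1(G^{rs})\to W$ is $\{e\}$, not $W_{X_s}$. Your section argument therefore establishes only the vacuous containment $\{e\}\subseteq LM(X)$ --- it is not merely a matter of "whether $\eta_*$ is surjective," as you suggest at the end; the loops you transport cannot see $W_{X_s}$ at all. Combined with your (correct) sheet count $|LM(X)|\leq|W_{X_s}|$, you are left with a one-sided inequality and no equality.

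The fix is exactly what the paper does: use the Steinberg section $\epsilon\col T/W\to G^r$ of $\chi$. After conjugating so that $\epsilon(\chi(X))=X$, a loop in a small punctured ball $B\cap(T/W)^{\mathrm{reg}}$ around $\chi(X)$ --- where the cover $\psi$ genuinely has monodromy group $W_{X_s}$, generated by the reflections fixing $X_s$ --- pushes forward via $\epsilon$ into $G^{rs}\cap B_X$, and $\chi\circ\epsilon=\mathrm{id}$ immediately transports the monodromy into $LM(X)$. The crucial distinction is that $\epsilon$ is a section from the \emph{base} $T/W$ of the cover, whereas your $\sigma$ is a section from the covering space $T$; only the former produces loops with nontrivial $\psi$-monodromy. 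With this replacement, the containment $W_{X_s}\subseteq LM(X)$ is immediate and your sheet-count argument (or the reverse inclusion via the same section) closes the proof.
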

\begin{proof}
The proof is the same as in \cite[Proposition 113]{BrosnanChow}. We consider the Grothendieck-Springer simultaneous resolution (restricted to regular elements of $G$)
\[
\begin{tikzcd}
G\times^B B^r \ar[r,"\phi"] \ar[d,"\theta"] & G^r\ar[d,"\chi"]\\
T\ar[r,"\psi"] & T/W.
\end{tikzcd}
\]
and the Steinberg section $\epsilon\col T/W\to G^{r}$ of $\chi$  (see \cite{Stein74}). Up to conjugation, we can assume that $\epsilon(\chi(X_s'))=X$. The result follows from \cite[Propositions 50 and 106]{BrosnanChow}.
\end{proof}

\subsection{Relative torus action}
\label{sec:torusbundle}
Consider the torus fiber bundle $\T\to G^{rs}$  (see \cite[Section 8.7]{BrosnanChow} and \cite[]{balibanu}) where
\[
\T=\{(g,g')\in G^{rs}\times G^{rs}; g'\in Z_G(g)\}
\]
and projection is onto the first factor. This is indeed a torus fiber bundle as the centralizer of every regular semisimple element is a torus (recall that $G$ is simply connected, otherwise we would have to consider the connected component of $Z_G(g)$ containing the identity). In fact, $Z_G(g)$ is the only maximal torus of $G$ containing $g$.
\begin{proposition}
We have that $H_{e}^{rs}\times_{G^{rs}}\T\to H_{e}^{rs}$ is a trivial torus bundle.
\end{proposition}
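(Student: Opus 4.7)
The plan is to use the isomorphism $H_{e}^{rs}\cong G/T\times T^r$ from Proposition~\ref{prop:iso}\eqref{item:4} to rigidify the centralizers of regular semisimple elements, thereby producing a canonical framing of the torus bundle after pullback.

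First, I would unwind the fibers. Over a point $(X,gB)\in H_{e}^{rs}$ corresponding to $(gT,t)\in G/T\times T^r$ under the isomorphism of Proposition~\ref{prop:iso}\eqref{item:4}, we have $X=gtg^{-1}$ with $t\in T^r$. Since $G$ is simply connected and $t$ is regular, $Z_G(t)=T$, whence $Z_G(X)=gTg^{-1}$. Consequently the fiber of $H_e^{rs}\times_{G^{rs}}\T\to H_e^{rs}$ over $(X,gB)$ is canonically identified with $T$ via conjugation by $g$.

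Next, I would define the trivialization explicitly using the $G/T$-presentation:
\begin{align*}
\Phi\col H_e^{rs}\times_{G^{rs}}\T &\longrightarrow H_e^{rs}\times T\\
\big((gT,t),g'\big)&\longmapsto \big((gT,t),g^{-1}g'g\big),
\end{align*}
which makes sense because $g'\in Z_G(X)=gTg^{-1}$ forces $g^{-1}g'g\in T$. The key point to check is that $\Phi$ is independent of the representative $g$ for the coset $gT$: if we replace $g$ by $gs$ with $s\in T$, then $g^{-1}g'g\in T$ and $T$ is abelian, so $(gs)^{-1}g'(gs)=s^{-1}(g^{-1}g'g)s=g^{-1}g'g$. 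This is the crucial (but easy) step where we use that $T$ is abelian.

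Finally, I would exhibit the inverse $\big((gT,t),\tau\big)\mapsto \big((gT,t),g\tau g^{-1}\big)$, which is well-defined by the same abelian argument and clearly a morphism. Both compositions are the identity, so $\Phi$ is an isomorphism of $H_e^{rs}$-schemes over $H_e^{rs}$, and it is $T$-equivariant with respect to the fiberwise group structure because conjugation by $g$ is a group isomorphism $gTg^{-1}\xrightarrow{\sim} T$. The main subtlety is conceptual rather than computational: one must work with the $G/T$-rigidification from Proposition~\ref{prop:iso}\eqref{item:4} rather than with arbitrary representatives in $G$ with $g^{-1}Xg\in B$, since for those the conjugate $g^{-1}g'g$ need not lie in $T$.
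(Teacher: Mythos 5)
Your proof is correct and follows essentially the same route as the paper: it uses the identification $H_e^{rs}\cong G/T\times T^r$ from Proposition~\ref{prop:iso}\eqref{item:4}, observes that $g'\in Z_G(gtg^{-1})$ iff $g^{-1}g'g\in Z_G(t)=T$, and writes down the same trivialization map $((gT,t),g')\mapsto((gT,t),g^{-1}g'g)$. The only difference is that you explicitly verify independence of the coset representative (which the paper leaves implicit), a harmless added detail.
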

\begin{proof}
By Proposition \ref{prop:iso}, we identify $H_{e}^{rs}$ with $G/T\times T^{r}$, and the natural map $H_e^{rs}\to G^{rs}$ is given by
\begin{align*}
G/T\times T^r&\to G^{rs}\\
   (gT,t)&\mapsto gtg^{-1}.
\end{align*}
 Then we have the equality
\[
(G/T\times T^r)\times_{G^{rs}}\T=\{(gT,t,g')| g'\in Z_G(gtg^{-1})\}.
\]
However, $g'\in Z_{G}(gtg^{-1})$ if and only if $g^{-1}g'g\in Z_{G}(t)=T$, hence we have an isomorphism
\begin{align*}
(G/T\times T)\times_{G^{rs}}\T &\to G/T\times T^{r}\times T\\
((gT, t),(gtg^{-1},g'))&\mapsto (gT,t, g^{-1}g'g).
\end{align*}
This proves that $H_1^{rs}\times_{G^{rs}}\T\cong H_1^{rs}\times T$.
\end{proof}
\begin{Exa}
 When $G=GL(\mathbb{C},n)$ and $X\in G^{rs}$, we have that $Z_{G}(X)$ is the set of matrices acting diagonally on the eigenvectors of $X$. We have seen in example \ref{exa:h1GLn}  that $\h_e^{rs}$ is the set of matrices together with an ordering with the eigenvectors. If $(X,(v_1,\ldots, v_n))$ is a point in $\h_e^{rs}$ then  we can identify the fiber $(\h_e^{rs}\times_{G^{rs}}\T)_{(X,(v_1,\ldots, v_n))}=Z_{G}(X)$ with $(\mathbb{C}^*)^n$ where the $i$-th coordinate $t_i$ of $(\mathbb{C}^*)^n$ acts on the $i$-th eigenvector of $X$.
\end{Exa}
\begin{proposition}
\label{prop:pi1toW}
Let $y\in T^{r}$ (in Particular $\T_y=T$). We have an action of $\pi_1(G^{rs},y)$ on $T$, and hence on $\hl$ (the Lie algebra of $T$). This action factors through the morphism $\pi_1(G^{rs},y)\to W$, where $W$ acts on $T$, and hence on $\hl$, via the adjoint action.
\end{proposition}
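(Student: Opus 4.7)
The plan is to exploit the fact, established in the previous proposition, that the torus bundle $\T \to G^{rs}$ pulls back to the trivial bundle $\h_e^{rs}\times T$ along the $W$-Galois cover $\pi\col \h_e^{rs}\to G^{rs}$. Since this pullback is trivial, its monodromy is trivial; equivalently, the kernel of the surjection $\pi_1(G^{rs},y)\to W$, namely the image of $\pi_1(\h_e^{rs},\tilde y)$ for any lift $\tilde y$ of $y$, acts trivially on the fiber $\T_y=T$. This already forces the monodromy action of $\pi_1(G^{rs},y)$ on $T$ to descend to a $W$-action on $T$.

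To identify that induced $W$-action with the adjoint action, one lifts the deck transformation action of $W$ on $\h_e^{rs}$ to the pullback bundle and transports it through the trivialization. Using the identification $\h_e^{rs}\cong G/T\times T^r$ from Proposition \ref{prop:iso}, the deck transformations are easily seen to be $w\cdot(gT,t)=(g\dot w^{-1}T,\,w(t))$, and the trivialization of the previous proposition sends $((gT,t),(gtg^{-1},g'))$ to $((gT,t),g^{-1}g'g)$. A direct computation then yields
\[
w\cdot\bigl((gT,t),\tau\bigr)=\bigl((g\dot w^{-1}T,\,w(t)),\,w(\tau)\bigr),\qquad \tau\in T,
\]
so the $W$-action on the $T$-factor is exactly the standard adjoint action.

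To conclude, choose the lift $\tilde y=(eT,y)$; the trivialization identifies $\T_y=T$ with $T$ via the identity, while the trivialization at $w\cdot\tilde y$ identifies $\T_y=T$ with $T$ via the adjoint action of $w$. For a loop $\gamma\in\pi_1(G^{rs},y)$ whose lift to $\h_e^{rs}$ ends at $w\cdot\tilde y$, parallel transport in the trivial pullback bundle is the identity on $T$, so the induced monodromy on $\T_y=T$ is precisely the adjoint action of $w\in W$. The analogous statement for $\hl$ follows by differentiating, since $\exp$ is equivariant for conjugation. The main obstacle in this argument is simply the coherent bookkeeping of the three identifications (the deck action on $\h_e^{rs}$, the trivialization of the pullback bundle, and the identification of $\T_y$ with $T$); once they are aligned, no further computation is required.
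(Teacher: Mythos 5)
Your proof is correct and takes essentially the same approach as the paper: both rely on the explicit trivialization of $\h_e^{rs}\times_{G^{rs}}\T$ via the identification $\h_e^{rs}\cong G/T\times T^r$, and both extract the adjoint action by comparing the trivialization maps $\T_y\to T$ at two points of the $W$-Galois fiber over $y$ (the paper's $y_1,y_2$ are exactly your $\tilde y, w\cdot\tilde y$). The only difference is organizational: you separate the argument into (i) the kernel of $\pi_1(G^{rs},y)\to W$ acts trivially and (ii) identifying the induced $W$-action, whereas the paper reads off the monodromy directly from the two identifications.
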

\begin{proof}
Let $\gamma\in \pi_1(G^{rs},y)$, and let $w\in W$ be the image of $\gamma$ via $\pi_1(G^{rs},y)\to W$. Let $y_1=(T,y)$ and $y_2=(\dw^{-1} T,\dw y\dw^{-1})$ be points in $G/T\times T^r$ over $y$. Note that $\gamma$ lifts to $G/T\times T^r$ connecting $y_1$ and $y_2$. Via the isomorphism below
\begin{align*}
\T_H:=(G/T\times T)\times_{G^{rs}}\T &\to G/T\times T^{r}\times T\\
((gT, t),(gtg^{-1}, g'))&\mapsto (gT,t, g^{-1}g'g),
\end{align*}
we can make the following identifications 
\begin{align*}
T=\T_y= \T_{H,y_1}&\to T\\
g'&\mapsto g'.
\end{align*}
On the other hand, we also have
\begin{align*}
T=\T_y= \T_{H,y_2}&\to T\\
g'&\mapsto \dw g'\dw^{-1}
\end{align*}    
and therefore an induced automorphism 
\begin{align*}
    \T_y&\to \T_y\\
    g'&\mapsto \dw g'\dw^{-1}.
\end{align*}
Since the path $\gamma$ lifts to a path on $G/T\times T^{r}$ with endpoints $y_1$ and $y_2$, and since $\T_H$ is a trivial bundle, we have our result. Taking the differential of the isomorphism above we get the action of $\pi_1(G^{rs},y)$ on $\hl$.
\end{proof}
\begin{corollary}
\label{cor:pi1T}
The action of $\pi_1(G^{rs},y)$ on $T$ and $\hl$ factor through the usual action of $W$ on $T$ and $\hl$.
\end{corollary}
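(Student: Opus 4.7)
The plan is to observe that Proposition \ref{prop:pi1toW} has essentially already done all the work, so Corollary \ref{cor:pi1T} is really a matter of identifying the action produced there with the standard $W$-action. First, I would recall that by Proposition \ref{prop:pi1toW}, given $\gamma\in \pi_1(G^{rs},y)$ whose image under the covering map $\pi_1(G^{rs},y)\to W$ is $w\in W$, the induced automorphism of $\T_y=T$ is $g'\mapsto \dw g'\dw^{-1}$, and correspondingly the induced automorphism of $\hl=\tang_eT$ is $\ad(\dw)$ restricted to $\hl$. In particular the action of $\pi_1(G^{rs},y)$ on both $T$ and $\hl$ factors through $W$.

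Next, I would note that the usual action of $W=N_G(T)/T$ on $T$ is by definition given by conjugation: an element $w\in W$ with chosen representative $\dw\in N_G(T)$ acts by $t\mapsto \dw t\dw^{-1}$. This is independent of the choice of representative $\dw$, since any two representatives differ by an element of $T$ and $T$ is abelian. Differentiating at the identity, we obtain the induced action of $W$ on $\hl$, which is exactly $\ad(\dw)|_\hl$.

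Since the formulas match exactly the ones produced by Proposition \ref{prop:pi1toW}, the induced action of $\pi_1(G^{rs},y)$ on $T$ and on $\hl$ coincides with the standard $W$-action via the surjection $\pi_1(G^{rs},y)\twoheadrightarrow W$ coming from the $W$-Galois cover $\h_e^{rs}\to G^{rs}$ constructed just before. I do not expect any genuine obstacle here: the only point to be careful about is that the representative $\dw$ of $w$ was chosen once and for all earlier in the section, so we are consistently using the same $\dw$ in both the monodromy description and the definition of the $W$-action, and the well-definedness modulo $T$ ensures there is no ambiguity. The proof is therefore essentially a one-line identification.
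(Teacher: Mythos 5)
Your proposal is correct and matches the paper's approach: the paper states Corollary \ref{cor:pi1T} without a separate proof precisely because it is immediate from Proposition \ref{prop:pi1toW}, whose proof already exhibits the monodromy automorphism of $\T_y=T$ as $g'\mapsto \dw g'\dw^{-1}$, i.e.\ the standard conjugation action of $W$ on $T$ (and its differential on $\hl$). Your added observation that this is independent of the representative $\dw$ modulo $T$ because $T$ is abelian is a reasonable, if minor, elaboration.
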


\subsection{Flag varieties}
\label{subsec:flag}
In this section we recall some basic results about flag varieties, Schubert varieties, and Bott-Samelson resolutions. Unlike the introduction, here we will work with any Lie type.\par 

As in the previous section let $G$ be a connected, simply connected, semisimple algebraic group. Denote by $\flag:=G/B$ the flag variety associated to $G$, so that $G$ acts on $\flag$ by left multiplication. Recall the definition of the relative Schubert strata $\Omega_w^\circ\subset \flag\times \flag$  
\[
\Omega_w^\circ:= G \cdot (B/B, \dw B/B)
\]
and let $\Omega_w=\overline{\Omega_w^\circ}$ be the relative Schubert variety. For an element $g\in G$, we write $\Omega_{w,g}^\circ$ and $\Omega_{w,g}$ for the fibers of $\Omega_{w}^\circ$ and $\Omega_{w}$ over $gB/B$ via the first projection (of course $\Omega_{w,g}^\circ$ and $\Omega_{w,g}$ only depends on $gB/B$). In particular, if $g=e$, we have that $\Omega_{w,e}^\circ=B\dw B/B$, the usual Bruhat strata of $\flag$.

\par
   Given a simple transposition $s\in S$ and $\mP_s:=G/P_s$ the associated partial flag variety, there is a natural map $\flag\to \mP_s$.  For a word $\si=(s_1, \dots, s_n)$, we define the Bott-Samelson variety
   \[
   \bs_\si:=\flag\times_{\mP_{s_1}} \flag\times_{\mP_{s_2}}\ldots\times_{\mP_{s_n}}\flag,
   \]
   and there is a natural map $\alpha_\si\col \bs_\si\to \flag\times\flag$ given by the first and last projections.\par
   
   \begin{Exa}
   \label{exa:gVbullet}
      When $G=GL_n(\mathbb{C})$, an element $gB\in \flag=G/B$ is identified with a flag $V_\bullet$ given by $V_i=\langle v_1,\ldots, v_i\rangle$, where the $v_i$ are columns of $g=(v_1\; v_2\;\ldots\; v_n)$. It is clear that $V_\bullet$ does not depend of the representative $g\in gB$, and the Schubert varieties $\Omega_w$ are the same as those considered in the introduction.\par
       For $s=(i,i+1)\in S$, we have that $\flag\times_{\mP_s}\flag=\{(g_1B,g_2B); g_1P_s=g_2P_s\}$. The condition $g_1P_s=g_2P_s$ is equivalent (recall Equation \eqref{eq:PsB}) to
       \[
       g_1\in g_2B\quad\text{ or }\quad g_1\in g_2B\ds B.
       \]
       Since $\ds$ is the permutation matrix associated to $(i,i+1)$, we have that $V_\bullet^1$ and $V_\bullet^2$ must satisfy $V_j^1=V_j^2$ for every $j\neq i$. Therefore the Bott-Samelson varieties are also the same as those considered in the introduction.
   \end{Exa}

   If $\si$ is a reduced word for $w$, then the image of $\alpha_\si$ is precisely $\Omega_w$, and we will still denote by $\alpha_\si$ the induced morphism $\alpha_\si\col \bs_\si\to \Omega_w$. It is known (see \cite{Springer}) that $\alpha_\si$ has a Whitney stratification given by $\Omega_u^\circ$ for $u\leq w$. Since the varieties $\Omega_u^\circ$ are simply connected, the Decomposition theorem reads (see \cite[Theorem 2.8]{Springer})
   \begin{equation}
       \label{eq:Schubertdecomp}
   \alpha_{\si*}(\mathbb{C}_{\bs_\si}[\dim \bs_\si])=\bigoplus IC_{\Omega_u}\otimes V_u
\end{equation}
   for some graded vector spaces $V_u$. Let $P_{\si,u}(q^{\frac{1}{2}})=\sum q^{\frac{i}{2}} \dim V_u^i$ be the Hilbert-Poincaré series of $V_u$. \par
   As seen in Section \ref{subsec:heckeflag}, we have a direct relation between $\Omega_w$ and $C'_w$. We make explicit in the following proposition the results that will be needed in the sequel.
   \begin{proposition}[\cite{Springer}]
   \label{prop:springer}
    If $\si=(s_1,\ldots, s_{\ell(w)})$ is a reduced word for $w$, then 
    \begin{enumerate}
        \item $P_{\si,u}(q^{-\frac{1}{2}})=P_{\si,u}(q^{\frac{1}{2}})$,
        \item $P_{\si,w}=1$
        \item $q^{-\frac{\ell(w)}{2}}\prod (1+T_{s_i})=\sum_{u\leq w} P_{\si,u}(q^{\frac{1}{2}}) C'_u$.
    \end{enumerate}
   \end{proposition}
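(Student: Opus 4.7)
The plan is to prove the three claims in the order (2), (1), (3), using the Decomposition Theorem applied to the proper birational Bott-Samelson map $\alpha_\si\col \bs_\si \to \Omega_w$, whose source $\bs_\si$ is smooth of dimension $\dim \bs_\si = \dim \flag + \ell(w)$.

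For (2), I restrict the decomposition $\alpha_{\si*}(\mathbb{C}_{\bs_\si}[\dim \bs_\si]) \cong \bigoplus_{u \leq w} IC_{\Omega_u} \otimes V_u$ to the open Bruhat cell $\Omega_w^\circ$. Since $\si$ is reduced, $\alpha_\si$ is an isomorphism there, so the left-hand side restricts to $\mathbb{C}_{\Omega_w^\circ}[\dim \Omega_w] = IC_{\Omega_w}|_{\Omega_w^\circ}$, while only the summand with $u = w$ is supported on $\Omega_w^\circ$. Comparing in each cohomological degree forces $V_w$ to be one-dimensional and concentrated in degree $0$, so $P_{\si,w} = 1$.

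For (1), I invoke Verdier self-duality: since $\bs_\si$ is smooth, $\mathbb{C}_{\bs_\si}[\dim \bs_\si]$ is Verdier self-dual, and properness of $\alpha_\si$ ensures the pushforward $\alpha_{\si*}(\mathbb{C}_{\bs_\si}[\dim \bs_\si])$ is self-dual as well; each $IC_{\Omega_u}$ is also self-dual. By uniqueness of the isotypic decomposition in the semisimple abelian category of perverse sheaves, each multiplicity space $V_u$ must be self-dual as a graded vector space, i.e.\ $\dim V_u^i = \dim V_u^{-i}$, which is precisely $P_{\si,u}(q^{-1/2}) = P_{\si,u}(q^{1/2})$.

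For (3), I apply $h$ to the decomposition. Using the identity $h(A[k]) = q^{-k/2} h(A)$ and Springer's theorem (Theorem \ref{thm:springer}) that $q^{\dim \flag/2}\,h(IC_{\Omega_u}) = C'_u$, the right-hand side rearranges to $q^{-\dim \flag/2}\sum_{u\leq w} P_{\si,u}(q^{1/2})\, C'_u$. For the left-hand side I compute $h(\alpha_{\si*}\mathbb{C}_{\bs_\si})$ by induction on $\ell(w)$: for a single simple reflection $s$, $\bs_{(s)}\to \flag\times\flag$ is a closed embedding onto $\Omega_s$, and a direct stalk computation at $(V^e_\bullet,V^z_\bullet)$ yields $h(\alpha_{(s)*}\mathbb{C}_{\bs_{(s)}}) = 1+T_s$. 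The iterated fiber product structure $\bs_\si = \flag\times_{\mP_{s_1}}\cdots\times_{\mP_{s_n}}\flag$ translates via proper base change into Hecke convolution, so $h(\alpha_{\si*}\mathbb{C}_{\bs_\si}) = \prod(1+T_{s_i})$. Accounting for the shift $[\dim\bs_\si] = [\dim\flag+\ell(w)]$ produces $q^{-(\dim\flag+\ell(w))/2}\prod(1+T_{s_i})$, and cancelling the common factor $q^{-\dim\flag/2}$ from both sides gives the claimed identity. The main obstacle is precisely this last computation: identifying the pushforward along the fiber-product Bott-Samelson tower with the Hecke algebra product with the correct $q$-powers, which requires careful tracking of proper base change, the convolution identification, and the cohomological shifts built into the definition of $h$.
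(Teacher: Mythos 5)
Your proposal is correct and follows essentially the same route as Springer's argument, which the paper cites for this proposition and sketches in Section~\ref{subsec:heckeflag}: (2) via birationality (restricting the decomposition to $\Omega_w^\circ$ where $\alpha_\si$ is an isomorphism), (1) via Verdier self-duality of $\alpha_{\si*}\mathbb{C}_{\bs_\si}[\dim\bs_\si]$ and of each $IC_{\Omega_u}$, and (3) by applying $h$ to the decomposition and computing $h(\alpha_{\si*}\mathbb{C}_{\bs_\si})=\prod(1+T_{s_i})$ through the identification of the Bott-Samelson pushforward with iterated convolution (equivalently, by computing stalks via proper base change as cohomology of Bott-Samelson fibers, which have affine pavings). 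The only place where you are explicitly terser than needed is precisely the step you flag yourself — that $h$ sends convolution to Hecke multiplication — but this is a standard point that Springer's paper supplies and the authors likewise defer to.
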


\section{Character Sheaves}
\label{sec:chashv}

   We have seen in Section \ref{sec:alggroups} that the Bruhat stratum $B\dw B$ is equal to $U_w\dw B$, and moreover every element $g\in BwB$ can be written uniquely as $u\dw b$, with $u\in U_w$ and $b\in B$. Since $B=TU$, we have a projection $p\col B\to T$, which induces a projection $p_w\col BwB\to T$ given by $p_w(u\dw b)=p(b)$. If $L$ is a local system on $T$, we can pull it back via $p_w$ to obtain a local system on $BwB$.  \par
   
We can identify $\h_w$ with $G\times^B B\dw B$ (where $B$ acts on $BwB$ by conjugation) via the isomorphism
\begin{align*}
    \h_w^\circ &\to G\times^B B\dw B\\
    (X,gB)& \to (g, g^{-1}Xg).
\end{align*}
 Since $p_w^*(L)$ is $B$-invariant (see \cite[Lemma 4.1.2]{MarsSpringer}), we have that $p_w^*(L)$ will descend to a local system $L_w$ on $\h_w$. \par
 
   Lusztig proved that $f_!(L_w)$ is a semisimple complex that is a sum of shifted simple perverse sheaves. An simple perverse sheaf that is a summand of $f_!(L_w)$ for some $w$ and some $L$ is called a \emph{character sheaf}.\par
   The work in \cite{ChaShvI,ChaShvII,ChaShvIV,ChaShvV} is a thorough study of these character sheaves, which includes their classification.
   
   As an example, when $w$ is the identity and $L$ is the trivial sheaf, then the summands in $f_!(L_w)$ correspond to the irreducible representations of $W$.  In fact, by Proposition \ref{prop:pi1toW} we have a map $\pi_1(G^{rs},y)\to W$, and each irreducible representation $\rho$ of $W$ will induce an simple local system $L_\rho$ on $G^{rs}$ (recall the bijection in Equation \eqref{eq:localsystem}), the intermediate extension $IC_G(L_\rho)$ of which will be an simple character sheaf. Moreover, we have that $f_!(L_w)$ will be induced by the regular representation of $W$. In the case that $G=GL_n$, these are the only character sheaves induced by the trivial sheaf on $T$.\par
   
   Lusztig also noted that we could change $f_!(L_w)$ to $f_*(IC_{\h_w}(L_w))$, that is, the latter sheaf is a semisimple complex which is a sum of shifted character sheaves, and each character sheaf appears as a summand of $f_*(IC_{\h_w}(L_w))$.

  The main idea behind the study of character sheaves is Grothendieck's sheaf-function correspondence. Here, we assume that we are over a field $k$ of positive characteristic $p$. If $F$ is a bounded complex of sheaves of $\overline{\mathbb{Q}_\ell}$ vector spaces over a scheme $\mathcal{X}$ such that there is an isomorphism $\varphi\col \Fr^*(F)\to F$. The characteristic function of $F$ is defined as 
   \begin{align*}
       \chi_{\mathcal{F}}\col \mathcal{X}(\mathbb{F}_q)&\to \overline{\mathbb{Q}_{\ell}}\\
                                  x &\mapsto \sum_{i}(-1)^i\text{Tr}(\varphi_x, H^i(F)_x).
   \end{align*}
   Grothendieck's correspondence is the idea that an interesting function on $\mathcal{X}$ should be a characteristic function of some complex of sheaves. When $\mathcal{X}=G$, the irreducible characters of $G(\mathbb{F}_q)$ are interesting functions, and should therefore come from complex of sheaves on $G$.\par  
    This is precisely what Lusztig proved when $G=GL_n(k)$: Every irreducible character of $GL_n(\mathbb{F}_q)$ is the characteristic function of a character sheaf. In general, we have that the characteristic functions of character sheaves form an orthonormal basis for the class functions of $G(\mathbb{F}_q)$.

   Here we are mostly concerned with the case that $L$ is trivial, and then $L_w$ is trivial as well. So we will restrict ourselves to this case.

   For each semisimple complex $F=\sum F_i[n_i]$ on $G$, with each $F_i$ a perverse sheaf, define
      \[
      \chi(F):=\sum q^{\frac{-n_i}{2}}F_i
      \]
      which is an element of $\mathbb{Z}[q^{\frac{1}{2}},q^{-\frac{1}{2}}]\otimes KG$ where $KG$ is the Grothendieck group of perverse sheaves on $G$. Lusztig considered a map
      \begin{align*}
      \tau\col H_W &\to \mathbb{Z}[q^{\frac{1}{2}},q^{-\frac{1}{2}}]\otimes KG\\
               C'_w&\mapsto \chi(f_*(IC_{\h_w}))
      \end{align*}
     and proved that $\tau$ is a generalized trace function. Moreover, he also concludes that there exists a function $c$, depending on a character sheaf $A$ and an irreducible character $\chi$ of $W$ such that, for every $a \in H_W$,
     \[
     \tau(a)=\sum c(A,\chi)\chi(a) A,
     \]
     where the sum runs through all irreducible characters $\chi$ of $W$ and all character sheaves $A$ of $G$. \par
     In the case that $G=GL_n$, we have that the character sheaves are in bijection with the irreducible character of $S_n$ and $c(A,\chi)=1$ if $A$ is induced by $\chi$ and $0$ otherwise. In particular we have that
     \[
     f_*(IC_{\h_w})=\sum_{\lambda\vdash n}\chi^{\lambda}(C'_w)A_{\lambda}
     \]
     Considering the stalk at a regular semisimple point $X$ and the appropriate shift,
     \[
     IH^*(\h_w(X))=\sum_{\lambda\vdash n}\chi^{\lambda}(q^{\frac{\ell(w)}{2}}C'_w) V^{\lambda},
     \]
     in particular, taking the Frobenius character, we have
     \begin{align*}
     \ch(IH^*(\h_w(X)))&=\sum_{\lambda\vdash n}\chi^{\lambda}(C'_w) \ch(V^{\lambda})\\
     &=\sum_{\lambda\vdash n}\chi^{\lambda}(C'_w) s_{\lambda}\\
     &=\ch(C'_w).
     \end{align*}

 \section{Induced characters of Hecke algebras}
\label{sec:hecke}
   Let $W$ be a finite Coxeter group with set of simple reflections $S$ and set of transpositions $\Tr$.   Each element $w\in W$ can be written (possibly non-uniquely) as a reduced word $w=s_1\ldots s_n$, and we define $\ell(w):=n$. The Bruhat order on $W$ is given by $w'\leq w$ if some reduced word of $w'$ is a subword of a reduced word of $w$. We also define the right and left weak orders as
   \begin{align*}
       w'\leq_Rw&\text{ if }w=w'u\text{ for some $u\in W$ with }\ell(w)=\ell(w')+\ell(u)\\ 
       w'\leq_Lw&\text{ if }w=uw'\text{ for some $u\in W$ with }\ell(w)=\ell(w')+\ell(u).
   \end{align*}
   We note that if $s\in S$ and $w\in W$, then either $sw<w$ or $sw>w$, and either $sw<_Lw$ or $sw>_Lw$. Moreover $sw<w$ if and only if $sw<_Lw$. The same holds for $\leq_R$.
   
       Define also the \emph{left and right inversion set} of $w$ by 
       \begin{align*}
           \Tr_L(w)&:=\{t \in \Tr; \ell(tw)<\ell(w)\},\\
           \Tr_R(w)&:=\{t \in \Tr; \ell(wt)<\ell(w)\}.
       \end{align*}
      Since we will mostly consider the set of left inversions, we write $\Tr(w):=\Tr_L(w)$. Similarly, the set of \emph{left and right descents} of $w$ by:
       \begin{align*}
           D_L(w)&:=\Tr_L(w)\cap S,\\
           D_R(w)&:=\Tr_R(w)\cap S.
       \end{align*}
       We also recall a simple result on Coxeter groups:
       \begin{lemma}
\label{lem:s'ws}
Let $w\in W$ and $s,s'\in S$ such that $s'w>w$ and $ws>w$. Then either 
\begin{enumerate}
    \item $s'ws=w$, or
    \item $s'ws>ws$ and $s'ws>s'w$.
\end{enumerate}
\end{lemma}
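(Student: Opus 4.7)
The key observation is that multiplication by a simple reflection changes length by exactly $\pm 1$. So $\ell(s'ws)$ differs from $\ell(ws)$ by one and from $\ell(s'w)$ by one. Combined with the hypotheses $\ell(s'w) = \ell(w) + 1$ and $\ell(ws) = \ell(w) + 1$, this forces $\ell(s'ws) \in \{\ell(w), \ell(w) + 2\}$. In the ``high'' case $\ell(s'ws) = \ell(w) + 2$, we immediately obtain $s'ws > ws$ and $s'ws > s'w$, giving alternative (2). So the substance is to show that the ``low'' case $\ell(s'ws) = \ell(w)$ forces $s'ws = w$, which is alternative (1).

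To handle the low case, I would fix a reduced expression $w = s_1 s_2 \cdots s_k$, so that $s' s_1 \cdots s_k$ and $s_1 \cdots s_k s$ are reduced expressions for $s'w$ and $ws$ respectively (using the hypotheses). The word $s' s_1 \cdots s_k s$ of length $k+2$ then represents $s'ws$, which by assumption has length $k$, so this expression is not reduced. By the deletion condition (or equivalently the strong exchange property), two letters can be removed from this word to obtain a reduced expression for $s'ws$. There are three cases for which pair is deleted:

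\begin{enumerate}
\item Both the leading $s'$ and the trailing $s$ are removed; this yields $s'ws = s_1 \cdots s_k = w$, which is exactly alternative (1).
\item The leading $s'$ and some middle $s_i$ are removed, giving $s'ws = u\cdot s$ for some $u$ of length $\leq k-1$. Left-multiplying by $s'$ yields $ws = s' u s$, but actually more directly: then $s'w = u$ has length $\leq k-1$, contradicting $\ell(s'w) = k+1$.
\item The trailing $s$ and some middle $s_i$ are removed, giving $s'ws = s' \cdot u$ with $u$ of length $\leq k-1$; left-multiplying by $s'$ gives $ws = u$ of length $\leq k-1$, contradicting $\ell(ws) = k+1$.
\end{enumerate}

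Thus the only consistent deletion is the first, and alternative (1) holds.

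The main obstacle, such as it is, is the careful bookkeeping in cases (2) and (3) above: one must verify that the length inequalities arising from the hypotheses $s'w > w$ and $ws > w$ really do rule out deleting a middle letter together with one of the end letters. Once the deletion condition is invoked, everything reduces to comparing lengths. Every tool used (length function, deletion/exchange property) is standard in Coxeter group theory, so no results beyond what is already implicit in the paper's treatment of $W$ are needed.
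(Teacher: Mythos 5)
The paper's own ``proof'' of this lemma is just a citation to \cite[Corollary 2.2.8]{BjornerBrenti}, so your proposal amounts to supplying the underlying argument. The overall plan is sound: since left or right multiplication by a simple reflection changes length by exactly $\pm 1$, the hypotheses force $\ell(s'ws)\in\{\ell(w),\ell(w)+2\}$; the high case gives alternative (2) immediately, and you attack the low case via the deletion condition. However, the case analysis in the low case is incomplete. After invoking the deletion condition on the length-$(k+2)$ word $s'\,s_1\cdots s_k\,s$, there are \emph{four} possibilities for which pair of letters is deleted, not three: you omit the case in which both deleted letters lie strictly inside the block $s_1\cdots s_k$. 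In that fourth case one would have $s'ws = s'\cdot u\cdot s$ for $u$ a subword of $s_1\cdots s_k$ of (word) length $k-2$, and conjugating by $s'$ on the left and $s$ on the right gives $w=u$, so $\ell(w)\leq k-2$, contradicting $\ell(w)=k$. The patch is therefore the same kind of length bookkeeping you already use in your cases (2) and (3), but as written the enumeration does not exhaust all pairs and the proof has a gap.
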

\begin{proof}
See \cite[Corollary 2.2.8]{BjornerBrenti}.
\end{proof}

   Given $J\subset S$ we denote by $W_J$ the induced parabolic subgroup, that is, the subgroup generated by $J$. We also define ${}^JW$ as the set of minimal representatives of the right cosets $W_J\backslash W$, that is, for each coset $W_jw$ we choose its element with minimal length.
   \begin{Exa}
   \label{exa:S4}
   Let $W=S_4$, then $S=\{(1,2),(2,3),(3,4)\}$. If $J=\{(1,2),(3,4)\}$, then ${}^JW$ is the set of permutations $w$ such that $w^{-1}(1)<w^{-1}(2)$ and $w^{-1}(3)<w^{-1}(4)$.
   \[
   {}^JW=\{1234, 1324, 1342, 3124, 3142, 3412\}.
   \]
   Analogously $W^J$ would be the set of permutations $W$ such that $w(1)<w(2)$ and $w(3)<w(4)$.   
   \end{Exa}
   
   By Deodhar's lemma \cite[]{BjornerBrenti}, we have that if $s\in S$ and $w\in {}^JW$ then either $ws\in {}^JW$  or $W_Jw=W_Jws$. We define the right action  $\cdot_J$ of $W$ on ${}^JW$ given by 
   \[
   w\cdot_J s=\begin{cases}
   ws& \text{ if } ws\in {}^JW\\
   w& \text{ otherwise}
   \end{cases}
   \]
   for every $w\in {}^JW$ and for every $s\in S$.  This is equivalent to the right action of $W$ on $W_J\backslash W$.
   \begin{Exa}
   \label{exa:S4dot}
     Keep the notation in Example \ref{exa:S4} and take $w=1342\in {}^JW$. Then
     \begin{align*}
          w\cdot_J(1,2)&=3142,\\
          w\cdot_J(2,3)&=1342,\\
          w\cdot_J(3,4)&=1324.
     \end{align*}
   \end{Exa}
   
   The Hecke algebra $H_W$ is the algebra over $\mathbb{C}(q^{\frac{1}{2}})$ with basis $\{T_w, w\in W\}$ satisfying the following multiplication rules
   \[
   \begin{array}{rll}
       T_u\cdot T_w=&\!\!\!\!\!T_{uw}& \text{ if }\ell(uw)=\ell(u)+\ell(w),\\
       T_s^2=&\!\!\!\!\!(q-1)T_s+q& \text{ if }s\in S.
   \end{array}
   \]
   The Hecke algebra $H_W$ has an involution 
   \begin{align*}
       i\col H_W&\to H_W\\
       q&\mapsto q^{-1}\\
       T_w&\mapsto T_{w^{-1}}^{-1}.
   \end{align*}
   The Kazhdan-Lusztig elements $C'_w$ of $H_w$ are defined by 
   \begin{align*}
       i(C'_w)=&C'_w,\\
       q^{\frac{\ell(w)}{2}}C'_w=&\sum_{u\leq w}P_{u,w}T_u,
   \end{align*}
   where $P_{u,w}\in \mathbb{Z}[q]$, $P_{w,w}(q)=1$ and $\deg(P_{u,w})<(\ell(w)-\ell(u))/2$ for $u\neq w$. The existence of such elements $C'_w$ and polynomials $P_{u,w}$ is given by \cite{KL}.\par
   \begin{Exa}
     If $W=S_4$, we have
     \begin{align*}
     q^{2}C'_{3412}=&(1+q)T_{1234}+(1+q)T_{1324}+T_{2134}+T_{1243}+T_{1342}+T_{1423}+T_{2143}\\
                    &+T_{3124}+T_{2314}+T_{1432}+T_{3142}+T_{2413}+T_{3214}+T_{3412},
     \end{align*}
     which means that $P_{1234,3412}(q)=1+q$. 
   \end{Exa}
     Given a set $J\subset S$, we can define a right action (also called $\cdot_J$) of $H_W$ on 
     \[
     V_J:=\mathbb{C}(q^{\frac{1}{2}})^{{}^JW}=\bigoplus_{w\in {}^JW}\mathbb{C}(q^{\frac{1}{2}})w
     \]
     by
     \[
     w\cdot_J T_s:=q\max(w\cdot_Js,w)+\min(w\cdot_Js,w)-w.
     \]
     \begin{Exa}
      Keep the notation in examples \ref{exa:S4} and \ref{exa:S4dot}. We have that
       \begin{align*}
          w\cdot_JT_{(1,2)}&=[3142],\\
          w\cdot_JT_{(2,3)}&=q [1342],\\
          w\cdot_JT_{(3,4)}&=q [1324]+(q-1)[1342].
     \end{align*}
     \end{Exa}
     This is indeed an action as can be seen in \cite[Page 287]{GeckPf}. More precisely, the action in loc. cit. is defined as
     \[
     w \otimes_JT_s =\begin{cases}
       qw         & \text{ if }ws\in W_Jw\\
      ws         & \text{ if } ws\in {}^JW \text{ and }\ell(ws)>\ell(w)\\
       qws+(q-1)w & \text{ if }ws\in {}^JW \text{ and }\ell(ws)<\ell(w)
     \end{cases}
     \]
     Our action can be obtained from $\otimes_J$ simply by a base change. If $\overline{w}=\frac{w}{q^{\ell(w)}}$, then
     \[
     \overline{w}\otimes_JT_s=q\overline{\max(w\cdot_Js,w)}+\overline{\min(w\cdot_Js,w)}-\overline{w}.
     \]
     The action $\otimes_J$ is the induced representation  $\ind_{H_J}^{H_W}$. In particular, we have that
     \[
     \chi^{\ind_{H_j}^{H_W}}(a)=Tr((-)\otimes_J a)=Tr( (-) \cdot_Ja).
     \]
     For simplicity, we will write $c_J(a):=Tr( (-) \cdot_Ja)$.

\begin{Rem}
\label{rem:C'wRam}
       Let $W=S_n$ be the symmetric group, and $\chi^\lambda$ the irreducible characters of $S_n$ and, abusing notation, of $H_{n}$. Recall that for every $a\in H_n$ we have defined
       \[
       \ch(a)=\sum_{\lambda \vdash n} \chi^{\lambda}(a)s_{\lambda}.
       \]
       By \cite[Theorem 3.8]{Ram}, we have that 
      \begin{equation}
      \label{eq:cha_cJ}
       \ch(a)=\sum_{\lambda \vdash n} c_\lambda(a) m_\lambda.
       \end{equation}
       More precisely, in \cite[Equation (3.2)]{Ram} there is defined the action
       \[
     w \boxtimes_JT_s =\begin{cases}
       qw         & \text{ if }ws\in W_Jw\\
       q^{\frac{1}{2}}ws+(q-1)w        & \text{ if } ws\in {}^JW \text{ and }\ell(ws)>\ell(w)\\
       q^{\frac{1}{2}}ws & \text{ if }ws\in {}^JW \text{ and }\ell(ws)<\ell(w).
     \end{cases}
     \]
     Again, we have to change basis. As usual we write $w_0$ for the maximal element of $W$ and $w_0(J)$ for the maximal element of $W_J$. Let
     \[
     \overline{w}= \frac{w_0(J) w w_0}{q^{\frac{\ell(w)}{2}}}.
     \]
     By \cite[Proposition 2.5.4]{BjornerBrenti} and the fact that $w_0s=sw_0$ for every $s\in S$, we have that
     \[
     \overline{w}\boxtimes T_s =\begin{cases}
       q\overline{w}        & \text{ if }ws\in W_Jw\\
       \overline{ws}+(q-1)\overline{w}        & \text{ if } ws\in {}^JW \text{ and }\ell(ws)<\ell(w)\\
       qws & \text{ if }ws\in {}^JW \text{ and }\ell(ws)>\ell(w),
     \end{cases}
     \]
     which is precisely the action $\cdot_J$.
\end{Rem}

We are now interested in the elements $(1+T_{s_1})(1+T_{s_2})\ldots (1+T_{s_{n}})$, which correspond to the Bott-Samelson resolution of the Schubert varieties (see Section \ref{subsec:flag}). For each sequence $\si=(s_1,s_2,\ldots, s_n)$ we define
\[
c_J(\si)=c_J((1+T_{s_1})(1+T_{s_2})\ldots (1+T_{s_{n}}))=Tr((-)\cdot_J(1+T_{s_1})(1+T_{s_2})\ldots (1+T_{s_{n}})).
\]
Note that for each $w\in {}^JW$, we have
\[
w\cdot_J (1+T_s)=q\max(w\cdot_Js,w)+\min(w\cdot_Js,w).
\]

   Alternatively we can compute $c_J(\si)$ as follows. Define $\phi_{J,s,i}\col {}^JW\to {}^JW$ for $i=0,1$ as
\begin{align*}
\phi_{J,s,1}(w)&=\max(w\cdot_J s,w)\\
\phi_{J,s,0}(w)&=\min(w\cdot_J s,w).
\end{align*}
Moreover, for each binary word  $\bi=(b_1,\ldots, b_n)$, define 
\begin{equation}
    \label{eq:PhiJLb}
    \phi_{J,\si,\bi}=\phi_{J,s_n,b_n}\circ\ldots\circ\phi_{J,s_1,b_1}.
\end{equation} Then we have 
\begin{equation}
    \label{eq:cJLW}
 c_J(\si)=\sum_{w\in {}^JW}\sum_{\bi \in \Bin_{J,\si}(w)} q^{|\bi|}
\end{equation}
where $|\bi|:=\sum b_i$ and $\Bin_{J,\si}(w):=\{\bi\in\{0,1\}^n; \phi_{J,\si,\bi}(w)=w\}$. When $J=\emptyset$,  we will write $\phi_{\si,\bi}$ instead of $\phi_{\emptyset,\si,\bi}$.\par
\begin{Exa}
\label{exa:cJs}
 Keep the notation in Examples \ref{exa:S4} and \ref{exa:S4dot}, and consider $\si=((1,2),(2,3))$. Then
 \begin{align*}
     [1234]\cdot_J(1+T_{(1,2)})(1+T_{(2,3)})=&(q+1)[1234]\cdot_J(1+T_{(2,3)})\\
                                            =&q(q+1)[1324]+(q+1)[1234]),\\
     [1324]\cdot_J(1+T_{(1,2)})(1+T_{(2,3)})=&(q[3124]+[1324])\cdot_J(1+T_{(2,3)})\\
                                            =&q(q+1)[3124]+q[1324]+[1234],\\
     [1342]\cdot_J(1+T_{(1,2)})(1+T_{(2,3)})=&(q[3142]+[1342])\cdot_J(1+T_{(2,3)})=\\
                                            =&q^2[3412]+q[3142]+(q+1)[1342],\\
     [3124]\cdot_J(1+T_{(1,2)})(1+T_{(2,3)})=&(q[3124]+[1324])\cdot_J(1+T_{(2,3)})=\\
                                            =&q(q+1)[3124]+q[1324]+[1234],\\
     [3142]\cdot_J(1+T_{(1,2)})(1+T_{(2,3)})=&(q[3142]+[1342])\cdot_J(1+T_{(2,3)})=\\
                                            =&q^2[3412]+q[3142]+(q+1)[1342],\\
     [3412]\cdot_J(1+T_{(1,2)})(1+T_{(2,3)})=&(q+1)[3412]\cdot_J(1+T_{(2,3)})\\
                                             =&q(q+1)[3412]+(q+1)[3142].\\
 \end{align*}
 Hence 
 \[
 c_J(L)=(q+1)+q+(q+1)+q(q+1)+q+q(q+1)=2q^2+6q+2.
 \]
 Alternatively, we have the following words $\bi\in\{0,1\}^2$ satisfying $\phi_{J,\si,\bi}(w)=w$ for each $w\in{}^JW$.
 \begin{align*}
     \Bin_{J,\si}(1234)=& \{(1,0), (0,0)\}\\
     \Bin_{J,\si}(1324)=& \{(0,1)\}\\
     \Bin_{J,\si}(1342)=&\{(0,0),(0,1)\}\\
     \Bin_{J,\si}(3124)=&\{(1,1),(1,0)\}\\
     \Bin_{J,\si}(3142)=&\{(1,0)\}\\
     \Bin_{J,\si}(3412)=&\{(1,1),(0,1)\}.
 \end{align*}
 which recovers $c_J(L)$.
\end{Exa}

\begin{proposition}
\label{prop:cJmonic}
   If the sequence $\si=(s_1,\ldots, s_n)$ is such that for each simple reflection $s\in S$ there exists $i=1,\ldots, n$ with $s_i=s$, then 
   \[
    c_J(\si)=q^n+[\textnormal{lower degrees}].
   \]
\end{proposition}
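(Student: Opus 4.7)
The plan is to isolate the $q^n$ coefficient directly from the combinatorial formula
\[
c_J(\si)=\sum_{w\in {}^JW}\sum_{\bi\in \Bin_{J,\si}(w)}q^{|\bi|}
\]
established in Equation \eqref{eq:cJLW}. Since $|\bi|\le n$ for every $\bi\in\{0,1\}^n$, the polynomial $c_J(\si)$ has degree at most $n$, so the statement reduces to showing that the coefficient of $q^n$ is exactly $1$. Only the binary word $\bi=(1,1,\ldots,1)$ contributes to the top degree, so this coefficient equals the number of $w\in {}^JW$ satisfying $\phi_{J,\si,(1,\ldots,1)}(w)=w$.

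The first key observation is a monotonicity argument. Each operator $\phi_{J,s,1}(w)=\max(w\cdot_J s,w)$ weakly increases the length, so if the full composition $\phi_{J,s_n,1}\circ\cdots\circ\phi_{J,s_1,1}$ sends $w$ back to $w$, then the length must fail to strictly increase at each individual step. Consequently, $\phi_{J,s_i,1}(w)=w$ for every $i=1,\ldots,n$. By Deodhar's lemma this condition says that for each $s_i$ either $ws_i\in W_J w$ (so $w\cdot_J s_i=w$) or $ws_i\in {}^JW$ with $\ell(ws_i)<\ell(w)$.

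Here I would use the hypothesis that every simple reflection occurs among the $s_i$: it promotes the condition above to the statement that $w\cdot_J s\le w$ for \emph{every} $s\in S$. Equivalently, $w$ has no ascent in the right weak order on ${}^JW$ (viewed as a set of right coset representatives of $W_J\backslash W$ with the action $\cdot_J$). The final step is to invoke the standard fact from the theory of Coxeter groups (see, e.g., \cite{BjornerBrenti}) that the right weak order on the quotient $W_J\backslash W$ is a lattice whose unique maximal element is the longest element $w_0^J$ of ${}^JW$. Therefore $w=w_0^J$ is the unique solution, and the coefficient of $q^n$ in $c_J(\si)$ is $1$.

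The main obstacle is the last step: justifying that the condition ``no ascent'' uniquely determines $w_0^J$. This is classical but deserves a careful pointer, since one must confirm that the right weak order on ${}^JW$ (not merely the Bruhat order) has a unique maximum. If a self-contained argument is desired, one can produce the required ascent directly from a reduced expression for $w_0^J$: writing $w_0^J=wu$ with lengths adding implies that the first letter of a reduced word for $u$ is a simple reflection $s$ with $ws\in {}^JW$ and $\ell(ws)=\ell(w)+1$; the technical point to verify is that every $w\in {}^JW$ is below $w_0^J$ in the right weak order, which follows from the parabolic factorization $w_0=w_0(J)w_0^J$ together with the length-additivity of right weak covers in ${}^JW$.
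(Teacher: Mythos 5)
Your proof is correct and follows essentially the same route as the paper's: reduce via Equation~\eqref{eq:cJLW} to counting fixed points of $\phi_{J,\si,(1,\ldots,1)}$, use monotonicity of each $\phi_{J,s_i,1}$ to force $w\cdot_J s_i\le w$ for all $i$, then use that every $s\in S$ occurs in $\si$ and that only the longest element of ${}^JW$ has no weak-order ascent. The potential gap you flag in your last paragraph — that one must justify that ``no $\cdot_J$-ascent'' characterizes the maximum of ${}^JW$ under the right weak order, not merely Bruhat order — is real but classical (and the paper asserts it without comment as well); your proposed resolution via the factorization $w_0=w_0(J)w_0^J$ and length-additivity is a sound way to close it.
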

\begin{proof}
    By Equation \eqref{eq:cJLW} it suffices to prove that there exists exactly one element $w\in {}^JW$ such that $\phi_{J,\si,\bi}(w)=w$ for $\bi=(1,1,\ldots,1)$. Let $w_0\in {}^JW$ be such that $\phi_{J,\si,\bi}(w_0)=w_0$, and define $w_{i+1}:=\phi_{J,\si|_i,\bi|_i}(w_i)$ for $i=0,\ldots, n-1$. Since $w_{i+1}=\max(w_{i}\cdot_J s_{i+1},w_i)$ and $w_0=w_n$, we must have that $w_{i+1}=w_i$ for all $i=0,\ldots, n-1$. This means that $w\geq w\cdot_J s_i$ for every $i=1,\ldots, n$, which implies that $w\geq w\cdot_j s$ for every $s\in S$ (by the hypothesis that each element $s\in S$ appears in the sequence $\si$). This happens if and only if $w$ is the unique maximal element of ${}^JW$, which finishes the proof.
\end{proof}

\begin{Def}
\label{def:good}
Fix $J\subset S$, and define $D_{\si,J}(w):=D_{\si}(w)\cap J$. Moreover, for $\si=(s_1,\ldots, s_n)$ and $\bi=(b_1,\ldots, b_n)$ let us define $\si|_i=(s_1,\ldots, s_i)$ and $\bi|_i=(b_1,\ldots, b_i)$. We say that a word $\bi$ is $(L,J)$-\emph{good} for $w$ if the following holds:
  \begin{enumerate}
      \item $\phi_{\si,\bi}(w)=w$, and
      \item for each $s\in D_{\si,J}(w)$, there exists $i\in \{1,\ldots,n\}$ such that $\phi_{\si|_{i-1},\bi|_{i-1}}(w)\cdot s_{i}=s\cdot \phi_{\si|_{i-1},\bi|_{i-1}}(w)$. 
  \end{enumerate}
  \end{Def}
  The intuition behind this definition is Theorem \ref{thm:hbspaving}.
  
\begin{Exa}
\label{exa:good}
  Consider $W=S_4$, $w=4213$,  $\si=((2,3),(1,2),(3,4),(2,3),(1,2),(3,4))$, and $J=S$. Then $\D_\si(w)=\{(1,2),(3,4)\}$, and we claim that the word $\bi=(1,0,1,1,1,0)$ is a $(L,J)$-good word for $w$. For simplicity let $w_i:=\phi_{\si|_{i},\bi|_i}(w)$.\par
  \begin{itemize}
      \item Since $s_1=(2,3)$ and $b_1=1$, we have $ws_1=4123< w$ and $w_1=\max(ws_1,w)=w=4213$. Moreover, it holds that $(1,2)w=4123=ws_1$.
      \item Since, $s_2=(1,2)$ and $b_2=0$, we have $w_1s_2=2413<w_1$ and $w_2=\min(w_1s_2,w_1)=w_1s_2=2413$. Moreover, it holds that $(2,4)w_1=2413=w_1s_2$.
      \item Since, $s_3=(3,4)$ and $b_3=1$, we have $w_2s_3=2431>w_2$ and $w_3=\max(w_2s_3,w_2)=w_2s_3=2431$. Moreover, it holds that $(1,3)w_2=2413=w_2s_3$.
      \item Since, $s_4=(2,3)$ and $b_4=1$, we have $w_3s_4=2341<w_3$ and $w_4=\max(w_3s_4,w_3)=w_3=2431$. Moreover, it holds that $(3,4)w_3=2341=w_3s_4$.
      \item Since, $s_5=(1,2)$ and $b_5=1$, we have $w_4s_5=4231>w_4$ and $w_5=\max(w_4s_5,w_4)=w_4s_5=4231$. Moreover, it holds that $(2,4)w_4=4231=w_4s_5$.
      \item Since, $s_6=(3,4)$ and $b_6=0$, we have $w_5s_6=4213<w_5$ and $w_6=\min(w_5s_6,w_5)=w_5s_6=4213$. Moreover, it holds that $(1,3)w_5=4213=w_5s_6$.
  \end{itemize}
  Then we have that the conditions of Definition \ref{def:good} are satisfied. Indeed, we have that $w_6=w$ and for each simple reflection $s\in \{(1,2),(3,4)\}=D_{\si,J}(w)$ there exists $i\in\{1,2,3,4\}$ such that $w_{i-1}s_i=sw_i$. If $s=(1,2)$, then $i=1$, and if $s=(3,4)$, then $i=4$.
  
\end{Exa}  
  
  Let $\Good(L,J,w)$ be the set of all $(L,J)$-good words for $w$ and define 
\[
c'_J(L)=\sum_{w\in W}\sum_{b \in Good(L,J,w)} q^{|b|}.
\]
The main goal of this section is to prove the following theorem.
\begin{theorem}
\label{thm:TrTr}
Let $J$ be a subset of $S$ and $\si=(s_1,\ldots, s_n)$ a sequence of simple reflections. We have that 
\[
c'_J(L)=c_J(L).
\]
\end{theorem}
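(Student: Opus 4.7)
The plan is to establish $c'_J(\si) = c_J(\si)$ via an explicit weight-preserving bijection
\[
\Psi\colon \{(w, \bi) : w \in W,\ \bi \in \Good(L,J,w)\} \;\xrightarrow{\ \sim\ }\; \{(v, \bi): v \in {}^JW,\ \bi \in \Bin_{J,\si}(v)\}
\]
defined by $\Psi(w, \bi) = (v, \bi)$, where $w = u \cdot v$ is the unique parabolic factorization with $u \in W_J$, $v \in {}^JW$, and $\ell(w) = \ell(u) + \ell(v)$. Since $\Psi$ manifestly preserves $q^{|\bi|}$, the bijectivity of $\Psi$ will yield the theorem.

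To show $\Psi$ is well defined, I would first prove a compatibility lemma for the two right actions: for any $w = uv$ in parabolic form and any $s \in S$, the parabolic factorization of $\phi_{s,b}(uv)$ has $v$-part equal to $\phi_{J,s,b}(v)$. This is a direct case analysis on the alternative $vs \in {}^JW$ versus $vs = tv$ for some $t \in J$. In the first (``non-special'') case, $ws = u(vs)$ remains a reduced factorization whose length shifts in parallel with that of $v$, so $u$ is unchanged and $v$ transforms exactly as under $\phi_{J,s,b}$. In the second (``special'') case, $v$ is unchanged while $u$ becomes either $u$ or $ut$, again dictated by $b$ and the length comparison between $u$ and $ut$ via the same max/min rule. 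Iterating, $\phi_{\si,\bi}(uv) = u_n \cdot v_n$ with $v_n = \phi_{J,\si,\bi}(v)$; hence $\phi_{\si,\bi}(w) = w$ forces $\phi_{J,\si,\bi}(v) = v$, and $\Psi$ is well defined.

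For the bijectivity of $\Psi$, fix $(v, \bi)$ with $\bi \in \Bin_{J,\si}(v)$ and look for a unique $u \in W_J$ rendering $(uv, \bi)$ good. The compatibility lemma identifies the $u$-dynamics as a max/min walk inside $W_J$ driven solely by the subsequence of ``special steps'' of $(\si, \bi)$ relative to $v$, with reflections $t_{i-1} := v_{i-1} s_i v_{i-1}^{-1} \in J$ depending only on $(v, \bi)$. The requirement $\phi_{\si,\bi}(uv) = uv$ is then equivalent to the return condition $u_n = u_0$. A direct computation in the parabolic decomposition shows that the equality $s w_{i-1} = w_{i-1} s_i$ with $s \in J$ is possible only at special steps, where it holds precisely when $s = u_{i-1} t_{i-1} u_{i-1}^{-1}$; hence the second clause of Definition~\ref{def:good} translates exactly to the requirement that the set of $W_J$-conjugates $\{u_{i-1} t_{i-1} u_{i-1}^{-1}\}$ realized during the walk covers $D_L(uv) \cap J$.

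The main technical obstacle is showing that this combined ``fixed point plus coverage'' constraint admits exactly one solution $u \in W_J$ for each $(v, \bi)$. I would attack this by induction on the number of special steps, exploiting the fact that the reduced $W_J$-walk is formally of the same type as the original problem but one Coxeter-rank smaller. Existence would be established by greedily constructing $u$ so that each reflection in $D_L(uv) \cap J$ is realized at some special step, while uniqueness follows because the coverage condition pins down, at each special step, whether the walk must or must not cross the reflection hyperplane of $t_{i-1}$ conjugated into $u_{i-1}$. The subword characterization of the Bruhat order in $W_J$ is the key combinatorial tool needed to convert the coverage condition into a rigid recipe for $u$, and this is where the bulk of the bookkeeping lies.
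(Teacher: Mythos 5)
Your framework---the map $\Psi$ (which is exactly the paper's $\kappa_J$) and the parabolic compatibility lemma relating $\phi_{s,b}$ to $\phi_{J,s,b}$---matches the paper's setup: the compatibility you sketch is Lemma~\ref{Lem:phiWJw}, and it is correct. But the core of the theorem is the bijectivity of $\kappa_J$ (Proposition~\ref{prop:bijection}), and this is precisely what your proposal does not establish.

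The paper's key idea, which is absent from your proposal, is that each elementary step $\phi_{s,b}$ (Lemma~\ref{lem:phis}), and hence each composite $\phi_{\si,\bi}$, is \emph{weak-left-order preserving}. Restricting to the left coset $W_J\overline{w}$---a finite poset with minimum under $\leq_L$---this yields a monotone self-map, and Tarski's fixed-point theorem (or, equivalently, iterating from $\overline{w}$) produces the unique minimum fixed point $w$. Corollary~\ref{cor:phiL} then gives the needed dichotomy for any $s'\in D_{\si,J}(w)$: either $\phi_{\si,\bi}(s'w)=\phi_{\si,\bi}(w)$, which happens exactly when some step witnesses $s'\phi_{\si|_{i-1},\bi|_{i-1}}(w)=\phi_{\si|_{i-1},\bi|_{i-1}}(w)s_i$ (coverage holds at $s'$), or $\phi_{\si,\bi}(s'w)=s'w$, so that $s'w$ is a strictly smaller fixed point. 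Minimality forbids the latter alternative, so $w$ is good; and Lemma~\ref{lem:chain} shows any non-minimal fixed point $\widehat w$ has some $s'\in D_{\si,J}(\widehat w)$ with $s'\widehat w$ again a fixed point, which drives the dichotomy the other way and defeats coverage at $s'$. That is the entire existence-and-uniqueness argument.

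Your proposal replaces this with a ``greedy construction'' for existence and a ``hyperplane-crossing'' argument for uniqueness, but both are assertions rather than arguments, and you say yourself that this is where ``the bulk of the bookkeeping lies.'' The induction scheme you propose also does not close: once you pass to the induced $W_J$-walk, the $v$-component trivializes and \emph{all} remaining steps are special, so the reduced instance has parabolic parameter equal to the full simple system of $W_J$ and admits no further parabolic reduction. There is no effective base case. What is genuinely missing, and what would dissolve all the anticipated bookkeeping, is the monotonicity of $\phi_{\si,\bi}$ in the weak left order and the resulting clean structure (a chain with a minimum) of its fixed-point set inside each coset $W_J\overline{w}$.
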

See also \cite{KLS} for combinatorial characterizations of the induced sign characters.

\begin{Exa}
\label{exa:S4good}
Keeping the notation in Examples \ref{exa:S4}, \ref{exa:S4dot} and \ref{exa:cJs}. We have that
\begin{align*}
    \Good(J,\si,1234)=&\{(0,0)\},& \Good(J,\si,1324)=&\{(0,1)\},& \Good(J,\si,1342)=&\{(0,0)\},\\
    \Good(J,\si,1432)=&\{(0,1)\},& \Good(J,\si,2134)=&\{(1,0)\},& \Good(J,\si,3124)=&\{(1,0)\},\\
    \Good(J,\si,3142)=&\{(1,0)\},& \Good(J,\si,3214)=&\{(1,1)\},& \Good(J,\si,3412)=&\{(0,1)\},\\
    \Good(J,\si,4312)=&\{(1,1)\}.
\end{align*}
While the other good sets are empty. This tells us that $c'_J(L)=2q^2+6q+2$, which agrees with $c_J(L)$.
\end{Exa}
We will need a few results before we are able to prove Theorem \ref{thm:TrTr}. We begin with the following lemma.
\begin{lemma}
\label{lem:phis}
Let $s, s'\in S$ and $b=0,1$. Then
\[
\phi_{s,b}(s'w)=\begin{cases}
\phi_{s,b}(w) & \text{if }s'w=ws\\
s'\phi_{s,b}(w)& \text{otherwise.}\\
\end{cases}
\]
Moreover, if $s'w>_\si w$, then $\phi_{s,b}(s'w)\geq _\si\phi_{s,b}(w)$. This means that $\phi_{s,b}\col W\to W$ is a weak-left order preserving map.
\end{lemma}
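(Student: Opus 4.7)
The plan is to reduce everything to a short case analysis using the standard Coxeter-theoretic observation (Lemma \ref{lem:s'ws}, quoted just above from Bj\"orner--Brenti) and its dual version. The key book-keeping observation is that $\phi_{s,b}(w)$ always lies in $\{w,ws\}$, and is determined purely by whether $ws>w$ or $ws<w$.

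\textbf{First formula.} If $s'w=ws$, then $\phi_{s,b}(s'w)=\phi_{s,b}(ws)$, and since $(ws)s=w$ we have $\max(ws,w)=\max(w,ws)$ and $\min(ws,w)=\min(w,ws)$, so this expression equals $\phi_{s,b}(w)$. Assume now $s'w\neq ws$. The central claim I would establish is
\[
ws>w \iff s'ws>s'w,
\]
which I would prove by splitting on the four sub-cases (A) $s'w>w,\ ws>w$; (B) $s'w<w,\ ws>w$; (C) $s'w>w,\ ws<w$; (D) $s'w<w,\ ws<w$. In Case (A), Lemma \ref{lem:s'ws} gives (after excluding $s'ws=w$, which is equivalent to the forbidden $s'w=ws$) that $s'ws>s'w$ and $s'ws>ws$. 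Case (D) is the dual statement and is handled by the same lemma applied to $w_0$-conjugates or directly by its dual formulation. Cases (B) and (C) are routine length computations using $\ell(s'u)=\ell(u)\pm1$ and $\ell(us)=\ell(u)\pm1$: given three of the four lengths, the fourth is pinned down by the only consistent choice, giving the claimed equivalence. Once this equivalence is in hand, both $\phi_{s,b}(s'w)$ and $s'\phi_{s,b}(w)$ consistently pick the same element from $\{s'w,s'ws\}$, and the formula follows.

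\textbf{Second statement.} Assume $s'w>_L w$, i.e.\ $\ell(s'w)=\ell(w)+1$. If $s'w=ws$ the two outputs are equal, so we are done. Otherwise, $\phi_{s,b}(s'w)=s'\phi_{s,b}(w)$ by the first part, and we need $\ell(s'\phi_{s,b}(w))=\ell(\phi_{s,b}(w))+1$. If $\phi_{s,b}(w)=w$ this is the hypothesis $s'w>_Lw$; if $\phi_{s,b}(w)=ws$ I must show $s'ws>_L ws$, i.e.\ $\ell(s'ws)=\ell(ws)+1$. This is exactly what the length accounting in Cases (A) and (C) yields: in (A), Lemma \ref{lem:s'ws} forces $\ell(s'ws)=\ell(w)+2=\ell(ws)+1$; in (C), the computation $\ell(s'w)=\ell(ws)+2$ combined with $\ell(s'ws)=\ell(s'w)\pm1$ forces $\ell(s'ws)=\ell(ws)+1$, again as required.

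\textbf{Main obstacle.} The only substantive point is the equivalence $ws>w\Leftrightarrow s'ws>s'w$; once this is established by the Bj\"orner--Brenti lemma together with its straightforward dual, the rest is bookkeeping. I anticipate no serious difficulty beyond being careful to exclude the coincidence $s'w=ws$ (equivalently $s'ws=w$) at each step, which is precisely the condition that separates the two alternatives in the stated formula.
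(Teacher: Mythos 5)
Your proposal is correct, and it takes a genuinely different route from the paper. The paper proves only the case $s'w>w$, $ws>w$ directly via Lemma~\ref{lem:s'ws}, and then handles the other cases by the two reductions $w\to s'w$ (when $s'w<w$) and $w\to ws$ (when $ws<w$, using the right-invariance $\phi_{s,b}(w)=\phi_{s,b}(ws)$); this is implicitly an induction on $\ell(w)$, terminating at $w=e$. You instead prove the length equivalence $ws>w\Leftrightarrow s'ws>s'w$ directly in all four sign cases, with (A) handled by Lemma~\ref{lem:s'ws}, (B)/(C) by comparing $\ell(s'(ws))$ and $\ell((s'w)s)$, and (D) by a reversed version of the lemma. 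Your (B)/(C) length argument is exactly right and is valid; the reason it doesn't extend to (D) is that there $\ell(s'w)=\ell(ws)$ and the two constraints on $\ell(s'ws)$ agree without pinning it down, so (D) genuinely needs the extra structure from Lemma~\ref{lem:s'ws}. Each approach has its merit: the paper's reduction is shorter and reuses a single application of the key lemma, while yours avoids any reduction/induction and makes the four-case logic fully explicit.

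One terminological slip worth fixing: case (D) is not handled by applying Lemma~\ref{lem:s'ws} to $w_0$-\emph{conjugates}. Conjugation $w\mapsto w_0ww_0$ preserves length, so it does not reverse the inequalities needed to turn (D) into (A); what does reverse them is left-translation $w\mapsto w_0 w$ (since $\ell(w_0 w)=\ell(w_0)-\ell(w)$, and $w_0 s'w=\tilde{s'}(w_0w)$ with $\tilde{s'}=w_0 s'w_0\in S$). Alternatively, and more in the spirit of the paper, you can apply Lemma~\ref{lem:s'ws} directly to $v:=s'w$: one has $s'v=w>v$, and if $vs>v$ held then the lemma would force $s'vs>s'v$, i.e. $ws>w$, contradicting (D); excluding the coincidence $s'vs=v$ (i.e. $s'w=ws$) this yields $vs<v$, which is $s'ws<s'w$. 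Either fix closes the only gap.
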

\begin{proof}
First we will assume that $s'w> w$ and $ws>w$ (in particular $s'w>_Lw$). By Lemma \ref{lem:s'ws}, we have two cases. In the first case we have $s'ws=w$, so that 
\[
\phi_{s,1}(s'w)=\max(s'ws,s'w)=\max(w,ws)=\phi_{s,1}(w).
\]
The same argument holds for $b=0$. In the second case, we have $s'ws>ws$ and $s'ws>s'w$, so 
 \begin{align*}
 &\phi_{s,1}(s'w)=\max(s'ws,s'w)=s'ws=s'\max(ws,w)=s'\phi_{s,1}(w)\\
 &\phi_{s,0}(s'w)=\min(s'ws,s'w)=s'w=s'\min(ws,w)=s'\phi_{s,0}(w)
\end{align*}
 Since $s'ws>_Lws$, we have $\phi_{s,1}(s'w)>_\si\phi_{s,1}(w)$, and since $s'w>_Lw$, we have $\phi_{s,0}(s'w)>_\si\phi_{s,0}(w)$.\par
 
    The case $s'w<w$ follows from the result above for $w'=s'w$, and the case $ws<w$ from the fact that $\phi_{s,b}(w)=\phi_{s,b}(ws)$.
 \end{proof}

 \begin{corollary}
 \label{cor:phiL}
For every $\si=(s_1,\ldots, s_k)$, $s'\in S$ and binary word $\bi$, we have that
\[
\phi_{\si,\bi}(s'w)=\begin{cases}
\phi_{\si,\bi}(w) & \text{if }s'\phi_{\si|_{i-1},\bi|_{i-1}}(w)=\phi_{\si|_{i-1},\bi|_{i-1}}(w)s_i\text{ for some i}\\
s'\phi_{\si,\bi}(w)& \text{otherwise.}\\
\end{cases}
\]
Moreover, $\phi_{\si,\bi}\colon W\to W$ is a weak-left order preserving map of posets.
 \end{corollary}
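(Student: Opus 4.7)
The natural approach is induction on $k = \ell(\si)$, with Lemma~\ref{lem:phis} handling both the base case $k=1$ and each step of the induction. The only real content is bookkeeping the two alternatives (``$s'$ is absorbed'' versus ``$s'$ survives on the left'') through successive applications of $\phi_{s_i,b_i}$.

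For the inductive step, set $w' := \phi_{\si|_{k-1},\bi|_{k-1}}(w)$ and $w'' := \phi_{\si|_{k-1},\bi|_{k-1}}(s'w)$. By the inductive hypothesis there are two cases. If there exists $i \le k-1$ with $s'\phi_{\si|_{i-1},\bi|_{i-1}}(w) = \phi_{\si|_{i-1},\bi|_{i-1}}(w) s_i$, then $w'' = w'$, and applying $\phi_{s_k,b_k}$ to both sides immediately gives $\phi_{\si,\bi}(s'w) = \phi_{\si,\bi}(w)$, which falls into the first branch of the claimed formula (with the same index $i$ witnessing it). Otherwise $w'' = s' w'$, and then $\phi_{\si,\bi}(s'w) = \phi_{s_k,b_k}(s' w')$; applying Lemma~\ref{lem:phis} to the pair $(s', w')$ splits further into two subcases. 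If $s' w' = w' s_k$, then $\phi_{s_k,b_k}(s'w') = \phi_{s_k,b_k}(w') = \phi_{\si,\bi}(w)$, and this is again the first branch of the formula (witnessed by $i = k$). If instead $s' w' \neq w' s_k$, then $\phi_{s_k,b_k}(s' w') = s' \phi_{s_k,b_k}(w') = s' \phi_{\si,\bi}(w)$, giving the second branch. This exhausts all cases.

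For the weak-left order preservation, it is enough to check the cover relation: if $s'w >_L w$ for some $s' \in S$, then $\phi_{\si,\bi}(s'w) \geq_L \phi_{\si,\bi}(w)$. By the first part, $\phi_{\si,\bi}(s'w)$ is either $\phi_{\si,\bi}(w)$ or $s' \phi_{\si,\bi}(w)$; in the former case there is nothing to prove, and in the latter case the ``moreover'' clause of Lemma~\ref{lem:phis}, applied inductively along the composition, shows that each intermediate $\phi_{\si|_j,\bi|_j}$ preserves the weak-left order, so the multiplication by $s'$ still represents a genuine cover at the final step. The global preservation statement then follows because the weak-left order is generated by these covers.

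The main obstacle, if any, is simply keeping the indexing straight between the two cases of the inductive step: one must check that the existential quantifier ``for some $i$'' in the statement is correctly transported from $\si|_{k-1}$ to $\si$, including the new possibility $i = k$ arising from the base-case application of Lemma~\ref{lem:phis} in the final step. Once this bookkeeping is acknowledged, everything reduces mechanically to the one-step lemma.
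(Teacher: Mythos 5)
Your proof is correct and takes the same approach the paper does (the paper's proof is literally the one-line remark ``This follows from successive applications of Lemma~\ref{lem:phis}''): induction on the length of $\si$, invoking Lemma~\ref{lem:phis} at each step to distinguish the absorbed case $s'w' = w's_k$ (landing in the first branch, witnessed by $i=k$) from the surviving case (landing in the second branch), and tracking the strict inequality from the ``moreover'' clause of that lemma through the non-absorbed steps to get weak-left order preservation. Your bookkeeping of the witness $i$ across both the inductive hypothesis and the fresh base-case application is correct, and identifying the cover relation check as sufficient is the right reduction.
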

\begin{proof}
This follows from successive applications of Lemma \ref{lem:phis}.
\end{proof}

\begin{lemma}
\label{Lem:phiWJw}
  Consider $J\subset S$,  $\overline{w}\in {}^JW$, $\si=(s_1,\ldots, s_n)$ and $\bi\in \{0,1\}^n$. Then
  \[
  \phi_{\si,\bi}(W_J\overline{w})=W_J\phi_{J,\si,\bi}(\overline{w}).
  \]
  In particular, if $\phi_{J,\si,\bi}(\overline{w})=\overline{w}$ we have that
  \[
  \phi_{\si,\bi}(W_J\overline{w})=W_J\overline{w}.
  \]
\end{lemma}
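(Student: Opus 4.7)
The plan is to prove the lemma by induction on $n$, reducing it to a one-step statement that compares $\phi_{s,b}$ on a $W_J$-coset in $W$ with $\phi_{J,s,b}$ on its representative in ${}^JW$. The essential content is that $\phi_{\si,\bi}$ respects left $W_J$-cosets, and the induced action on cosets $W_J\backslash W$ corresponds, under the bijection $W_J\backslash W \cong {}^JW$, to $\phi_{J,\si,\bi}$.

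First I would set up the induction. The base case $n=0$ is immediate since $\phi_{\emptyset,\emptyset}$ is the identity and $\phi_{J,\emptyset,\emptyset}(\overline{w})=\overline{w}$. For the inductive step, write $\si'=(s_1,\ldots,s_{n-1})$ and $\bi'=(b_1,\ldots,b_{n-1})$ and set $\overline{w}':=\phi_{J,\si',\bi'}(\overline{w})$. By the inductive hypothesis, $\phi_{\si',\bi'}(W_J\overline{w})\subseteq W_J\overline{w}'$; applying $\phi_{s_n,b_n}$ reduces the problem to the single-step claim: for any $s\in S$, $b\in\{0,1\}$ and $\overline{w}'\in {}^JW$,
\[
\phi_{s,b}(W_J\overline{w}')\subseteq W_J\phi_{J,s,b}(\overline{w}').
\]

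The main step is this single-step claim. I would first establish, by iterated application of Corollary~\ref{cor:phiL} (applied to the one-element sequence $(s)$), the auxiliary fact that for every $u\in W_J$ and every $w\in W$, $\phi_{s,b}(uw)\in W_J\phi_{s,b}(w)$; one simply writes $u=s_{i_1}\cdots s_{i_k}$ with $s_{i_j}\in J\subseteq S$ and inducts on $k$, noting that Corollary~\ref{cor:phiL} gives either $\phi_{s,b}(s_{i_1}w'')=\phi_{s,b}(w'')$ or $s_{i_1}\phi_{s,b}(w'')$, both of which lie in $W_J\phi_{s,b}(w'')$. This fact reduces the claim to showing $\phi_{s,b}(\overline{w}')\in W_J\phi_{J,s,b}(\overline{w}')$.

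Now I would do a case split on whether $\overline{w}'s\in {}^JW$. If $\overline{w}'s\in {}^JW$, then $\overline{w}'\cdot_J s=\overline{w}'s$, so by definition $\phi_{J,s,b}(\overline{w}')$ agrees with $\phi_{s,b}(\overline{w}')$, and in particular the latter lies in $W_J\phi_{J,s,b}(\overline{w}')$. If instead $\overline{w}'s\notin {}^JW$, then $W_J\overline{w}'s=W_J\overline{w}'$, so we can write $\overline{w}'s=u_0\overline{w}'$ for some $u_0\in W_J$, and by definition $\overline{w}'\cdot_J s=\overline{w}'$ so $\phi_{J,s,b}(\overline{w}')=\overline{w}'$. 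Then $\phi_{s,b}(\overline{w}')\in\{\overline{w}',\overline{w}'s\}=\{\overline{w}',u_0\overline{w}'\}\subseteq W_J\overline{w}'$, as required. The main obstacle here is the second case, which hinges on the identification of $\overline{w}'s$ with a $W_J$-translate of $\overline{w}'$ coming from Deodhar's lemma. The ``in particular'' statement is then immediate by setting $\phi_{J,\si,\bi}(\overline{w})=\overline{w}$.
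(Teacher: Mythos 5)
Your argument follows exactly the same route as the paper's: reduce by iteration to a single step $\phi_{s,b}$, write elements of the coset as $u\overline{w}'$ with $u\in W_J$, push $u$ through $\phi_{s,b}$ by Lemma~\ref{lem:phis}, and split on the Deodhar dichotomy $\overline{w}'s\in {}^JW$ versus $W_J\overline{w}'=W_J\overline{w}'s$. Your version is a bit more explicit about where Lemma~\ref{lem:phis} enters — the paper simply asserts $\phi_{s,b}(w)=u\phi_{J,s,b}(\overline{w})$ or $u'\phi_{J,s,b}(\overline{w})$ without spelling out the intermediate step $\phi_{s,b}(uw)\in W_J\phi_{s,b}(w)$ that you prove by induction on $\ell(u)$ — but the substance is identical.

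One thing you should be aware of: you prove the inclusion $\phi_{\si,\bi}(W_J\overline{w})\subseteq W_J\phi_{J,\si,\bi}(\overline{w})$, while the lemma as printed asserts equality. The equality is actually an overstatement, and the discrepancy is in the statement, not in your reasoning. In $W=S_3$ with $J=\{s_1\}$, $\overline{w}=e$, $\si=(s_1)$, $\bi=(0)$ one has $\phi_{s_1,0}(e)=\phi_{s_1,0}(s_1)=e$, so the image of $W_Je=\{e,s_1\}$ is the singleton $\{e\}$, a proper subset of $W_J\phi_{J,s_1,0}(e)=W_Je$. The cause is that $\phi_{s,0}$ identifies $w$ with $ws$, and when $\overline{w}s\notin {}^JW$ these two elements can lie inside the same $W_J$-coset, destroying injectivity of the restriction. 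In the one place the lemma is used — Proposition~\ref{prop:bijection} — only the containment $\phi_{\si,\bi}(W_J\overline{w})\subseteq W_J\overline{w}$ is needed, precisely so that $\phi_{\si,\bi}$ can be restricted to a self-map of the coset and Tarski's fixed-point theorem applied. So your careful $\subseteq$ is the correct statement to prove.
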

\begin{proof}
Let $w\in W_J\overline{w}$. That means that there exists $u\in W_J$ such that $w=u\overline{w}$ with $u\in W_J$ and $\ell(w)=\ell(u)+\ell(\overline{w})$. Let $s\in S$, then we have two cases: Either
$\overline{w}s\in {}^JW$ or $W_J\overline{w}=W_J\overline{w}s$, so that either $ws=u \overline{w}s$ or $ws=u'\overline{w}$ for $u'\in W_J$. In the first case we have
\[
\phi_{s,b}(w)=u\phi_{J,s,b}(\overline{w}),
\]
while in the second case we have $\phi_{J,s,b}(w)=w$, hence
\[
\phi_{s,b}(w)=\begin{cases}
u\phi_{J,s,b}(\overline{w})\\
u'\phi_{J,s,b}(\overline{w}).
\end{cases}
\]
This proves that
\[
\phi_{s,b}(W_J\overline{w})=W_J\phi_{J,s,b}(w).
\]
With successive applications we get the result of the Lemma.
\end{proof}
For each $J\subset S$, consider the sets
\begin{align*}
    F_1:=&\{(w,\bi); \bi \in \Good(L,J,w)\}\subset W\times \{0,1\}^n,\\
    F_2:=&\{(w,\bi); \phi_{J,\si,\bi}(w)=w\}\subset {}^JW\times \{0,1\}^n,
\end{align*}
and define the function 
\begin{align*}
\kappa_J\col F_1&\to F_2\\
         (w,\bi)&\mapsto (\overline{w},\bi).
\end{align*}
where $\overline{w}$ is the minimal representative of $W$ in $W_Jw$.

\begin{Exa}
  Keep the notation in Examples \ref{exa:S4}, \ref{exa:S4dot}, \ref{exa:cJs}, and \ref{exa:S4good}. Then
  \[
  F_1=\left\{\begin{array}{l}
  (1234,(0,0)),(1324,(0,1)), (1342,(0,0)), (1432,(0,1)), (2134,(1,0)),\\
  (3124,(1,0)),   (3142, (1,0)), (3214,(1,1)), (3412,(0,1)), (4312,(1,1)) \end{array} \right\}
  \]
  and 
  \[
  F_2=\left\{\begin{array}{l}
  (1234,(0,0)),(1234,(1,0)), (1324,(0,1)), (1342,(0,0)), (1342,(0,1)),\\
  (3124,(1,0)), (3124,(1,1)), (3142, (1,0)), (3412,(0,1)), (3412,(0,1))
  \end{array}\right\}.
  \]
  Moreover, we can see that
  \begin{align*}
  1234,2134&\in W_J[1234],& 1324&\in W_J[1324] & 1342, 1432&\in W_J[1342]\\
  3124, 3214&\in W_J[3124]& 3142& \in W_J[3142]& 3412, 4312&\in W_J[3412].
  \end{align*}
  We can see that $\kappa_J$ is a bijection. Proposition \ref{prop:bijection} below proves that this is always the case.
\end{Exa}

\begin{proposition}
\label{prop:bijection}
The function $\kappa_J$ is a bijection. 
\end{proposition}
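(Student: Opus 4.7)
The plan is to prove $\kappa_J$ is a bijection in three steps: well-definedness, surjectivity, and injectivity.

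For \textbf{well-definedness}, suppose $(w,\bi) \in F_1$, so that $\phi_{\si,\bi}(w) = w$. Writing $w = u\overline{w}$ with $u \in W_J$, Lemma \ref{Lem:phiWJw} yields $W_J\,\phi_{J,\si,\bi}(\overline{w}) = \phi_{\si,\bi}(W_J\overline{w})$, a coset that must contain $w$ and hence equal $W_J\overline{w}$. Since both $\phi_{J,\si,\bi}(\overline{w})$ and $\overline{w}$ are the unique minimal-length representative of this coset in ${}^JW$, they coincide, so $(\overline{w},\bi) \in F_2$.

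For the two nontrivial parts, the strategy is to set $\overline{w}_i := \phi_{J,\si|_i,\bi|_i}(\overline{w})$ with $\overline{w}_0 = \overline{w}_n = \overline{w}$, and to analyze, step by step, how a lift $w_i \in W_J\overline{w}_i$ (with $\phi_{s_{i+1},b_{i+1}}(w_i) = w_{i+1}$) can be constructed so that $w_n = w_0$. Each index $i$ falls into one of two cases: Case A, in which $\overline{w}_{i-1}s_i \in {}^JW$, so multiplication by $s_i$ on the right induces a bijection $W_J\overline{w}_{i-1} \to W_J\overline{w}_i$ compatible with $\phi_{s_i,b_i}$ (by Lemma \ref{lem:phis} and the fact that multiplication on the right by $s_i$ does not merge $W_J$-cosets); and Case B, in which $\overline{w}_{i-1}s_i = t_i\overline{w}_{i-1}$ for a reflection $t_i \in W_J$ depending on $\overline{w}_{i-1}$ and $s_i$, so the target coset equals the source and $\phi_{s_i,b_i}$ can collapse two elements.

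For \textbf{surjectivity}, we tentatively start with the lift $w_0 = \overline{w}$ and propagate. In Case A positions the lift is forced; in Case B positions there is a choice of ``branch,'' governed by $b_i$, that may multiply $w_{i-1}$ on the left by $t_i$. Using the weak-left-order preservation from Corollary \ref{cor:phiL} together with Lemma \ref{lem:phis}, one verifies that modifying the initial choice $w_0 \mapsto u\,w_0$ for $u \in W_J$ translates through the lifting process in a controlled way (in Case A it commutes, and in Case B it is possibly absorbed into $t_i$). A combinatorial analysis of the composition of these reflections $t_i$ then shows that there is exactly one choice of initial coset element $u\overline{w}$ for which the cycle condition $w_n = w_0$ holds and for which, simultaneously, every $s \in D_{\si,J}(w_0)$ is produced as $\phi_{\si|_{i-1},\bi|_{i-1}}(w_0)\cdot s_i = s\cdot \phi_{\si|_{i-1},\bi|_{i-1}}(w_0)$ at some Case B position $i$. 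That is precisely the good condition of Definition \ref{def:good}.

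For \textbf{injectivity}, suppose $(w,\bi)$ and $(w',\bi)$ both map to the same $(\overline{w},\bi) \in F_2$, so $w' = uw$ with $u \in W_J$. The good condition forces $D_L(w)\cap J$ and $D_L(w')\cap J$ to each equal the same set $T^\ast$ prescribed by the Case B witnesses along $(\si,\bi)$; combined with $w, w' \in W_J\overline{w}$, this pins down $u$ uniquely (an element of $W_J$ whose left action adds a prescribed descent set to $\overline{w}$ is unique), giving $u = e$. The \textbf{main obstacle} is the bookkeeping in Case B: keeping track of how the reflections $t_i$ compose across multiple Case B positions, and matching them bijectively with the witness positions required by goodness. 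A clean induction on $n$, or alternatively on the number of Case B positions, controlling the coset-level behavior via Lemma \ref{Lem:phiWJw} and the element-level behavior via Lemma \ref{lem:phis}, should handle this bookkeeping and complete both arguments.
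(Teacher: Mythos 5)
Your well-definedness step is fine and matches the paper. But your treatment of surjectivity and injectivity is a plan, not a proof: you yourself flag the ``main obstacle'' of tracking the compositions of the reflections $t_i$ across Case~B positions and assert that ``a clean induction should handle this bookkeeping'' without carrying it out. That is exactly where all the content lies, so as written the argument has a genuine gap. Moreover, your injectivity step contains an error: you claim the good condition forces $D_L(w)\cap J$ and $D_L(w')\cap J$ to coincide with a single set $T^\ast$ ``prescribed by the Case B witnesses along $(\si,\bi)$.'' But goodness for $w$ is tested against the witnesses $\phi_{\si|_{i-1},\bi|_{i-1}}(w)$, which themselves depend on $w$; there is no $w$-independent set $T^\ast$ that the descent sets are forced to match, so the conclusion ``$u=e$'' does not follow as stated.

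The paper sidesteps all of this bookkeeping. It fixes $(\overline{w},\bi)\in F_2$, notes by Lemma~\ref{Lem:phiWJw} that $\phi:=\phi_{\si,\bi}|_{W_J\overline{w}}$ maps the coset to itself and is weak-left-order preserving (Corollary~\ref{cor:phiL}), and applies Tarski's fixed point theorem to conclude the fixed-point set is non-empty with a unique minimal element $w$. It then checks, purely via Corollary~\ref{cor:phiL} and minimality, that (a) this $w$ is good, because for any $s'\in D_{\si,J}(w)$, $s'w$ cannot be a fixed point, forcing $\phi(s'w)=\phi(w)$ and hence the existence of a witness position; and (b) any other fixed point $\widehat w$ is not good, via Lemma~\ref{lem:chain} which produces $s'\in D_{\si,J}(\widehat w)$ with $s'\widehat w$ a fixed point, which kills all witness positions for $s'$. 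If you want to salvage your approach you would likely have to re-derive something equivalent to this order-theoretic argument; the Tarski route is the clean way to organize the case analysis you were trying to do by hand.
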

\begin{proof}
Fix $\overline{w}\in {}^JW$ and $\bi\in \{0,1\}^n$ with $\phi_{J,\si,\bi}(\overline{w})=\overline{w}$. Let us prove that exists exactly one $w\in W_J\overline{w}$ such that $\bi \in Good(L,J,w)$. Consider $\phi_{\si,\bi}\colon W\to W$. Since $\phi_{J,\si,\bi}(\overline{w})=\overline{w}$, we have by Lemma \ref{Lem:phiWJw} that $\phi_{\si,\bi}(W_J\overline{w})=W_J\overline{w}$. Note that $\phi_{\si,\bi}|_{W_J\overline{w}}$  still is weak-left order preserving on $W_J\overline{w}$. By Tarski's fixed point theorem (\cite{Tarski}), the set of fixed points of $\phi:=\phi_{\si,\bi}|_{W_j\overline{w}}$ is non-empty and has a minimal element $w$. We will prove that this $w$ satisfies $\bi \in \Good(L,J,w)$ and is the unique element of $W_J\overline{w}$ with this property.\par

    Choosing $s'\in D_{\si,J}(w)$, we have $s'w<_\si w$ and $s'w\in W_J\overline{w}$. Since $w$ is minimal, we have $\phi(s'w)\neq s'w$. On the other hand, by Corollary \ref{cor:phiL}, we have that $\phi(s'w)=\phi(w)=w$ or $\phi(s'w)=s'\phi(w)=s'w$, and therefore $\phi(s'w)=\phi(w)$. Again by Corollary \ref{cor:phiL}, there exists $i$ such that $s'\phi_{\si|_{i-1}},\bi|_{i-1}(w)=\phi_{\si|_{i-1}},\bi|_{i-1}(w)s_i$.  This proves that $\bi\in \Good(L,J,w)$.\par
    
    Choose a fixed point $\widehat{w}\in W_J\overline{w}$ that it is not minimal. By Lemma \ref{lem:chain} below there exists $s'\in D_{\si,J}(\widehat{w})$ such that $s'\widehat{w}$ is a fixed point of $\phi$. 
    
    Repeating the argument above, we have that, in order for $s'\widehat{w}$ be a fixed point, it must hold that $s'\phi_{\si|_{i-1}},\bi|_{i-1}(w)\neq\phi_{\si|_{i-1}},\bi|_{i-1}(w)s_i$ for every $i$. This means that $\bi\notin \Good(L,J,\widehat{w})$.
\end{proof}

\begin{lemma}
\label{lem:chain}
Let $w$ be the minimal fixed point and let $\widehat{w}$ be a fixed point. Choose a maximal chain $w<_Lw_1<_\si\ldots <_\si w_k<_\si\widehat{w}$ in $W_J\overline{w}$. Then $w_i$ is a fixed point for every $i$. In particular,  there exists $s'\in D_{\si,J}(\widehat{w})$ such that $s'\widehat{w}$ is a fixed point.
\end{lemma}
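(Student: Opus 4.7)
The plan is, for each element $w_j$ of the chain (writing $w_0 = w$ and $w_{k+1} = \widehat{w}$), to set $v_j := \phi(w_j)$ and track $\ell(v_j)$. Since the chain is a maximal chain in $W_J \overline{w}$ under left weak order, and $W_J \overline{w}$ is isomorphic as a left-weak-order poset to $W_J$ via the bijection $u \mapsto u\overline{w}$ (using $\overline{w}\in {}^JW$), each cover takes the form $w_{j+1} = t_j w_j$ with $t_j$ a simple reflection of $W_J$, hence $t_j \in J$, and $\ell(w_{j+1}) = \ell(w_j)+1$. Applying Corollary \ref{cor:phiL} to $w_{j+1} = t_j w_j$ gives
\[
v_{j+1} \in \{v_j,\; t_j v_j\},
\]
so each increment $\ell(v_{j+1}) - \ell(v_j)$ lies in $\{-1,0,+1\}$.

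The decisive step is a telescoping length count. Using $v_0 = \phi(w) = w$ and $v_{k+1} = \phi(\widehat{w}) = \widehat{w}$ (both fixed), we have
\[
\sum_{j=0}^{k} \big(\ell(v_{j+1}) - \ell(v_j)\big) \;=\; \ell(\widehat{w}) - \ell(w) \;=\; k+1.
\]
A sum of $k+1$ terms, each bounded above by $+1$, can equal $k+1$ only if every term equals $+1$. Therefore $v_{j+1} = t_j v_j$ with $\ell(t_j v_j) = \ell(v_j) + 1$ for all $j$, and iterating yields
\[
v_j \;=\; t_{j-1}\cdots t_0\,v_0 \;=\; t_{j-1}\cdots t_0\,w \;=\; w_j.
\]
Thus each $w_j$ is a fixed point of $\phi$.

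For the \emph{In particular} clause, the final cover $w_k <_L \widehat{w}$ is of the form $\widehat{w} = s' w_k$ with $s' \in J$ and $s'\in D_L(\widehat{w})$, so $s' \in D_{\sigma,J}(\widehat{w})$, and $s'\widehat{w} = w_k$ is a fixed point by what we just proved. I expect the only subtle point to be the poset description of $W_J\overline{w}$: once it is established that covers there really are given by left multiplication by simple reflections $t_j \in J$ and that $\ell_W$ behaves additively on $W_J\overline{w}$ (both immediate from $\overline{w}\in {}^JW$), the rest is simply combining the trichotomy from Corollary \ref{cor:phiL} with the endpoint constraint via the length count.
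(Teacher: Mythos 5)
Your proof is correct and takes essentially the same approach as the paper's: both arguments reduce to a length-telescoping count, using Lemma \ref{lem:phis}/Corollary \ref{cor:phiL} to see that $\phi$ moves by at most one step along each cover of the maximal chain, and then observing that the endpoints force every step to increase length by exactly one, hence $\phi(w_j)=w_j$ throughout. The only cosmetic difference is that you use the bound $\{-1,0,+1\}$ on the length increment while the paper notes the sharper $\{0,+1\}$ (from $\phi$ being weak-order preserving); since you only use the upper bound in the telescoping argument, this does not affect correctness.
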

\begin{proof}
Since $\phi$ is order preserving we have that $w\leq_\si\phi(w_1)\leq\ldots\leq\phi(w_k)\leq\widehat{w}$. Since $w_{i+1}=s'_iw_i$, by Lemma \ref{lem:phis} we have that $\ell(\phi(w_{i+1}))=\phi(w_i)$ or $\ell(\phi(w_{i+1}))=\phi(w_i)+1$. Since $\ell(\phi(\widehat{w}))=\ell(\widehat{w})=\ell(w)+k+1=\ell(\phi(w))+k+1$, we have that $\ell(\phi(w_{i+1}))=\phi(w_i)+1$ holds for all $i=1,\ldots, k$. In particular $\phi(w_{i+1})=s'_i\phi(w_i)$, which proves that $\phi(w_i)=w_i$ for all $i$. Moreover, $w_k=s'\widehat{w}$ for some $s'\in D_{\si,J}(w)$.
\end{proof}
This proves Theorem \ref{thm:TrTr}.

 To finish this section we restrict ourselves to the case where $W$ is a Weyl group.  In this case, we have bijections between $S$ and $\Delta$ (as seen in Section \ref{sec:alggroups}) and between $\Tr$ and $\Phi^+$. For a transposition $t\in \Tr$ we write $\gamma_t$ for the induced positive root, and conversely if $\gamma\in \Phi^+$ is a positive root, we write $t_{\gamma}$ for the induced transposition. This bijection between $\Tr$ and $\Phi^+$ induces a bijection between $\Phi(w)$ and $\Tr_\si(w)$ for every $w\in W$.\par

 \begin{Def}
 \label{def:goodseq}
  Let $\underline \delta:=(\delta_i)_{i=1,\ldots,m}$ be a sequence of positive roots. We say that $\underline \delta$ is a \emph{good sequence} for $w$ if for every $\gamma\in \Phi(w)$ at least one of the following holds:
 \begin{enumerate}
     \item There exists $i$ such that $\gamma=\delta_i$.
     \item For all $\alpha\in \Delta$ such that $\gamma-\alpha\in \Phi\setminus\Phi(w)$, there exists $i$ such that $\gamma-\alpha=\delta_i$ and $\delta_j\neq\alpha$ for every $j<i$.
 \end{enumerate}
 \end{Def}

\begin{Def}
 \label{def:goodseq1}
  Let $\si=(s_1,\ldots, s_n)$ be a sequence of simple reflections, $J\subset S$, $w\in W$ and $\bi\in \Good(L,J,w)$. Write $w=u\overline{w}$ with $u\in W_J$ and $\overline{w}\in {}^JW$, and define $w_i:=\phi_{\si_i,\bi|_{i}}(w)$.  Let $j_i$ be the unique integer such that $b_{j_i}=1$ and $\sum_{l=1}^{j_i}{b_l}=i$. Let $t_i\in \Tr$ be the transposition such that $t_iw_{j_i-1}=w_{j_i-1}s_{j_i}$.  Define the sequence $\underline{\delta}=\underline{\delta}(L,J,w,\bi)=(\delta_1,\ldots, \delta_{|\bi|})$ by $\delta_i=\gamma_{t_i}$.
\end{Def}

\begin{Exa}
\label{exa:goodseq} Let $W$, $w$, $\si$ and $\bi$ as in Example \ref{exa:good}. Then the sequence $\underline{\delta}$ defined in Proposition \ref{prop:goodseq} is $\gamma_{1,2},\gamma_{1,3},\gamma_{3,4},\gamma_{2,4}$ corresponding to the transpositions $(1,2),(1,3),(3,4),(2,4)$.
\end{Exa}

\begin{proposition}
\label{prop:goodseq}
Let $\si=(s_1,\ldots, s_n)$ be a sequence of simple reflections (roots), $J\subset S$, $w\in W$ and $\bi\in \Good(L,J,w)$. Write $w=u\overline{w}$ with $\overline{w}\in {}^JW$, then the sequence $\underline{\delta}=\underline{\delta}(L,J,w,\bi)$ is a good sequence for $u$ in $W_J$.
\end{proposition}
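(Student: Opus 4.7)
The plan is to analyze the walk $w=w_0,w_1,\ldots,w_n=w$ in $W$ defined by $w_i=\phi_{\si|_i,\bi|_i}(w)$ and to track how each $\gamma\in\Phi(u)$ evolves through the successive inversion sets $\Phi(w_i)$. The basic tool is the classical identity $\Phi(vs)\triangle\Phi(v)=\{|v(\alpha_s)|\}$ (empty if $vs=v$) for a simple reflection $s$. Splitting on whether $b_i$ is $0$ or $1$ and whether $w_{i-1}s_i$ exceeds $w_{i-1}$ or not, each index falls into one of four categories: true up-step, true down-step, \emph{forced no-op} ($b_i=1$ with $w_{i-1}s_i<w_{i-1}$), or anti-forced no-op. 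A direct check shows $\delta_i=|w_{j_i-1}(\alpha_{s_{j_i}})|$, so $\underline\delta$ is exactly the list of reflection roots at the indices with $b_i=1$. Using $\overline w\in{}^JW$ (which forces $\Phi(\overline w)\cap\Phi_J=\emptyset$) and the fact that $u\in W_J$ preserves $\Phi_J$ setwise, the factorization $w=u\overline w$ with $\ell(w)=\ell(u)+\ell(\overline w)$ yields $\Phi(w)=\Phi(u)\sqcup u(\Phi(\overline w))$ and $\Phi(u)=\Phi(w)\cap\Phi_J$.

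Next, fix $\gamma\in\Phi(u)$ and consider the indicator $f_\gamma(i):=\mathbf{1}[\gamma\in\Phi(w_i)]$. Since $w_0=w_n$, $f_\gamma(0)=f_\gamma(n)=1$, and $f_\gamma$ changes value only at a true up-step or true down-step whose reflection root equals $\gamma$. If $f_\gamma$ is non-constant, then its last $0{\to}1$ transition is necessarily a true up-step at some index $j$ with $b_j=1$ and reflection root $\gamma$, placing $\gamma$ in $\underline\delta$. If $f_\gamma\equiv 1$ but some forced no-op at index $i$ satisfies $-w_{i-1}(\alpha_{s_i})=\gamma$, then again $\gamma\in\underline\delta$. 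In both situations condition (i) of Definition~\ref{def:goodseq} is satisfied.

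The remaining case is $f_\gamma\equiv 1$ with no forced no-op involving $\gamma$; here one must verify condition (ii). Given $\alpha\in\Delta_J$ with $\beta:=\gamma-\alpha\in\Phi_J\setminus\Phi(u)$, we have $\beta\notin\Phi(w)$, so $f_\beta(0)=f_\beta(n)=0$ and any appearance of $\beta$ in $\underline\delta$ must come from a genuine up-step. Using the rank-two subsystem spanned by $\alpha$ and $\beta$, together with the $(L,J)$-goodness of $\bi$ (which provides, for the simple descent $s_\alpha\in D_L(w)\cap J=D_L(u)$, an index at which $s_\alpha$ is realized as $w_{i-1}s_iw_{i-1}^{-1}$), one argues that $\beta$ must be created by some up-step; we then take $i$ to be the least index for which $\delta_i=\beta$.

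The main obstacle is establishing the ordering requirement $\delta_j\neq\alpha$ for every $j<i$. This should follow from a rank-two root-theoretic analysis: because $f_\gamma\equiv 1$ and no forced no-op involves $\gamma$, any earlier occurrence of the simple root $\alpha$ as a reflection root at a $b=1$ index would, through $\gamma=\alpha+\beta$ and the coupled dynamics of $f_\alpha,f_\beta,f_\gamma$, force $\beta$ to be an inversion already at that earlier moment, contradicting the minimality of $i$. Completing this step via a careful case split on whether the hypothetical earlier $\alpha$-occurrence is an up-step or a forced no-op, and tracking the simultaneous evolution of the three indicators, is the technical heart of the proof.
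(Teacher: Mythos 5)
Your overall strategy is the same as the paper's: track the walk $w_i=\phi_{\si|_i,\bi|_i}(w)$, use $\Phi(u)=\Phi(w)\cap\Phi_J$, show that $\gamma\notin\underline\delta$ forces $\gamma\in\Phi(w_i)$ for all $i$, and then for $\alpha\in\Delta_J$ with $\beta:=\gamma-\alpha\notin\Phi(u)$ appeal to $(L,J)$-goodness of $\bi$ via the realized descent $s_\alpha\in D_{\si,J}(w)$. Where you stop --- ``\,one argues that $\beta$ must be created by some up-step'' and ``completing this step \ldots is the technical heart of the proof'' --- is precisely the point the paper closes with Lemma~\ref{lem:alphabeta}: if $\alpha\in\Phi(v)$, $\beta\notin\Phi(v)$ and $\alpha+\beta\in\Phi(v)$, then $v^{-1}(\alpha)\notin-\Delta$, equivalently $v^{-1}s_\alpha v$ is not simple. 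This single rank-two observation, plus the closure of $\Phi(v)$ and of its complement under sums, is what you need in \emph{both} places you left open: (a) if $\beta$ never occurred in $\underline\delta$, then $\beta\notin\Phi(w_l)$ and hence $\alpha\in\Phi(w_l)$ for every $l$, so the goodness-provided index $l$ with $w_{l-1}^{-1}s_\alpha w_{l-1}=s_l$ violates the lemma; and (b) if some $\delta_j=\alpha$ with $j<i$, then at the original index $j_j$ either $\alpha\in\Phi(w_{j_j-1})$ and the lemma forces $\beta\in\Phi(w_{j_j-1})$, or $\alpha\notin\Phi(w_{j_j-1})$ and complement-closure forces $\beta\in\Phi(w_{j_j-1})$; either way $\beta$ entered at an earlier up-step, contradicting minimality of $i$.

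One small organizational remark: the paper's proof argues by contradiction around the \emph{earliest} moment at which $s_\alpha$ is realized, showing that up to that moment $\beta$ must already have been inserted, while your sketch minimizes over occurrences of $\beta$ and rules out earlier $\alpha$-occurrences. Both orderings work, and once Lemma~\ref{lem:alphabeta} is in hand they are interchangeable. You should also note, though you implicitly use it, that a step changes $\Phi(w_i)$ only by the single root $|w_{i-1}(\alpha_{s_i})|$ and that any genuine step (up or down) is recorded in $\underline\delta$ precisely when $b_i=1$; this is the content of Lemma~\ref{lem:wt} in the paper, which you invoked via the symmetric-difference formula. With these two lemmas supplied, your argument is complete and correct.
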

\begin{proof}
As in Definition \ref{def:goodseq1}, write $w_i=\phi_{\si|_i,\bi|_i}(w)$. Choose $\gamma\in \Phi(u)\subset \Phi_J$ such that $\gamma\neq \delta_i$ for every $i=1,\ldots, |\bi|$. We claim that $\gamma\in \Phi(w_i)$ for every $i$. Indeed, we have that $\gamma\in \Phi(w)$ because
\[
\ell(t_\gamma w)=\ell(t_\gamma u\overline{w})\leq \ell(t_\gamma u)+\ell(\overline{w})<\ell(w)+\ell(\overline{w})=\ell(w).
\]
Assume by contradiction that there exists $i$ such that $\gamma\notin \Phi(w_i)$ and choose $k:=\max\{i;\gamma\notin \Phi(w_i)\}$. Clearly, $k\neq n$ because $w_n=w$. Moreover, since $\gamma\in \Phi(w_{k+1})\setminus \Phi(w_k)$, we have $w_{k+1}=w_ks_{k+1}$. By Lemma \ref{lem:wt} below, $w_{k+1}>w_k$ and $t_\gamma w_k=w_{k+1}s_{k+1}$. This means $b_{k+1}=1$ (because $w_{k+1}>w_k$) and hence $\gamma=\delta_{k+1}$, a contradiction. This proves that $\gamma\in \Phi(w_i)$ for every $i$.\par

Let $s\in J$ be such that $\gamma-\alpha_s\in \Phi$ (and hence $\gamma-\alpha_s\in \Phi_J$). Since both $\Phi(w)$ and $\Phi^+\setminus \Phi(w)$ are closed by sums, and since $\gamma\in \Phi(w)$, we have that if $\alpha_s\notin \Phi(w)$, then $\gamma-\alpha_s\in \Phi(w)$.\par

Assume that $\alpha_s\in \Phi(w)$ (equivalently, $s\in \Tr_\si(w)$). By Lemma \ref{lem:wt}, either $s\in \Tr_\si(w_k)$ for every $k$ or there exists $k$ such that $s\in \Tr_\si(w_{k+1})\setminus \Tr_\si(w_k)$.\par

  Let us assume first that $s\in \Tr_\si(w_k)$ for every $k$. Since $\bi$ is a $(L,J)$-good word for $w$, we have that there exists $k$ such that $s w_k=w_k s_{k+1}$. Moreover, since $s\in \Tr_\si(w_k)$, we have that $\ell(w_ks_{k+1})=\ell(sw_k)<\ell(w_k)$, so by Lemma \ref{lem:wt} and the fact that $s\in \Tr_\si(w_{k+1})$, we must have $b_{k+1}=1$.\par

In the second case, there exists $k$  such that $\Phi(w_{k+1})=\Phi(w_k)\setminus\{\alpha_s\}$. Since $w_n=w$, we must have that there also exists some $k$ such that $\Phi(w_{k+1})=\Phi(w_k)\cup\{\alpha_s\}$. In that case, we have that $w_{k+1}=\alpha w_k=w_ks_{k+1}$, $b_{k+1}=1$ and $w_{k+1}>w_k$.\par

   Let $k$ be the minimum $i$ such that $b_{i+1}=1$ and $sw_i=w_{i+1}s_{i+1}$, which exists by the discussion above. In particular, we have that $\delta_j\neq \alpha_s$ for every $j<k+1$. 
   
   Let us proceed by contradiction and assume that $\gamma-\alpha_s\notin \Phi(w)$ and that there does not exist $j\leq k$ with $\gamma-\alpha_s=\delta_j$. By Lemma \ref{lem:wt}, this means that $\gamma-\alpha_s\notin \Phi(w_j)$ for every $j\leq k$. Since $\gamma\in \Phi(w_j)$ for every $j$, we must have that $\alpha_s\in \Phi(w_j)$ for every $j\leq k$. In particular, we have that $\alpha_s\in \Phi(w_k)$, $\gamma-\alpha_s\notin \Phi(w_k)$, $\gamma\in \Phi(w_k)$ and that $w_k^{-1}sw_k=s_{k+1}$. This contradicts Lemma \ref{lem:alphabeta}
\end{proof}

\begin{lemma}
\label{lem:wt}
Let $w\in W$ and $s\in S$, and assume that $ws>w$. Then $\Tr_\si(ws)=\Tr_\si(w)\cup \{t\}$, where $t\in \Tr$ is such that $tw=ws$. Conversely, if $ws<w$ then $\Tr_\si(ws)=\Tr_\si(w)\setminus\{t\}$.
\end{lemma}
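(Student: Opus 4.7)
The plan is to translate the statement about transpositions into the equivalent statement about the positive-root inversion set $\Phi(w)=\{\gamma\in\Phi^+:w^{-1}(\gamma)\in\Phi^-\}$, using the bijection $\Tr\leftrightarrow \Phi^+$, $t\mapsto \gamma_t$, already discussed above. Under this bijection, if $t$ is the transposition with $tw=ws$, then $t=wsw^{-1}=ws_{\alpha_s}w^{-1}=s_{w(\alpha_s)}$, so $t$ corresponds to the positive root $w(\alpha_s)$ (which is positive precisely because $ws>w$, a standard consequence of the exchange property). The lemma therefore reduces to the classical identity
\[
\Phi(ws)=\Phi(w)\sqcup\{w(\alpha_s)\}\qquad\text{whenever }ws>w.
\]

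I would prove this identity by a direct check using the action of $s$ on roots: $s$ sends $\alpha_s\leftrightarrow -\alpha_s$ and permutes $\Phi^-\setminus\{-\alpha_s\}$ among itself. For $\gamma\in\Phi^+$ we have $(ws)^{-1}(\gamma)=s\bigl(w^{-1}(\gamma)\bigr)$, and a case analysis on whether $w^{-1}(\gamma)$ equals $\alpha_s$, $-\alpha_s$, or neither, shows that
\[
\gamma\in\Phi(ws)\iff \gamma\in\Phi(w)\text{ or }\gamma=w(\alpha_s),
\]
with the two possibilities on the right being mutually exclusive (since $w^{-1}(w(\alpha_s))=\alpha_s\in\Phi^+$, so $w(\alpha_s)\notin\Phi(w)$).

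For the converse statement with $ws<w$, I would simply apply the forward direction to $w':=ws$ and the same simple reflection $s$: the hypothesis $ws<w$ says exactly $w's>w'$, so
\[
\Phi(w)=\Phi(w's)=\Phi(ws)\sqcup\{ws(\alpha_s)\}=\Phi(ws)\sqcup\{-w(\alpha_s)\},
\]
and the transposition associated to the root $-w(\alpha_s)=|w(\alpha_s)|$ is again $wsw^{-1}$, i.e.\ the same $t$ with $tw=ws$. Rearranging gives $\Tr_L(ws)=\Tr_L(w)\setminus\{t\}$.

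There is no genuine obstacle here; the only thing to be careful about is that in the converse step $w(\alpha_s)$ is negative (this is the equivalent form of $ws<w$), so the positive root being removed is $-w(\alpha_s)$, but the associated transposition $s_{w(\alpha_s)}=s_{-w(\alpha_s)}=wsw^{-1}$ is the same on either side, matching the transposition $t$ in the statement.
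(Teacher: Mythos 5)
Your proof is correct, but it takes a genuinely different route from the paper's. You translate $\Tr_L(w)$ into the inversion-root set $\Phi(w)$ and prove the classical identity $\Phi(ws)=\Phi(w)\sqcup\{w(\alpha_s)\}$ (for $ws>w$) by a case analysis on how $s$ acts on the pre-image root $w^{-1}(\gamma)$. The paper instead argues purely in terms of the length function: it notes $t(ws)=w<ws$ so $t\in\Tr_L(ws)$, shows $\ell(t'ws)\le\ell(t'w)+1\le\ell(w)<\ell(ws)$ for any $t'\in\Tr_L(w)$, and then closes the argument by comparing cardinalities ($|\Tr_L(ws)|=\ell(ws)=\ell(w)+1=|\Tr_L(w)\cup\{t\}|$). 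The paper's argument is more elementary and works verbatim in any Coxeter group without invoking a geometric representation, whereas yours makes the precise element being added ($w(\alpha_s)$) explicit from the outset, which is perhaps more informative but relies on the Weyl-group/root-system setup. For the converse direction ($ws<w$) both you and the paper reduce to the forward case applied to $w'=ws$; your extra care to check that $-w(\alpha_s)$ is the positive root and that $s_{-w(\alpha_s)}=s_{w(\alpha_s)}=t$ is correct and worth noting.
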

\begin{proof}
We first prove that $t\in \Tr_\si(ws)$. Since $t(ws)=w<ws$, then $t\in \Tr_\si(ws)$. Now let $t'\in \Tr_\si(w)$, so that $\ell(t'w)<\ell(w)$, which means
\[
\ell(t'ws)\leq \ell(t'w)+1\leq \ell(w) < \ell(ws).
\]
Then $t'\in \Tr_\si(ws)$, hence $\Tr_\si(w)\cup\{t\}\subset \Tr_\si(ws)$. But these sets both have cardinality $\ell(w)+1$, so $\Tr_\si(ws)=\Tr_\si(w)\cup\{t\}$ as desired. The second statement follows directly from the first.
\end{proof}

\begin{lemma}
\label{lem:alphabeta}
  Let $w\in W$, $\alpha\in \Phi(w)$ and $\beta\notin \Phi(w)$ be such that $\alpha+\beta\in \Phi(w)$. Then $w^{-1}(\alpha)\notin -\Delta$, or, equivalently, $w^{-1}s_\alpha w$ is not a simple transposition.
\end{lemma}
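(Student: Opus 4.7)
The plan is to proceed by contradiction, assuming $w^{-1}(\alpha) = -\alpha_s$ for some $s \in S$ (the formulation in terms of simple transpositions is equivalent because $w^{-1}s_\alpha w = s_{w^{-1}(\alpha)}$, and since $\alpha\in\Phi(w)$ forces $w^{-1}(\alpha)\in\Phi^-$, the reflection $s_{w^{-1}(\alpha)}$ is simple precisely when $w^{-1}(\alpha)\in -\Delta$). I will deduce a contradiction from elementary properties of root systems, namely the rule that subtracting a simple root from a positive root yields a positive root or zero whenever the result is itself a root.

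More precisely, since $\beta\in\Phi^+$ (this is implicit from the context of Proposition~\ref{prop:goodseq}, where $\beta=\alpha_s$ is simple) and $\beta\notin\Phi(w)$, we have $w^{-1}(\beta)\in\Phi^+$. From $\alpha+\beta\in\Phi(w)$ we get
\[
w^{-1}(\alpha)+w^{-1}(\beta)=-\alpha_s+w^{-1}(\beta)\in\Phi^-.
\]
Now I apply the following standard fact: if $\gamma\in\Phi^+$ and $\gamma-\alpha_s\in\Phi$, then either $\gamma=\alpha_s$ (so $\gamma-\alpha_s=0$, not a root) or $\gamma-\alpha_s\in\Phi^+$. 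This is because every positive root other than $\alpha_s$ has a strictly positive coefficient of some simple root $\alpha_{s'}\neq\alpha_s$ in its expansion, a coefficient unchanged by subtracting $\alpha_s$. Applying this to $\gamma:=w^{-1}(\beta)$ contradicts the fact that $w^{-1}(\beta)-\alpha_s$ is a (necessarily nonzero) negative root.

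I expect this argument to be essentially immediate once the right reformulation is made; the only point that requires slight attention is the implicit hypothesis $\beta\in\Phi^+$, which follows from how the lemma is invoked. No delicate estimate or induction is needed, so there is no real obstacle beyond unpacking the definition of $\Phi(w)$ and invoking the cited root-system fact.
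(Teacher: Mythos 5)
Your proof is correct and takes essentially the same approach as the paper: both reduce to the observation that $w^{-1}(\alpha)=w^{-1}(\alpha+\beta)-w^{-1}(\beta)$ is a negative root \emph{minus} a positive root, hence cannot be negative simple (the paper phrases this via the height being at most $-2$, you phrase it via the standard ``subtracting a simple root from a positive root stays positive'' fact applied to $w^{-1}(\beta)$; these are the same coefficient computation). You also correctly spot that the positivity of $\beta$ is an implicit hypothesis: the paper's one-line proof likewise asserts $w^{-1}(\beta)\in\Phi^+$ without remark, and the lemma as literally stated is false without it (one can arrange $\beta\in\Phi^-$ with the other hypotheses holding and $w^{-1}(\alpha)\in -\Delta$). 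One small slip in your parenthetical: in the application inside Proposition~\ref{prop:goodseq} it is $\alpha$, not $\beta$, that plays the role of the simple root $\alpha_s$; there $\beta=\gamma-\alpha_s$, which is positive but typically not simple (it is positive because $\gamma\in\Phi^+$, $\gamma\neq\alpha_s$, and $\gamma-\alpha_s\in\Phi$). This mislabeling does not affect the validity of your argument, since you only use $\beta\in\Phi^+$, not simplicity of $\beta$.
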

\begin{proof}
By the hypothesis we have that
\[
w^{-1}(\alpha)\in \Phi^{-}\text{, }w^{-1}(\beta)\in \Phi^+\text{ and }w^{-1}(\alpha+\beta)\in \Phi^{-}.
\]
Since $w^{-1}(\alpha)=w^{-1}(\alpha+\beta)-w^{-1}(\beta)$ we have that $w^{-1}(\alpha)\notin -\Delta$.
\end{proof}
 
 \section{Paving by affines the Bott-Samelson variety}
 \label{sec:pavingbs}

   In this section we will describe an affine paving of the Bott-Samelson variety. Usually, these affine pavings are described in terms of Tits \emph{buildings} and \emph{galleries}, see \cite{Titsresume}, \cite[Section 3]{Gaussent} and \cite[Chapter 14]{CC}. We will avoid defining such objects and give instead an explicit construction.\par
   
      Our construction will be done inductively. Let $\si=(s_1,\ldots, s_n)$ be a word in $S$. When $n=0$, then $\bs_\si=\flag$ and the paving will simply be the paving given by Schubert stratification, that is, $\Omega_{w,e}^\circ=B\dw B/B \subset \flag$. We have the following isomorphim (see Section \ref{sec:alggroups})
   \begin{align*}
    U^w&\to \Omega_{w,e}^\circ\\
    u&\mapsto u\dw B.
   \end{align*}
  
\begin{Exa}
\label{exa:Uwomega}
    Consider $w=4213\in S_4$. We saw in example \ref{exa:Uw} that 
    \[
    U^w=\left\{\left(\begin{array}{cccc}
       1 & * &0 & *\\
       0 & 1 &0 &*\\
       0 & 0 & 1 & *\\
       0 & 0 & 0 &1
        \end{array}\right)\right \}.
    \]
    This means that $\Omega_{w,e}^\circ$ is the set of flags associated to matrices of the form
    \[
    \left(\begin{array}{cccc}
       1 & x_1 &0 & x_2\\
       0 & 1 &0 &x_3\\
       0 & 0 & 1 & x_4\\
       0 & 0 & 0 &1
        \end{array}\right)\cdot \left( \begin{array}{cccc}
       0 & 0 &1 & 0\\
       0 & 1 &0 &0\\
       0 & 0 & 0 &1\\
       1 & 0 & 0 &0
        \end{array}\right) = \left(\begin{array}{cccc}
       x_2 & x_1 &1 & 0\\
       x_3 & 1 &0 &0\\
       x_4 & 0 & 0 & 1\\
       1 & 0 & 0 &0
        \end{array}\right)=:(v_1,v_2,v_3,v_4).
    \]
    The flag $V_\bullet$ can be recovered from the above matrix by setting $V_1=\langle v_1\rangle$, $V_2=\langle v_1,v_2\rangle$, $V_3=\langle v_1,v_2,v_3 \rangle$ and $V_4=\mathbb{C}^4$ (recall Example \ref{exa:gVbullet}). We will usually abuse notation and write $V_\bullet=(v_1,v_2,v_3,v_4)$.
\end{Exa}

  We generalize the above construction to any $n>0$. Let $\si=(s_1,\ldots, s_n)$ be a word in $S$, $w$ any element of $W$ (not necessarily the one given by $\si$), and $\bi=(b_1,\ldots, b_{n})$ a binary word of length $n$. Recall the definition of the function $\phi_{\si,\bi}\col W\to W$ in Equation \ref{eq:PhiJLb}. We write $w_i=\phi_{\si|_i,\bi|_i}(w)$ and, as usual, $\dw_i$ for the chosen representative of $w_i$ in $G$. Moreover, let $\gamma_i$ be the positive root such that $t_{\gamma_i}w_{i-1}=w_{i-1}s_i$.\par 
  
  For $\gamma\in \phi^+$ and $b=0,1$ define 
  \[
  U_{\gamma,b}:=\begin{cases}
  U_\gamma&\text{ if }b=1\\
  \{1\}&\text{ if }b=0,
  \end{cases}
  \]
    and consider the map $g_{\si,\bi}^{w}$ defined by
\begin{equation}
    \label{eq:gLbw}
\begin{aligned}
 U^w\times U_{\gamma_1,b_1}\times\cdots \times U_{\gamma_n,b_n}&\to \bs_{\si}   \\
 (u_0,u_1,\ldots, u_n)&\mapsto (u_0\dw B, u_0u_1\dw_1B,\ldots, u_0u_1\ldots u_n\dw_nB).
 \end{aligned}
 \end{equation}
 To motivate this definition we give an example.
 \begin{Exa}
 \label{exa:BSL}
     As in example \ref{exa:Uwomega}, consider $w=4213\in S_4$ and let $\si=(s_1)$, with $s_1=(2,3)$. Then 
     \[
     \bs_\si=\{(V_\bullet^1,V_\bullet^2); V_j^{1}=V_j^{2}\text{ for every }j=1,3,4\}.
     \]
     Assuming that $V_\bullet^1 \in \Omega_{w,e}^\circ$, we have  $V_1^1=\langle v_1\rangle$, $V_2^1=\langle v_1,v_2\rangle$, $V_3^1=\langle v_1,v_2,v_3 \rangle$ and $V_4^1=\mathbb{C}^4$ for $v_1,v_2,v_3,v_4$ as in Example \ref{exa:Uwomega}.\par
       If we fix $V_\bullet^1$, we have a $\mathbb{P}^1$ of choices for $V_\bullet^2$, in fact, we just have to choose $V_2^2$ satisfying 
       \[
       <v_1>=V_1^1=V_1^2\subset V_2^2 \subset V_3^2=V_3^1=\langle v_1,v_2,v_3\rangle.
       \]
       Which means that $V_2^2=\langle v_1, \lambda_1v_2+\lambda_2 v_3\rangle$. In particular if 
       \[
       \bs_{\si}^{w}:=\{(V_\bullet^1,V_\bullet^2)\in\bs_\si; V_\bullet^1\in \Omega_{w,e}^\circ\},
       \]
       then $\bs_{\si}^{w}=\Omega_{w,1}^\circ\times\mathbb{P}^1$. \par
       
       The idea is to stratify $\mathbb{P}^1$ as $\mathbb{P}^1=\mathbb{A}^1\cup \mathbb{A}^0$.  There are two natural choices for accomplishing this: 
       \begin{align}
           \mathbb{P}^1&=\{(1:\lambda_2)\} \cup \{(0:1)\},\label{eq:P1strat1}\\
           \mathbb{P}^1&=\{(\lambda_1:1)\} \cup \{(1:0)\}.\label{eq:P1strat2}
        \end{align}
      Let us take a closer look at each of these options, defining in both cases $\bs_{\si,0}^w:=\Omega_{w,1}^\circ\times\mathbb{A}^0$ and $\bs_{\si,1}^w:=\Omega_{w,e}^\circ\times \mathbb{A}^1$.\par

      Let us begin with the first possible stratification \eqref{eq:P1strat1}. If we have $(V_\bullet^1,V_\bullet^2)\in \bs_{\si,1}^w$, then $V_2^2=\langle v_1, v_2+\lambda_2v_3\rangle$, $V_3^2=\langle v_1, v_2+\lambda_2v_3, v_3 \rangle $, and we can write the matrix associated to $V_\bullet^2$ as
     \begin{equation}
     \label{eq:V2bullet1}   
     V_\bullet^2=\left(\begin{array}{cccc}
       x_2 & x_1+\lambda_2 &1 & 0\\
       x_3 & 1 &0 &0\\
       x_4 & 0 & 0 & 1\\
       1 & 0 & 0 &0
        \end{array}\right).
     \end{equation}
     If $(V_\bullet^1,V_\bullet^2)\in \bs_{\si,0}^w$, then $V_2^2=\langle v_1,v_3\rangle$ and $V_3^2=\langle v_1,v_3,v_2\rangle$, which means
     \begin{equation}
         \label{eq:V2bullet2}
     V_\bullet^2=\left(\begin{array}{cccc}
       x_2 & 1 &x_1 & 0\\
       x_3 & 0 &1 &0\\
       x_4 & 0 & 0 & 1\\
       1 & 0 & 0 &0
        \end{array}\right).
     \end{equation}

     On the other hand, if we choose stratification \eqref{eq:P1strat2} and consider $(V_\bullet^1,V_\bullet^2)\in \bs_{\si,1}^w$ then, $V_2^2=\langle v_1, \lambda_1v_2+v_3\rangle$ and $V_3^2=\langle v_1, \lambda_1v_2+v_3,v_2\rangle$, so we can write 
     \[
     V_\bullet^2=\left(\begin{array}{cccc}
       x_2 & \lambda_1x_1+1 &1 & 0\\
       x_3 & \lambda_1 &0 &0\\
       x_4 & 0 & 0 & 1\\
       1 & 0 & 0 &0
        \end{array}\right).
     \]
     If we have  $(V_\bullet^1,V_\bullet^2)\in \bs_{\si,0}^w$, then $V_2^2=\langle v_1, v_2\rangle$ and $V_3^2=\langle v_1,v_2, v_3\rangle$, thus
     \[
     V_\bullet^2= \left(\begin{array}{cccc}
       x_2 & x_1 &1 & 0\\
       x_3 & 1 &0 &0\\
       x_4 & 0 & 0 & 1\\
       1 & 0 & 0 &0
        \end{array}\right).
     \]
     
     Among these two choices, note that only stratification \eqref{eq:P1strat1} has the property that the matrix associated to $V_\bullet^2$ has a permutation of columns which turns it into an upper triangular matrix with all diagonal entries equal to $1$. That is to say, this matrix is of the form $u\dw_1$ for some $u\in U$ and $w_1\in W$. (In this case $w_1=w$, but that is not so in general.) For this reason, this is the stratification we will choose. \par

      More precisely, the matrices in Equations \eqref{eq:V2bullet1} and \eqref{eq:V2bullet2} can be rewritten as
     \begin{equation}
     \label{eq:matrixbraid}
     \left(\begin{array}{cccc}
       x_2 & x_1+\lambda_2 &1 & 0\\
       x_3 & 1 &0 &0\\
       x_4 & 0 & 0 & 1\\
       1 & 0 & 0 &0
        \end{array}\right)=
        \left(\begin{array}{cccc}
       1 & x_1 &0 & x_2\\
       0 & 1 &0 &x_3\\
       0 & 0& 1 & x_4\\
       0 & 0 & 0 &1
        \end{array}\right)\cdot
        \left(\begin{array}{cccc}
       1 & \lambda_2  & 0 & 0\\
       0 & 1 &0 &0\\
       0 & 0& 1 &0\\
       0 & 0 & 0 &1
        \end{array}\right)\cdot
        \left(\begin{array}{cccc}
       0 & 0 &1 & 0\\
       0 & 1 &0 &0\\
       0 & 0& 0 & 1\\
       1 & 0 & 0 &0
        \end{array}\right)
     \end{equation}
     and 
     \[
   \left(\begin{array}{cccc}
       x_2 & 1 &x_1 & 0\\
       x_3 & 0 &1 &0\\
       x_4 & 0 & 0 & 1\\
       1 & 0 & 0 &0
        \end{array}\right)=
        \left(\begin{array}{cccc}
       1 & x_1 &0 & x_2\\
       0 & 1 &0 &x_3\\
       0 & 0& 1 & x_4\\
       0 & 0 & 0 &1
        \end{array}\right)\cdot
        \left(\begin{array}{cccc}
       1 & 0  & 0 & 0\\
       0 & 1 &0 & 0\\
       0 & 0& 1 &0\\
       0 & 0 & 0 &1
        \end{array}\right)\cdot
        \left(\begin{array}{cccc}
       0 & 1 &0 & 0\\
       0 & 0 &1 &0\\
       0 & 0& 0 & 1\\
       1 & 0 & 0 &0
        \end{array}\right).
     \]
     Note that the permutation matrices appearing above are, respectively, the matrices of  $4213=w$ and $4123=ws_1=:w_1$ (see also the first item in Example \ref{exa:good}). Moreover, we have that $t_{\gamma_1}:=ws_1w^{-1}=(1,2)$, so
     \[
     U_{\gamma_1,1}=\left\{
      \left(\begin{array}{cccc}
       1 & \lambda_2  & 0 & 0\\
       0 & 1 &0 &0 \\
       0 & 0& 1 &0\\
       0 & 0 & 0 &1
        \end{array}\right)\right\}
        \quad\text{ and }\quad 
        U_{\gamma_1,0}=\left\{
      \left(\begin{array}{cccc}
       1 & 0  & 0 & 0\\
       0 & 1 &0 & 0\\
       0 & 0& 1 &0\\
       0 & 0 & 0 &1
        \end{array}\right)\right\}.
     \]
     
     This means that an element of $\bs_{\si,1}^w$ can be written as $(u_0\dw, u_0u_1\dw)$ for $u_0\in U^w$ and $u_1\in U_{\gamma_1}$, while an element of $\bs_{\si,0}^w$ is of the form $(u_0\dw, u_0\dw_1)$. It is also worth noting that $w_1<w$, hence $w_1=\phi_{\si,0}(w)$ and $w=\phi_{\si,1}(w)$. Finally, we have isomorphisms $U^w\times U_{\gamma_1,1}\to \bs_{\si,1}^w$ and $U^w\times U_{\gamma_1,0}\to \bs_{\si,0}^w$.

            If $\si=(s_1,\ldots, s_n)$ is a longer word, we can repeat the process above. Assuming that we have  found matrices for $V_\bullet^1,\ldots, V_\bullet^j$, we start with the matrix associated to $V_\bullet^{j}$ and find the possible matrices associated with $V_{\bullet}^{j+1}$ and choose the stratification such that this matrix associated with $V_{\bullet}^{n+1}$ can be made upper-triangular after a permutation of the columns.
 \end{Exa}
 
 \begin{Rem}
 It is also possible to write the map $g_{\si, \bi}^w$ is in terms of \emph{braid matrices}, that is, matrices of the form
 \[
 Br_i(\lambda)=\left(\begin{array}{cccccc}
      1&\dots  & && \dots &0\\
       \vdots &\ddots &&&\vdots\\
       0& \dots &0&1&\dots &0\\
       0&\dots &1&\lambda & \dots& 0\\
       \vdots &\ddots &&& \vdots\\
       0&\dots  & && \dots &1\\
 \end{array}\right).
 \]
 For instance, Equation \eqref{eq:matrixbraid} can be written as
 \[
 \left(\begin{array}{cccc}
       x_2 & x_1+\lambda_2 &1 & 0\\
       x_3 & 1 &0 &0\\
       x_4 & 0 & 0 & 1\\
       1 & 0 & 0 &0
        \end{array}\right)=
        \left(\begin{array}{cccc}
       x_2 & x_1 &1 & 0\\
       x_3 & 1 &0 &0\\
       x_4 & 0 & 0 & 1\\
       1 & 0 & 0 &0
        \end{array}\right)\cdot
        Br_2(\lambda_2)\cdot
        \left(\begin{array}{cccc}
       1 & 0 &0 & 0\\
       0 & 0 &1 &0\\
       0 & 1& 0 & 0\\
       0 & 0 & 0 &1
        \end{array}\right).
 \]
 \end{Rem}

\begin{lemma}
The map $g_{\si,\bi}^{w}$ is well defined, that is, $g_{\si,\bi}^{w}(u_0,\ldots, u_n)\in \bs_\si$ for every $(u_0,\ldots, u_n)\in U^w\times U_{\gamma_1,b_1}\times\cdots \times U_{\gamma_n,b_n}$.
\end{lemma}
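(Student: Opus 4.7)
Recall that
\[
\bs_\si = \flag\times_{\mP_{s_1}} \flag\times_{\mP_{s_2}}\ldots\times_{\mP_{s_n}}\flag,
\]
so showing that the tuple $(u_0\dw B,\, u_0u_1\dw_1 B,\, \ldots,\, u_0u_1\cdots u_n\dw_n B)$ lies in $\bs_\si$ amounts to verifying, for every $i=1,\ldots,n$, that its $i$-th and $(i+1)$-st entries have the same image in $\mathcal{P}_{s_i}=G/P_{s_i}$. Equivalently, writing $g_{i-1}=u_0u_1\cdots u_{i-1}$, we must check
\[
\dw_{i-1}^{-1}\,g_{i-1}^{-1}\,g_{i-1}u_i\,\dw_i = \dw_{i-1}^{-1}u_i\dw_i \in P_{s_i}.
\]
Factor this as
\[
\dw_{i-1}^{-1}u_i\dw_i \;=\; \bigl(\dw_{i-1}^{-1}u_i\dw_{i-1}\bigr)\cdot\bigl(\dw_{i-1}^{-1}\dw_i\bigr),
\]
so it is enough to show that each factor lies in $P_{s_i}$.

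For the second factor, by definition $w_i=\phi_{s_i,b_i}(w_{i-1})\in\{w_{i-1},w_{i-1}s_i\}$, so $w_{i-1}^{-1}w_i\in\{e,s_i\}\subset W_{\{s_i\}}$. Since $\dw_{i-1},\dw_i$ are representatives of $w_{i-1},w_i$ in $N_G(T)$, the product $\dw_{i-1}^{-1}\dw_i$ differs from an element of $\{\,e,\ds_i\,\}$ by an element of $T$, hence belongs to $T\cdot\{e,\ds_i\}\subset B\cup B\ds_iB = P_{s_i}$ by Equation \eqref{eq:PsB}.

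For the first factor, if $b_i=0$ then $u_i=1$ and there is nothing to check. If $b_i=1$ then $u_i\in U_{\gamma_i}$, where $\gamma_i$ is the positive root satisfying $t_{\gamma_i}w_{i-1}=w_{i-1}s_i$; equivalently, $w_{i-1}^{-1}s_{\gamma_i}w_{i-1}=s_i$, which forces $w_{i-1}^{-1}(\gamma_i)=\pm\alpha_{s_i}$. Consequently,
\[
\dw_{i-1}^{-1}\,U_{\gamma_i}\,\dw_{i-1} \;=\; U_{w_{i-1}^{-1}(\gamma_i)} \;=\; U_{\pm\alpha_{s_i}} \;\subset\; P_{s_i},
\]
the last inclusion holding because $P_{s_i}$ is generated by $B$ and $U_{-\alpha_{s_i}}$, and contains $U_{\alpha_{s_i}}\subset U\subset B$ as well. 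This completes the verification for every $i$, and therefore $g_{\si,\bi}^{w}(u_0,\ldots,u_n)\in\bs_\si$.
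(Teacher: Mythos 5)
Your proof is correct and takes essentially the same approach as the paper: you reduce to checking that $\dw_{i-1}^{-1}u_i\dw_i\in P_{s_i}$ and then use the identity $\dw_{i-1}^{-1}U_{\gamma_i}\dw_{i-1}=U_{\pm\alpha_{s_i}}\subset P_{s_i}$ together with $\dw_{i-1}^{-1}\dw_i\in T\cup T\ds_i\subset P_{s_i}$. The only difference is presentational---you separate the two factors explicitly and handle the case $b_i=0$ on its own---whereas the paper folds both observations into a single containment chain.
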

\begin{proof}
It is enough to check that 
\[
(u_0u_1\ldots u_i\dw_i)^{-1} (u_0u_1\ldots u_{i+1}\dw_{i+1})\in P_{s_{i+1}}.
\]
However, we can simplify the expression to
\[
\dw_i^{-1}u_{i+1}\dw_{i+1}\in P_{s_{i+1}}.
\]

We have that either $w_{i+1}=w_i$ or $w_{i+1}=w_is_{i+1}$. Since $u_{i+1}\in U_{\gamma_{i+1}}$ and $t_{\gamma_{i+1}}=w_is_iw_i^{-1}$, we have  $U_{\gamma_{i+1}}=w_i U_{\pm \alpha_{s_{i+1}}}w_i^{-1}$, where $\alpha_{s_{i+1}}$ is the simple root associated to $s_{i+1}$. Since $w_i U_{\pm \alpha_{s_{i+1}}}w_i^{-1}\subset w_iP_{s_{i+1}}w_i^{-1}$ and $\ds_{i+1}\in P_{s_{i+1}}$, then
   \[
   w_i^{-1}u_{i+1}w_{i+1}\in w_i^{-1}w_iP_{s_{i+1}}w_i^{-1}w_{i+1}=P_{s_{i+1}},
   \]
   which finishes the proof.
   \end{proof}

We will now prove that $g_{\si,\bi}^w$ is an immersion. Let $\bs_{\si,\bi}^{w}$ be the image of the map $g_{\si,\bi}^{w}$.
\begin{proposition}
The maps $g_{\si,\bi}^{w}$ are isomorphisms onto their images, and the variety $\bs_\si$ is paved by the collection $\bs_{\si,\bi}^{w}$ as $\bi$ varies in $\{0,1\}^n$ and $w$ in $W$.
\end{proposition}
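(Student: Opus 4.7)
The plan is to prove both statements simultaneously by induction on $n = \ell(\si)$, using that the forgetful projection $\pi \colon \bs_\si \to \bs_{\si|_{n-1}}$ that drops the last flag is a Zariski-locally trivial $\mathbb{P}^1$-bundle (indeed $\bs_\si = \bs_{\si|_{n-1}} \times_{\mP_{s_n}} \flag$). The base case $n = 0$ is the Bruhat decomposition $\flag = \bigsqcup_{w \in W} U^w \dw B/B$, with $u_0 \mapsto u_0 \dw B$ giving the standard isomorphism $U^w \to \Omega_{w,e}^\circ$.

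For the inductive step, fix $w \in W$ and $\bi' = (b_1, \ldots, b_{n-1}) \in \{0,1\}^{n-1}$. By the inductive hypothesis, a point $p$ of $\bs^w_{\si|_{n-1}, \bi'}$ has a unique expression $p = g^w_{\si|_{n-1}, \bi'}(u_0, u_1, \ldots, u_{n-1})$; setting $g_{n-1} := u_0 u_1 \cdots u_{n-1} \dw_{n-1}$, the last flag of $p$ is $g_{n-1} B$ and $\pi^{-1}(p) = g_{n-1} \cdot P_{s_n}/B \cong \mathbb{P}^1$. It suffices to show that for each $b_n \in \{0,1\}$, the map $u_n \mapsto u_0 u_1 \cdots u_n \dw_n B$ is an isomorphism from $U_{\gamma_n, b_n}$ onto a stratum of $\pi^{-1}(p)$, and that these two strata cover $\pi^{-1}(p)$ exactly once.

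The verification splits according to whether $w_{n-1} s_n > w_{n-1}$ (Case 1) or $w_{n-1} s_n < w_{n-1}$ (Case 2). In Case 1 one has $\gamma_n = w_{n-1}(\alpha_{s_n})$; choosing $\dw_n = \dw_{n-1}$ when $b_n = 0$ (so $w_n = w_{n-1}$) and $\dw_n = \dw_{n-1} \ds_n$ when $b_n = 1$ (so $w_n = w_{n-1} s_n$), we compute
\[
u_0 \cdots u_n \dw_n B = g_{n-1}\bigl(\dw_{n-1}^{-1} u_n \dw_{n-1}\bigr)\dw_{n-1}^{-1}\dw_n B,
\]
where $\dw_{n-1}^{-1} u_n \dw_{n-1} \in U_{w_{n-1}^{-1}(\gamma_n)} = U_{\alpha_{s_n}}$. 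For $b_n = 1$ this parametrizes the open Schubert cell $U_{\alpha_{s_n}}\ds_n B/B = B\ds_n B/B$ of $P_{s_n}/B$, and for $b_n = 0$ it collapses to the single point $g_{n-1} B$ corresponding to $B/B$. Case 2 is symmetric: $\gamma_n = -w_{n-1}(\alpha_{s_n})$, and the roles of $b_n$ in the choice of $\dw_n$ swap, so that $b_n = 1$ parametrizes the open affine cell $U_{-\alpha_{s_n}} B/B$ of $P_{s_n}/B$ (using that $U_{-\alpha_{s_n}} \subset P_{s_n}$ and $U_{-\alpha_{s_n}} B \cap \ds_n B = \emptyset$), while $b_n = 0$ picks out the closed point $g_{n-1}\ds_n B$.

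Putting this together, $g^w_{\si, \bi}$ is an isomorphism from $U^w \times \prod_{i=1}^n U_{\gamma_i, b_i}$ onto a locally closed piece $\bs^w_{\si, \bi}$ of $\bs_\si$, and the pieces for the two values of $b_n$ partition $\pi^{-1}(\bs^w_{\si|_{n-1}, \bi'})$; combined with the inductive paving of $\bs_{\si|_{n-1}}$, this yields the desired paving of $\bs_\si$. The main technical point is the bookkeeping in the case analysis, which ultimately reduces to the conjugation identity $\dw_{n-1}^{-1} U_{\gamma_n} \dw_{n-1} = U_{\pm \alpha_{s_n}}$ and to identifying the resulting one-dimensional affine cell inside the fiber $P_{s_n}/B$.
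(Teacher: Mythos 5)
Your proof is correct and follows essentially the same path as the paper's: induction on $n$ using Bruhat decomposition for the base case, the fiber-product projection $\bs_\si \to \bs_{\si|_{n-1}}$ as a $\mathbb{P}^1$-bundle, and a two-case analysis ($w_{n-1}s_n \gtrless w_{n-1}$) splitting each fiber $P_{s_n}/B$ into an affine line parametrized by $U_{\pm\alpha_{s_n}}$ and a point, matched to $b_n=1$ and $b_n=0$ by the conjugation $\dw_{n-1}^{-1}U_{\gamma_n}\dw_{n-1}=U_{\pm\alpha_{s_n}}$. The only cosmetic difference is that you transport the fiber decomposition to the ``source side'' (rewriting $u_0\cdots u_n\dw_n B$ explicitly), whereas the paper works directly with $u_x\dw' P_{s_n}$ on the target, but these are the same computation.
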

\begin{proof}
We proceed by induction on $n$. The case $n=0$ follows from the Bruhat decomposition. Assume that $n\geq1$, let  $\si':=(s_1,\ldots, s_{n-1})$ be a truncation of $\si$, and suppose that $g_{\si',\bi'}^{w}$ is an isomorphim onto its image for each $w\in W$ and $\bi'\in \{0,1\}^{n-1}$. Moreover assume that $(X_{\si',\bi'}^{w})_{w,\bi'}$ pave $\bs_{\si'}$. Fix $w$ and $\bi$, and define as before $w_i=\phi_{\si|_i,\bi|_i}$ and $\gamma_iw_{i-1}=w_{i-1}s_i$. Also, let $\bi'=\bi|_{n-1}$. Let $p\col \bs_\si\to \bs_{\si'}$  and $q\col U^w\times U_{\gamma_1,b_1}\times\ldots\times U_{\gamma_n,b_n} \to U^w\times U_{\gamma_1,b_1}\times \ldots\times U_{\gamma_{n-1},b_{n-1}}$ be the projections.  It is clear from the definition that $p\circ g_{\si,\bi}^{w}=g_{\si',\bi'}^{w}\circ q$,
  \[
  \begin{tikzcd}
  U^w\times U_{\gamma_1,b_1}\times\ldots\times U_{\gamma_n,b_n} \ar[d,"q"] \ar[r,"{g_{\si,\bi}^w}"] & \bs_\si \ar[d,"p"]\\
    U^w\times U_{\gamma_1,b_1}\times \ldots\times U_{\gamma_{n-1},b_{n-1}}\ar[r,"{g_{\si',\bi'}^w}"] & \bs_{\si'}.
   \end{tikzcd}
   \]
   
   Let $\bi_0:=(\bi',0)$ and $\bi_1:=(\bi',1)$. We have that  $\bs_{\si,\bi_0}^{w}, \bs_{\si,\bi_1}^{w}\subset  p^{-1}(\bs_{\si',\bi'}^{w})$. Define $w':=\phi_{\si',\bi'}(w)$ and let $x\in \bs_{\si',\bi'}^{w}$ be a point with last coordinate $u_x\dw'B$ for some $u_x\in U$. Then 
  \[
  p^{-1}(x)=\{(x,gB); g\in u_x\dw'P_{s_{n}}\}.
  \]
   Recall that, if $\alpha_{s_n}$ is the simple root associated to $s_n$, then (see Equation \ref{eq:PsB})
  \[
  P_{s_n}=U_{\alpha_{s_n}}\ds_nB\cup B\quad\text{ and }  P_{s_n}=\ds_nU_{\alpha_{s_n}}\ds_nB\cup \ds_nB.
  \]
  Now we consider two cases. The first is when $w's_{n}>w'$. In this case we have that $\phi_{\si,\bi_1}(w)=w's_n$ and $\phi_{\si,\bi_0}(w)=w'$. Moreover $\dw' U_{\alpha_{s_n}} \dw'^{-1}=U_{\gamma_n}$, thus
  \begin{align*}
  u_x\dw'P_{s_{n}}=&u_x\dw' (U_{\alpha}\ds_nB\cup B)\\
                =&u_x\dw'U_{\alpha}\dw'^{-1} \dw'\ds_nB\cup u_x \dw'B\\
                =&u_xU_{\gamma_n} \dw'\ds_nB\cup u_x \dw'B\\
                =&u_xU_{\gamma_n} \dw'\ds_nB\cup u_x \dw'B
  \end{align*}
  The second case is when $w's_n<w'$. In this case we have $\phi_{\si,\bi_1}(w)=w'$ and $\phi_{\si.\bi_0}(w)=w's_n$. Moreover $\dw'\ds_n U_{\alpha_{s_n}} \ds_n\dw'^{-1}=U_{\gamma_n}$, so that
\begin{align*}
  u_x\dw'P_{s_{n}}=&u_x\dw' (\ds_nU_{\alpha}\ds_nB\cup \ds_nB)\\
                =&u_x\dw'\ds_nU_{s_n}\ds_n\dw'^{-1} \dw'B\cup u_x \dw'\ds_nB\\
                =&u_xU_{\gamma_n} \dw'B\cup u_x \dw'\ds_nB.
  \end{align*}
  The result follows.
\end{proof}

\section{Basic properties of Lusztig varieties}
\label{sec:twisted}
 In this section we give basic results about the varieties $\h_w(X)$ and $\hbs_{\si}(X)$ defined in Definition \ref{def:twisted}. We keep the notation of Section \ref{sec:alggroups} and consider the variety $\hb\subset G\times\flag\times\flag$ given by
  \[
  \hb :=\{(X, g_1B, g_2B); g_1^{-1}Xg_2\in B\}=\{(X, g_1B, g_2B); X\in g_1Bg_2^{-1}\}.
  \]
  Note that the projection  $\hb \to \flag\times\flag$ is a $B$-bundle (i.e., a smooth morphism with fibers isomorphic to $B$). Moreover, $\hb$ is isomorphic to $G\times \B$ via the map
  \begin{align*}
  G\times \B&\to \hb\\
  (X, gB) &\mapsto (X,XgB,gB).
  \end{align*}
  Also we have that $G$ acts on $Y$ by conjugation on the first coordinate and left multiplication on the other coordinates:
  \begin{align*}
      G\times Y&\to Y\\
      (g, (X,g_1B,g_2B))&\mapsto (gXg^{-1}, gg_1B, gg_2B).
  \end{align*}
  
 Let $w\in W$ and $\si=(s_1,\ldots, s_{\ell(w)})$ a reduced word for $w$. We define the variety $\h_w$ (and its associated strata $\h_w^{\circ})$ and the variety $\hbs_\si$ as
\begin{align*}
    \h_w&:=\hb\times_{\flag\times\flag}\Omega_w & \h_w^\circ&:=\hb\times_{\flag\times\flag}\Omega_w^\circ\\
    \hbs_\si&:=\hb\times_{\flag\times\flag}\bs_\si.
\end{align*}
More explicitly we have
\begin{align*}
    \h_w&:=\overline{\{(X,gB); g^{-1}Xg\in BwB\}}\subset G\times \B,\\
    \h_w^\circ&:=\{(X,gB); g^{-1}Xg\in BwB\}\subset G\times \B,\\
    \hbs_\si&:=\{(X,(g_0B,g_1B,\ldots, g_{\ell(w)}B)); g_0^{-1}Xg_{\ell(w)}\in B\}\subset G\times \bs_\si.
\end{align*}
As before, $G$ acts on these $3$ varieties by conjugation on the first coordinate and left multiplication on the other coordinates.\par
We have the following commutative diagram
\[
\begin{tikzcd}
& \hbs_\si \ar[rr] \ar[ld] \ar[ldd]& &\bs_\si \ar[ld] \ar[ldd]\\
\h_w \ar[d] \ar[rr]&&\Omega_w\ar[d] &\\
\hb \ar[rr]&& \flag\times\flag.\\
\end{tikzcd}
\]
Via the first projection $\hb\to G$, we have induced maps $\h_w\to G$ and $\hbs_\si\to G$. These maps are equivariant when we consider the adjoint action of $G$ on itself. For $X\in G$, we denote by $\h_w(X)$ and $\hbs_\si(X)$ the fibers of $\h_w\to G$ and $\hbs_\si\to G$ over $X$. Set theoretically, these fibers can be described as (see Definition \ref{def:twisted})
\begin{equation}
    \label{eq:fiberh}
\begin{aligned}
\h_w(X)&=\{gB; g^{-1}Xg\in B\}\subset \flag\\
\hbs_\si(X)&=\{(g_0B,g_1B,\ldots, g_{\ell(w)}B)\in \bs_\si; g_0^{-1}Xg_{\ell(w)}\in B\}.
\end{aligned}
\end{equation}
\begin{Rem}
 We can define $\hbs_\si$ and $\hbs_\si(X)$ for every word $\si$, not just the reduced ones. 
\end{Rem}

 Recall that $G^{rs}$ denotes the open subset of $G$ consisting of the regular semisimple elements, and denote by $\h_w^{rs}$, $\h_w^{rs,\circ}$, and $\hbs_\si^{rs}$ the restrictions of $\h_w$, $\h_w^{\circ}$ and $\hbs_\si$ to $G^{rs}$.
  
\begin{Exa}
 Let $G=GL(\mathbb{C},4)$ and take
 \[
 X_1:=\left(\begin{array}{cccc}
    1 & 0 & 0 &0\\
    0 & 2 & 0& 0\\
    0 & 0 & 3&0\\
    0 & 0& 0& 4
 \end{array}\right)
 \quad\text{ and }\quad
 X_2:=\left(\begin{array}{cccc}
    1 & 1 & 0 & 0\\
    0 & 1 & 1 & 0\\
    0 & 0 & 1 & 1\\
    0 & 0 & 0 & 1
 \end{array}\right).
 \]
 Also consider $w=2413$. Then 
 \[
 \h_w(X)=\{V_\bullet; XV_1\subset V_2, V_2\subset XV_3\}.
 \]
 We try to give a more geometric description of $\h_w(X_1)$ and $\h_w(X_2)$. First, we consider the forgetful map
 \[
 \h_w(X)\to \Gr(2,4).
 \]
 Then it is clear that its image is given by $\{V_2\subset \mathbb{C}^4; \dim(XV_2\cap V_2)\geq 1\}$. \par
 Recall the Pl\"ucker coordinates of $\Gr(2,4)$: choose a basis $v_1,v_2$ of $V_2$, and let $M_{V_2}=(v_1, v_2)$ be the $4\times 2$ matrix with columns $v_1$ and $v_2$. Then the $2\times 2$ minors $p_{ij}$ given by lines $i$ and $j$ of $M_{V_2}$ define an embedding
 \begin{align*}
 Gr(2,4)&\to \mathbb{P}^5\\
  V_2&\mapsto(p_{12}:p_{13}:p_{14}:p_{23}:p_{24}:p_{34}).
 \end{align*}
 The image of $\Gr(2,4)$ has equation $p_{12}p_{34}-p_{13}p_{24}+p_{14}p_{23}=0$. The condition $\dim(XV_2\cap V_2)\geq 1$ is equivalent to the vanishing of the determinant $\det(v_1,v_2,Xv_1,Xv_2)$. It is not hard to write this determinant in terms of $p_{ij}$. For $X=X_1$, the ideal of the image of $\h_w(X)$ in $\mathbb{P}^5$ is
 \[
 I_1=(p_{13}p_{24}-4p_{12}p_{34},p_{14}p_{23}-3p_{12}p_{34}),
 \]
 while for $X=X_2$ it is 
 \[
 I_2= (p_{24}^2-p_{14}p_{34}-p_{23}p_{34}, p_{14}p_{23}- p_{13}p_{24} + p_{12}p_{34}).
 \]

 It is not hard to see that $V(I_1)$ is singular precisely at the 6 points $(1:0:\ldots:0),\ldots, (0:\ldots:0:1)$. These points correspond to the vector spaces $V_2$ such that $X_1V_2=V_2$.  For example at $(0:1:0:0:0:0)$, the affine equation for $V(I_1)$ becomes (in the affine chart $p_{13}=1$)
 \[
 (p_{24}-4p_{12}p_{34}, p_{14}p_{23}-3p_{12}p_{34}).
 \]
 This means that $V(I_1)\cap \{p_{13}=1\}$ is isomorphic to the cone $\spec(\frac{\mathbb{C}[x,y,z,w]}{(xy-zw)})$. \par
  We claim that $\h_w(X_1)$ is $V(I_1)$ blown up at the $6$ singular points. Indeed, the map $\h_w(X_1)\to V(I_1)$ is one-to-one outside the singular points, while if $XV_2\neq V_2$ the only possible choices for $V_1$ and $V_3$ are $V_1=V_2\cap X_1^{-1}V_2$ and $V_3=V_2+X_1^{-1}V_2$. Over the points such that $X_1V_2=V_2$, the fiber of $\h_w(X_1)\to V(I)$ is $\mathbb{P}^1\times\mathbb{P}^1$, since we are free to choose $V_1\subseteq V_2$ and $V_3\supseteq V_2$. The preimages of the singular points are Cartier divisors, and the blow-up of a cone on its singular point has a $\mathbb{P}^1\times\mathbb{P}^1$ as its exceptional divisor, so we have proved our claim.\par
    Upon a rescaling of the variables $p_{ij}$, we could assume that 
    \[
    I=(p_{13}p_{24}-p_{12}p_{34},p_{14}p_{23}-p_{12}p_{34}),
    \]
    and now $V(I)$ carries an action of $S_4$, where $wp_{ij}=p_{w(i)w(j)}$. Since the singular points are invariant points in this action, we have that this action of $S_4$ lifts to an action of $S_4$ on  $\h_w(X_1)$.\par
    The unipotent case $X=X_2$ is more complicated (for instance, it is known that regular nilpotent Hessenberg varieties are not even normal \cite{}). We have that only the point $(1:0:0:0:0:0)$ (which corresponds to $V_2=\langle e_1,e_2\rangle$) corresponds to a point where $X_2V_2=V_2$. However, the map $\h_w(X_2)\to V(I_2)$ is not the blowup of this special point, but the blowup of a more sophisticated subscheme.\par
    
\end{Exa}

\section{Paving by affines the variety $\hbs_{\si}(X)$}
\label{sec:hbspaving}

In this section we will construct an affine paving of $\hbs_\si(X)$ for a regular element $X\in G^r$. The idea is to prove that for each $\bs_{\si,\bi}^w\subset \bs_\si$ constructed in Section \ref{sec:pavingbs}, we have that $\hbs_\si(X)\cap \bs_{\si,\bi}^w$ is either empty or isomorphic to an affine space. In order to do that, we will need a technical result.

  Let $A$ be a $\mathbb{C}$-algebra and consider $A[x_1,\ldots, x_N]$  the polynomial ring in $N$ variables over $A$. Assume that the variable $x_i$ has weight $k_i\in \mathbb{Z}$. We consider a finite set of polynomials $\mathcal{F}=\{F_{k,j}\}$ where $k\in \mathbb{Z}$ and $j\in\{1,\ldots, m_k\}$ (note that $m_k=0$ except for a finite number of values $k$).

\begin{proposition}
\label{prop:affine}
Assume that we can write $F_{k,j}=G_{k,j}+H_{k,j}$ where $G_{k,j}$ is a linear polynomial with complex coefficients in variables of weight $k$ and $H_{k,j}$ is a polynomial in variables of weight less than $k$. Moreover, assume that for each $k$ the polynomials $\{G_{k,j}\}_{j\in\{1,\ldots, m_k\}}$ are linearly independent. In that case, we have that $V(F)=\spec(\frac{A[x_1,\ldots,x_N]}{\langle F\rangle})$ is isomorphic to an affine space over $A$ of expected dimension $N-\sum_{k\in \mathbb{Z}} m_k$ (over $spec(A)$).
\end{proposition}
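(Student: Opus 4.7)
The plan is to proceed by induction on the total number of relations $M := \sum_{k \in \mathbb{Z}} m_k$. The base case $M = 0$ is trivial, since then $V(F) = \spec A[x_1, \ldots, x_N]$ is already an affine $N$-space over $A$.

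For the inductive step, I would pick the smallest weight $k_0$ with $m_{k_0} > 0$ and focus on the polynomials $F_{k_0,1}, \ldots, F_{k_0, m_{k_0}}$. Because their linear parts $G_{k_0,1}, \ldots, G_{k_0, m_{k_0}}$ are linearly independent complex linear forms in the weight-$k_0$ variables, I can perform an invertible $\mathbb{C}$-linear change of coordinates among the weight-$k_0$ variables so that $G_{k_0, 1}$ becomes a coordinate variable, call it $\tilde x$. After this change, $G_{k_0,j}$ for $j \geq 2$ may still involve $\tilde x$, but I can replace $F_{k_0, j}$ by $F_{k_0, j} - c_j F_{k_0, 1}$ (where $c_j$ is the $\tilde x$-coefficient of $G_{k_0, j}$); this does not change the ideal, the new $G_{k_0, j}$'s for $j \geq 2$ are free of $\tilde x$ and remain linearly independent among themselves. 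Now $F_{k_0, 1} = \tilde x + H_{k_0, 1}$ with $H_{k_0, 1}$ only involving variables of weight less than $k_0$, so this relation lets me eliminate $\tilde x$ altogether, yielding an $A$-algebra isomorphism between $A[x_1, \ldots, x_N]/\langle F \rangle$ and a quotient of a polynomial ring in $N-1$ variables by the images of the remaining $M - 1$ polynomials.

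Then I would verify that the reduced system still satisfies the hypotheses of the proposition, with the grading inherited from the original variables. For the weight-$k_0$ polynomials $F_{k_0, j}$ with $j \geq 2$: their linear parts are unchanged (they do not involve $\tilde x$) and are linearly independent, while their non-linear parts get modified by the substitution $\tilde x \mapsto -H_{k_0,1}$, but since $H_{k_0,1}$ only contains variables of weight below $k_0$, these non-linear parts still only involve lower-weight variables. For polynomials $F_{k', j}$ with $k' > k_0$: the linear part $G_{k', j}$ is untouched by the substitution (it lives in weight-$k'$ variables, whereas $\tilde x$ has weight $k_0 < k'$), and substituting $\tilde x$ by $-H_{k_0, 1}$ keeps $H_{k', j}$ in variables of weight less than $k'$. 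Hence the induction hypothesis applies to produce an isomorphism with an affine space over $A$ of dimension $(N-1) - (M-1) = N - M$, as required.

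The only delicate point is bookkeeping the weight structure across the substitution, but the hypothesis that each $H_{k,j}$ involves only strictly lower weights is exactly what is needed to ensure that eliminating a weight-$k_0$ variable does not disturb the weight-graded shape of any surviving $F_{k', j}$. Once this is checked, the induction runs without further difficulty, and the whole argument is essentially linear algebra applied one weight at a time.
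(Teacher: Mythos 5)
Your proof is correct and follows essentially the same strategy as the paper's, using the triangular (weight-graded) structure to eliminate variables one by one. The only packaging difference is that you induct on the total number of relations $M=\sum_k m_k$ (peeling off one relation at a time from the lowest nonempty weight), whereas the paper inducts directly on the weight $k$ and eliminates all $m_{k+1}$ variables of weight $k+1$ in a single step; a very minor slip is your remark that the $H_{k_0,j}$ for $j\ge 2$ are modified by the substitution $\tilde x\mapsto -H_{k_0,1}$ --- they are not, since they live only in variables of weight strictly below $k_0$ --- but this does not affect the argument.
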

\begin{proof}
The proof is done by induction. Consider $X_k=\{x_i;\wt(x_i)\leq k\}$ and let $A_k=A[X_k]$ and $\mathcal{F}_k:=\{F_{k',j}\}_{k'\leq k}$. Clearly $\mathcal{F}_k$ is a set of polynomials in $A_k$. For $k$ sufficiently small, we have $X_k=\emptyset$ and $\mathcal{F}_k=\emptyset$, and in this case the result holds trivially. Assume, by the induction hypothesis, that $V(\mathcal{F}_k)$ is isomorphic to an affine space of dimension $|X_k|-\sum_{k'\leq k}m_{k'}$. Since $G_{k+1,1},\ldots, G_{k+1,m_{k+1}}$ are linearly independent, upon change the variables of weight $k$ with suitable linear combinations, we can assume that $G_{k+1,j}=x_j$. Hence, there is an isomorphism $V(\mathcal{F}_k)\times \mathbb{A}^{\alpha_k-m_{k+1}}\to V(F_{k+1})$ given by
 \begin{align*}
   \mathbb{A}^{X_k}\times \mathbb{A}^{\alpha_k-m_{k+1}} &\to \mathbb{A}^{X_k}\times \mathbb{A}^{\alpha_k}\\    
   (y,(x_{m_k+1},\ldots, x_{\alpha_k}))&\mapsto (y, (-H_{k+1,1}(y), -H_{k+1,2}(y),\ldots, -H_{k+1,m_{k+1}}(y), x_{m_k+1},\ldots, x_{\alpha_k})
 \end{align*}
 where $\alpha_k=|X_{k+1}\setminus X_k|$. By the induction hypothesis, we have our result.
\end{proof}

 \par

 Let $\exp\col \mathfrak{g}\to G$ be the exponential map. As we saw in Section \ref{sec:alggroups}, the restriction $\exp:\mathfrak{u}\to U$ is an isomorphism of schemes (see also Equation \eqref{eq:BCH}). Moreover, given $w\in W$, we have $\exp(\mathfrak{u}^w)=U^w$ and $\exp(\mathfrak{u}_w)=U_w$. To fix notation, for each $\gamma\in \Phi^+$ we will choose $E_\gamma$ a generator of $\ul_{\gamma}$. Moreover, for $\beta,\gamma\in \Phi^+$, we define constants $\lambda_{\beta,\gamma}$ by the formula
 \begin{equation}
     \label{eq:lambda}
     [E_\beta,E_\gamma]=\lambda_{\beta, \gamma}E_{\beta+\gamma}.
 \end{equation}
 The equation above only makes sense if $\beta+\gamma$ is a root, otherwise we define $\lambda_{\beta,\gamma}:=0$. It is worth noting that $\lambda_{\beta,\gamma}=-\lambda_{\gamma,\beta}$.
 \par

Let $X\in B$ a regular element. Recall that we can pave $\bs_\si$ by affines isomorphic to $U^w\times U_{\delta_1,b_1}\times \ldots \times U_{\delta_{n},b_n}$ via the map $g_{\si,\bi}^{w}$ defined in Equation \eqref{eq:gLbw}. Also recall Equation \eqref{eq:fiberh}. Then we have that  
\begin{equation}
    \label{eq:u0X}
    (g_{\si,\bi}^w)^{-1}(\hbs_{\si}(X))=\{(u_0,u_1,\ldots, u_n);\dw^{-1}u_0^{-1}Xu_0u_1\ldots u_n\phi_{\si,\bi}(\dw)\in B\}.
\end{equation}
Or, equivalently,
\[
(g_{\si,\bi}^w)^{-1}(\hbs_{\si}(X))=\{(u_0,u_1,\ldots, u_n);u_0^{-1}Xu_0u_1\ldots u_n\phi_{\si,\bi}(\dw)\in \dw B\}.
\]
Since $u_0^{-1}Xu_0u_1\ldots u_n\in B$ for every $(u_0,\ldots, u_n)\in U^w\times U_{\delta_1,b_1}\times\ldots\times U_{\delta_n,b_n}$, we have that if $(g_{\si,\bi}^w)^{-1}(\hbs_{\si}(X))$ is non-empty, then $B\phi_{\si,\bi}(w)B\cap BwB\neq \emptyset$. This implies that $\phi_{\si,\bi}(w)=w$. In this case, we can rephrase Equation \eqref{eq:u0X} as
\begin{equation}
    \label{eq:gLu0}
    (g_{\si,\bi}^w)^{-1}(\hbs_{\si}(X))=\{(u_0,u_1,\ldots, u_n);u_0^{-1}Xu_0u_1\ldots u_n\in B\cap \dw B\dw^{-1}\}.
\end{equation}
For simplicity, we will write $\hbs_{\si,\bi}^w(X):=\hbs_\si(X)\cap \bs_{\si,\bi}^w$.\par

\begin{theorem}
\label{thm:hbspaving}
Let $X\in B$ be a regular element such that $X=X_sX_u$ and $Z_G(X_s)=G_J$ for some $J\subset S$.  Then $\hbs_{\si}(X)$ is paved by the affines $\hbs_{\si,\bi}^{w}(X)$ where $w\in W$ and $\bi \in \Good(L,J,w)$, and $\dim \hbs_{\si,\bi}^{w}(X)=|\bi|$.
\end{theorem}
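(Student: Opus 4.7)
The plan is to pull back the defining condition of $\hbs_\si(X)$ through the isomorphism $g_{\si,\bi}^w : U^w \times \prod_{i=1}^n U_{\gamma_i, b_i} \xrightarrow{\sim} \bs_{\si,\bi}^w$ from Section \ref{sec:pavingbs} and reduce the theorem to an application of Proposition \ref{prop:affine}. By Equation \eqref{eq:gLu0} this pullback is cut out by the relation $u_0^{-1} X u_0 u_1 \cdots u_n \in B \cap \dw B \dw^{-1}$, which is automatically empty when $\phi_{\si,\bi}(w) \neq w$. Conjugating by an element of $B$ we may assume $X_s \in T$ and write $X = t \exp(Y)$ with $t = X_s$ and $Y = \log X_u \in \ul_J$; regularity forces the coordinates $y_\alpha$ of $Y$ in the basis $\{E_\alpha\}$ to be nonzero for every simple root $\alpha \in \Delta_J$.

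Writing $u_0 = \exp\bigl(\sum_{\beta \in \Phi(w)} x_\beta E_\beta\bigr)$ and $u_i = \exp(\lambda_i E_{\gamma_i})$ for those $i$ with $b_i = 1$, iterated use of the Baker-Campbell-Hausdorff formula produces $u_0^{-1} X u_0 u_1 \cdots u_n = t \cdot \exp(W)$ for an explicit $W \in \ul$. The defining relation becomes the vanishing of the $\ul^w$-component of $W$: one polynomial equation $F_\gamma = 0$ for each $\gamma \in \Phi(w)$. Assign each $x_\beta$ and $\lambda_i$ the weight equal to the height of its associated positive root. For $\gamma \in \Phi(w) \setminus \Phi_J$ the weight-$\mathrm{ht}(\gamma)$ leading part of $F_\gamma$ contains $(1 - \gamma(t)^{-1})\, x_\gamma$ with invertible coefficient, so $x_\gamma$ can be eliminated. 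For $\gamma \in \Phi(w) \cap \Phi_J$ this coefficient vanishes, and the leading weight-$\mathrm{ht}(\gamma)$ part reduces to $\sum_{i:\,\gamma_i = \gamma,\,b_i = 1} (\pm) \lambda_i$, supplemented (after substitutions from lower heights) by the $\ul_\gamma$-component of $[Y, X_0] = \sum_{\alpha + \beta = \gamma,\, \alpha \in \Phi_J^+} y_\alpha \lambda_{\alpha,\beta}\, x_\beta$; the cross-height linear independence of the latter across roots of fixed height is exactly the content of Corollary \ref{cor:LI}.

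The bridge between this algebraic linear independence and the combinatorics of $(L,J)$-goodness is Proposition \ref{prop:goodseq}: for $\bi \in \Good(L, J, w)$ the associated sequence $\underline{\delta}(L, J, w, \bi)$ is a good sequence for the $W_J$-part of $w$ in the sense of Definition \ref{def:goodseq}, which is precisely the combinatorial input needed to carry out the height-by-height elimination through Corollary \ref{cor:LI}. When $\bi \notin \Good(L, J, w)$ the condition fails already at height one for some $s \in D_{\si, J}(w)$, producing an inconsistent relation $y_{\alpha_s} \neq 0$ at the corresponding $F_{\alpha_s}$ and forcing $\hbs_{\si,\bi}^w(X) = \emptyset$. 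Proposition \ref{prop:affine} then yields $\hbs_{\si,\bi}^w(X) \cong \mathbb{A}^{|\bi|}$ for $\bi \in \Good(L, J, w)$, and the paving property follows since the $\bs_{\si,\bi}^w$ already pave $\bs_\si$. The main obstacle is the height-by-height bookkeeping: after eliminating variables at lower heights one must track how the substitutions propagate into the leading parts at higher heights and verify that the good-sequence property of $\underline{\delta}$ delivers the precise linear independence demanded by Corollary \ref{cor:LI} at every level.
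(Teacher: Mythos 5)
Your outline matches the paper's strategy in its skeleton — pull back along $g_{\si,\bi}^w$, conjugate $X_s$ into $T$, use the BCH formula in $\ul$, assign weights, invoke Proposition \ref{prop:affine} with Corollary \ref{cor:LI} supplying the linear independence and Proposition \ref{prop:goodseq} supplying the combinatorial input — but what you have written is a plan, not a proof, and the acknowledged gap is precisely where the work of this theorem lives.

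Concretely, two things are missing. First, the bulk of the paper's argument consists of tracking, through iterated BCH, how the coefficient of each $E_\gamma$ decomposes into pieces $f_\gamma + g_\gamma + h_\gamma$ of controlled height (Lemma \ref{lem:u0Xu0}, Propositions \ref{prop:bchy1y2} and \ref{prop:recurf}, Corollaries \ref{cor:f} and \ref{cor:ggamma}, and finally Proposition \ref{prop:gf}); without this one cannot verify that "substitutions from lower heights" leave the leading parts intact, which is exactly what you flag as the main obstacle. The paper also uses a finer weight assignment than "weight $=$ height": it sets $\wt(x_\beta)=\hta(\beta)+1$, $\wt(y_\beta)=\hta(\beta)+\tfrac12$ when $\beta\in\Phi(w)$ and $\wt(y_\beta)=\hta(\beta)+1$ otherwise, and the half-integer offset is what cleanly separates the equations coming from $\gamma\in\{\delta_j\}$ from those not of this form so that Proposition \ref{prop:affine} can be applied weight by weight. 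Your integer weights put both families at the same level, and while one can argue they involve disjoint variable sets (so the independence still holds), you neither state nor prove this; absent that argument the step is unsubstantiated. Second — and structurally more important — the general (non-unipotent) case is not a single global elimination in the paper. The proof first establishes Proposition \ref{prop:locusdelta} for regular unipotent $X$ with $J=S$, and then fibers the general case over that: the homomorphism $p_J\col U\to U_J$ of Lemma \ref{lem:UJnormal} projects the defining locus onto the unipotent-case locus $U^{w_J}_{\underline\delta_J}(X_u)$, and the remaining coordinates in $\Phi^+\setminus\Phi_J$ are eliminated separately via Lemma \ref{lem:adu0X} because $1-\gamma(X_s)\neq 0$ there. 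Your text gestures at both directions but never performs this reduction, so the interaction between the unipotent parameters and the semisimple scaling is left unaddressed. Finally, a small slip: when $\bi$ is not a good word, the obstruction is not a relation "$y_{\alpha_s}\neq 0$"; it is that the coefficient of $E_{\alpha_s}$ equals $1$ (a nonzero constant), forcing the locus to be empty.
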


\begin{corollary}
\label{cor:poinchi}
If $\si=(s_1,\ldots, s_n)$ is a word in $S$, we have the equality
\[
Poin(H^*(\hbs_\si(X_J)))=\chi^{\ind_{W_J}^W}((1+T_{s_1})(1+T_{s_2})\ldots(1+T_{s_n})).
\]
\end{corollary}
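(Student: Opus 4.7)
The plan is to assemble the corollary directly from Theorem \ref{thm:hbspaving} and Theorem \ref{thm:TrTr}, with the intermediate step being the standard fact that an affine paving computes the Poincaré polynomial cell by cell. Concretely, I would first verify that $X_J$ meets the hypotheses of Theorem \ref{thm:hbspaving}: by construction $X_J$ is regular, lies in $B$ up to conjugation, and its semisimple part $X_{J,s}$ has centralizer $Z_G(X_{J,s}) = G_J$. Thus Theorem \ref{thm:hbspaving} applies and produces a paving of $\hbs_\si(X_J)$ by the affine cells $\hbs_{\si,\bi}^{w}(X_J)$ indexed by pairs $(w,\bi)$ with $w\in W$ and $\bi\in\Good(L,J,w)$, where the complex dimension of the cell is exactly $|\bi|$.

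Next I would invoke the standard fact that a variety admitting a paving by affines has a cellular decomposition of its singular cohomology: the cohomology is concentrated in even degrees, vanishes in odd degrees, and each affine cell of complex dimension $d$ contributes a one-dimensional summand in degree $2d$. Translating into the Poincaré series convention \eqref{eq:chgraded} used in the paper (so a cell of complex dimension $d$ contributes $q^{d}$), this gives
\[
\Poin(H^*(\hbs_\si(X_J))) \;=\; \sum_{w\in W}\,\sum_{\bi\in\Good(L,J,w)} q^{|\bi|} \;=\; c'_J(\si),
\]
where the last equality is literally the definition of $c'_J(\si)$ from Section \ref{sec:hecke}.

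Finally, Theorem \ref{thm:TrTr} identifies $c'_J(\si)$ with $c_J(\si)$, and by construction $c_J(\si)$ is the induced character of the Hecke algebra element $(1+T_{s_1})(1+T_{s_2})\cdots(1+T_{s_n})$ evaluated via $\chi^{\ind_{W_J}^W}$. Chaining the three equalities yields the desired identity. There is no real obstacle here: the corollary is essentially the juxtaposition of the geometric input (Theorem \ref{thm:hbspaving}, the hard theorem with the explicit affine paving) and the combinatorial input (Theorem \ref{thm:TrTr}, matching good words with traces). The only point requiring any care is the bookkeeping of conventions, namely confirming that the $q^{|\bi|}$ produced geometrically (via complex dimensions and the $q^{i/2}$ convention for cohomological degree $i$) matches the $q^{|\bi|}$ appearing in $c'_J(\si)$.
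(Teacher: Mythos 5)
Your proposal matches the paper's own proof: the paving from Theorem \ref{thm:hbspaving} gives $\Poin(H^*(\hbs_\si(X_J)))=\sum_{w}\sum_{\bi\in\Good(L,J,w)}q^{|\bi|}$, and Theorem \ref{thm:TrTr} converts this to the induced character. Your extra care about the regularity of $X_J$ and the $q^{i/2}$ convention is sound but is implicitly handled in the paper.
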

\begin{proof}
Since $\hbs_\si(X_J)$ is paved by affines, we have
\begin{align*}
Poin(H^*(\hbs_\si(X_J)))&=\sum_{w\in W}\sum_{\bi\in \Good(L,J,w)}q^{\dim(\hbs_{\si,\bi}^w(X_J))}\\
  &=\sum_{w\in W}\sum_{\bi\in \Good(L,J,w)}q^{|\bi|}\\
  &=\chi^{\ind_{W_J}^W}((1+T_{s_1})(1+T_{s_2})\ldots(1+T_{s_n})).
\end{align*}
The first and second equalities follow from Theorem \ref{thm:hbspaving}, the third equality from Theorem \ref{thm:TrTr}.
\end{proof}

\begin{corollary}
\label{cor:bettipalin}
The Betti numbers of $\hbs_\si(X)$ are palindromic for every regular element $X$.
\end{corollary}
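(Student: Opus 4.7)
By Theorem \ref{thm:hbspaving} and its Corollary \ref{cor:poinchi}, the Poincaré polynomial of $\hbs_{\si}(X)$ depends only on the Levi type $J\subset S$ of $Z_G(X_s)$ and equals
\[
P(q) := \chi^{\ind_{W_J}^W}\Bigl(\textstyle\prod_{i=1}^{n}(1+T_{s_i})\Bigr).
\]
Palindromicity of the Betti numbers amounts to the identity $P(q)=q^{n}P(q^{-1})$, so the question reduces to a character computation inside the Hecke algebra $H_W$.

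The first step is to apply the involution $\iota$ of $H_W$ to each factor $1+T_{s_i}$. From $\iota(T_s)=T_s^{-1}$ and the quadratic relation $T_s^2=(q-1)T_s+q$ one computes $T_s^{-1}=q^{-1}T_s+(q^{-1}-1)$, hence
\[
\iota(1+T_s)=1+T_s^{-1}=q^{-1}(1+T_s).
\]
Multiplying over $i=1,\dots,n$ gives $\iota\bigl(\textstyle\prod_{i}(1+T_{s_i})\bigr)=q^{-n}\textstyle\prod_{i}(1+T_{s_i})$; equivalently, the element $\prod_{i} C'_{s_i}=q^{-n/2}\prod_i(1+T_{s_i})$ is $\iota$-fixed.

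The second step is the identity $\chi^{\ind_{W_J}^W}(\iota(a))=\sigma(\chi^{\ind_{W_J}^W}(a))$, where $\sigma$ is the involution of $\mathbb{Z}[q^{1/2},q^{-1/2}]$ swapping $q^{1/2}$ and $q^{-1/2}$. Granting this, one deduces $\sigma(P(q))=q^{-n}P(q)$, which rearranges to $P(q)=q^{n}P(q^{-1})$. Since $(e,(0,\dots,0))$ is always a good pair in the sense of Definition \ref{def:good} --- condition (1) holds because $es_i>e$ for every $i$, and condition (2) is vacuous as $D_{\si}(e)=\emptyset$ --- the polynomial $P(q)$ has nonzero constant term, and the functional equation above then forces $\deg P=n$ with palindromic coefficients. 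The identity $\chi^{\ind_{W_J}^W}(\iota(a))=\sigma(\chi^{\ind_{W_J}^W}(a))$ is the main non-formal input and should be regarded as the hard step: it can be proved either by a direct induction on word length, exploiting the explicit $2\times 2$ block form of $T_s$ on the seminormal basis ${}^{J}W$ of $V_J$ recorded in Section \ref{sec:hecke}, or more conceptually by expanding $\prod_{i}C'_{s_i}=\sum_{w}b_{w}C'_{w}$ in the Kazhdan-Lusztig basis --- the coefficients $b_{w}$ are automatically $\sigma$-fixed because both $\prod_{i}C'_{s_i}$ and every $C'_{w}$ are $\iota$-fixed --- and then applying Haiman's palindromic unimodality lemma \cite[Lemma 1.1]{Haiman} to each irreducible constituent of the induced character $\chi^{\ind_{W_J}^W}$.
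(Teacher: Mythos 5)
Your proof is correct and follows the paper's approach: by Corollary \ref{cor:poinchi}, the Poincaré polynomial equals $\chi^{\ind_{W_J}^W}\bigl(\prod_{i}(1+T_{s_i})\bigr)$, and the corollary reduces to the palindromicity of this Hecke-algebra character. The paper simply asserts that palindromicity as known, whereas you supply a genuine proof via $\iota(1+T_s)=q^{-1}(1+T_s)$ and the bar-invariance identity $\chi^{\ind_{W_J}^W}\circ\iota=\sigma\circ\chi^{\ind_{W_J}^W}$, which is a correct and standard argument.
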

\begin{proof}
This follows from Corollary \ref{cor:poinchi} and the fact that $\chi^{\ind_{W_J}^W}((1+T_{s_1})(1+T_{s_2})\ldots(1+T_{s_n}))$ is palindromic.
\end{proof}

\begin{corollary}
\label{cor:hbschlambda}
Consider $G=SL_n(\mathbb{C})$ and $W=S_n$. Let $\si=(s_1,\ldots, s_n)$ be a word in $S$, then we have the following equality
\[
\ch((1+T_{s_1})(1+T_{s_2})\ldots(1+T_{s_n}))=\sum_{\lambda\vdash n}Poin(H^*(\hbs_\si(X_{\lambda})))m_{\lambda},
\]
where each $X_{\lambda}$ is a regular matrix with Jordan decomposition $\lambda$.
\end{corollary}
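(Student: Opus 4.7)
The plan is to combine Corollary \ref{cor:poinchi} (which identifies Poincaré polynomials of $\hbs_{\si}(X_J)$-fibers with induced characters of the Hecke element) with the monomial-symmetric expansion of the dual Frobenius character recalled in Remark \ref{rem:C'wRam}. Concretely, I will produce a canonical correspondence between partitions $\lambda \vdash n$ and subsets $J \subset S$ of simple transpositions so that $W_J = S_\lambda$, then check that the element $X_\lambda \in SL_n(\mathbb{C})$ with Jordan type $\lambda$ has semisimple part whose centralizer is precisely the Levi $G_J$, so the two conditions on the inputs agree.

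First, given a partition $\lambda = (\lambda_1,\ldots,\lambda_{\ell(\lambda)}) \vdash n$, I would let $J_\lambda \subset S$ be the set of simple transpositions $(i,i+1)$ such that $i$ and $i+1$ lie in the same block of the standard set-partition of $[n]$ into consecutive intervals of sizes $\lambda_1,\lambda_2,\ldots,\lambda_{\ell(\lambda)}$. With this choice, $W_{J_\lambda} = S_\lambda$ as Young subgroups of $S_n$. The element $X_\lambda$ by construction has a single Jordan block per distinct eigenvalue with block sizes equal to the parts of $\lambda$; its semisimple part $X_{\lambda,s}$ is therefore a diagonal matrix whose centralizer in $SL_n(\mathbb{C})$ is the block-diagonal Levi subgroup $G_{J_\lambda}$, and $X_\lambda$ is regular by the assumption that each eigenvalue has a unique Jordan block.

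Next I apply Corollary \ref{cor:poinchi} with $J = J_\lambda$ to get
\[
\Poin(H^*(\hbs_{\si}(X_\lambda))) = \chi^{\ind_{W_{J_\lambda}}^{W}}\bigl((1+T_{s_1})\cdots(1+T_{s_n})\bigr) = c_\lambda\bigl((1+T_{s_1})\cdots(1+T_{s_n})\bigr),
\]
where the last equality is the definition $c_\lambda(a) = \chi^{\ind_{S_\lambda}^{S_n}}(a)$ recalled just before Remark \ref{rem:C'wRam}. Summing over $\lambda$ and invoking Equation \eqref{eq:cha_cJ}, which expresses the dual Frobenius character as $\ch(a) = \sum_{\lambda \vdash n} c_\lambda(a) m_\lambda$, immediately yields the desired identity. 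The only nontrivial ingredient is the identification of $W_{J_\lambda}$ with $S_\lambda$ and of $Z_G(X_{\lambda,s})$ with $G_{J_\lambda}$, both of which are standard, so no genuine obstacle arises beyond bookkeeping of the bijection between partitions of $n$ and Young subgroups of $S_n$.
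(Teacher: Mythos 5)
Your proof is correct and follows exactly the paper's route: apply Corollary \ref{cor:poinchi} with $J=J_\lambda$ (where $W_{J_\lambda}=S_\lambda$ and $Z_G(X_{\lambda,s})=G_{J_\lambda}$), then sum over $\lambda$ using Equation \eqref{eq:cha_cJ}. The paper states this in one line; you merely spell out the standard bookkeeping between partitions, Young subgroups, and Levi subgroups that the paper takes for granted.
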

\begin{proof}
This follows directly from Corollary \ref{cor:poinchi} and Equation \eqref{eq:cha_cJ}.
\end{proof}

\begin{corollary}
\label{cor:irreducible}
 Let $\si$ be a sequence such that every $s\in S$ appears in $\si$. Then the variety $\hbs_{\si}(X)$ is irreducible for every regular element $X$. In particular, $\h_w(X)$ is irreducible for every $w\in W$.
\end{corollary}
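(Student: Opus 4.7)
The plan is to combine the affine paving from Theorem~\ref{thm:hbspaving}, the monicity of the leading coefficient from Proposition~\ref{prop:cJmonic}, and the palindromicity of Corollary~\ref{cor:bettipalin}.

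First, Theorem~\ref{thm:hbspaving} provides an affine paving of $\hbs_\si(X)$ whose cells $\hbs_{\si,\bi}^w(X)$ have dimension $|\bi|$ and are indexed by pairs $(w,\bi)$ with $\bi\in\Good(L,J,w)$, where $J\subset S$ is determined by the Levi of the semisimple part of $X$. Setting $n:=|\si|$, the hypothesis that every $s\in S$ appears in $\si$ lets me invoke Proposition~\ref{prop:cJmonic}: the coefficient of $q^n$ in $c_J(\si)$ equals $1$. Combined with Corollary~\ref{cor:poinchi}, this produces a unique top-dimensional cell $C_0=\hbs_{\si,(1,\ldots,1)}^{w_0}(X)$ of dimension $n$, so in particular $\dim H^{2n}(\hbs_\si(X))=1$.

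Next, by Corollary~\ref{cor:bettipalin} the Betti numbers of $\hbs_\si(X)$ are palindromic, so $\dim H^0(\hbs_\si(X))=\dim H^{2n}(\hbs_\si(X))=1$. Thus $\hbs_\si(X)$ is connected.

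To upgrade connectedness to irreducibility, I would first establish that $\hbs_\si(X)$ is pure of dimension $n$. The variety $\hbs_\si$ is the preimage of the diagonal $\Delta\subset\flag\times\flag$ under the smooth map $G\times\bs_\si\to\flag\times\flag$ sending $(X,(g_0B,\ldots,g_nB))\mapsto(g_0B,Xg_nB)$; therefore it is a local complete intersection inside the smooth variety $G\times\bs_\si$, hence Cohen--Macaulay. Since $\hbs_\si\to G$ has fibers of constant dimension $n$ over the regular locus $G^r$ (by the paving count from Theorem~\ref{thm:hbspaving}), miracle flatness yields flatness there, and each fiber $\hbs_\si(X)$ is Cohen--Macaulay and in particular pure of dimension $n$. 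With this in hand the conclusion is immediate: $\overline{C_0}$ is a closed irreducible $n$-dimensional subvariety of $\hbs_\si(X)$; any other irreducible component is also $n$-dimensional (by pure-dimensionality), hence must contain some top-dimensional cell, necessarily $C_0$ by uniqueness, so it coincides with $\overline{C_0}$. Hence $\hbs_\si(X)=\overline{C_0}$ is irreducible.

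The main obstacle is establishing pure-dimensionality for arbitrary regular $X$: for regular semisimple $X$ the fiber $\hbs_\si(X)$ is itself smooth and one can conclude directly from smoothness plus connectedness, but for general regular $X$ the Cohen--Macaulay plus miracle flatness argument above is required. The ``In particular'' for $\h_w(X)$ follows by choosing a word $\si$ that contains every simple reflection and whose Bott--Samelson map surjects onto $\h_w(X)$ (extending a reduced word for $w$), so that $\h_w(X)$ is the image of the irreducible variety $\hbs_\si(X)$ and is therefore irreducible as well.
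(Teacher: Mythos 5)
Your proof has the same skeleton as the paper's: identify a unique top-dimensional cell using Proposition~\ref{prop:cJmonic} and Theorem~\ref{thm:hbspaving}, establish that $\hbs_\si(X)$ is equidimensional of dimension $n$, and conclude irreducibility. But the route you take to equidimensionality has a gap. To apply miracle flatness at a point $x\in\hbs_\si(X)$ you need $\dim_x\hbs_\si(X)=\dim\hbs_\si-\dim G=n$, i.e.\ the \emph{local} fiber dimension at every point must be $n$. The paving only gives the upper bound $\dim\hbs_\si(X)\le n$ and the existence of one $n$-cell; it does not rule out a second irreducible component of dimension $<n$, at whose points miracle flatness would fail. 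Ruling out low-dimensional components is exactly what you are trying to prove, so the invocation is circular. The paper avoids this by applying the theorem on dimension of fibers (Chevalley): $\hbs_\si^r$ is irreducible (it is open in a $B$-bundle over the irreducible $\bs_\si$), $\hbs_\si^r\to G^r$ is dominant with $\dim\hbs_\si^r-\dim G^r=n$, so every component of every fiber has dimension $\ge n$; together with the $\le n$ bound from the paving this gives pure dimension $n$ directly, with no flatness needed. Once you have that, your palindromicity/connectedness detour is also superfluous: purity plus a unique $n$-cell already forces a single component.

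Your argument for the ``in particular'' is also problematic. If $\si_w$ is a reduced word for $w$ and you append a generator $s$ that is not a right descent of the running Demazure product, the Bott--Samelson image becomes $\Omega_{w'}$ for some $w'>w$, so $\hbs_\si(X)$ surjects onto $\h_{w'}(X)$ rather than $\h_w(X)$. The extension trick only works when $w$ has full support (every $s\in S$ already occurs in $\si_w$), in which case no extension is needed. Indeed, for $w=e$ and $X$ regular semisimple, $\h_e(X)$ consists of $|W|$ points and is not irreducible; so the ``in particular'' as stated in the corollary really does require a hypothesis on the support of $w$ (this appears to be an imprecision in the paper as well, and is worth flagging, but it confirms that your extension argument cannot be made to apply uniformly).
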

\begin{proof}
  The proof is the same as \cite[Corollary 14]{Precup18}. By proposition \ref{prop:cJmonic} and Theorem \ref{thm:hbspaving} we have that $\hbs_{\si}(X)$ has only one stratum of top dimension. By the Theorem of dimension of fibers applied to $\hbs_{\si}^{r}\to G^r$, the dimension of each irreducible component of $\hbs_{\si}(X)$ is at least $n$, in particular $\hbs_{\si}(X)$ is equidimensional and hence irreducible (since it has only one stratum of top dimension).
\end{proof}

The proof of Theorem \ref{thm:hbspaving} will take the rest of the section. We begin with an example to fix the main ideas.
\begin{Exa}
\label{exa:paving}
 As in Examples \ref{exa:good}, \ref{exa:goodseq}, \ref{exa:Uwomega} and \ref{exa:BSL}, let $G=GL(4)$ then $W=S_4$. If $w=4231$ then $\Tr(w)=\{(1,2),(1,4),(2,4),(3,4)\}$ and $D_\si(w)=\{(1,2),(3,4)\}$. Let $\bi=(1,0,1,1,1,0)$, and $\si=((2,3),(1,2),(3,4),(2,3),(1,2),(3,4))$. We have that the sequence $\underline{\delta}$ (defined in \ref{def:goodseq1}) is given by $\delta_1=(1,2)$, $\delta_2=(1,3)$, $\delta_3=(3,4)$ and $\delta_4=(2,4)$. This means that
\begin{align*}
    u_0&=\left(\begin{array}{cccc}
       1 & x_{12} &0 & x_{14}\\
       0 & 1 &0 &x_{24}\\
       0 & 0 & 1 & x_{34}\\
       0 & 0 & 0 &1
        \end{array}\right), & 
    u_1&=\left(\begin{array}{cccc}
       1 & y_1 &0 &0 \\
       0 & 1 &0 &0\\
       0 & 0 & 1 &0 \\
       0 & 0 & 0 &1
        \end{array}\right), &
    u_2&=\left(\begin{array}{cccc}
       1 & 0 &y_2 & 0\\
       0 & 1 &0 &0\\
       0 & 0 & 1 & 0\\
       0 & 0 & 0 &1
        \end{array}\right),\\
    u_3&=\left(\begin{array}{cccc}
       1 & 0 &0 & 0\\
       0 & 1 &0 &0\\
       0 & 0 & 1 & y_3\\
       0 & 0 & 0 &1
        \end{array}\right),    &
        u_4&=\left(\begin{array}{cccc}
       1 & 0 &0 & 0\\
       0 & 1 &0 &y_4\\
       0 & 0 & 1 &0 \\
       0 & 0 & 0 &1
        \end{array}\right),& X&:=\left(\begin{array}{cccc}
       1 & 1 &0 & 0\\
       0 & 1 &1 & 0\\
       0 & 0 & 1 &1 \\
       0 & 0 & 0 &1
        \end{array}\right).
\end{align*}
Then
\[
u_0^{-1}Xu_0=\left(\begin{array}{cccc}
       1 & 1 &-x_{12} & -x_{12}x_{34}+x_{24}\\
       0 & 1 &1 & x_{34}\\
       0 & 0 & 1 &1 \\
       0 & 0 & 0 &1
        \end{array}\right).
\]
 Moreover, we have that
\begin{align*}
M:=&u_0^{-1}Xu_0u_1u_2u_3u_4\\
=&\left(\begin{array}{cccc}
       1 & y_1+1 &-x_{12}+y_2 & -x_{12}x_{34}-x_{12}y_3+y_2y_3+y_1y_4+x_{24}+y_4\\
       0 & 1 &1 & x_{34}+y_3+y_4\\
       0 & 0 & 1 &y_3+1 \\
       0 & 0 & 0 &1
        \end{array}\right).
\end{align*}

 The condition that $M\in \dw B\dw^{-1}$ says that $M_{12}=0$, $M_{14}=0$, $M_{24}=0$ and $M_{34}=0$ (these are the same entries that are non-zero in $u_0\in U^w$). Therefore, we have that the subscheme
 \[
 (g_{\si,\bi}^w)^{-1}(\hbs_\si(X))\subset U^w\times U_{\delta_1}\times U_{\delta_2}\times U_{\delta_3}\times U_{\delta_4}=\spec(\mathbb{C}[x_{12},x_{14},x_{24},x_{34},y_1,y_2,y_3,y_4])
 \]
 is given by the following equations
 \begin{align*}
     0&=y_1+1,\\
     0&=-x_{12}x_{34}-x_{12}y_3+y_2y_3+y_1y_4+x_{24}+y_4,\\
     0&=x_{34}+y_3+y_4,\\
     0&=y_3+1.
 \end{align*}
It is not hard to see that $(g_{\si,\bi}^w)^{-1}(\hbs_\si(X))\cong \mathbb{A}^4_{\mathbb{C}}$.\par
  We note that if we had omitted $u_1$ in the product defining $M$, then $M_{12}$ would be just $1$, in which case $(g_{\si,\bi}^w)^{-1}(\hbs_\si(X))$ would be empty. The same holds for $\delta_3$. This is the main motivation behind the definition of a good sequence, to exclude that the matrix $M$ could have an entry $M_{i,i+1}=1$ for some $(i,i+1)\in D_\si(w)$.
\end{Exa}

Fix $\si=(s_1,\ldots, s_n)$, $w\in W$ and $\bi$.  We begin, restricting ourselves to the case that $X\in U$ is a regular unipotent element. In this case, Equation \ref{eq:gLu0} becomes
\[
    (g_{\si,\bi}^w)^{-1}(\hbs_{\si}(X))=\{(u_0,u_1,\ldots, u_n);u_0^{-1}Xu_0u_1\ldots u_n\in U_w\},
\]
hence we have the fiber diagram
\begin{equation}
    \label{eq:UUw}
\begin{tikzcd}
(g_{\si,\bi}^w)^{-1}(\hbs_{\si}(X)) \ar[r]\ar[d]& U_w \ar[d]\\
U^w\times U_{\delta_1,b_1}\times\ldots\times U_{\delta_{n},b_n} \ar[r] & U,
\end{tikzcd}
\end{equation}
where the bottom map is given by 
\[
(u_0,u_1,\ldots, u_n)\to u_0^{-1}Xu_0u_1\ldots u_n.
\]
Since $U_{\delta,0}=\{1\}$, we consider only the roots $\delta_i$ such that $b_i=1$. Renaming then, we write $\delta_1,\delta_2,\ldots, \delta_{n'}$ for such roots. Then
\[
U^w\times U_{\delta_1,b_1}\times\ldots\times U_{\delta_{n},b_n}
\]
becomes 
\[
U^w\times U_{\delta_1}\times\ldots\times U_{\delta_{n'}}.
\]
Of course, we have that $n'=|\bi|=\sum_{1\leq i\leq n} b_i$. Alternatively, via BCH formula, the bottom map in Equation \eqref{eq:UUw} can be viewed as the map
\begin{align*}
\mathfrak{u}^w\times \mathfrak{u}_{\delta_1}\times\ldots\times \mathfrak{u}_{\delta_{n'}}&\to \mathfrak{u}\\
(u_0,\ldots, u_{n'})&\mapsto (-u_0)\otimes X\otimes u_0\otimes u_1\otimes\ldots\otimes u_{n'}.
\end{align*}
Via the (inverse of the) exponential map, we can assume that $X\in \ul$, and since $X$ is regular, we can also assume that $X=\sum_{\alpha\in \Delta} E_\alpha$ (up to conjugation). \par  
\begin{Exa}
\label{exa:pavingBCH}
We will repeat Example \ref{exa:paving} using BCH formular. Everything stays the same except that
\begin{align*}
    u_0&=\left(\begin{array}{cccc}
       0 & x_{12} &0 & x_{14}\\
       0 & 0 &0 &x_{24}\\
       0 & 0 & 0 & x_{34}\\
       0 & 0 & 0 &0
        \end{array}\right), & 
    u_1&=\left(\begin{array}{cccc}
       0 & y_1 &0 &0 \\
       0 & 0 &0 &0\\
       0 & 0 & 0 &0 \\
       0 & 0 & 0 &0
        \end{array}\right), &
    u_2&=\left(\begin{array}{cccc}
       0 & 0 &y_2 & 0\\
       0 & 0 &0 &0\\
       0 & 0 & 0 & 0\\
       0 & 0 & 0 &0
        \end{array}\right),\\
    u_3&=\left(\begin{array}{cccc}
       0 & 0 &0 & 0\\
       0 & 0 &0 &0\\
       0 & 0 & 0 & y_3\\
       0 & 0 & 0 &0
        \end{array}\right),    &
        u_4&=\left(\begin{array}{cccc}
       0 & 0 &0 & 0\\
       0 & 0 &0 &y_4\\
       0 & 0 & 0 &0 \\
       0 & 0 & 0 &0
        \end{array}\right),& X&:=\left(\begin{array}{cccc}
       0 & 1 &0 & 0\\
       0 & 0 &1 & 0\\
       0 & 0 & 0 &1 \\
       0 & 0 & 0 &0
        \end{array}\right).
\end{align*}
Then
\[
(-u_0)\otimes X \otimes u_0=\left(\begin{array}{cccc}
       0 & 1 &-x_{12} & -x_{12}x_{34}+x_{24}\\
       0 & 0 &1 & x_{34}\\
       0 & 0 & 0 &1 \\
       0 & 0 & 0 &0
        \end{array}\right),
\]
see Lemma \ref{lem:u0Xu0} below. Moreover, we have that
\begin{align*}
M:=&(-u_0)\otimes X\otimes u_0\otimes u_1\otimes u_2\otimes u_3\otimes u_4=\left(\begin{array}{cccc}
       0 & y_1+1 &-x_{12}-\frac{1}{2}y_1+y_2 & M_{14}\\
       0 & 0 &1 & x_{34}+\frac{1}{2}y_3+y_4\\
       0 & 0 & 0 &y_3+1 \\
       0 & 0 & 0 &0
        \end{array}\right),
\end{align*}
where
\[
M_{14}=-x_{12}x_{34}-\frac{1}{2}x_{34}y_1-\frac{1}{2}x_{12}y_3-\frac{1}{6}y_1y_3+\frac{1}{2}y_2y_3+\frac{1}{2}y_1y_4+x_{24}+\frac{1}{12}y_1-\frac{1}{2}y_2+\frac{1}{12}y_3+\frac{1}{2}y_4
\]
see Corollaries \ref{cor:f} and \ref{cor:ggamma} below.\par
\end{Exa}

In view of Proposition \ref{prop:affine}, it is convenient to give weights to the variables we are considering. Assume that we have variables $x_{\gamma,i}$ for each $\gamma\in \Phi^{+}$ and each integer $i$.  Recall that we have a partial ordering $\leq $ in $\Phi^+$. For $\beta,\gamma\in \Phi^+$, we write $\beta\lessdot \gamma$ if there exists $\alpha\in \Delta$ with $\gamma=\beta+\alpha$ and write $\beta<<\gamma$ if $\beta\leq \gamma$ and $\hta(\beta)\leq \hta(\gamma)-2$.\par
  Given a property $P$ on the set $\Phi^+$ we will write $g(x_{\beta,i},P(\beta))$ to mean a polynomial in the variables $x_{\beta,i}$ such that $\beta\in \Phi^+$ and $\beta$ satisfies $P$. Consider the elements of $\mathbb{C}[x_{\gamma,i}]\otimes_{\mathbb{C}} \ul$ given by
\begin{equation}
    \label{eq:u<gamma}
Y=\sum_{\gamma\in\Phi^+} E_{\gamma}(f_{\gamma}(x_{\gamma,i})+g_{\gamma}(x_{\beta,i},(\beta\lessdot\gamma) \vee (\beta\in\Delta\cap \Tr_{\gamma}))+h_{\gamma}(x_{\beta,i},\beta<<\gamma)),
\end{equation}
where $g$ and $h$ have no constant terms. In particular, if $\gamma\in \Delta$, then $g_\gamma=h_\gamma=0$.
\begin{proposition}
\label{prop:bchy1y2}
If $Y^1$, $Y^2$ and $Y^3$ are elements as above with
\[
Y^i=\sum_{\gamma\in\Phi^+} E_{\gamma}(f^i_{\gamma}(x_{\gamma,i})+g^i_{\gamma}(x_{\beta,i},(\beta\lessdot\gamma) \vee (\beta\in\Delta\cap \Tr_{\gamma}))+h^i_{\gamma}(x_{\beta,i},\beta<<\gamma))
\]
and $Y^3=Y^1\otimes Y^2$, then
\begin{align*}
    f^3_{\gamma}&=f^1_{\gamma}+f^2_{\gamma},\\
    g^3_{\gamma}&=g^1_{\gamma}+g^2_\gamma+\frac{1}{2}\sum_{\alpha\in\Delta}\lambda_{\alpha,\gamma-\alpha}(f^1_\alpha f^2_{\gamma-\alpha}-f^2_\alpha f^1_{\gamma-\alpha})&\textnormal{if $\hta(\gamma)\geq 3$},\\
    g^3_{\gamma}&=g^1_{\gamma}+g^2_\gamma+\frac{1}{2}\sum_{\alpha\in\Delta}\lambda_{\alpha,\gamma-\alpha}(f^1_\alpha f^2_{\gamma-\alpha})&\textnormal{if $\hta(\gamma)= 2$},
\end{align*}
where $\lambda_{\alpha,\gamma-\alpha}$ are the structure constants defined in Equation \eqref{eq:lambda}.
\end{proposition}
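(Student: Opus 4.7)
The plan is to apply the BCH formula \eqref{eq:BCH} directly to $Y^1\otimes Y^2$ and to extract, term by term, the contribution to the coefficient of $E_\gamma$. The key simplifying observation is an order-of-magnitude estimate: any iterated bracket $[Y^{i_1},[\ldots,[Y^{i_{n-1}},Y^{i_n}]\cdots]]$ of length $n\geq 3$ has $E_\gamma$-coefficient supported only on variables $x_{\beta,i}$ with $\beta<<\gamma$, and therefore contributes only to $h^3_\gamma$. To prove this estimate, observe that the $E_\gamma$-coefficient of such an $n$-fold bracket is a sum, over decompositions $\beta_1+\cdots+\beta_n=\gamma$ with all $\beta_j\in\Phi^+$, of products of the $E_{\beta_j}$-coefficients of the various $Y^{i_j}$. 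Since $n\geq 3$ and each $\beta_j$ is a positive root, $\hta(\beta_j)\leq \hta(\gamma)-(n-1)\leq \hta(\gamma)-2$. Each factor $f^{i_j}_{\beta_j}+g^{i_j}_{\beta_j}+h^{i_j}_{\beta_j}$ involves only variables $x_{\alpha,i}$ with $\alpha$ at most $\beta_j$ in the root order, so of height $\leq \hta(\gamma)-2$; hence these terms can be absorbed into $h^3_\gamma$.

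After this reduction it remains to analyze $Y^1+Y^2+\tfrac{1}{2}[Y^1,Y^2]$. The sum $Y^1+Y^2$ contributes $f^1_\gamma+f^2_\gamma$ to $f^3_\gamma$, $g^1_\gamma+g^2_\gamma$ to $g^3_\gamma$, and $h^1_\gamma+h^2_\gamma$ to $h^3_\gamma$ by direct inspection; since no bracket contribution can be linear in $x_{\gamma,i}$ alone (a bracket at $E_\gamma$ always involves coefficients at strictly smaller positive roots), this already settles the formula for $f^3_\gamma$. For the commutator I would write
\[
\tfrac{1}{2}[Y^1,Y^2]_\gamma \;=\; \tfrac{1}{2}\sum_{\beta+\delta=\gamma}\lambda_{\beta,\delta}\,(f^1_\beta+g^1_\beta+h^1_\beta)(f^2_\delta+g^2_\delta+h^2_\delta)
\]
and split by how many of $\beta,\delta$ are simple. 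If both are non-simple, then $\hta(\beta),\hta(\delta)\leq\hta(\gamma)-2$ and the same height argument as above puts the entire contribution into $h^3_\gamma$. If exactly one is simple, say $\beta=\alpha\in\Delta$ and $\delta$ non-simple (possible only when $\hta(\gamma)\geq 3$), then out of the three pieces of $f^1_\alpha(f^2_\delta+g^2_\delta+h^2_\delta)$ only $f^1_\alpha f^2_\delta$ is a polynomial in variables allowed for $g^3_\gamma$ (namely $x_{\alpha,i}$ with $\alpha$ simple and $x_{\delta,i}$ with $\delta\lessdot\gamma$); the other two pieces involve at least one variable of height $\leq\hta(\gamma)-2$ and are absorbed into $h^3_\gamma$. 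Summing the two decompositions $(\alpha,\gamma-\alpha)$ and $(\gamma-\alpha,\alpha)$ and using $\lambda_{\gamma-\alpha,\alpha}=-\lambda_{\alpha,\gamma-\alpha}$ yields the stated $\hta(\gamma)\geq 3$ formula.

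The case $\hta(\gamma)=2$ is separated only because $\gamma=\alpha_1+\alpha_2$ forces both $\beta$ and $\delta$ to be simple, so the two decompositions $(\alpha_1,\alpha_2)$ and $(\alpha_2,\alpha_1)$ both contribute directly through their $f$-parts and the sum reads $\tfrac{1}{2}\sum_{\alpha\in\Delta}\lambda_{\alpha,\gamma-\alpha}f^1_\alpha f^2_{\gamma-\alpha}$, producing the ``missing subtraction'' automatically via $\lambda_{\alpha_2,\alpha_1}=-\lambda_{\alpha_1,\alpha_2}$. The one bookkeeping point I would watch carefully is that when $\hta(\gamma)=3$, triple brackets can a priori produce cubic monomials in simple-root variables, and such variables lie simultaneously in $g^3_\gamma$'s allowed set (simple roots in $\Tr_\gamma$) and in $h^3_\gamma$'s allowed set (height $1\leq\hta(\gamma)-2$); these must consistently be pushed into $h^3_\gamma$, which is precisely what the order-of-magnitude estimate dictates. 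Apart from this bookkeeping, the proof is a mechanical application of BCH together with the height estimate.
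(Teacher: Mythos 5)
Your proof is correct and takes essentially the same approach as the paper's: both reduce the higher BCH commutators to $h^3_\gamma$ by a height-counting argument on the decompositions $\beta_1+\cdots+\beta_n=\gamma$, then split the single commutator $\tfrac12[Y^1,Y^2]_\gamma$ by how many of $\beta_1,\beta_2$ are simple. Your write-up actually fleshes out the higher-commutator reduction step (which the paper dispatches with a terse one-liner about $\beta\le\gamma$) and flags the bookkeeping overlap between the allowed variable sets of $g^3_\gamma$ and $h^3_\gamma$ when $\hta(\gamma)=3$ — a point the paper leaves implicit.
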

\begin{proof}
We have that
\begin{align*}
     Y^1\otimes Y^2 = Y^1+Y^2+\frac{1}{2}[Y_1,Y_2]+[\text{higher commutators}].
\end{align*}
Since all variables in the coefficient of $E_{\gamma}$ are of the form $x_{\beta,i}$, with $\beta\leq \gamma$, the higher commutators term will belong entirely to $h^3_{\gamma}$ and can therefore be ignored. Continuing with the computation, we have
\begin{align}
\label{eq:Y1Y2}
     Y^1+ Y^2&=\sum_{\gamma\in \phi^+}E_{\gamma}(f^1_\gamma+f^2_\gamma+g^1_\gamma+g^2_\gamma+h^1_\gamma+h^2_\gamma).
\end{align}
and 
\begin{align}
\label{eq:[Y1Y2]}
     [Y^1,Y^2]=\sum_{\gamma} E_{\gamma}\bigg(\underset{\beta_1+\beta_2=\gamma}{\sum_{\beta_1,\beta_2\in\phi^+}}\lambda_{\beta_1,\beta_2}(f^1_{\beta_1}+g^1_{\beta_1}+h^1_{\beta_1})(f^2_{\beta_1}+g^2_{\beta_2}+h^2_{\beta_2})\bigg).
\end{align}

If neither $\beta_1$ nor $\beta_2$ is a simple root, then the coefficient 
\[
\lambda_{\beta_1,\beta_2}(f^1_{\beta_1}+g^1_{\beta_1}+h^1_{\beta_1})(f^2_{\beta_2}+g^2_{\beta_2}+h^2_{\beta_2})
\]
also belongs in $h^3_\gamma$, so we can consider only the summands where either $\beta_1\in \Delta$ or $\beta_2\in\Delta$. \par
 Now we are ready to prove the statements in the Proposition. The first follows directly from Equations \eqref{eq:Y1Y2} and \eqref{eq:[Y1Y2]}.  Let us prove the second statement, considering first the case where $\hta(\gamma)\geq 3$. In this case, precisely one of the $\beta_i$ is in $\Delta$, and we assume without loss of generality that $\beta_1\in \Delta$. In particular, we see that $\beta_1<<\gamma$. Moreover $g^1_{\beta_1}=h^1_{\beta_1}=0$, and the coefficient above becomes
 \[
 \lambda_{\beta_1,\beta_2}(f^1_{\beta_1}f^2_{\beta_2}+f^1_{\beta_1}g^2_{\beta_2}+f^1_{\beta_1}h^2_{\beta_2}).
 \]
 It is easy to see that $f^1_{\beta_1}g^2_{\beta_2}+f^1_{\beta_1}h^2_{\beta_2}$ belongs in $h^3_\gamma$ and $f^1_{\beta_1}f^2_{\beta_2}$ belongs in $g^3_{\gamma}$. Since $\lambda_{\beta_1,\beta_2}=-\lambda_{\beta_2,\beta_1}$, the case $\beta_2\in\Delta$ is analogous and we get the second statement.\par

If $\hta(\gamma)=2$, then both $\beta_1,\beta_2\in\Delta$, so  $g^i_{\beta_i}=h^i_{\beta_i}=0$ for $i=1,2$ and the coefficient above becomes
\[
\lambda_{\beta_1,\beta_2}f^1_{\beta_1}f^2_{\beta_2}.
\]
The result follows.
\end{proof}

Each of the spaces $\ul^w$ and $\ul_{\delta_i}$ are affine, and we write $u_0=\sum_{\beta \in \Tr(w)}x_\beta E_\beta$ and $u_i=y_{i}E_{\delta_i}$. As before, we will have a root associated to each variable. From now on, we will write the elements of $\mathbb{C}[x_\beta,y_{i}]\otimes_\mathbb{C}\ul$ as in Equation \ref{eq:u<gamma}, abusing notation, omit the variables and write simply $f_\gamma,g_\gamma, h_\gamma$.
Recall that $[E_\alpha, E_\beta]=\lambda_{\alpha,\beta}E_{\alpha+\beta}$, where $\lambda_{\alpha,\beta}\neq 0$ if and only if $\alpha+\beta\in \Phi^+$. 
\begin{lemma}
\label{lem:u0Xu0}
We can write
\[
(-u_0)\otimes X\otimes (u_0)=\sum_{\gamma\in \Phi^+} E_\gamma(f^0_{\gamma}+g^0_{\gamma}+h^0_{\gamma})
\]
where 
\[
f^0_{\gamma}=\begin{cases}
1 & \text{ if }\gamma\in \Delta\\
0 & \text{otherwise},
\end{cases}
\]
and
\begin{align*}
    g^0_{\gamma}&=\underset{\gamma-\alpha\in\phi_w}{\sum_{\alpha\in\Delta}}\lambda_{\alpha,\gamma-\alpha}x_{\gamma-\alpha}
\end{align*}
for every $\gamma\in\phi^+\setminus\Delta$.
\end{lemma}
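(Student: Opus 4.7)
The plan is to reduce the two-sided BCH product to a single adjoint exponential. Since $\exp:\ul\to U$ is an isomorphism of algebraic groups intertwining the BCH product $\otimes$ on $\ul$ with ordinary group multiplication on $U$, and since $\mathrm{Ad}(\exp Y) = e^{\mathrm{ad}(Y)}$ for every $Y\in\ul$, conjugation by $\exp(u_0)$ in $U$ corresponds to $e^{-\mathrm{ad}(u_0)}$ on $\ul$:
\[
(-u_0)\otimes X\otimes u_0 \;=\; e^{-\mathrm{ad}(u_0)}(X) \;=\; \sum_{k\ge 0}\frac{(-1)^k}{k!}\,\mathrm{ad}(u_0)^k(X).
\]
This cleanly replaces the tangled higher-order terms of BCH by a single iterated-bracket expansion against the fixed element $X = \sum_{\alpha\in\Delta}E_\alpha$, and the proof reduces to reading off the three pieces $f^0_\gamma$, $g^0_\gamma$, $h^0_\gamma$ term by term.

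The $k=0$ summand is $X$ itself, giving exactly $f^0_\gamma = 1$ for $\gamma\in\Delta$ and $0$ otherwise. The $k=1$ summand is $-[u_0,X]$, which expands using $[E_\beta,E_\alpha]=\lambda_{\beta,\alpha}E_{\beta+\alpha}$ together with the antisymmetry $\lambda_{\beta,\alpha}=-\lambda_{\alpha,\beta}$ directly into
\[
\sum_{\gamma\in\Phi^+} E_\gamma \!\!\sum_{\substack{\alpha\in\Delta\\ \gamma-\alpha\in\Phi(w)}}\!\! \lambda_{\alpha,\gamma-\alpha}\,x_{\gamma-\alpha},
\]
matching the asserted formula for $g^0_\gamma$; each variable $x_{\gamma-\alpha}$ appearing is indexed by a root $\gamma-\alpha \lessdot \gamma$, so it sits legally in the $g$-slot.

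The main (but minor) verification is that every $k\ge 2$ contribution lands in $h^0_\gamma$. A generic summand of $\mathrm{ad}(u_0)^k(X)$ has the form $c\,x_{\beta_1}\cdots x_{\beta_k}\,E_{\alpha+\beta_1+\cdots+\beta_k}$ with $\beta_i\in\Phi(w)$, $\alpha\in\Delta$ and $c\in\mathbb{C}$, so it contributes to $E_\gamma$ for $\gamma=\alpha+\sum_i\beta_i$. For each fixed $i$ the height bookkeeping
\[
\hta(\beta_i) \;=\; \hta(\gamma) - \hta(\alpha) - \sum_{j\ne i}\hta(\beta_j) \;\le\; \hta(\gamma) - 1 - (k-1) \;\le\; \hta(\gamma)-2
\]
(using $\hta(\alpha)=1$, $\hta(\beta_j)\ge 1$, and $k-1\ge 1$) yields $\beta_i\ll\gamma$ for every $i$, which is precisely the admissibility condition for $h^0_\gamma$.

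The only conceptual subtlety is the group-to-algebra passage itself: one must know that $\otimes$ defined in \eqref{eq:BCH} is genuinely transported by $\exp$ to multiplication in $U$, and that $\mathrm{Ad}\circ\exp=\exp\circ\mathrm{ad}$ on $\ul$. Both facts are standard for finite-dimensional nilpotent Lie algebras, so no real obstacle arises and the proof is completed by the elementary bracket bookkeeping sketched above; alternatively, the same decomposition can be obtained by applying Proposition \ref{prop:bchy1y2} twice (once to form $X\otimes u_0$, once to left-multiply by $-u_0$), with the cancellations among the two $f_\gamma$-contributions producing the same answer.
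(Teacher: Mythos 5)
Your proof is correct. The paper computes $(-u_0)\otimes X\otimes u_0$ directly from the BCH formula (expanding $X\otimes u_0$ first, then left-multiplying by $-u_0$, then collecting) to arrive at $X+[X,u_0]+[\text{higher commutators}]$, and it then reads off $f^0$, $g^0$, $h^0$ without explicitly checking which roots index the variables in the remainder. Your route is the same computation packaged more cleanly: the identity $(-u_0)\otimes X\otimes u_0=e^{-\mathrm{ad}(u_0)}(X)$, valid because $\exp\colon\ul\to U$ transports $\otimes$ to group multiplication and $\mathrm{Ad}\circ\exp=\exp\circ\,\mathrm{ad}$ on the nilpotent algebra, replaces the two-stage BCH expansion and its cancellations by a single iterated-bracket series, from which the $k=0$, $k=1$, and $k\geq2$ contributions fall out of degree counting. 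Two further points in your favor: you make explicit the height bookkeeping $\hta(\beta_i)\leq\hta(\gamma)-k\leq\hta(\gamma)-2$ showing each $\beta_i\ll\gamma$, which justifies that every $k\geq2$ term sits legally in the $h^0_\gamma$ slot of the template in Equation (8b) — a step the paper leaves implicit inside ``[higher commutators]''; and your sign computation $-[u_0,X]=[X,u_0]$ using $\lambda_{\beta,\alpha}=-\lambda_{\alpha,\beta}$ matches the paper's $\tfrac12[X,u_0]+\tfrac12[-u_0,X]=[X,u_0]$.
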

\begin{proof}
We could use Proposition \ref{prop:bchy1y2}, but it is more informative to compute directly:
\begin{align*}
    (-u_0)\otimes X\otimes u_0=&(-u_0)\otimes (X+u_0+\frac{1}{2}[X,u_0]+[\text{higher commutators}])\\
    =&(-u_0)+X+u_0+\frac{1}{2}[X,u_0]+[\text{higher commutators}]\\
    &  +\frac{1}{2}\big[-u_0,X+u_0+\frac{1}{2}[X,u_0]+[\text{higher commutators}]\big]+[\text{higher commutators}]\\
    =&X+\frac{1}{2}[X,u_0]+\frac{1}{2}[-u_0,X]+[\text{higher commutators}]\\
    =&X+[X,u_0]+[\text{higher commutators}]\\
    =&\sum_{\alpha\in \Delta}E_\alpha+\sum_{\gamma\notin\Delta}E_{\gamma}\bigg(\underset{\alpha+\beta=\gamma}{\sum_{\alpha\in\Delta,\beta\in\Tr(w)}}\lambda_{\alpha,\beta}x_\beta+h^0_{\gamma}\bigg).
\end{align*}
This finishes the proof.
\end{proof}
\begin{Exa}
\label{exa:ggamma0}
In the notation of Example \ref{exa:pavingBCH}, we have  $g^0_{13}=-x_{12}$, $g_{24}^0=x_{34}$, $g^0_{14}=x_{24}$, and $h_{24}^0=0$, $h_{13}^0=0$, $h_{14}^0=-x_{12}x_{34}$.
\end{Exa}

In the proposition we have defined polynomials $f_\gamma^0$, $g_\gamma^0$, $h_\gamma^0$ in the variables $x_{\beta}$ for $\beta\in \Tr_w$.
Define inductively $f_\gamma^i, g_\gamma^i, h_\gamma^i$ by the formula (recall Equation \eqref{eq:u<gamma}) 
\[
\sum_{\gamma\in \Phi^+} E_\gamma(f_\gamma^{i+1}+g_\gamma^{i+1}+h_\gamma^{i+1})=\bigg(\sum_{\gamma\in \Phi^+} E_\gamma(f_\gamma^i+g_\gamma^i+h_\gamma^i)\bigg)\otimes y_{i+1}E_{\delta_{i+1}}.
\]
Moreover, we set $f_\gamma:=f_{\gamma}^{n'}$, $g_{\gamma}:=g_{\gamma}^{n'}$ and $h_{\gamma}:=h_{\gamma}^{n'}$ for the final polynomials.
The next result follows directly from Proposition \ref{prop:bchy1y2}.

\begin{proposition}
\label{prop:recurf}
We have the following relations.
\begin{enumerate}
    \item If $\gamma\neq \delta_{i+1}$, $\delta_{i+1}\notin \Delta$ and $\gamma-\delta_{i+1}\notin\Delta$, then
    \begin{align*}
    f_{\gamma}^{i+1}&=f_{\gamma}^i,\\
    g_{\gamma}^{i+1}&=g_{\gamma}^i.
    \end{align*}
    \item If $\gamma\neq \delta_{i+1}$, $\delta_{i+1}\in \Delta$ or $\gamma-\delta_{i+1}\in\Delta$, then
    \begin{align*}
    f_{\gamma}^{i+1}&=f_{\gamma}^{i},\\
    g_{\gamma}^{i+1}&=g_{\gamma}^{i}+\frac{1}{2}\lambda_{\gamma-\delta_{i+1},\delta_{i+1}}y_{i+1}f_{\gamma-\delta_{i+1}}^{i}.
    \end{align*}
    \item If $\gamma=\delta_{i+1}$, then
    \begin{align*}
    f_{\gamma}^{i+1}&=f_{\gamma}^{i}+y_{i+1},\\
    g_{\gamma}^{i+1}&=g_{\gamma}^{i}.
    \end{align*}
\end{enumerate}
\end{proposition}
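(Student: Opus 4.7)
The proposition is a direct instantiation of Proposition \ref{prop:bchy1y2}, so the plan is essentially to unwind that earlier formula with a very simple second factor and check how each case falls out. The main setup would be to put
\[
Y^1 := \sum_{\gamma \in \Phi^+} E_\gamma(f_\gamma^i + g_\gamma^i + h_\gamma^i), \qquad Y^2 := y_{i+1} E_{\delta_{i+1}},
\]
and observe that $Y^2$ fits the template \eqref{eq:u<gamma}: its $f^2$-component is $y_{i+1}$ at $\gamma = \delta_{i+1}$ and $0$ otherwise, and $g^2_\gamma = h^2_\gamma = 0$ for every $\gamma$. Then by definition $\sum_\gamma E_\gamma(f_\gamma^{i+1}+g_\gamma^{i+1}+h_\gamma^{i+1}) = Y^1 \otimes Y^2$, so Proposition \ref{prop:bchy1y2} applies.

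For the $f$-recursion, the formula $f^3_\gamma = f^1_\gamma + f^2_\gamma$ immediately yields $f_\gamma^{i+1} = f_\gamma^i$ whenever $\gamma \ne \delta_{i+1}$ (cases (1) and (2)) and $f_{\delta_{i+1}}^{i+1} = f_{\delta_{i+1}}^i + y_{i+1}$ (case (3)). For the $g$-recursion, the key observation is that $f^2_\beta$ is supported only at $\beta = \delta_{i+1}$, so in the sum $\sum_{\alpha \in \Delta} \lambda_{\alpha,\gamma-\alpha}(f^1_\alpha f^2_{\gamma-\alpha} - f^2_\alpha f^1_{\gamma-\alpha})$ only the indices $\alpha = \gamma - \delta_{i+1}$ (requiring $\gamma - \delta_{i+1} \in \Delta$) or $\alpha = \delta_{i+1}$ (requiring $\delta_{i+1} \in \Delta$) can contribute. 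In case (1) neither condition holds, so the entire correction vanishes and $g_\gamma^{i+1} = g_\gamma^i$.

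For case (2) with $\hta(\gamma) \geq 3$, exactly one of the two conditions can hold (since the other would force $\hta(\gamma)=2$); whichever it is, the surviving term evaluates to $\lambda_{\gamma - \delta_{i+1}, \delta_{i+1}}\, y_{i+1} f_{\gamma - \delta_{i+1}}^i$ — using $\lambda_{\alpha,\beta} = -\lambda_{\beta,\alpha}$ to convert the $-f^2_\alpha f^1_{\gamma-\alpha}$ branch into the stated form — and after the $\tfrac12$ prefactor we get the claimed formula. For case (2) with $\hta(\gamma) = 2$ (which forces both $\delta_{i+1} \in \Delta$ and $\gamma - \delta_{i+1} \in \Delta$), one must instead use the height-$2$ branch of Proposition \ref{prop:bchy1y2}, where the summand is $\tfrac12 \sum \lambda_{\alpha,\gamma-\alpha} f^1_\alpha f^2_{\gamma-\alpha}$ (without the antisymmetrization); only $\alpha = \gamma - \delta_{i+1}$ contributes, giving the same expression $\tfrac12 \lambda_{\gamma - \delta_{i+1}, \delta_{i+1}} y_{i+1} f^i_{\gamma-\delta_{i+1}}$. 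Finally in case (3), $\gamma = \delta_{i+1}$ is a simple root (or at least $f^2_{\gamma-\alpha}$ and $f^2_\alpha$ never line up with $\alpha \in \Delta$ nontrivially), and the correction vanishes, giving $g_\gamma^{i+1} = g_\gamma^i$.

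There is no real obstacle — the only subtle point is bookkeeping across the two branches of Proposition \ref{prop:bchy1y2} (height $2$ versus height $\geq 3$) and checking that the single formula $\tfrac12 \lambda_{\gamma-\delta_{i+1},\delta_{i+1}} y_{i+1} f_{\gamma-\delta_{i+1}}^i$ covers both, which it does by the antisymmetry of $\lambda$. The rest is mechanical substitution.
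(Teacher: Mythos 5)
Your proposal is correct and takes essentially the same approach as the paper: apply Proposition~\ref{prop:bchy1y2} with $Y^2 = y_{i+1}E_{\delta_{i+1}}$, observe that the only nonzero $f^2$-component is $\widetilde{f}_{\delta_{i+1}} = y_{i+1}$, and read off the three cases. Your expanded case analysis (splitting the height-$2$ and height-$\geq 3$ branches and checking the antisymmetry of $\lambda$) spells out what the paper compresses into the single remark that $\widetilde{f}_\gamma = 0$ unless $\gamma = \delta_{i+1}$.
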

\begin{proof}
Write $y_{i+1}E_{\delta_{i+1}}=\sum_{\gamma\in \Phi^+} E_{\gamma}(\widetilde{f}_\gamma)$. By Proposition \ref{prop:bchy1y2}, we have that
\begin{align*}
    f_{\gamma}^{i+1}=&f_{\gamma}^i+\widetilde{f}_{\gamma}\\
    g_{\gamma}^{i+1}=&g_{\gamma}^i+\frac{1}{2}\sum_{\alpha\in\Delta}\lambda_{\alpha,\gamma-\alpha}(f_{\alpha}^{i}\widetilde{f}_{\gamma-\alpha}-\widetilde{f}_{\alpha}f_{\gamma-\alpha}^{i}).&\textnormal{if $\hta(\gamma)\geq 3$}\\
    g^{i+1}_{\gamma}=&g^i_{\gamma}+\frac{1}{2}\sum_{\alpha\in\Delta}\lambda_{\alpha,\gamma-\alpha}(f^i_\alpha \widetilde{f}_{\gamma-\alpha}).&\textnormal{if $\hta(\gamma)= 2$}
\end{align*}
Since $\widetilde{f}_{\gamma}=0$, unless $\gamma=\delta_{i+1}$, we get our results.
\end{proof}
\begin{corollary}
\label{cor:f}
For every $\alpha\in \Delta$, we have 
\begin{align*}
f_{\alpha}^{i}=&1+\underset{\delta_i=\alpha}{\sum_{j\in\{1,\ldots, i\}}} y_j,
\end{align*}
while for every $\gamma\notin\Delta$,
\begin{align*}
f_{\gamma}^{i}=&\underset{\delta_i=\gamma}{\sum_{j\in\{1,\ldots, i\}}} y_j.
\end{align*}
\end{corollary}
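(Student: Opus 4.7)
My plan is to prove Corollary \ref{cor:f} by a straightforward induction on $i$, using Proposition \ref{prop:recurf} as the engine. The corollary asserts that $f_\gamma^i$ is either $1 + \sum_{j \leq i,\ \delta_j = \alpha} y_j$ when $\gamma = \alpha \in \Delta$, or just $\sum_{j \leq i,\ \delta_j = \gamma} y_j$ when $\gamma \notin \Delta$; I interpret the subscript on the summation as a typo for $\delta_j = \gamma$ (resp.\ $\delta_j = \alpha$).

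For the base case $i=0$, I would invoke Lemma \ref{lem:u0Xu0}, which tells us that $f_\gamma^0 = 1$ if $\gamma \in \Delta$ and $f_\gamma^0 = 0$ otherwise. Since the indexing set $\{j \in \{1,\ldots,0\}\}$ is empty, the claimed formula reduces to $1$ for $\gamma \in \Delta$ and $0$ for $\gamma \notin \Delta$, matching the base case exactly.

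For the inductive step, assume the formula holds for $f_\gamma^i$ and consider $f_\gamma^{i+1}$. By Proposition \ref{prop:recurf}, there are two scenarios. If $\gamma \neq \delta_{i+1}$ (cases (1) and (2)), then $f_\gamma^{i+1} = f_\gamma^i$, and correspondingly the summation set $\{j \leq i+1 : \delta_j = \gamma\}$ equals $\{j \leq i : \delta_j = \gamma\}$ since $\delta_{i+1} \neq \gamma$, so both sides remain unchanged. If $\gamma = \delta_{i+1}$ (case (3)), then $f_\gamma^{i+1} = f_\gamma^i + y_{i+1}$, and the summation set gains exactly the index $j = i+1$, contributing precisely $y_{i+1}$ to the right-hand side; so the two sides match.

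There is essentially no obstacle here: the recursion in Proposition \ref{prop:recurf} is explicitly designed so that $f_\gamma^{i+1}$ receives a contribution from $y_{i+1}$ precisely when $\delta_{i+1} = \gamma$, and is otherwise unchanged. The induction is purely bookkeeping, and the corollary follows immediately.
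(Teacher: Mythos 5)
Your proof is correct and follows essentially the same route the paper intends: the paper's proof is a one-liner citing Proposition \ref{prop:recurf}, and your induction on $i$ (with base case from Lemma \ref{lem:u0Xu0} and the inductive step handled by the three cases of the recursion) is exactly the bookkeeping that justifies it. Your reading of the summation subscript as a typo for $\delta_j$ is also the correct interpretation.
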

\begin{proof}
This follows directly from Proposition \ref{prop:recurf}.
\end{proof}

\begin{corollary}
\label{cor:ggamma}
It holds that
\begin{equation}
\label{eq:ggamma}    
g_{\gamma}=g_{\gamma}^{n'}=g_{\gamma}^0+\frac{1}{2}\underset{\gamma-\delta_j\in \Delta}{\sum_{1\leq j\leq n'}}\lambda_{\gamma-\delta_j,\delta_j}y_j+\frac{1}{2}\underset{(\delta_i\in\Delta)\vee(\gamma-\delta_i\in\Delta)}{\sum_{1\leq i\leq n'-1}}\lambda_{\delta_i,\gamma-\delta_i}y_i(\underset{\delta_j=\gamma-\delta_i}{\sum_{i+1\leq j\leq n}} y_j)
\end{equation}
\end{corollary}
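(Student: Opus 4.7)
The plan is to iterate the recursion in Proposition \ref{prop:recurf} and substitute the explicit formula for $f^{i}_{\gamma-\delta_{i+1}}$ from Corollary \ref{cor:f}, then reindex. Concretely, define the increment $\Delta_{i+1}:=g^{i+1}_\gamma-g^i_\gamma$. By Proposition \ref{prop:recurf}, $\Delta_{i+1}=0$ unless $\gamma\neq\delta_{i+1}$ and one of $\delta_{i+1}\in\Delta$ or $\gamma-\delta_{i+1}\in\Delta$ holds, in which case
\[
\Delta_{i+1}=\tfrac{1}{2}\lambda_{\gamma-\delta_{i+1},\,\delta_{i+1}}\,y_{i+1}\,f^{i}_{\gamma-\delta_{i+1}}.
\]
(The case $\gamma=\delta_{i+1}$ also gives $\Delta_{i+1}=0$, and when $\gamma-\delta_{i+1}\notin\Phi$ the constant $\lambda$ vanishes, so we may harmlessly extend the sum.) Telescoping, I then get
\[
g_\gamma=g^{n'}_\gamma=g^0_\gamma+\tfrac{1}{2}\sum_{j=1}^{n'}\mathbf 1_{(\delta_j\in\Delta)\vee(\gamma-\delta_j\in\Delta)}\,\lambda_{\gamma-\delta_j,\,\delta_j}\,y_j\,f^{j-1}_{\gamma-\delta_j}.
\]

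Next I plug in Corollary \ref{cor:f}. Write $\beta:=\gamma-\delta_j$. If $\beta\in\Delta$, then $f^{j-1}_\beta=1+\sum_{i<j,\,\delta_i=\beta}y_i$; if $\beta\notin\Delta$, then $f^{j-1}_\beta=\sum_{i<j,\,\delta_i=\beta}y_i$. Splitting each term of the sum accordingly, the constant ``$1$'' occurs exactly when $\gamma-\delta_j\in\Delta$ and contributes
\[
\tfrac{1}{2}\sum_{\substack{1\le j\le n'\\ \gamma-\delta_j\in\Delta}}\lambda_{\gamma-\delta_j,\,\delta_j}\,y_j,
\]
which is precisely the middle sum of the target formula.

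The remaining bilinear piece is
\[
\tfrac{1}{2}\sum_{\substack{1\le i<j\le n'\\ (\delta_j\in\Delta)\vee(\gamma-\delta_j\in\Delta)\\ \delta_i=\gamma-\delta_j}}\lambda_{\gamma-\delta_j,\,\delta_j}\,y_j\,y_i.
\]
On the support of this sum we have $\delta_j=\gamma-\delta_i$, so $\lambda_{\gamma-\delta_j,\delta_j}=\lambda_{\delta_i,\gamma-\delta_i}$, and the disjunction $(\delta_j\in\Delta)\vee(\gamma-\delta_j\in\Delta)$ becomes $(\gamma-\delta_i\in\Delta)\vee(\delta_i\in\Delta)$. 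Reindexing (making $i$ the outer and $j$ the inner index with $i<j$) yields exactly
\[
\tfrac{1}{2}\underset{(\delta_i\in\Delta)\vee(\gamma-\delta_i\in\Delta)}{\sum_{1\le i\le n'-1}}\lambda_{\delta_i,\,\gamma-\delta_i}\,y_i\!\!\underset{\delta_j=\gamma-\delta_i}{\sum_{i+1\le j\le n'}}\!\!y_j,
\]
matching the last sum in the statement. Adding everything, Equation \eqref{eq:ggamma} follows.

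There is no real obstacle: this is bookkeeping. The only point requiring care is not to double-count when both $\delta_j\in\Delta$ and $\gamma-\delta_j\in\Delta$ hold simultaneously (which only happens for $\gamma$ of height two), and to confirm that the cases $\gamma=\delta_{i+1}$ contribute nothing either to the recursion or to the claimed formula, both of which are immediate from the vanishing of $\lambda_{0,\delta_{i+1}}$ and from Proposition \ref{prop:recurf}(3).
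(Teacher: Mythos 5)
Your proof is correct and follows essentially the same route as the paper's: telescope the recursion of Proposition~\ref{prop:recurf} to express $g_\gamma^{n'}-g_\gamma^0$ as a single sum $\frac{1}{2}\sum_j\lambda_{\gamma-\delta_j,\delta_j}y_jf^{j-1}_{\gamma-\delta_j}$ over the indices $j$ satisfying the disjunction, substitute the explicit form of $f^{j-1}_{\gamma-\delta_j}$ from Corollary~\ref{cor:f} to isolate the constant ``$1$'' and the bilinear remainder, and then reindex the bilinear piece using $\delta_i+\delta_j=\gamma$ and $\lambda_{\gamma-\delta_j,\delta_j}=\lambda_{\delta_i,\gamma-\delta_i}$. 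The ``double-counting'' worry you flag is in fact a non-issue (the disjunction only governs when a single term per $j$ is nonzero; it never produces two terms), but raising and then dismissing it does no harm.
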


\begin{proof}
By Proposition \ref{prop:recurf}, we see that
\begin{align*}
g_{\gamma}^{n'}= &g_{\gamma}^{0}+\frac{1}{2}\underset{(\gamma-\delta_j\in\Delta)\vee (\delta_j\in\Delta)}{\sum_{1\leq j\leq n'}} \lambda_{\gamma-\delta_j,\delta_j}y_j f_{\gamma-\delta_j}^{j-1}\\
             =&g_{\gamma}^{0}+\frac{1}{2}\underset{(\gamma-\delta_j\in\Delta)\vee(\delta_j\in\Delta)}{\sum_{1\leq j\leq n'}} \lambda_{\gamma-\delta_j,\delta_j}y_j \big([\gamma-\delta_j\in\Delta]+\underset{\delta_i=\gamma-\delta_j}{\sum_{1\leq i\leq j-1}}y_i\big)\\
             =&g_{\gamma}^0+\frac{1}{2}\underset{\gamma-\delta_j\in \Delta}{\sum_{1\leq j\leq n'}}\lambda_{\gamma-\delta_j,\delta_j}y_j+\frac{1}{2}\underset{(\delta_i\in\Delta)\vee( \delta_j\in\Delta)}{\underset{\delta_i+\delta_j=\gamma}{\sum_{1\leq i<j\leq n'}}}\lambda_{\delta_i,\delta_j}y_jy_i\\
             =&g_{\gamma}^0+\frac{1}{2}\underset{\gamma-\delta_j\in \Delta}{\sum_{1\leq j\leq n'}}\lambda_{\gamma-\delta_j,\delta_j}y_j+\frac{1}{2}\underset{(\delta_i\in\Delta)\vee(\gamma-\delta_i\in\Delta)}{\sum_{1\leq i\leq n'-1}}\lambda_{\delta_i,\gamma-\delta_i}y_i(\underset{\delta_j=\gamma-\delta_i}{\sum_{i+1\leq j\leq n}} y_j)
\end{align*}
\end{proof}
\begin{Exa}
\label{exa:ggamman}
Using the notation of Example \ref{exa:paving} and restricting attention to $\gamma\in \Tr(w)=\{(1,2),(1,4),(2,3),(2,4)\}$, we have
\begin{align*}
    f_{12}&=y_1+1\\
    f_{13}&=y_2\\
    f_{34}&=y_3+1\\
    f_{24}&=y_4\\
    f_{14}&=0\\
    g_{24}&=x_{34}+\frac{1}{2}y_3\\
    g_{13}&=-x_{12}-\frac{1}{2}y_1\\
    g_{14}&=x_{24}-\frac{1}{2}y_2+\frac{1}{2}y_4+\frac{1}{2}y_1y_4+\frac{1}{2}y_2y_3\\
    h_{14}&=x_{12}x_{34}-\frac{1}{2}x_{34}y_1-\frac{1}{2}x_{12}y_3-\frac{1}{6}y_1y_3+\frac{1}{12}y_1+\frac{1}{12}y_3
\end{align*}
in agreement with Corollaries \ref{cor:f} and \ref{cor:ggamma}. Indeed, let us compute $g_{14}$ via Corollary \ref{cor:ggamma}: We have $\gamma=(1,4)$, $\delta_1=(1,2)$, $\delta_2=(1,3)$, $\delta_3=(3,4)$, and $\delta_4=(2,4)$, so $\gamma-\delta_i\in \Delta$ only for $i=2,4$. Moreover, only for the pairs $(i,j)=(1,4),(2,3)$ do we have that $\delta_i+\delta_j=\gamma$ and either $\delta_i\in \Delta$ or $\delta_j\in \Delta$, so
\[
g_{14}=x_{24}+\frac{1}{2}(\lambda_{(3,4),(1,3)}y_2+\lambda_{(1,2),(2,4)}y_4)+\frac{1}{2}(\lambda_{(1,2),(2,4)}y_1y_4+\lambda_{(1,3),(3,4)}y_2y_3).
\]
Since $\lambda_{(i,j),(j,k)}=1$ for $i<j<k$, we have
\[
g_{14}=x_{24}-\frac{1}{2}y_2+\frac{1}{2}y_4+\frac{1}{2}y_1y_4+\frac{1}{2}y_2y_3
\]
as expected.
\end{Exa}

We are interested in the space $(g_{\si,\bi}^w)^{-1}(\hbs_{\si}(X)):=V(f_{\gamma}+g_{\gamma}+h_{\gamma})_{\gamma\in \phi(w)}$. First note that if there exists $\alpha\in\Delta\cap \phi_w$ such that $\delta_j\neq \alpha$ for every $j$, then $\hbs_{\si,X}^{w,\bi}=\emptyset$. This is because
\[
f_\alpha+g_{\alpha}+h_{\alpha}=1
\]
by Corollary \ref{cor:f} and the fact that $g_{\alpha}=h_{\alpha}=0$. In particular, if $\bi$ is not a good word for $w$, then $\hbs_{\si,X}^{w,\bi}=\emptyset$. \par
We now assume that $\bi$ is a good word for $w$. In particular we can apply Proposition \ref{prop:goodseq}, and see that for every $\gamma\in \Tr_\si(w)$, one of the following conditions hold
\begin{enumerate}
    \item There exists $i$ such that $\gamma=\delta_i$.
    \item For all $\alpha\in \Delta$ such that $\gamma-\alpha\in \phi\setminus\Tr(w)$, there exists $i$ such that $\gamma-\alpha=\delta_i$ and $\delta_j\neq\alpha$ for every $j<i$.
\end{enumerate}

For every $\beta\in \{\delta_1,\ldots, \delta_{n'}\}$, let $y_\beta:=y_{k_\beta}$, where $k_{\beta}=\min\{i, \delta_i=\beta\}$. Let us give weights to the variables according to $\wt(x_\beta):=\hta(\beta)+1$ and $\wt(y_\beta):=\hta(\beta)+\frac{1}{2}$ if $\beta\in \Tr(w)$, or $\wt(y_\beta)=\hta(\beta)+1$ otherwise. For every other $y_i$ we define $\wt(y_i):=0$.
\begin{proposition}
\label{prop:gf}
Let $\gamma\in\Tr(w)$ be a root such that $\gamma\neq \delta_i$ for every $i$. Then
\begin{align*}
g_{\gamma}=&\sum_{\alpha\in\Delta,\gamma-\alpha\in\Tr(w)}\lambda_{\alpha,\gamma-\alpha}x_{\gamma-\alpha}+\sum_{\alpha\in\Delta,\gamma-\alpha\notin\Tr(w)}\lambda_{\alpha,\gamma-\alpha}y_{\gamma-\alpha}\frac{1}{2}(2-f_{\alpha})+h_{\gamma}'
\end{align*}
where $h_{\gamma}'$ has only variables of weight less than $\hta(\gamma)$.
\end{proposition}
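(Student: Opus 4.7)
My plan is to start from Corollary \ref{cor:ggamma}, which writes $g_\gamma = g_\gamma^0 + \tfrac{1}{2}S_2 + \tfrac{1}{2}S_3$, and then regroup the two sums by decompositions $\gamma = \alpha + \beta$ with $\alpha \in \Delta$. Lemma \ref{lem:u0Xu0} identifies $g_\gamma^0$ with the first sum in the proposition, so the task reduces to analysing the contribution of each pair $(\alpha,\beta)$ to $\tfrac{1}{2}(S_2+S_3)$. Using $\lambda_{\beta,\alpha} = -\lambda_{\alpha,\beta}$, this contribution takes the compact form
\[
\tfrac{1}{2}\lambda_{\alpha,\beta}\Bigl(\sum_{j:\delta_j=\beta} y_j \;+\; \sum_{\substack{i<j\\\delta_i=\alpha,\,\delta_j=\beta}} y_iy_j \;-\; \sum_{\substack{i<j\\\delta_i=\beta,\,\delta_j=\alpha}} y_iy_j\Bigr).
\]

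The next step is a case split according to whether $\beta \in \Tr(w)$. If $\beta \in \Tr(w)$, every variable inside the bracket has weight strictly less than $\hta(\gamma)$: the first occurrence $y_{k_\beta}$ has weight $\hta(\gamma)-\tfrac{1}{2}$, later occurrences of $\beta$ are assigned weight $0$ by definition, and any $y_i$ with $\delta_i = \alpha \in \Delta$ has weight at most $2$; so for $\hta(\gamma)\geq 3$ the whole contribution belongs to $h'_\gamma$. If $\beta \notin \Tr(w)$, Proposition \ref{prop:goodseq} applied to $\underline{\delta}$ guarantees both that $\beta$ appears as some $\delta_i$ (so that $y_\beta = y_{k_\beta}$ is defined), and that $\delta_j \neq \alpha$ whenever $j < k_\beta$. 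Consequently every $\alpha$-index strictly exceeds $k_\beta$, so extracting the $i = k_\beta$ part of the second bilinear sum yields exactly $y_{k_\beta}\sum_{j:\delta_j=\alpha} y_j = y_\beta(f_\alpha - 1)$ by Corollary \ref{cor:f}. Subtracting this piece from the bracket converts it into $y_\beta(2-f_\alpha)$ — the desired principal term — plus a residue made of $\sum_{i>k_\beta,\,\delta_i=\beta}y_i$ and the leftover bilinear pieces, all of whose $y$-factors are either non-principal occurrences of $\beta$ (weight $0$) or simple-root $y$'s (weight $\leq 2$), hence of weight strictly less than $\hta(\gamma)$.

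The main obstacle is exactly the bookkeeping of the bilinear sum $S_3$: the good-word inequality $k_\alpha > k_\beta$ provided by Proposition \ref{prop:goodseq} is precisely what allows the cross-term $y_{k_\beta}\sum y_j$ to sit entirely inside $\Sigma_2^{\alpha,\beta}$, so the subtraction removes it cleanly and leaves behind only weight-zero or simple-root variables; without this input, the residue would retain monomials of weight $\hta(\gamma)$ and the claim about $h'_\gamma$ would fail. The boundary case $\hta(\gamma)=2$ (both $\alpha,\beta$ simple) must be treated by summing the contributions from $(\alpha,\beta)$ and $(\beta,\alpha)$ together; here one uses the fact that $\gamma \in \Tr(w)$ forces at least one of $\alpha,\beta$ to lie in $\Tr(w)$ — since $\Phi^+ \setminus \Tr(w)$ is closed under sums of positive roots — which keeps the weight of every $y$-factor from a simple root below $\hta(\gamma)$ and dispenses with the boundary case without new ideas.
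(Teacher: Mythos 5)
Your overall strategy is the same as the paper's — expand $g_\gamma$ by Corollary \ref{cor:ggamma}, isolate the heavy variable $y_{\gamma-\alpha}$ via the good-sequence property coming from Proposition \ref{prop:goodseq}, and check that everything else involves only variables of weight below $\hta(\gamma)$. For $\hta(\gamma)\geq 3$, where each decomposition $\gamma=\alpha+\beta$ has a unique simple summand, your regrouping into ordered pairs and the ``compact form'' for each pair are a correct rephrasing of the paper's bookkeeping; the principal-term extraction you carry out matches the proof.

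The $\hta(\gamma)=2$ case as you describe it, however, does not work. First, when both summands $\alpha_1,\alpha_2$ are simple, adding your compact form for $\alpha=\alpha_1$ and the one for $\alpha=\alpha_2$ double-counts $S_3$: every pair $(i,j)$ with $\delta_i+\delta_j=\gamma$ appears once in the $\alpha_1$ bracket and once again in the $\alpha_2$ bracket, so your total is $\tfrac12 S_2 + S_3$ rather than $\tfrac12(S_2+S_3)$. ``Summing the contributions from $(\alpha,\beta)$ and $(\beta,\alpha)$ together'' is precisely what produces this overcount, so the boundary case is not dispensed with. Second, the statement that closedness of $\Phi^+\setminus\Tr(w)$ ``keeps the weight of every $y$-factor from a simple root below $\hta(\gamma)$'' is false: if $\alpha_2\notin\Tr(w)$ then $y_{\alpha_2}$ has weight exactly $2=\hta(\gamma)$; it is the heavy variable that must be isolated in the second sum, not something that vanishes. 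What closedness actually buys (and what the proposition needs) is that the complementary simple root $\alpha_1$ lies in $\Tr(w)$, so that $f_{\alpha_1}$ in the principal term $\tfrac12\lambda_{\alpha_1,\alpha_2}y_{\alpha_2}(2-f_{\alpha_1})$ involves only variables of weight at most $\tfrac32<\hta(\gamma)$.

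The paper's argument is variable-centric rather than decomposition-centric and so is immune to this problem: it fixes the heavy variable $y_{\gamma-\alpha}$, observes that it can only occur in Equation \eqref{eq:ggamma} once in $S_2$ and once as the outer factor $y_i$ in $S_3$ (the inner position $y_j$ is excluded because there is no $j<k_{\gamma-\alpha}$ with $\delta_j=\alpha$, by the good-sequence property), and collects those two occurrences. Each heavy variable is handled exactly once, independently of $\hta(\gamma)$, so no overcounting can arise. To repair your version, either adopt that viewpoint at $\hta(\gamma)=2$, or restrict the compact form to the unique ordered pair $(\alpha,\beta)$ with $\beta\notin\Tr(w)$ (the remaining light $S_2$-piece attached to the other simple root has weight $\tfrac32$ and goes straight into $h'_\gamma$).
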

\begin{proof}
Since $\gamma\neq \delta_i$ for every $i$, condition (2) above holds. Fix $\alpha\in\Delta$ such that $\gamma-\alpha\in\Phi^+$.   If $\gamma-\alpha\notin\Tr(w)$, then $\delta_j\neq\alpha$ for every $j<k_{\gamma-\alpha}$. In particular the variable $y_{\gamma-\alpha}=y_{k_{\gamma-\alpha}}$ only appears in the following summand in Equation \ref{eq:ggamma}
\[
\frac{1}{2}\lambda_{\alpha,\gamma-\alpha}y_{\gamma-\alpha}+\frac{1}{2}\lambda_{\gamma-\alpha,\alpha}y_{\gamma-\alpha}(\underset{\delta_j=\alpha}{\sum_{k_\beta+1\leq j\leq n'}} y_j)
\]

Now we check that every other variable $y_i$ appearing in $g_{\gamma}$ has weight less than $\hta(\gamma)$. If  $\gamma-\alpha\in\Tr(w)$, then all variables $y_i$ with $\delta_i=\gamma-\alpha$ either have weight $0$, $\hta(\gamma)-1$, or $\hta(\gamma)-\frac{1}{2}$. If $\alpha\in \Tr(w)$, then all variables $y_i$ with $\delta_i=\alpha$ have weight either $0$, $1$, or $\frac{3}{2}$, and if $\alpha\notin\Tr(w)$, then either $\alpha=\gamma-\alpha'$ for some $\alpha'\in\Delta$ and we are in the case of the previous paragraph, or $\hta(\gamma)\geq3$ and all variables $y_i$ with $\delta_i=\alpha$ have weight either $0$ or $2$.\par
 We finish by noting that 
 \[
 \underset{\delta_j=\alpha}{\sum_{i+1\leq j\leq n'}}y_j=f_{\alpha}-1,
 \]
 since there is no $j<k_\beta$ with $\delta_j=\alpha$. Since $\lambda_{\alpha,\gamma-\alpha}=-\lambda_{\gamma-\alpha,\alpha}$ the result follows.
\end{proof}
\begin{Exa}
In Example \ref{exa:ggamman} we saw that 
\[
f_{12}=1+y_1\quad,f_{34}=y_3+1\text{ and } g_{14}=x_{24}-\frac{1}{2}y_2+\frac{1}{2}y_4+\frac{1}{2}y_1y_4+\frac{1}{2}y_2y_3.
\]
 This means that
\begin{align*}
g_{14}=&x_{24}+(-1)y_2\frac{1}{2}(2-(y_3+1))+y_4+y_1y_4\\
           =&x_{24}+\lambda_{(3,4),(1,3)}y_2\frac{1}{2}(2-(y_3+1))+(y_4+y_1y_4)
\end{align*}
which agrees with Proposition \ref{prop:gf}, since $(1,4)-(3,4)=(1,3)=\delta_2\notin \Tr(w)$ and $(1,4)-(1,2)=(2,4)\in \Tr(w)$.
\end{Exa}

We are finally in a position to use Proposition \ref{prop:affine} and prove Theorem \ref{thm:hbspaving}. 

\begin{proof}[Proof of Theorem \ref{thm:hbspaving}] Assume first that $X$ is regular unipotent, so that $J=S$.

We have three cases
\begin{enumerate}
    \item If $\alpha\in\Delta\cap \Tr(w)$, then $g_\alpha=h_\alpha=0$ and the coefficient of $E_\alpha$ is
    \[
    f_{\alpha}=1+\sum_{\delta_j=\gamma} y_j.
    \]
    \item If $\gamma\in\Tr(w)\setminus\Delta$ and there exists $j$ such that $\gamma=\delta_j$, then the coefficient of $E_{\gamma}$ is
    \[
    f_{\gamma}+g_{\gamma}+h_{\gamma}=\sum_{\delta_j=\gamma} y_j+g_{\gamma}+h_{\gamma}.
    \]
    \item If $\gamma\in\Tr(w)\setminus\Delta$ and there does not exists $j$ such that $\gamma=\delta_j$, then the coefficient of $E_{\gamma}$ is
    \[
    g_{\gamma}+h_{\gamma}=\sum_{\alpha\in\Delta,\gamma-\alpha\in\Tr(w)}\lambda_{\alpha,\gamma-\alpha}x_{\gamma-\alpha}+\sum_{\alpha\in\Delta,\gamma-\alpha\notin\Tr(w)}\lambda_{\alpha,\gamma-\alpha}y_{\gamma-\alpha}\frac{1}{2}(2-f_{\alpha})+h_{\gamma}'+h_{\gamma}.
    \]
\end{enumerate}
The variety $\hbs_{\si,\bi}^{w}(X)$ is isomorphic, via $g_{\si,\bi}^w$ to $(g_{\si,\bi}^w)^{-1}(\hbs_\si(X))$, the latter being the locus where all the polynomials in items (1), (2), and (3) above vanish. It is straightforward to see that we can change the polynomials in case (3) to
\begin{enumerate}
    \item[(3')] If $\gamma\in\Tr(w)\setminus\Delta$ we consider the polynomial
    \[
    \sum_{\alpha\in\Delta,\gamma-\alpha\in\phi(w)}\lambda_{\alpha,\gamma-\alpha}x_{\gamma-\alpha}+\sum_{\alpha\in\Delta,\gamma-\alpha\notin\phi(w)}\lambda_{\alpha,\gamma-\alpha}y_{\gamma-\alpha}+h_{\gamma}'+h_{\gamma}.
    \]
\end{enumerate}
This is because $\alpha+(\gamma-\alpha)\in \Tr(w)$ and $\gamma-\alpha\notin\Tr(w)$, hence $\alpha\in \Tr(w)$, which will force $f_{\alpha}$ to vanish.\par
The only thing left is to see that the linear polynomials
\[
\sum_{\alpha\in\Delta,\gamma-\alpha\in\phi(w)}\lambda_{\alpha,\gamma-\alpha}x_{\gamma-\alpha}+\sum_{\alpha\in\Delta,\gamma-\alpha\notin\phi(w)}\lambda_{\alpha,\gamma-\alpha}y_{\gamma-\alpha}
\]
for $\gamma\in \Tr(w)\setminus(\Delta\cup\{\delta_1,\ldots,\delta_{n'}\})$ are linearly independent, but this follows from Corollary \ref{cor:LI}.
\end{proof}

Since we need a slightly more general result for the non-unipotent case, we state the following Proposition, which is what we actually prove above.

\begin{proposition}
\label{prop:locusdelta}
Let $X\in U$ be a regular unipotent element, $w\in W$, and $\underline{\delta}=(\delta_1,\ldots, \delta_{n'})$ a good sequence for $w$ of positive roots. Then the locus
\[
U_{\underline{\delta}}^w(X):=\{(u_0,u_1,\ldots, u_{n'})\in U^w\times U_{\delta_1}\times\ldots\times U_{\delta_{n'}}; u_0^{-1}X u_0 u_1\ldots u_{n'}\in U_w\}
\]
is isomorphic to an affine space of dimension $n'$.
\end{proposition}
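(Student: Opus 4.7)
The plan is to essentially repeat the argument already sketched for Theorem \ref{thm:hbspaving}, extracting its Lie-algebraic core so that the statement makes sense for an arbitrary good sequence $\underline{\delta}$ (not necessarily arising as the $\underline{\delta}(L,J,w,\bi)$ associated to a good binary word $\bi$). After passing from $U$ to $\ul$ via the exponential and the BCH formula \eqref{eq:BCH}, the defining condition $u_0^{-1}Xu_0u_1\cdots u_{n'}\in U_w$ translates into the vanishing of the coefficient of $E_\gamma$ for every $\gamma\in\Phi(w)$ in the expression
\[
(-u_0)\otimes X\otimes u_0\otimes u_1\otimes\cdots\otimes u_{n'}\in\mathbb{C}[x_\beta,y_i]\otimes_{\mathbb{C}}\ul,
\]
where $u_0=\sum_{\beta\in\Phi(w)}x_\beta E_\beta$ and $u_i=y_iE_{\delta_i}$.

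First I would write each partial product as $\sum_\gamma E_\gamma(f_\gamma^i+g_\gamma^i+h_\gamma^i)$ in the form of Equation \eqref{eq:u<gamma}, using Lemma \ref{lem:u0Xu0} to initialize and Proposition \ref{prop:bchy1y2} (equivalently, Proposition \ref{prop:recurf}) to iterate. This yields the explicit formulas of Corollary \ref{cor:f} for $f_\gamma$ and of Corollary \ref{cor:ggamma} and Proposition \ref{prop:gf} for $g_\gamma$. With these in hand, the defining equations of $U_{\underline{\delta}}^w(X)$ split into three families indexed by $\gamma\in\Phi(w)$: (i) if $\gamma=\alpha\in\Delta\cap\Phi(w)$, the equation is $f_\alpha=1+\sum_{\delta_j=\alpha}y_j=0$; (ii) if $\gamma\notin\Delta$ and $\gamma=\delta_j$ for some $j$, the equation is $f_\gamma+g_\gamma+h_\gamma=0$ with $f_\gamma=\sum_{\delta_j=\gamma}y_j$; (iii) if $\gamma\notin\Delta$ and no $\delta_j$ equals $\gamma$, it is $g_\gamma+h_\gamma=0$.

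The crucial step is to assign weights $\wt(x_\beta):=\hta(\beta)+1$ and $\wt(y_\beta):=\hta(\beta)+\tfrac12$ (for $\beta\in\Phi(w)$) and $\hta(\beta)+1$ otherwise (with all redundant $y_i$ of weight $0$) and to check that the system falls under the hypothesis of Proposition \ref{prop:affine}. In case (i), the linear-in-top-weight part is $y_{k_\alpha}$, which is obviously nonzero and the $y_{k_\alpha}$ across different $\alpha\in\Delta\cap\Phi(w)$ are independent; in case (ii), Proposition \ref{prop:gf} shows that the top-weight linear part is again just $y_{k_\gamma}$. The delicate case is (iii): here Proposition \ref{prop:gf} gives the top-weight linear part as
\[
\sum_{\substack{\alpha\in\Delta\\\gamma-\alpha\in\Phi(w)}}\lambda_{\alpha,\gamma-\alpha}x_{\gamma-\alpha}\;+\;\sum_{\substack{\alpha\in\Delta\\\gamma-\alpha\notin\Phi(w)}}\lambda_{\alpha,\gamma-\alpha}y_{\gamma-\alpha},
\]
where, using that $\underline{\delta}$ is a good sequence, the fact that $\gamma-\alpha\notin\Phi(w)$ forces $\gamma-\alpha=\delta_j$ for some suitable $j$ (so the corresponding $y_{\gamma-\alpha}$ exists) and $f_\alpha$ vanishes on the solution set (so $y_{\gamma-\alpha}\cdot\tfrac12(2-f_\alpha)$ contributes $y_{\gamma-\alpha}$ to top weight).

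The main obstacle — and the whole reason for introducing the notion of a good sequence — is to guarantee the linear independence of these top-weight linear forms as $\gamma$ ranges over case (iii). I would verify this by rewriting each such linear form as the polynomial $F_\gamma=\sum_{\alpha\in\Delta}\lambda_{\alpha,\gamma-\alpha}z_{\gamma-\alpha}$ (with $z_{\gamma-\alpha}=x_{\gamma-\alpha}$ or $y_{\gamma-\alpha}$ depending on whether $\gamma-\alpha\in\Phi(w)$), which is precisely the family appearing in Corollary \ref{cor:LI}; since the $z_\beta$ corresponding to distinct $\beta$ of height $\hta(\gamma)-1$ are distinct variables, the independence reduces to the independence of the $F_\gamma$ of Corollary \ref{cor:LI}. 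Once this independence is in hand, Proposition \ref{prop:affine} applies layer-by-layer in increasing weight and yields an isomorphism of $U_{\underline{\delta}}^w(X)$ onto an affine space, whose dimension can be read off as $\dim U^w+n'-|\Phi(w)|=n'$.
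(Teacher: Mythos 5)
Your proof is correct and follows essentially the same route as the paper's (which explicitly states Proposition~\ref{prop:locusdelta} as the content of the three-case analysis in the proof of Theorem~\ref{thm:hbspaving} for regular unipotent $X$): the same split into cases (i)/(ii)/(iii), the same weight assignment, the ideal replacement $\tfrac12(2-f_\alpha)\mapsto 1$ justified by $f_\alpha$ vanishing on the solution locus, the identification of the case-(iii) top-weight linear forms with the family of Corollary~\ref{cor:LI}, and then Proposition~\ref{prop:affine}. One small slip: in case (ii) you cite Proposition~\ref{prop:gf}, but that proposition is stated only under the hypothesis $\gamma\neq\delta_i$ for all $i$ (i.e.\ case (iii)); the observation you need in case (ii) — that the top-weight linear part is just $y_{k_\gamma}$ — follows directly from Corollary~\ref{cor:f} together with the general shape of $g_\gamma,h_\gamma$ in Equation~\eqref{eq:u<gamma}, not from Proposition~\ref{prop:gf}.
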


\begin{proof}[Proof of Theorem \ref{thm:hbspaving}, the general case]

Let $X=X_sX_u$, where $X_s$ is semisimple and $X_u$ is unipotent. We know (by \cite{Stein65}) that $X$ is regular if and only if $X_u$ is regular in $Z_G(X_s)$. Up to conjugation, we can assume that there exists $J\subset \Delta$ such that $Z_G(X_s)=G_J$ and $X_u\in U_j$ is regular in $G_J$. \par
  The main idea is to reduce to the regular unipotent case and use Propostion \ref{prop:affine}.  As before, for each $\bi\in \{0,1\}^n$ and $w\in W$ we define $\hbs_{\si,\bi}^{w}(X)=\bs_{\si,\bi}^w\cap \hbs_\si(X)$. Via the isomorphism $g_{\si,\bi}^w$, we have that $\hbs_{\si,\bi}^w(X)$ is isomorphic to $(g_{\si,\bi}^w)^{-1}(\hbs_\si(X))\subset U^w\times U_{\delta_1}\times\ldots\times U_{\delta_{n'}}$. Moreover, recalling Equation \eqref{eq:gLu0} we have
  \[
  (g_{\si,\bi}^w)^{-1}(\hbs_\si(X))=\{(u_0,u_1,\ldots, u_{n'}); u_0^{-1}Xu_0u_1\ldots u_{n'}\in  \dw B\dw^{-1}\}.
  \]
Since $X_s\in T\subset \dw B\dw^{-1}$, we have that
\[
u_0^{-1}Xu_0u_1\ldots u_m\in \dw B\dw^{-1}\text{ if and only if } X_s^{-1}u_0^{-1}Xu_0u_1\ldots u_m \in \dw B\dw^{-1}.
\]
By Lemma \ref{lem:UJnormal} the projection $p_J\col U\to U_J$ is a group homomorphism. Define 
\[
U_{\delta_i,J}:=U_{\delta_i}\cap U_J=\begin{cases}
  \{1\}&\text{ if }\delta_i\notin \Phi_J\\
  U_{\delta_i}&\text{ if }\delta_i\in \Phi_J,
  \end{cases}
\]
and let $w_J\in W_J$ and ${}^Jw\in {}^JW$ be such that $w=w_J\cdot {}^Jw$. We let $q_J$ be the projection map
\begin{align*}
q_J\col U^w\times U_{\delta_1}\times\ldots\times U_{\delta_{n'}}&\to U^{w_J}\times U_{\delta_1,J}\times\ldots\times U_{\delta_{n'},J}\\
 (u_0,u_1,\ldots, u_{n'})&\mapsto (p_J(u_0),p_J(u_1),\ldots, p_J(u_{n'})).
\end{align*}
Then by Lemma \ref{lem:UJnormal}, we have that $p_J$ is a homomorphism, $p_J(tut^{-1})=tp_J(u)t^{-1}$, and $p_J(u)\in U_J\subset Z_G(X_s)$ for every $u\in U$. Thus
\begin{align*}
p_J(X_s^{-1}u_0^{-1}X_sX_uu_0u_1\ldots u_n) = & p_J(X_s^{-1}u_0^{-1}X_s)X_up_J(u_0)p_J(u_1)\ldots p_J(u_n)\\
                                             =  & X_s^{-1}p_J(u_0)^{-1}X_sX_up_J(u_0)p_J(u_1)\ldots p_J(u_n)\\
                                           = & p_J(u_0)^{-1}X_up_J(u_0)p_J(u_1)\ldots p_J(u_n),
\end{align*}
and in particular we have
\[
q_J((g_{\si,\bi}^w)^{-1}(\hbs_{\si}(X))=U^{w_J}_{\underline{\delta}_J}(X_u),
\]
where $\underline{\delta}_J$ is the sequence of positive roots obtained from $\underline{\delta}$ by removing the entries $\delta_i$ such that $\delta_i\not\in \Phi_J$. By Propositions \ref{prop:locusdelta} and \ref{prop:goodseq} we have that $U^{w_J}_{\underline{\delta}_J}(X_U)$ is isomorphic to an affine space of dimension $|\underline{\delta}_J|$. Moreover,
\[
(g_{\si,\bi}^w)^{-1}(\hbs_\si(X))\subset U_{\underline{\delta}_J}^{w_J}(X_u)\times_{(U^{w_J}\times U_{\delta_1,J}\times\ldots U_{\delta_{n'},J})} (U^w\times U_{\delta_1}\times\ldots \times U_{\delta_{n'},J}),
\]
and the right-hand side is isomorphic to 
\[
U_{\underline{\delta}_J}^{w_J}(X_u)\times \prod_{\gamma\in \Tr(w)\setminus\Phi_J} U_{\gamma}\times \prod_{i, \delta_i\notin\Phi_J} U_{\delta_i}.
\]
Let $A$ be the $\mathbb{C}$-algebra such that $U_{\underline{\delta}_J}^{w_J}(X_u)=\spec(A)$. Then
\[
U_{\underline{\delta}_J}^{w_J}(X_u)\times_{(U^{w_J}\times U_{\delta_1,J}\times\ldots U_{\delta_{n'},J})} (U^w\times U_{\delta_1}\times\ldots \times U_{\delta_{n'},J})=\spec(A[x_\gamma,y_i]_{\gamma\in \Tr(w)\setminus\Phi_J,\delta_i\notin\Phi_J}).
\]
Via the exponential map, we can preform the computation in $(\ul,\otimes)$, since $X_s^{-1}u_0^{-1}X_s\in U$, $X_u\in U$, and $\log(X_s^{-1}u_0^{-1}X_s)=\ad(X_s)(-\log(u_0))$. Therefore we have
 \[
  (g_{\si,\bi}^w)^{-1}(\hbs_\si(X))=\{(u_0,u_1,\ldots, u_{n'}); (\ad(X_s)(-u_0))\otimes X_n\otimes u_0\otimes u_1\otimes\ldots \otimes u_{n'}\in \ul_w.\}
  \]

As before, write $u_{0}=\sum_{\gamma\in \Tr(w)}x_\gamma E_\gamma$ and $u_i=y_iE_{\delta_i}$. However, if $\gamma,\delta_i\in \phi_J$, we consider $x_\gamma$ and $y_i$ as elements of $A$. Then only $x_{\gamma}$ for $\gamma\in\Tr(w)\setminus\Phi_J$ and $y_i$ for $\delta_i\notin\phi_J$ are variables in $A[x_\gamma,y_i]_{\gamma\in \Tr(w)\setminus\Phi_J,\delta_i\notin\Phi_J}$. If $\gamma\notin \Phi_J$ then $\gamma+\beta\notin\Phi_J$ for every $\beta$, so by Proposition \ref{prop:recurf} we have that the coefficient of $E_\gamma$ in 
\[
\ad(X_s)(-u_0))\otimes X_n\otimes u_0\otimes u_1\otimes\ldots \otimes u_{n'}
\]
is constant for $\gamma\in \Phi_J$. Thus we can restrict attention to coefficients of $E_\gamma$ for $\gamma \notin \Phi_J$. As in Equation \ref{eq:u<gamma}, we write some elements of $A[x_\gamma,y_i]_{\gamma\in \Tr(w)\setminus\Phi_J,\delta_i\notin\Phi_J}\otimes_\mathbb{C}\ul$ as
\[
Y=\sum_{\gamma\in\Phi^+\setminus \Phi_J} E_{\gamma}(f_{\gamma}(x_{\gamma,i})+g_{\gamma}(x_{\beta},\beta<\gamma,y_i) ).
\]
By Lemma \ref{lem:adu0X} below  and Proposition \ref{prop:recurf}, we can write the coefficient of each $E_\gamma$ for $\gamma\in\Tr(w)\setminus\phi_J$ in $(\ad(X_s)(-u_0))\otimes X_n\otimes u_0\otimes\ldots \otimes u_n$ as
\[
f_\gamma+g_\gamma=(1-\gamma(X_s))x_\gamma+\sum_{\delta_j=\gamma}y_j+g_\gamma^0
\]
where $g_{\gamma}$ is a polynomial in variables $x_\beta, y_{\beta,i}$ with $\beta<\gamma$. Since $\gamma\notin \Phi_J$, we have $(1-\gamma(X_s))\neq0$, so applying Proposition \ref{prop:affine}, we see that $\hbs_{\si,\bi}^w$ is an affine space over $U_{\underline{\delta}_J}^{w_J}(X_u)$ of the expected dimension, and hence it is an affine space of the expected dimension $n'=|\bi|$.
\end{proof}

\begin{lemma}
\label{lem:adu0X}
We can write
\[
(\ad(X_s)(-u_0))\otimes X_u\otimes (u_0)=\sum_{\gamma\in\Tr(w)\setminus\Phi_J} E_\gamma(f^0_{\gamma}+g^0_{\gamma})+\sum_{\gamma\notin(\Tr(w)\setminus\Phi_J)}E_\gamma(g_\gamma^0)
\]
where $f^0_{\gamma}=(1-\gamma(X_s))x_\gamma$ for every $\gamma\in\Tr(w)\setminus \Phi_J$ and $g_\gamma^0$ is a polynomial in variables $x_\beta$ with $\beta<\gamma$.
\end{lemma}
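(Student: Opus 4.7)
The plan is to apply the Baker--Campbell--Hausdorff formula to $\ad(X_s)(-u_0)\otimes X_u\otimes u_0$ and track the coefficient of each $E_\gamma$, using two crucial facts: (i) since $X_s$ commutes with $G_J$, we have $\gamma(X_s)=1$ for every $\gamma\in\Phi_J$, and (ii) since $X_u\in U_J$, the logarithm $\log(X_u)$ lies in $\ul_J=\bigoplus_{\gamma\in\Phi_J^+}\mathbb{C}E_\gamma$. This is a direct generalization of Lemma \ref{lem:u0Xu0} where we replace $X=\sum_{\alpha\in\Delta}E_\alpha$ with a pair $(\ad(X_s)(-u_0),X_u)$.

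First, I compute $\ad(X_s)(-u_0)=-\sum_{\beta\in\Tr(w)}\beta(X_s)x_\beta E_\beta$, which follows from $\ad(X_s)(E_\beta)=\beta(X_s)E_\beta$. Then the \emph{linear} part of the BCH expansion of $\ad(X_s)(-u_0)\otimes X_u\otimes u_0$ is simply the sum
\[
\ad(X_s)(-u_0)+X_u+u_0 = X_u+\sum_{\beta\in\Tr(w)}(1-\beta(X_s))x_\beta E_\beta,
\]
and all remaining terms involve iterated commutators. Since every iterated commutator lies in $[\ul,\ul]=\bigoplus_{\hta(\gamma)\geq 2}\ul_\gamma$ and has the form $\lambda\, E_{\beta_1+\beta_2+\cdots}$ (multiplied by polynomial expressions in the $x_\beta$'s and constants coming from $X_u$), its contribution to the coefficient of $E_\gamma$ is a polynomial in variables $x_\beta$ with $\beta<\gamma$ strictly (plus possibly constants from $X_u$ when $\gamma\in\Phi_J$).

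Now I separate into cases. For $\gamma\in\Tr(w)\setminus\Phi_J$, the coefficient of $E_\gamma$ in $X_u$ is zero (as $X_u\in\ul_J$), so the only linear contribution is $(1-\gamma(X_s))x_\gamma$, giving $f^0_\gamma$ exactly as claimed; the higher-order commutator contributions give $g^0_\gamma$, a polynomial in $x_\beta$ with $\beta<\gamma$. For $\gamma\in\Tr(w)\cap\Phi_J$, we have $\gamma(X_s)=1$, so the linear term $(1-\gamma(X_s))x_\gamma$ vanishes, and the entire coefficient of $E_\gamma$ is absorbed into $g^0_\gamma$. Finally, for $\gamma\notin\Tr(w)$, no variable $x_\gamma$ is present, and the coefficient is the constant coming from $X_u$ (only nonzero if $\gamma\in\Phi_J$) plus polynomial commutator contributions in $x_\beta$ with $\beta<\gamma$, all going into $g^0_\gamma$.

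The main (and only) obstacle is the bookkeeping: one must verify that every BCH commutator term $[E_{\beta_1},[E_{\beta_2},\ldots]]$ contributing to the $E_\gamma$-coefficient involves only root vectors $E_{\beta_i}$ with $\beta_i<\gamma$, so that the associated polynomial in the $x$'s involves only $x_\beta$ with $\beta<\gamma$. This is immediate from the fact that the bracket is compatible with the root grading, $[\ul_{\beta_1},\ul_{\beta_2}]\subset\ul_{\beta_1+\beta_2}$, together with positivity of roots, which forces each $\beta_i$ in a commutator summing to $\gamma$ to be strictly less than $\gamma$.
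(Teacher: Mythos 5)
Your proof is correct and follows essentially the same route as the paper: compute $\ad(X_s)(-u_0) + u_0 = (I-\ad(X_s))(u_0)$ from the linear part of the BCH expansion, note that $\gamma(X_s)=1$ for $\gamma\in\Phi_J$ so those $x_\gamma$-terms disappear, and absorb the higher-order commutator terms (which only involve $x_\beta$ with $\beta<\gamma$ by the root grading) plus the contribution of $X_u\in\ul_J$ into $g^0_\gamma$. Your version just spells out the case split ($\gamma\in\Tr(w)\setminus\Phi_J$, $\gamma\in\Tr(w)\cap\Phi_J$, $\gamma\notin\Tr(w)$) and the grading argument a bit more explicitly than the paper does.
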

\begin{proof}
Recall that $\ad(X_s)u_0=\sum_{\beta\in\phi(w)} \beta(X_s)x_\beta E_\beta$. Moreover, we have that if $\gamma\in\Phi_J$ and $\alpha+\beta=\gamma$, then $\alpha,\beta\in \Phi_J$ which means that $\alpha(X_s)=\beta(X_s)=1$.
\begin{align*}
    (\ad(X_s)(-u_0))\otimes X_u\otimes u_0=&(\ad(X_s)(-u_0))\otimes (X_n+u_0+[\text{higher commutators}])\\
    =&(\ad(X_s)(-u_0))+X_u+u_0+[\text{higher commutators}]\\
    =&(I-\ad(X_s))(u_0)+X_u+[\text{higher commutators}]\\
    =&\sum_{\gamma\in\Tr(w)\setminus\Phi_J}E_\gamma((1-\gamma(X_s))x_\gamma+g_\gamma^0) +\sum_{\gamma\notin (\Tr(w)\setminus\phi_J) }E_\gamma(g_\gamma^0)
\end{align*}
\end{proof}

\section{The monodromy action on $\h_w$ and $\hbs_\si$} 

\label{sec:fibrations}

  The goal of this section is twofold. First, we use using Ehresmann's lemma and Thom's first isotopy lemma to prove that both $\h_w^{rs}\to G^{rs}$ and $\hbs_\si^{rs}\to G^{rs}$ are topological fibrations. Second, we study the monodromy action of $\h_w$ and $\hbs_{\si}$. We begin with the following proposition.

\begin{proposition}
\label{prop:hGsubmersion}
The map $\h_w^{rs,\circ}\to G^{rs}$ is a submersion.
\end{proposition}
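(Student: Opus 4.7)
The plan is to compute the differential of the projection $\phi:\h_w^{rs,\circ}\to G^{rs}$ at an arbitrary point $(X,gB)$ and show that it hits every direction in $\mathfrak{g}\cong T_XG$. Fix $(X,gB)\in \h_w^{rs,\circ}$ and set $b_0:=g^{-1}Xg\in BwB$ (a regular semisimple element, since $X$ is). A tangent vector to $\h_w^\circ$ at $(X,gB)$ corresponds to a first-order variation $(X(\epsilon),g(\epsilon)B)=(X+\epsilon\dot X,(g+\epsilon\dot g)B)$ with $g(\epsilon)^{-1}X(\epsilon)g(\epsilon)\in BwB$; writing $\alpha:=X^{-1}\dot X$ and $u:=g^{-1}\dot g$ in $\mathfrak{g}$, a routine expansion (viewing $G\subset GL_n$ for this local computation) shows that the left-invariant representative of the tangent vector at $b_0$ of the curve $\epsilon\mapsto g(\epsilon)^{-1}X(\epsilon)g(\epsilon)$ is
\[
b_0^{-1}\dot b_0=\operatorname{Ad}(g^{-1})\alpha+(I-\operatorname{Ad}(b_0^{-1}))u.
\]
Hence $(\dot X,\dot gB)$ lies in $T_{(X,gB)}\h_w^\circ$ if and only if this vector lies in the subspace $\widetilde T:=b_0^{-1}T_{b_0}BwB\subset\mathfrak{g}$.

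Since $\operatorname{Ad}(g^{-1}):\mathfrak{g}\to\mathfrak{g}$ is an isomorphism, surjectivity of $d\phi$ at $(X,gB)$ is equivalent to the vector-space decomposition
\[
\mathfrak{g}=(I-\operatorname{Ad}(b_0^{-1}))(\mathfrak{g})+\widetilde T.
\]
This is where the two lemmas from Section~\ref{sec:alggroups} do the work. First, because $b_0$ is regular semisimple, its centralizer is a maximal torus $H_{b_0}\subset G$ with Cartan subalgebra $\mathfrak{h}':=\mathfrak{h}_{b_0}$; a standard computation (write $b_0=hth^{-1}$ with $t\in T^r$ and use that $I-\operatorname{Ad}(t^{-1})$ vanishes on $\mathfrak{t}$ and is invertible on each root space) identifies the image $(I-\operatorname{Ad}(b_0^{-1}))(\mathfrak{g})$ with $\mathfrak{g}_{\mathfrak{h}'}$. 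Second, Lemma~\ref{lem:TXB} gives $\mathfrak{b}\subset \widetilde T$. Combining these, the desired sum contains $\mathfrak{g}_{\mathfrak{h}'}+\mathfrak{b}$, which by Lemma~\ref{lem:bh'} (applied to the Borel $\mathfrak{b}$ and the Cartan $\mathfrak{h}'$) equals all of $\mathfrak{g}$.

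The only real computation is Step 1 (identifying the left-invariant form of the constraint defining $T_{(X,gB)}\h_w^\circ$); the heart of the argument is recognizing that $(I-\operatorname{Ad}(b_0^{-1}))(\mathfrak{g})$ is exactly the $\mathfrak{g}_{\mathfrak{h}'}$ to which Lemma~\ref{lem:bh'} applies, and this uses crucially that $X$ (equivalently $b_0$) is regular semisimple. I expect the main subtlety to be bookkeeping between right/left trivializations when translating between $T_XG$, $T_gG$, and $T_{b_0}G$; once that is carried out, the conclusion is immediate from the two lemmas and no further ingredients (in particular no assumption that $\si$ is reduced or that $w$ is smooth) are needed.
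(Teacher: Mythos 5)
Your proof is correct and follows essentially the same strategy as the paper's: compute the tangent-space constraint in a left-trivialization, observe that surjectivity of the differential reduces to $\g = (I-\ad(b_0^{-1}))(\g) + \widetilde T$, identify $(I-\ad(b_0^{-1}))(\g)$ with $\g_{\hl'}$ using regular semisimplicity, and conclude via Lemmas \ref{lem:TXB} and \ref{lem:bh'}. The only cosmetic difference is that the paper first passes to the $B$-bundle $\widetilde{\h}_w^\circ\to\h_w^\circ$ and reduces to $g_0=e$ by $G$-equivariance, whereas you carry the $\ad(g^{-1})$ explicitly and work modulo $\bl$ in the $G/B$ direction; both routes lead to the identical linear-algebra statement.
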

\begin{proof}
First, consider $\widetilde{\h}_w^{\circ}=\{(X,g);g^{-1}Xg\in BwB\}\subset G\times G$. We have a map
\begin{align*}
\widetilde{\h}_w^\circ&\to \h_w^\circ\\
(X,g) &\mapsto (X,XgB, gB),
\end{align*}
which is smooth with fibers isomorphic to $B$. Since we are interested in the projection onto the first factor, it is enough to prove that 
\begin{equation}
\label{eq:THsur}
    \tang_{(X_0,g_0)}\widetilde{\h}_w^\circ\to \tang_{X_0}G
\end{equation} 
is surjective for every pair $(X_0,g_0)\in G^{rs}\times G$. Since $\widetilde{\h}_w^\circ\to G$ is $G$-equivariant, we can assume that $g_0=e$. Consider the composition
\[
\begin{tikzcd}[row sep=0cm]
G\times G\ar[r]&\ar[r] G\times G&\ar[r] G& G\\
(g',g)\ar[r, maps to]& (X_0g',g)\ar[r,maps to]&\ar[r,maps to]g^{-1}X_0g'g& X_0^{-1}g^{-1}X_0g'g.
\end{tikzcd}
\]
It is clear that $(e,e)$ is taken to $(e,e)$ and that the differential map is given by
\begin{align*}
    \g\times \g &\to \g\times \g\\
    (Y,Z) &\mapsto \ad(X_0)^{-1}(-Z)+Y+Z.
\end{align*}
Hence
\[
\tang_{(X_0,1)}\widetilde{H}_w=\{(Y,Z); \ad(X_0)^{-1}(-Z)+Y+Z\in \tang_{X_0}BwB\}.
\]
Thus the surjectivity of map \eqref{eq:THsur} is equivalent to the fact that for every $Y\in \g$, there exists $Z\in \g$ such that $\ad(X_0)^{-1}(-Z)+Y+Z\in \tang_{X_0}BwB$. In turn, this is equivalent to the surjectivity of the map
\begin{align*}
\g&\to \frac{\g}{\tang_{X_0}BwB}\\
Z&\mapsto (1-\ad(X_0^{-1}))(Z)+\tang_{x_0}BwB,
\end{align*}
which we will now establish.\par

Since $X_0$ is regular semisimple, it lies inside a maximal torus $T'\subset G$ (with associated Cartan subalgebra $\hl'$). If $\Phi'$ is the set of roots associated to $T'$, then $\g=\hl'\oplus \bigoplus_{\gamma\in \Phi'} \g_\gamma$, and if $E_\gamma$ is a generator of $\g_\gamma$, and since $X_0$ is regular, we have $(1-\ad(X_0^{-1}))(E_\gamma)=(1-\gamma(X_0^{-1}))E_\gamma\neq 0$ (see \cite[Proposition 2.11]{Stein65}). Hence, the image of the map 
\begin{align*}
f\col\g&\to \g\\
Z&\mapsto (1-\ad(X_0)^{-1}))(Z)
\end{align*}
is $\g_{\hl'}:=\bigoplus_{\gamma\in \Phi'} \g_\gamma$. \par
Now, each map below is surjective
\[
\g\xrightarrow{f}\g_{\hl'}\to \frac{\g}{\bl}\to \frac{\g}{\tang_{x_0}BwB},
\]
the second by Lemma \ref{lem:bh'} and the third by Lemma \ref{lem:TXB}. This finishes the proof.
\end{proof}

\begin{proposition}
\label{prop:fibrationH}
The map $\h_w^{rs}\to G^{rs}$ is a topological fibration.
\end{proposition}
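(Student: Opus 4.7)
The plan is to combine Proposition~\ref{prop:hGsubmersion} with Thom's first isotopy lemma, using the Bruhat stratification of $\h_w$ as the relevant Whitney stratification.

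First I would observe that the map $\h_w^{rs}\to G^{rs}$ is proper. Indeed, $\h_w$ is closed in $G\times\flag$, and the first projection $G\times\flag\to G$ is proper because $\flag$ is projective. Restricting to the open subset $G^{rs}\subset G$ preserves properness. So we are in the setting of a proper map, and it remains to produce a local trivialization.

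Next, recall the Bruhat-type stratification $\h_w=\bigsqcup_{u\leq w}\h_u^\circ$, induced by the stratification $\Omega_w=\bigsqcup_{u\leq w}\Omega_u^\circ$ of the relative Schubert variety pulled back along $\h_w\to\Omega_w$. Restricting to $G^{rs}$ gives a finite stratification $\h_w^{rs}=\bigsqcup_{u\leq w}\h_u^{rs,\circ}$ into smooth locally closed subvarieties. Applying Proposition~\ref{prop:hGsubmersion} to each $u\leq w$ (the argument given there depends only on $u$, not on the containment in $\Omega_w$), we see that each restricted stratum map $\h_u^{rs,\circ}\to G^{rs}$ is a submersion. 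In particular, $G^{rs}$ itself needs no further stratification.

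The remaining ingredient is that the stratification $\{\h_u^{rs,\circ}\}_{u\leq w}$ of $\h_w^{rs}$ satisfies Whitney's condition~(b). This is a standard fact about Schubert-type stratifications: the stratification $\{\Omega_u^\circ\}_{u\leq w}$ of $\Omega_w$ is Whitney (it is the stratification by $G$-orbits on $\flag\times\flag$ intersected with $\Omega_w$, and $B$-orbit stratifications on flag varieties are Whitney regular), and the property is preserved under pulling back along the smooth morphism $\h_w\to\Omega_w$ and restricting to the open set $G^{rs}\subset G$.

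With these three inputs in hand (proper, stratified submersion, Whitney regular), Thom's first isotopy lemma yields a local topological trivialization over $G^{rs}$ that is compatible with the stratification; in particular the map $\h_w^{rs}\to G^{rs}$ is a topological fibration. The main point to check carefully is the Whitney regularity of the Schubert-type stratification; everything else is either immediate from projectivity of $\flag$ or an application of the previous proposition to smaller Bruhat cells.
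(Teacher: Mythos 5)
Your proof is correct and follows the same route as the paper: Whitney regularity of the Bruhat-type stratification $\{\h_u^{rs,\circ}\}_{u\le w}$ (obtained by pulling back the Whitney stratification of $\Omega_w$ along the smooth morphism $\h_w\to\Omega_w$ and restricting to the open set $G^{rs}$), the submersion property on each stratum from Proposition~\ref{prop:hGsubmersion}, and Thom's first isotopy lemma. The only difference is that you make the properness of $\h_w^{rs}\to G^{rs}$ explicit, which the paper leaves tacit but which is indeed a hypothesis of Thom's lemma — a worthwhile addition.
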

\begin{proof}
For $z\leq w$, $\h_{z}^{rs,\circ}$  is a Whitney stratification of $\h_w^{rs}$. Indeed for $z\leq w$, $\Omega_z$ is a Whitney stratification of $\Omega_w$, and since $\h_w\to \Omega_w$ is a smooth morphism, we have that $\h_z^{\circ}$ is a Whitney stratification of $\h_w$. Since $\h_w^{rs}$ is an open subset of $\h_w$, $\h_{z}^{rs,\circ}$ is a Whitney stratification of $\h_w^{rs}$. By Proposition \ref{prop:hGsubmersion}, the restriction $\h_u^{rs,\circ}\to G$ is a submersion for every $u$, so the result follows by Thom's first isotopy lemma.
\end{proof}

\begin{proposition}
\label{prop:fibrationHBS}
Let $w\in W$, and let $\si$ be a reduced word for $w$. Then the map $\hbs_{\si}^{rs}\to G^{rs}$ is a topological fibration.
\end{proposition}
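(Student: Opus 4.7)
The plan is to mirror the argument used in Proposition \ref{prop:fibrationH}, reducing the claim to Thom's first isotopy lemma applied to a properly stratified proper map. First I would observe that $\hbs_\si = \hb \times_{\flag \times \flag} \bs_\si$ is smooth, since $\bs_\si$ is smooth and $\hb \to \flag \times \flag$ is a smooth $B$-bundle (smoothness is preserved by base change along a smooth morphism). Next, $\hbs_\si \to G$ is proper: it factors as the closed immersion $\hbs_\si \hookrightarrow G \times \bs_\si$ followed by the projection to $G$, and $\bs_\si$ is projective. Base-changing, the restriction $\hbs_\si^{rs} \to G^{rs}$ remains proper.

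The heart of the argument is exhibiting a Whitney stratification of $\hbs_\si^{rs}$ whose strata all submerse onto $G^{rs}$. I would start from the Whitney stratification of $\bs_\si$ by $\alpha_\si^{-1}(\Omega_z^\circ)$ for $z \leq w$ (recalled in Section \ref{subsec:heckeflag} following Springer), and pull it back along the smooth morphism $\hb \to \flag \times \flag$. Since
\[
\hb \times_{\flag \times \flag} \alpha_\si^{-1}(\Omega_z^\circ) \;=\; \h_z^\circ \times_{\Omega_z^\circ} \alpha_\si^{-1}(\Omega_z^\circ),
\]
the induced stratification of $\hbs_\si^{rs}$ has strata
\[
\hbs_{\si,z}^{rs,\circ} \;:=\; \h_z^{rs,\circ} \times_{\Omega_z^\circ} \alpha_\si^{-1}(\Omega_z^\circ), \qquad z \leq w.
\]
Whitney regularity is preserved under smooth base change, so this is a Whitney stratification of $\hbs_\si^{rs}$.

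To see that each stratum submerses onto $G^{rs}$, I would factor the map $\hbs_{\si,z}^{rs,\circ} \to G^{rs}$ as
\[
\hbs_{\si,z}^{rs,\circ} \;\longrightarrow\; \h_z^{rs,\circ} \;\longrightarrow\; G^{rs}.
\]
The first map is obtained by base change from $\alpha_\si^{-1}(\Omega_z^\circ) \to \Omega_z^\circ$, which is a smooth surjection (indeed a fiber bundle, thanks to $G$-equivariance of $\alpha_\si$ and the transitivity of the $G$-action on $\Omega_z^\circ$). The second map is a submersion by Proposition \ref{prop:hGsubmersion}. A composition of submersions is a submersion, so Thom's first isotopy lemma applies and yields the claimed topological fibration.

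The main point to verify carefully is that pulling back the Whitney stratification of $\bs_\si$ along the smooth morphism $\hb \to \flag \times \flag$ really does produce a Whitney stratification on $\hbs_\si^{rs}$ — this is standard for smooth base changes of Whitney-regular sets, but it is what makes the reduction to Proposition \ref{prop:hGsubmersion} work cleanly. A secondary point is confirming that each $\alpha_\si^{-1}(\Omega_z^\circ) \to \Omega_z^\circ$ is genuinely a fiber bundle, which is routine from the standard geometry of Bott-Samelson resolutions.
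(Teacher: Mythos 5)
Your overall strategy is sound and mirrors the paper's: the key step is to factor through $\h_z^{rs,\circ}$ and invoke Proposition \ref{prop:hGsubmersion}, and your observations that $\hbs_\si$ is smooth and that $\hbs_\si^{rs}\to G^{rs}$ is proper are both correct and needed. However, there is a genuine gap in the way you set up the stratification. The sets $\alpha_\si^{-1}(\Omega_z^\circ)$ do \emph{not} form a Whitney stratification of $\bs_\si$: they are Zariski-locally trivial fibrations over $\Omega_z^\circ$ (by $G$-equivariance and transitivity, as you note), but their fibers are in general \emph{singular} projective varieties (unions of cells in the Bott--Samelson tower indexed by subwords with a fixed Demazure product), so the total spaces $\alpha_\si^{-1}(\Omega_z^\circ)$ need not be smooth and therefore cannot serve as strata. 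For the same reason, the claim that $\alpha_\si^{-1}(\Omega_z^\circ)\to\Omega_z^\circ$ is a ``smooth surjection'' is false: a locally trivial bundle with singular fiber is not a smooth morphism. The ``Whitney stratification'' referred to in Section \ref{subsec:heckeflag} (following Springer) is a stratification of the \emph{target} $\Omega_w$ over which the map $\alpha_\si$ is a fibration, not a Whitney stratification of the source $\bs_\si$ by preimages. As a result, Thom's first isotopy lemma cannot be applied directly to the decomposition you describe.

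The correct (and lighter) route, which is what the paper actually does, exploits the fact that $\hbs_\si$ is already smooth: by Ehresmann's lemma it suffices to show that $\hbs_\si^{rs}\to G^{rs}$ is a proper submersion. Pointwise submersivity can be checked as follows: if $Q\in\hbs_\si^{rs}$ maps to a point of $\h_z^{rs,\circ}$, then the Zariski-local triviality of $\beta_\si^{-1}(\h_z^{rs,\circ})\to\h_z^{rs,\circ}$ (pulled back from $\alpha_\si^{-1}(\Omega_z^\circ)\to\Omega_z^\circ$) provides a local algebraic section through $Q$, whose differential lands in $T_Q\hbs_\si$; composing with the surjection $T\h_z^{rs,\circ}\to T G^{rs}$ from Proposition \ref{prop:hGsubmersion} gives surjectivity of $T_Q\hbs_\si\to T G^{rs}$. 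No Whitney stratification of $\hbs_\si^{rs}$ itself is needed.
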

\begin{proof}
Since $\hbs_\si$ is smooth, it is sufficient, by Ehresmann's lemma, to prove that $\hbs^{rs}_\si\to G^{rs}$ is a submersion. Take $Q$ a closed point in $\hbs_\si^{rs}$ and denote by $\beta_\si\col \hbs_\si\to \h_w$ the natural map. Then $Q$ belongs in some strata $\beta_\si^{-1}(\h_u^\circ)$. Moreover, $\beta_\si^{-1}(\h_u^\circ)\to \h_u^\circ$ is a topological fibration since $\alpha_\si^{-1}(\Omega_u)\to \Omega_u$ is a topological fibration, hence $\tang_Q\beta_\si^{-1}(\h_u^\circ)\to \tang_{\beta_\si(Q)}\h_u^\circ$ is surjective. However, we already know that $\tang_{\beta_\si(Q)}\h_u^\circ\to \tang_{f(\beta_\si(Q))}G$ is surjective, so the map $\tang_Q\beta_\si^{-1}(\h_u^\circ)\to \tang_{f(g(Q))}G$ is surjective, which proves that $\tang_Q\hbs_\si\to \tang_{f(\beta_\si(Q))}G$ is surjective.
\[
\begin{tikzcd}
& \hbs_\si \ar[rr] \ar[ld] \ar[ldd]& &\bs_\si \ar[ld] \ar[ldd]\\
\h_w \ar[d] \ar[rr]&&\Omega_w\ar[d] &\\
\hb \ar[rr]&& \flag\times\flag\\
\end{tikzcd}
\]
\end{proof}

The remainder of this section is devoted to proving the following result.

\begin{proposition}
\label{prop:pi1actW}
Let $X\in T^r$ be a regular element of $T$, $w$ an element of $W$, and $\si=(s_1,\ldots, s_{\ell(w)})$ a reduced word for $w$. Then
\begin{enumerate}
    \item The action of $\pi_1(G^{rs},X)$ on $H^*(\hbs_{\si}(X))$ factors through an action of $W$ on $H^*(\hbs_\si(X))$.
    \item The action of $\pi_1(G^{rs},X)$ on $IH^*(\h_u(X))$ factors through an action of $W$ on $IH^*(\h_u(X))$ for every $u\in W$.
    \item  The actions of $W$ are compatible with the isomorphism
\[
H^*(\hbs_\si(X))\cong IH^*(\h_w(X))\oplus \bigoplus IH^*(\h_u(X))\otimes L_u[-\ell(w)+\ell(u)]
\]
given by the decomposition theorem, where $W$ acts on $L_u$ trivially.
\end{enumerate}
\end{proposition}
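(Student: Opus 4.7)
The plan is to prove items (1)--(3) in order, with item (1) being the crux and items (2), (3) following from it via the Decomposition Theorem applied to $\beta_\si \colon \hbs_\si \to \h_w$.

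For item (1), the monodromy action comes from the topological fibration $\hbs_\si^{rs} \to G^{rs}$ of Proposition \ref{prop:fibrationHBS}. My approach is to pull back along the $W$-Galois cover $p \colon \h_e^{rs} \to G^{rs}$ arising from the Grothendieck-Springer simultaneous resolution restricted to the regular semisimple locus. Setting $\widetilde{\hbs}_\si^{rs} := \hbs_\si^{rs} \times_{G^{rs}} \h_e^{rs}$, the Galois property gives the short exact sequence $1 \to \pi_1(\h_e^{rs}, \tilde X) \to \pi_1(G^{rs}, X) \to W \to 1$, so the monodromy on $H^*(\hbs_\si(X))$ factors through $W$ if and only if $\widetilde{\hbs}_\si^{rs} \to \h_e^{rs}$ has cohomologically trivial monodromy. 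Using the isomorphism $\h_e^{rs} \cong G/T \times T^r$ of Proposition \ref{prop:iso}, a direct change of variables identifies $\widetilde{\hbs}_\si^{rs} \cong G \times^T (\hbs_\si^{rs}|_{T^r})$, with $T$ acting on $G$ by right multiplication and on $\hbs_\si^{rs}|_{T^r}$ via the restriction of the $G$-action. Since $G/T$ is simply connected (being homotopy equivalent to the flag variety $G/B$), $\pi_1(\h_e^{rs}) = \pi_1(T^r)$, and the problem reduces to showing that the fibration $\hbs_\si^{rs}|_{T^r} \to T^r$ has trivial monodromy on the cohomology of its fibers.

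To establish this triviality, the plan is to combine the affine paving of Theorem \ref{thm:hbspaving} with the $T$-equivariance of the fibration. For $X \in T^r$ (the case $J = \emptyset$ of that theorem), the paving of $\hbs_\si(X)$ is indexed combinatorially by pairs $(w, \bi) \in W \times \{0,1\}^{|\si|}$ with $\phi_{\si,\bi}(w) = w$, independent of the specific choice of $t \in T^r$; moreover, the equations cutting out each affine piece $\hbs_{\si,\bi}^w(X)$ vary continuously in $X$ without changing the affine structure (visible from the proof of Theorem \ref{thm:hbspaving} via Proposition \ref{prop:affine}). This should yield an explicit trivialization of the local system $R^i f_* \mathbb{C}$ on $T^r$. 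Alternatively, the $T$-action on $\hbs_\si^{rs}|_{T^r}$ is fiberwise (since $T$ is abelian, so acts trivially on $T^r \subset T$) and trivial on the cohomology of each fiber (since $T$ is connected); one can then lift loops in $T^r$ to $T$-equivariant isotopies in the total space, which witnesses triviality of the cohomological monodromy.

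For items (2) and (3), the Decomposition Theorem applied to the proper map $\beta_\si \colon \hbs_\si \to \h_w$ (smooth source, Whitney stratification by the strata $\h_u^\circ$ carrying only trivial local systems, in analogy with Equation \eqref{eq:Schubertdecomp}) gives $\beta_{\si, *}(\mathbb{C}_{\hbs_\si}[\dim \hbs_\si]) = \bigoplus_{u \leq w} IC_{\h_u} \otimes L_u$ for graded vector spaces $L_u$. Applying Proposition \ref{prop:fibrations} to the pair of fibrations $\hbs_\si^{rs} \to \h_w^{rs} \to G^{rs}$ (both fibrations by Propositions \ref{prop:fibrationH} and \ref{prop:fibrationHBS}) then yields the cohomological decomposition stated in item (3) with compatible $\pi_1(G^{rs}, X)$-actions; item (1) ensures both sides descend to $W$-actions, with the $W$-action on each $L_u$ necessarily trivial (as $L_u$ appears in a summand decomposed from the right-hand side). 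Item (2) follows because for any $u \in W$, a reduced word $\si_u$ for $u$ exhibits $IH^*(\h_u(X))$ as a direct summand of $H^*(\hbs_{\si_u}(X))$ and inherits the factorised $W$-action from item (1) applied to $\si_u$. The main obstacle lies in rigorously establishing the triviality of monodromy of $\hbs_\si^{rs}|_{T^r} \to T^r$ on fiber cohomology: this is where the essential geometric content sits, and carefully realising either the combinatorial trivialization from the affine paving or the $T$-equivariant isotopy argument requires a delicate analysis of how the paving strata and the $T$-action interact along paths in $T^r$.
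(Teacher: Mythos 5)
Your handling of items (2) and (3) essentially matches the paper, with one caveat: you assert the local systems in the decomposition $\beta_{\si*}(\mathbb{C}_{\hbs_\si}[\dim\hbs_\si])\cong\bigoplus_u IC_{\h_u}\otimes L_u$ are trivial ``in analogy with'' the Schubert case, but the strata $\h_u^\circ$ are not simply connected, so this does not follow by analogy; the paper obtains the trivial-local-system form by smooth base change from the Schubert-variety decomposition theorem via the fibre square over $\flag\times\flag$, and that is the argument you should give. After that, Proposition \ref{prop:fibrations} together with Propositions \ref{prop:fibrationH} and \ref{prop:fibrationHBS} transfers the $\pi_1(G^{rs},X)$-action across the decomposition, and (2), (3) follow from (1) as you say.

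For item (1) you take a genuinely different route from the paper, and it has a gap. The paper works $T$-equivariantly: localization injects $H_T^*(\hbs_\si(X))$ into $H_T^*(\hbs_\si(X)^{\T_X})\cong\bigoplus_{\textrm{fixed pts}}\mathbb{C}[\hl]$, the $\pi_1(G^{rs},X)$-action on which is seen to factor through $W$ both on the finite set of fixed points (it is pulled back from the Galois cover $\h_e^{rs}\to G^{rs}$) and on $\mathbb{C}[\hl]$ (Corollary \ref{cor:pi1T}); one then descends along the surjection $H_T^*\twoheadrightarrow H^*$. Your reduction---pull back along $\h_e^{rs}\to G^{rs}$, identify $\widetilde{\hbs}_\si^{rs}\cong G\times^T(\hbs_\si^{rs}|_{T^r})$, use $\pi_1(G/T)=1$ to reduce to triviality of the monodromy of $\hbs_\si^{rs}|_{T^r}\to T^r$ on fibre cohomology---is sound and is logically independent of the localization machinery, but you do not actually prove the resulting triviality, and of your two proposed routes only one is salvageable. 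The $T$-equivariant isotopy argument does not work: since $T$ is abelian its action covers the trivial action on $T^r$, so loops in $T^r$ are not realized by the $T$-action, and requiring a monodromy map to be $T$-equivariant is a vacuous constraint because the connected group $T$ already acts trivially on fibre cohomology. The affine-paving argument is viable, but the key fact you must record, and do not, is that $\hbs_{\si,\bi}^w(X)=\bs_{\si,\bi}^w\cap\hbs_\si(X)$ for locally closed $\bs_{\si,\bi}^w\subset\bs_\si$ defined once and for all, independently of $X$. This turns the pointwise pavings of Theorem \ref{thm:hbspaving} into a single stratification of the family $\hbs_\si^{rs}|_{T^r}$ whose strata fibre over $T^r$ with affine-space fibres of constant dimension; an induction on cell dimension using compactly supported direct images then shows $R^kf_*\mathbb{C}$ is a direct sum of constant sheaves on $T^r$, which is the required triviality. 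Without that observation the heart of item (1) is left unaddressed; with it you have a complete argument that is different in spirit from the paper's equivariant one.
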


As in \cite{BrosnanChow}, the idea is to prove the results for the equivariant cohomology and then restrict to the usual cohomology. We begin with a few definitions.\par

Let $\si=(s_1,\ldots, s_{\ell(w)})$ be a reduced word of an element $w\in W$. Recall the torus fiber bundle $\T$ defined in Section \ref{sec:torusbundle}.  There is an action of $\T$ on $G^{rs}\times \flag$, given by
 \begin{align*}
 m\col \T\times_{G^{rs}} (G^{rs}\times \flag)&\to G^{rs}\times\flag\\
  ((X,g'), (X,gB)) & \to (X, g'gB).
 \end{align*}
Since $g'\in Z_G(X)$ we have that $m$ preserves each stratum $\h_w^{rs,\circ}\subset G^{rs}\times \flag$. Hence $m$ will also be considered as an action of $\T$ on $\h_w^{rs}$ for every $w\in W$.\par

Since $\bs_\si=\flag\times_{\mathcal{P}_{s_1}}\flag\times\ldots\times \times_{\mathcal{P}_{s_n}}\flag$, we have an action of $\T$ in $G^{rs}\times \bs_\si$ given by the action of $m$ on each of the factors of $\bs_\si$, that is, 
\begin{align*}
m' \col \T\times_{G^{rs}} (G^{rs}\times \bs_\si)&\to G^{rs}\times \bs_\si\\
((X,g'), (X,g_0B,g_1B,\ldots, g_nB))&\mapsto (X,(g'g_0B,g'g_1B,\ldots, g'g_nB)).
\end{align*}
Again, since $g'\in Z_G(X)$, we have that this action stabilizes $\hbs_\si$.  Clearly the map $\hbs_\si^{rs}\to \h_w^{rs}$ is $\T$-equivariant.\par

  We now turn our attention to the fixed locus of $m$.  A point $(X,gB)\in G^{rs}\times\flag$ is a fixed point of $m$ if and only if $gB=g'gB$ for every $g'\in Z_G(X)$. The latter occurs if and only if $g^{-1}Z_G(X)g\subset B$, which is the same as $Z_G(g^{-1}Xg)\subset B$. Since $X$ is regular semisimple, then so is $g^{-1}Xg$. In particular, we have that $Z_G(g^{-1}Xg)\subset B$ if and only $g^{-1}Xg\in B$. This proves that the fixed locus of the action $m$ is precisely $\h_{e}^{rs}$ (defined in Definition \ref{def:twisted}, Section \ref{sec:twisted} and Equation \eqref{eq:h1}).  \par

  The fixed locus of $m'$ is little more involved. Since the action $m'$ is simply the action $m$ on the factors of $\bs_\si$, we have that fixed locus of $m'$ is
    \begin{equation}
        \label{eq:GrsBSLT}
        (G^{rs}\times \bs_{\si})^\T=\h_{e}^{rs}\times_{G^{rs}\times P_{s_1}}\h_{e}^{rs}\times_{G^{rs}\times P_{s_2}}\ldots\times_{G^{rs}\times P_{s_n}} \h_{e}^{rs}.
    \end{equation}

   It is easier to see this fixed locus on a fiber over $X\in T^{r}$. We know that $\h_{e}^{rs}(X)=\{\dw B; w \in W\}$, in particular, we have 
    \[
    \bs_\si^{\T_X}=\{(\dw_0B, \dw_1B,\ldots, \dw_nB); w_i^{-1}w_{i+1}\in <s_i>\text{ for every }i=0,\ldots, n-1\}.
    \]

This gives a bijection between the set of fixed points $\bs_{\si}^{\T_X}$ and the set $W\times \{0,1\}^{\ell(w)}$. Explicitly, this bijection is given by
\begin{align*}
    W\times\{0,1\}^{\ell(w)}&\to \bs_{\si}^{\T_X}\\
    (w_0,\bi) &\mapsto (\dw_0 B, \dw_1B, \ldots, \dw_{\ell(w)} B),
\end{align*}
where $w_i=\phi_{\si|_{i},\bi|_{i}}(w_0)$. We will abuse notation and say that a pair $(w_0,\bi)$ is a fixed point in  $\bs_\si^{\T_X}$. Moreover, we have that a fixed point $(w_0,\bi)\in \bs_{\si}^{\T_X}$ is in $\hbs_\si(X)$ if and only if $\phi_{\si,\bi}(w_0)=w_0$. Meanwhile, the Weyl group $W$ acts on $\bs_\si^{\T_X}$ by 
  \begin{align*}
  W\times \bs_\si^{\T_X}&\to \bs_\si^{\T_X}\\
  (w',(\dw_0B,\ldots, \dw_{\ell(w)}B))&\mapsto (\dw'\dw_0B,\dw'\dw_1B,\ldots, \dw'\dw_{\ell(w)}B)
  \end{align*}
  which can be restricted to $\hbs_\si(X)^{\T_X}$. Since $\T\to G^{rs}$ and $\hbs_\si^{rs}\to G^{rs}$ are fibrations, we have that $\pi_1(G^{rs},X)$ acts on  $H_{T}^*(\hbs_\si(X))$ and $H_T^*(\hbs_\si(X)^{\T_X})$. By localization we obtain an injective map
  \begin{equation}
      \label{eq:hbsinje}
      H_T^*(\hbs_\si(X))\hookrightarrow H_{T}^*(\hbs_\si(X)^{\T_X})
  \end{equation}
  that is $\pi_1(G^{rs},X)$-equivariant.
  
  \begin{lemma}
  \label{lem:pi1hb}
  The action of $\pi_1(G^{rs},X)$ on $H_T^*(\hbs_\si(X)^{\T_X})$ factors through an action of $W$ on $H_T^*(\hbs_\si(X)^{\T_X})$.
  \end{lemma}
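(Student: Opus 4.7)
The plan is to exploit the fact that $\hbs_\si(X)^{\T_X}$ is a finite (discrete) set of points, so that the equivariant cohomology splits as a direct sum indexed by fixed points. Concretely, writing $\mathrm{Sym}(\hl^*)$ for $H^*(BT)$, we have
\[
H_T^*(\hbs_\si(X)^{\T_X}) \;\cong\; \bigoplus_{p\in \hbs_\si(X)^{\T_X}} \mathrm{Sym}(\hl^*),
\]
and the monodromy action of $\pi_1(G^{rs},X)$ on the left-hand side is built from two ingredients: (i) a permutation action on the index set $\hbs_\si(X)^{\T_X}$, coming from monodromy of the finite étale cover $\hbs_\si^{rs,\T}\to G^{rs}$, and (ii) an action on each summand $\mathrm{Sym}(\hl^*)$ coming from the monodromy action on $\T$ restricted to $\T_X=T$. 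I will show that each of these factors through $W$, and that they assemble into the natural $W$-action
$w'\cdot (\dw_0B,\dw_1B,\dots,\dw_nB) = (\dw'\dw_0B,\dw'\dw_1B,\dots,\dw'\dw_nB)$ on fixed points combined with the usual $W$-action on $\hl$.

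For ingredient (ii), the action of $\pi_1(G^{rs},X)$ on $T=\T_X$ is precisely the content of Corollary~\ref{cor:pi1T}, which asserts that it factors through the map $\pi_1(G^{rs},X)\to W$ and that $W$ acts on $T$ (and hence on $\hl$ and $\mathrm{Sym}(\hl^*)$) via its standard action. For ingredient (i), I invoke the description of the fixed locus in Equation~\eqref{eq:GrsBSLT}: the relative fixed locus $\hbs_\si^{rs,\T}\to G^{rs}$ is a closed subscheme of the iterated fibered product $(\h_e^{rs})^{\times_{G^{rs}}(\ell(w)+1)}$ cut out by the $P_{s_i}$-compatibility conditions. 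Since $\h_e^{rs}\to G^{rs}$ is a $W$-Galois cover (the restricted Grothendieck--Springer resolution discussed after Proposition~\ref{prop:iso}) and the compatibility conditions are $W$-equivariant for the diagonal $W$-action, the map $\hbs_\si^{rs,\T}\to G^{rs}$ is a finite covering whose monodromy factors through $\pi_1(G^{rs},X)\to W$. Evaluating the $W$-action on the fiber over $X$ (using that $\h_e^{rs}(X)=\{\dw B\mid w\in W\}$ with $W$ acting by left multiplication on representatives) yields precisely the action $(\dw_0B,\dots,\dw_nB)\mapsto(\dw'\dw_0B,\dots,\dw'\dw_nB)$.

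Putting (i) and (ii) together gives a well-defined $W$-action on $H_T^*(\hbs_\si(X)^{\T_X})$ such that the monodromy action of $\pi_1(G^{rs},X)$ is the composition of $\pi_1(G^{rs},X)\to W$ with this $W$-action; this is exactly the statement of the lemma. The main obstacle I anticipate is a bookkeeping one: one must verify that the two factorizations through $W$ (one on the combinatorial level of fixed points, the other on the torus $\T_X$) are induced by the \emph{same} map $\pi_1(G^{rs},X)\to W$, so that the combined action really is a $W$-action rather than a $W\times W$-action. This compatibility follows from the fact that both actions are pulled back from the single $W$-Galois structure on $\h_e^{rs}\to G^{rs}$: the torus fiber $\T_X\to T$ is trivialized by pulling back to $\h_e^{rs}$ (by the discussion in Section~\ref{sec:torusbundle}), and the fixed locus is also built from $\h_e^{rs}$, so a single choice of lift of a loop $\gamma$ to $\h_e^{rs}$ simultaneously governs both the permutation of fixed points and the action on $T$.
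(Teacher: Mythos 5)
Your proposal is correct and follows essentially the same route as the paper: decompose $H_T^*(\hbs_\si(X)^{\T_X})$ as a direct sum of copies of $\mathrm{Sym}(\hl^*)$ over the finite fixed-point set, factor the permutation action through $W$ via Equation~\eqref{eq:GrsBSLT} and the $W$-Galois cover $\h_e^{rs}\to G^{rs}$, and factor the action on $\hl$ through $W$ via Corollary~\ref{cor:pi1T}. Your discussion of why both factorizations arise from a single map $\pi_1(G^{rs},X)\to W$ is a useful elaboration of a point the paper leaves implicit, but the underlying argument is the same.
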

  \begin{proof}
  We have that $\hbs_\si(X)^{\T_X}$ is a union of points, and the the action of $\pi_1(G^{rs},X)$ on these points factors through $W$. This follows from Equation \eqref{eq:GrsBSLT} and the fact that $\h_e^{rs}\to G^{rs}$ is a $W$-Galois cover.\par
   On the other hand, we have that the action of $\pi_1(G^{rs},X)$ on the Lie algebra $\hl$ of $T=\T_X$ factors through $W$, which is the content of Corollary \ref{cor:pi1T}. Since 
   \[
   H_T^*(\hbs_\si(X)^{\T_X})=\bigoplus_{pt\in\hbs_\si(X)^{\T_X}} H_T^*(pt)= \bigoplus_{pt\in\hbs_\si(X)^{\T_X}} \mathbb{C}[\hl]
   \]
  we have our result.
  \end{proof}
  
  \begin{corollary}
  \label{cor:pi1actW}
  The action of $\pi_1(G^{rs},X)$ on $H_T^*(\hbs_\si(X))$ factors through an action of $W$ on $H_T^*(\hbs_\si(X))$.
  \end{corollary}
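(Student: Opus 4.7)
The plan is to deduce Corollary \ref{cor:pi1actW} directly from the localization injection established in Equation \eqref{eq:hbsinje} together with Lemma \ref{lem:pi1hb}. The argument is essentially formal: a $\pi_1(G^{rs},X)$-equivariant monomorphism of $\pi_1(G^{rs},X)$-modules forces the action on the source to factor through any quotient through which the action on the target factors.

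More concretely, I would proceed as follows. First, recall from the discussion preceding Lemma \ref{lem:pi1hb} that the equivariant localization map
\[
\iota \colon H_T^*(\hbs_\si(X)) \hookrightarrow H_T^*(\hbs_\si(X)^{\T_X})
\]
is injective and $\pi_1(G^{rs},X)$-equivariant (here we use that $\T \to G^{rs}$ and $\hbs_\si^{rs} \to G^{rs}$ are both fibrations, the latter by Proposition \ref{prop:fibrationHBS}, so that $\pi_1(G^{rs},X)$ acts on both equivariant cohomology groups and the localization map commutes with monodromy). Next, by Lemma \ref{lem:pi1hb}, the action of $\pi_1(G^{rs},X)$ on $H_T^*(\hbs_\si(X)^{\T_X})$ factors through the natural surjection $\pi_1(G^{rs},X) \twoheadrightarrow W$ coming from the $W$-Galois cover $\h_{e}^{rs}\to G^{rs}$.

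To conclude, let $N \subset \pi_1(G^{rs},X)$ denote the kernel of $\pi_1(G^{rs},X) \twoheadrightarrow W$. For any $\gamma \in N$ and any $\alpha \in H_T^*(\hbs_\si(X))$, the equivariance of $\iota$ gives $\iota(\gamma \cdot \alpha) = \gamma \cdot \iota(\alpha) = \iota(\alpha)$, where the last equality is Lemma \ref{lem:pi1hb}. Since $\iota$ is injective, $\gamma \cdot \alpha = \alpha$, so $N$ acts trivially on $H_T^*(\hbs_\si(X))$ and the action descends to an action of $W = \pi_1(G^{rs},X)/N$.

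There is no real obstacle here once the two previous ingredients are in place: the entire content is the combination of the injectivity of equivariant localization (standard, and already invoked to write down \eqref{eq:hbsinje}) with the factoring statement for fixed points (Lemma \ref{lem:pi1hb}). The substantive work was really done in establishing those two inputs, in particular Corollary \ref{cor:pi1T} on the monodromy of $\T$ and the identification of $\hbs_\si(X)^{\T_X}$ with fixed-point data controlled by the $W$-cover $\h_e^{rs} \to G^{rs}$.
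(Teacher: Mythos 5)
Your proof is correct and is precisely the argument the paper intends: the paper's own proof simply says the corollary ``is a direct consequence of Lemma \ref{lem:pi1hb} and Equation \eqref{eq:hbsinje},'' and you have spelled out exactly that deduction via the equivariant localization injection. No gaps.
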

  \begin{proof}
  This is a direct consequence of Lemma \ref{lem:pi1hb} and Equation \eqref{eq:hbsinje}.
  \end{proof}
  
  \begin{proof}[Proof of Proposition \ref{prop:pi1actW}]
  We begin by proving item (1). The result follows from  Corollary \ref{cor:pi1actW} and the fact that $H^*(\hbs_\si(X))$ is a quotient of $H_T^*(\hbs_\si(X))$. Since the fiber diagram
     \[
     \begin{tikzcd}
     \hbs_\si^{rs}\ar[r]\ar[d, "{\beta_\si}"] & \bs_\si\ar[d, "{\alpha_{\si}}"]\\
     \h_w^{rs}\ar[r]   & \Omega_w
     \end{tikzcd}     
     \]
     is a smooth base change, we have that the decomposition theorem for $\hbs_\si^{rs}\to \h_w^{rs}$ reads
     \[
     (\beta_\si)_*(IC_{\hbs_\si(X)})=IC_{\h_w}\oplus\bigoplus_{z\leq w} IC_{\h_z}\otimes L_z,
     \]
     where  $L_u$ are the same graded vector spaces appearing in Equation \eqref{eq:Schubertdecomp}.\par
     Recalling that $IH^i(\mathcal{X})=\mathcal{H}^{i-\dim_\mathbb{C}(\mathcal{X})}(IC_{\mathcal{X}})$,  $\dim(\hbs_\si)=\dim(G)+ \ell(w)$, and $\dim(\h_z)=\dim(G)+\ell(z)$, we have that (see also Proposition \ref{prop:fibrations})
     \begin{equation}
         \label{eq:HTw}
         H^*(\hbs_\si(X))\cong IH^*(\h_w(X))\oplus \bigoplus_{z\leq w}  IH^*(\h_z(X))\otimes L_z[-\ell(w)+\ell(z)].
     \end{equation}
   
     By Propositions \ref{prop:fibrations},   \ref{prop:fibrationH} and \ref{prop:fibrationHBS}, the isomorphism in Equation \eqref{eq:HTw} is compatible with the action of $\pi_1(G^{rs},X)$. Since the action of $\pi_1(G^{rs},X)$ on the left-hand side factors through $W$, we have that the action on the right-hand side will also factor through $W$. This proves items (2) and (3).
  \end{proof}

\section{Proof of the main theorems}
\label{sec:proofs}

 Now we have all the ingredients to prove  Theorems \ref{thm:mainsimply} and \ref{thm:hbsall}. 

\begin{proof}[Proof of Theorem \ref{thm:hbsall}]
Items (1) and (2) follow from Theorem \ref{thm:hbspaving} and Corollary \ref{cor:poinchi}.\par
Item (3) follows from items (1) and (2), Theorems \ref{thm:invmap} and \ref{thm:invmappalin}, and Propositions \ref{prop:conjugate} and  \ref{prop:pi1actW}. 

  Now we prove item (3). By Proposition \ref{prop:conjugate}, Theorems \ref{thm:invmap} and \ref{thm:invmappalin} and Corollary \ref{cor:bettipalin} we have that
 \[
 H^*(\hbs_\si(X))^{W_J}\cong H^*(\hbs_\si(X_J)),
 \]
 for some $X_J=X_sX_u$ with $Z_G(X_s)=G_J$. In particular
 \begin{equation}
     \label{eq:poinXJ}
  \Poin(H^*(\hbs_\si(X))^{W_J})=\Poin(H^*(\hbs_\si(X_J))).
 \end{equation}
  By Corollary \ref{cor:poinchi},
 \begin{equation}
 \label{eq:poinchi}
 \Poin(H^*(\hbs_\si(X_J)))=\chi^{\ind_{W_J}^W}(\prod_{i=1}^{n}(1+T_{s_i}))),
 \end{equation}
proving the first equality in item (3). To finish the proof, we just notice that, in the case of $G=SL_n(\mathbb{C})$ and $W=S_n$, we have
\begin{align*}
    \ch(\prod_{i=1}^{n}(1+T_{s_i})&=\sum_{\lambda\vdash n} \chi^{\ind_{S_{\lambda}}^{S_n}}(\prod_{i=1}^{n}(1+T_{s_i}))) m_{\lambda},\\
     \ch(H^*(\hbs_\si(X))^{S_{\lambda}})&=\sum_{\lambda\vdash n}\Poin(H^*(\hbs_\si(X))^{S_{\lambda}}) m_{\lambda}
\end{align*}
\end{proof}

\begin{proof}[Proof of Theorem \ref{thm:mainsimply}]

 Let $\si$ be a reduced word for $w$. The first statement of the theorem follows directly from Proposition \ref{prop:pi1actW} item (2). By Proposition \ref{prop:pi1actW} item (3), we have that 
 \[
 H^*(\hbs_\si(X))^{W_J}\cong IH^*(\h_w(X))^{W_J}\oplus\bigoplus_{z< w}IH^*(\h_z(X))^{W_J}\otimes L_z[-\ell(w)+\ell(u)].
 \]
In particular,
\begin{equation}
    \label{eq:poinhb}
 \Poin(H^*(\hbs_\si(X))^{W_J})=\Poin( IH^*(\h_w(X))^{W_J})+\sum_{z\leq w}\Poin(IH^*(\h_z(X))^{W_J})q^{\frac{\ell(w)-\ell(z)}{2}}P_{\si,z}(q^{\frac{1}{2}}),
 \end{equation}
 where $P_{\si,z}(q^{\frac{1}{2}})$ are the polynomials in Proposition \ref{prop:springer}. By Proposition \ref{prop:springer},
 \begin{align}
 \prod_{i=1}^{\ell(w)}(1+T_{s_i})=&q^{\frac{\ell(w)}{2}}C'_w+\sum_{z<w}q^{\frac{\ell(w)}{2}}P_{\si,z}(q^{\frac{1}{2}})C'_z.\nonumber\\
   =&q^{\frac{\ell(w)}{2}}C'_w+\sum_{z<w}q^{\frac{\ell(w)-\ell(z)}{2}}P_{\si,z}(q^{\frac{1}{2}})q^{\frac{\ell(z)}{2}}C'_z. \label{eq:prod1+T}
 \end{align}
 Combining Equations \eqref{eq:poinhb}, \eqref{eq:poinXJ}, \eqref{eq:poinchi}, and \eqref{eq:prod1+T}, with an inductive argument yields
 \[
 \Poin( IH^*(\h_w(X))^{W_J})=\chi^{\ind_{W_J}^W}(q^{\frac{\ell(w)}{2}}C'_w),
 \]
 as desired. The last statement of the theorem follows from the observation that
  \begin{align*}
  \ch(IH^*(\h_w(X)))&=\sum_{\lambda\vdash n}\Poin( IH^*(\h_w(X))^{S_\lambda})m_{\lambda},\\
  \ch(q^{\frac{\ell(w)}{2}}C'_w)&=\sum_{\lambda\vdash n}\chi^{\ind_{S_\lambda}^{S_n}}(q^{\frac{\ell(w)}{2}}C'_w)m_{\lambda},
  \end{align*}
  which finishes the proof of Theorem \ref{thm:mainsimply}.
\end{proof}

\section{Further directions}
\label{sec:questions}
\subsection{Rationally smoothness}
One class of varieties that has palindromic Betti numbers and whose intersection cohomology is isomorphic to its singular cohomology are \emph{rationally smooth varieties}, that is, varieties $\X$ such that $IC_\X=\mathbb{\mathbb{C}}_\X[\dim(\X)]$.   In \cite{Precup18}, Precup asks if regular Hessenberg varieties are rationally smooth. We conjecture the same should hold for the varieties $\hbs_{\si}(X)$.

\begin{conjecture}
\label{conj:bott}
  When $X$ is regular the varieties $\hbs_{\si}(X)$ are rationally smooth.
\end{conjecture}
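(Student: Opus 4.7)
The plan combines three ingredients from the paper: the affine paving of Theorem~\ref{thm:hbspaving}, the palindromicity of the Poincar\'e polynomial from Corollary~\ref{cor:bettipalin}, and the smoothness of the regular semisimple fibers established in Proposition~\ref{prop:fibrationHBS}. The starting observation is that the ambient variety $\hbs_\si$ is smooth---it is a $B$-bundle over the smooth Bott--Samelson variety $\bs_\si$---so the map $f\colon \hbs_\si\to G$ is a proper morphism between smooth varieties. Since its fibers over regular elements are equidimensional of dimension $\ell(\si)$ (Corollary~\ref{cor:irreducible}), miracle flatness ensures that $f$ is flat over $G^r$, and hence each fiber $\hbs_\si(X)$ for $X\in G^r$ is Cohen--Macaulay. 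When $X$ is regular semisimple, Proposition~\ref{prop:fibrationHBS} gives smoothness directly, so the content of Conjecture~\ref{conj:bott} lies in the case where $X$ has a nontrivial unipotent part.

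First, I would equip $\hbs_\si(X)$ with a $\mathbb{C}^*$-action whose Bia\l ynicki--Birula cells recover the paving of Theorem~\ref{thm:hbspaving}. Because $g'\in Z_G(X)$ fixes $X$, the natural $G$-action on $\hbs_\si$ restricts to a $Z_G(X)$-action on $\hbs_\si(X)$; for regular $X$, the connected component $Z_G(X)^\circ$ is abelian of dimension equal to the rank of $G$ and contains a maximal torus $T_X$. A generic cocharacter $\lambda\colon\mathbb{C}^*\to T_X$ has isolated fixed points, which by the $T$-equivariant nature of the construction in Section~\ref{sec:pavingbs} are in bijection with the pairs $(w,\bi)$ indexing the cells of Theorem~\ref{thm:hbspaving}, with attracting cells equal to the affines $\hbs_{\si,\bi}^w(X)$. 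Second, I would invoke a Carrell--Peterson style criterion: for a projective variety $Y$ with a $\mathbb{C}^*$-action having isolated fixed points and with palindromic Poincar\'e polynomial, rational smoothness at a fixed point $y$ reduces to a local condition on the moment graph, namely that the $T_X$-weights on $T_y Y$ satisfy no unexpected linear coincidences. Palindromicity is Corollary~\ref{cor:bettipalin}, and one would verify the local condition from the explicit combinatorics of good sequences developed in Proposition~\ref{prop:goodseq}.

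The main obstacle is making this second step precise outside the classical Schubert setting. Unlike Schubert varieties, where the moment graph is governed by the Bruhat order and the Carrell--Peterson criterion applies verbatim, here one must compute the $T_X$-weights on the tangent space $T_{(w,\bi)}\hbs_\si(X)$ at each fixed point---these come from the linear parts of the polynomials described in Proposition~\ref{prop:gf}---identify the $T_X$-invariant curves joining nearby fixed points, and check that the resulting local structure forces $H^*$ and $IH^*$ of $\hbs_\si(X)$ to agree. An alternative I would pursue in parallel is to exploit the decomposition theorem for $\beta_\si\colon\hbs_\si\to\h_w$ restricted to the regular locus: Theorem~\ref{thm:invmappalin} forces every $IC$-summand to be supported on the full regular locus, so a careful base-change argument to the fiber over $X$ should identify $IC_{\hbs_\si(X)}$ with $\mathbb{C}_{\hbs_\si(X)}[\dim \hbs_\si(X)]$. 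Either route requires input beyond what is established in the paper; I expect the moment-graph and GKM approach to be the most tractable, given the explicit combinatorial control provided by Theorem~\ref{thm:hbspaving}.
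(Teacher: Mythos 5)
This is Conjecture~\ref{conj:bott} in the paper: the authors explicitly leave it open in Section~\ref{sec:questions}, offering no proof, and they already know and prove the palindromicity of the Betti numbers (Corollary~\ref{cor:bettipalin}) without being able to conclude rational smoothness. So there is no proof in the paper to compare yours against, and the fact that the authors state it as a conjecture despite having the paving and palindromicity in hand is itself strong evidence that these ingredients do not suffice.

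Beyond that meta-point, the proposal contains concrete gaps. The torus-action step fails exactly in the cases of interest: for regular $X$ with nontrivial unipotent part, $Z_G(X)^\circ$ is abelian of rank-of-$G$ dimension but is \emph{not} a torus; its maximal torus is $Z(G_J)^\circ$, of dimension $\mathrm{rank}(G)-|J|$, which is $0$ when $X$ is regular unipotent (the case you yourself identify as the essential one). So there is no generic cocharacter $\lambda\colon\mathbb{C}^*\to T_X$ whose Bia{\l}ynicki--Birula decomposition could recover the affine paving of Theorem~\ref{thm:hbspaving} in the regular unipotent case; that paving is constructed by hand and is not a BB decomposition from the $Z_G(X)$-action. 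Next, a Carrell--Peterson criterion is not a free lunch: palindromicity of the global Poincar\'e polynomial is necessary but far from sufficient for rational smoothness, and the refined criterion requires control of the $T$-invariant curves and the interval-wise palindromicity of the moment graph, which is precisely the difficult structure that does not exist once there is no torus acting. Finally, the alternative route via the decomposition theorem for $\beta_\si\colon\hbs_\si\to\h_w$ is not well posed: the summands produced by the decomposition theorem live on $\h_w$, not on the fiber, and $\hbs_\si$ is already smooth so $IC_{\hbs_\si}$ is trivially the shifted constant sheaf; the question is about $IC_{\hbs_\si(X)}$, and the restriction of a perverse sheaf to a non-transverse fiber is generally not the intersection complex of that fiber. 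Theorem~\ref{thm:invmappalin} controls the local invariant cycle map on $G$, which is a statement about the pushforward to $G$, not a computation of the intrinsic $IC$ of the fiber.

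In short, you should treat this statement as an open problem, and note that the two routes you sketch both break at points that the paper's machinery was not designed to cross.
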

Notice that if $X$ is regular semisimple, then the varieties $\hbs_{\si}(X)$ are smooth and hence rationally smooth.

We note that $\h_w(X_{\lambda})$ is not rationally smooth in general, since already the Schubert variety $\Omega_w$ is not rationally smooth. Nevertheless it seems reasonable to expect that the intersection cohomology sheaf of $\h_w(X_{\lambda})$ is not so different from the intersection cohomology sheaf of the Schubert variety $\Omega_w$. More precisely, we conjecture that:

\begin{conjecture}
\label{conj:IC}
  For $X\in G$ a regular element and $w\in W$, we have that $IC_{\h_w(X)}=i^!(IC_{\Omega_w})$, where $i\col \h_w(X)\to \Omega_w$ is the natural map $i(gB)=(XgB,gB)$. 
\end{conjecture}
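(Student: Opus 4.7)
The natural strategy is to factor the closed immersion $i$ through the smooth $B$-bundle $p\col\h_w\to \Omega_w$ obtained by base-changing the $B$-bundle $\hb\to\flag\times\flag$. Writing $\iota_X\col\h_w(X)\hookrightarrow \h_w$ for the inclusion of the fiber over $X$ of the projection $f\col\h_w\to G$, we have $i=p\circ\iota_X$. By Proposition \ref{prop:smooth_pullback_perverse}, smoothness of $p$ gives $p^!(IC_{\Omega_w})=IC_{\h_w}[\dim B]$, so---up to a shift by $\dim\flag$, which I suspect is a typographical omission in the statement---the conjecture reduces to the identity
\[
\iota_X^!(IC_{\h_w})\cong IC_{\h_w(X)}[-\dim G].
\]

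This identity is precisely the assertion that $\iota_X$ is a \emph{normally nonsingular inclusion} with respect to the Whitney stratification $\{\h_z^\circ\}_{z\leq w}$ of $\h_w$, or equivalently that $X$ is a regular value of each restriction $f|_{\h_z^\circ}\col\h_z^\circ\to G$. For $X$ regular semisimple this is the content of Proposition \ref{prop:hGsubmersion}, whose proof applies verbatim to every stratum $\h_z^{rs,\circ}$, so the conjecture follows over $G^{rs}$ from the normally-nonsingular pullback formula for perverse sheaves.

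The main task is therefore to extend the submersivity from $G^{rs}$ to the whole regular locus $G^r$. My plan is to mimic the proof of Proposition \ref{prop:hGsubmersion} using the Jordan decomposition $X=X_sX_u$ with $Z_G(X_s)=G_J$: the required surjectivity of the map $Z\mapsto (1-\Ad(X))(Z)$ modulo $\tang_X(g_0\overline{B\dw B}g_0^{-1})$ splits, via Lemmas \ref{lem:bh'}, \ref{lem:TXB}, and \ref{lem:UJnormal}, into a part supported in root spaces $\g_\gamma$ with $\gamma\notin\Phi_J$ (where $1-\gamma(X_s)\neq 0$ makes it automatic) and a part supported in $\textnormal{Lie}(G_J)$, where the corresponding surjectivity for the regular unipotent $X_u$ inside the reductive $G_J$ should follow from Lemma \ref{lem:fsurj} and Corollary \ref{cor:LI}. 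The equidimensionality $\dim\h_z^\circ(X)=\ell(z)$ needed to promote this tangent-space surjectivity into a genuine normally-nonsingular statement is already guaranteed by the paving by affines of Theorem \ref{thm:hbspaving}.

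The principal obstacle is ensuring that this tangent-space analysis is uniform across all strata $\h_z^\circ$ and all regular $X$: the proof of Proposition \ref{prop:hGsubmersion} relied on the direct-sum decomposition $\g=\hl'\oplus\g_{\hl'}$ coming from a Cartan subalgebra through $X$, which is unavailable when $X$ is not semisimple, and the Levi/unipotent decomposition aligned with $J$ does not manifestly match the local geometry of $\h_z^\circ$ at points where the two directions interact nontrivially. A possible workaround that sidesteps the tangent-space analysis is to prove the conjecture first on the Bott-Samelson cover by exploiting the cartesian square
\[
\begin{tikzcd}
\hbs_\si(X)\ar[r]\ar[d,"\beta_\si"] & \bs_\si \ar[d,"\alpha_\si"]\\
\h_w(X) \ar[r,"i"] & \Omega_w
\end{tikzcd}
\]
and then identify $IC_{\h_w(X)}$ as the summand of $\beta_{\si*}IC_{\hbs_\si(X)}$ that corresponds, via the decomposition theorem on the smooth cover $\bs_\si$ (Equation \eqref{eq:Schubertdecomp}), to the summand $IC_{\Omega_w}$ of $\alpha_{\si*}\mathbb{C}_{\bs_\si}[\dim\bs_\si]$.
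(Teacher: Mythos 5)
This statement is labeled as Conjecture \ref{conj:IC} and appears in Section \ref{sec:questions} (``Further directions''). The paper provides no proof of it, so there is no argument of the authors to compare yours against; what you have submitted is a candidate proof of an open problem, and it should be judged on those terms.

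Your reduction over $G^{rs}$ is essentially correct. The factorization $i = p\circ\iota_X$ through the smooth $B$-bundle $p\col\h_w\to\Omega_w$, the use of Proposition \ref{prop:smooth_pullback_perverse} to get $p^!(IC_{\Omega_w}) = IC_{\h_w}[\dim B]$, and the observation that a normally-nonsingular inclusion $\iota_X$ of codimension $\dim G$ would yield $\iota_X^!IC_{\h_w}\cong IC_{\h_w(X)}[-\dim G]$ are all sound; the resulting answer $i^!IC_{\Omega_w}\cong IC_{\h_w(X)}[-\dim\flag]$ confirms your suspicion that the stated conjecture suppresses a shift. Over $G^{rs}$, Proposition \ref{prop:hGsubmersion} and the Whitney stratification argument of Proposition \ref{prop:fibrationH} do exactly what you want.

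The genuine gap, however, is not a technicality you can patch by ``mimicking the proof of Proposition \ref{prop:hGsubmersion}'': the normally-nonsingular inclusion claim is actually \emph{false} for $X\in G^r\setminus G^{rs}$. The clearest obstruction is already visible for the stratum $z=e$: the map $\h_e^r\to G^r$ is finite of degree $|W|$ over $G^{rs}$ but drops degree over $G^r\setminus G^{rs}$ (the Grothendieck--Springer map restricted to the regular locus is finite but ramified precisely along the discriminant). A finite morphism between equidimensional smooth varieties that is ramified at a point is not a submersion there, so $\h_e^{r,\circ}\to G^r$ fails to be submersive at any $(X,gB)$ with $X$ regular non-semisimple, and hence $\iota_X$ cannot be a normally nonsingular inclusion with respect to the stratification $\{\h_z^\circ\}$. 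Your own plan to split the tangent-space analysis using Lemma \ref{lem:UJnormal} and the decomposition $\g = \hl'\oplus\g_{\hl'}$ runs into the very problem you flag: that decomposition needs a Cartan through $X$, which does not exist when $X$ is not semisimple, and the image of $1-\ad(X^{-1})$ no longer has the clean root-space description used in Proposition \ref{prop:hGsubmersion}. The dimension count from Theorem \ref{thm:hbspaving} tells you that fiber dimensions do not jump, but constancy of fiber dimension does not give you a transversal slice.

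The workaround via the Bott--Samelson cover does not close the gap either. The square over $X\in G^r\setminus G^{rs}$ is cartesian but is neither a smooth nor a flat base change, so you have no base-change theorem transporting the decomposition \eqref{eq:Schubertdecomp} of $\alpha_{\si*}\mathbb{C}_{\bs_\si}[\dim\bs_\si]$ to a decomposition of $\beta_{\si*}$ of \emph{anything} on $\hbs_\si(X)$. And even granting this, you would need to know $IC_{\hbs_\si(X)} = \mathbb{C}_{\hbs_\si(X)}[\dim]$ to get started, which is precisely Conjecture \ref{conj:bott}, itself unproven. So your route silently conditions one open conjecture on another. In short: the $G^{rs}$ part of your argument is valid and already folklore given Section \ref{sec:fibrations}, but the $G^r\setminus G^{rs}$ case --- which is the entire content of the conjecture --- is untreated, and the two strategies you propose are each blocked for an identifiable reason.
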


\subsection{Cohomology of Hessenberg varieties} Since the varieties $\h_w(X)$ are generalizations of Hessenberg varieties, we believe that  these varieties may help to understand the cohomology of Hessenberg varieties (see \cite{HaradaPrecup}, \cite{HHMPT} and \cite{HPT21} for results about the cohomology ring of Hessenberg varieties). More precisely,  since $\h_w(X)$ is a  subvariety of $\h_\m(X)$ for every $w\leq \m$, we have natural geometric classes $[\h_w(X)]\in H^*(\h_{\m(X)})$. In the case that $X$ is unipotent, we have that the number of codimension $i$ subvarieties that appear in this way agrees with the known dimension of $H^{2i}(\h_{\m(X)})$. We therefore conjecture:

\begin{conjecture}
Let $w\in S_n$ be a smooth permutation and $X\in SL(n)$ a regular element. Then the classes $[\h_u(X)]\in H^*(\h_w(X))$ (for $u\leq w$) are linearly independent. If $X$ is semisimple, then these classes are fixed by the action of $S_n$. If $X$ is regular unipotent, then the $[\h_u(X)]$ will form a basis of $H^*(\h_w(X))$.
\end{conjecture}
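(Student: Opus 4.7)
The proof splits naturally along the three assertions, all tied together by the embedding $i_X \colon \h_w(X) \hookrightarrow \Omega_w$, $gB \mapsto (XgB, gB)$, which pulls the Schubert stratification of $\Omega_w$ back to the stratification $\h_w(X) = \bigsqcup_{u \leq w} \h_u^\circ(X)$. For $w$ smooth the Schubert classes $\{[\Omega_u]\}_{u \leq w}$ form a basis of $H^*(\Omega_w)$, and the unifying strategy is to identify the pullbacks $i_X^*[\Omega_u]$ with the desired classes $[\h_u(X)]$.

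The $W$-invariance assertion is the cleanest step. Specialize to $X = X_s \in T^{rs}$: by Proposition \ref{prop:fibrationH} the map $\h_w^{rs} \to G^{rs}$ is a topological fibration with smooth total space (for $w$ smooth), and $\h_u^{rs}$ is a closed subfamily; consequently there is a global Gysin class $[\h_u] \in H^{2(\ell(w)-\ell(u))}(\h_w^{rs})$ whose restriction to the fiber over $X_s$ is $[\h_u(X_s)]$. Any class pulled back from the base is automatically fixed by monodromy, and by Proposition \ref{prop:pi1actW} the $W$-action on $H^*(\h_w(X_s))$ factors through $\pi_1(G^{rs}, X_s) \twoheadrightarrow W$; hence every $[\h_u(X_s)]$ lies in $H^*(\h_w(X_s))^{W}$.

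For linear independence in the semisimple case, apply Theorem \ref{thm:mainsimply} with $J = S$: for $w$ smooth the Kazhdan--Lusztig polynomials satisfy $P_{u,w} = 1$ on $[e,w]$, so $\dim H^*(\h_w(X_s))^{W} = \chi^{\mathrm{triv}}(q^{\ell(w)/2}C'_w)\big|_{q=1} = |\{u : u \leq w\}|$, matching the number of classes and reducing linear independence to spanning. I would then establish $i_{X_s}^*[\Omega_u] = [\h_u(X_s)]$ by a dimension count---$\operatorname{codim}(\h_w(X_s), \Omega_w) = \dim \flag$, $\operatorname{codim}(\Omega_u, \Omega_w) = \ell(w)-\ell(u)$, and $\dim \h_u(X_s) = \ell(u)$, so the intersection $i_{X_s}^{-1}(\Omega_u) = \h_u(X_s)$ is proper---and check transversality for $X_s$ generic regular semisimple to obtain multiplicity one. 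To conclude linear independence, use the opposite Schubert classes $[\Omega_u^{\mathrm{op}}]$ (for which $\int_{\Omega_w}[\Omega_u]\cdot[\Omega_{u'}^{\mathrm{op}}] = \delta_{uu'}$) to define dual classes $\h_u'(X_s) := i_{X_s}^{-1}(\Omega_u^{\mathrm{op}})$ and compute the Poincar\'e pairing $\int_{\h_w(X_s)}[\h_u(X_s)]\cdot[\h_{u'}'(X_s)]$ via the projection formula as a triple intersection on $\Omega_w$, giving a matrix that is upper-triangular with unit diagonal in Bruhat order.

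For the regular unipotent case, first verify that $\h_w(X_u)$ has palindromic Betti numbers for smooth $w$---this is expected to follow from a Lusztig-variety analogue of Conjecture \ref{conj:bott}, transported along the Bott--Samelson resolution $\hbs_\si(X) \to \h_w(X)$ using that for smooth $w$ the resolution $\alpha_\si$ has $IC_{\Omega_w}$ as a trivial summand. Apply Brosnan--Chow's palindromicity criterion (Theorem \ref{thm:invmappalin}) to the family $\h_w \to G$ to obtain the local invariant cycle isomorphism $H^*(\h_w(X_u)) \cong H^*(\h_w(X_s))^{W}$, which by construction commutes with restriction of global classes and therefore sends $[\h_u(X_s)] \mapsto [\h_u(X_u)]$; combined with the previous step, $\{[\h_u(X_u)]\}_{u \leq w}$ is a basis. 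The main obstacles are (i) verifying generic transversality of $i_{X_s}$ with every Schubert stratum and the nondegeneracy of the dual pairing---for which an equivariant argument using the $T = Z_G(X_s)$-action on $\h_w(X_s)$ and localization at its $|W|$ fixed points appears to be the cleanest route---and (ii) establishing the palindromicity/rational smoothness of $\h_w(X_u)$ needed to invoke the local invariant cycle isomorphism, which is a Lusztig-variety counterpart of Conjecture \ref{conj:bott} and is made particularly delicate by the known non-normality of regular nilpotent Hessenberg varieties.
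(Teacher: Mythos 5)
The statement you are proving is a \emph{conjecture} in the paper (it appears in Section~\ref{sec:questions}, ``Further directions''), so there is no proof of record to compare against; your proposal must be assessed on its own terms. The overall architecture is reasonable and matches the spirit of the paper's machinery: use the Gysin class of $\h_u^{rs}\subset\h_w^{rs}$ to get monodromy invariance, count dimensions via Theorem~\ref{thm:mainsimply} with $J=S$ and smoothness of $w$ (so $P_{u,w}=1$), establish linear independence over the semisimple locus, and transport to the unipotent fiber via the local invariant cycle theorem. The $W$-invariance step is essentially complete once you observe that $\h_w(X_s)$ is smooth for $w$ smooth and $X_s$ regular semisimple (because $\h_w\to\flag$ is a bundle with fiber $\overline{B\dw B}$, and the proof of Proposition~\ref{prop:hGsubmersion} applies on the whole of $\h_w^{rs}$ when $\overline{B\dw B}$ is smooth), so $IH^*=H^*$, and that restriction of the global class $[\h_u^{rs}]$ along a fiber of the fibration $\h_w^{rs}\to G^{rs}$ gives $[\h_u(X_s)]$. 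For the semisimple linear independence, your equivariant localization sketch (restrict to the $|W|$ isolated $T$-fixed points $V^v_\bullet$, note that $[\h_u(X_s)]|_{V^v_\bullet}\neq0$ iff $v\leq u$ with nonzero Euler class on the diagonal) is more robust than the opposite-Schubert pairing route you foreground, which would need a transversality argument you have not supplied.

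The genuine gap is in the regular unipotent case, and you have correctly identified it yourself: applying Theorem~\ref{thm:invmappalin} to $\h_w\to G$ at $X_u$ requires the palindromicity of the Betti numbers of $\h_w(X_u)$, which is precisely a Lusztig-variety version of Conjecture~\ref{conj:bott} (or Precup's question for Hessenberg varieties) and is unproven in the paper. The paper's palindromicity result (Corollary~\ref{cor:bettipalin}) is for the \emph{Bott--Samelson} variety $\hbs_\si(X)$, not for $\h_w(X)$ itself; for $w$ smooth the resolution $\beta_\si\colon\hbs_\si(X)\to\h_w(X)$ is not an isomorphism, and rational smoothness of the source does not pass to the target, so one cannot conclude palindromicity of $\h_w(X_u)$ from item (2) of Theorem~\ref{thm:hbsall}. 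Until that hole is filled, the final assertion (that $\{[\h_u(X_u)]\}$ is a basis) remains conditional. A further point your write-up glosses over: the conjecture asserts linear independence for \emph{every} regular $X$, not only semisimple or unipotent; to cover the intermediate cases you would want to argue that the surjective invariant cycle map $H^*(\h_w(X))\twoheadrightarrow H^*(\h_w(X_s))^{W_{X_s}}$ (Theorem~\ref{thm:invmap}, Proposition~\ref{prop:conjugate}) carries $[\h_u(X)]$ to $[\h_u(X_s)]$ and invoke the semisimple independence; this is plausible via the global Gysin classes but needs to be said.
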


When $X$ is regular semisimple and $w\in S_n$ is a smooth permutation, then $\h_w(X)$ is a GKM-space and the equivariant cohomology ring $H_T^*(\h_w(X))$ has an explicit description in terms of the moment graph of $\h_w(X)$ (see \cite{Tym08}).

\begin{Question}
For each $u\in S_n, u\leq w$, compute the class $[\h_u(X)]$ in terms of the moment graph of $\h_w(X)$.
\end{Question}

\subsection{Relation with $\LLT$-polynomials}

  Given a Hessenberg function $\m$, besides the chromatic quasisymmetric function $\csf_q(G_{\m})$ there are other interesting symmetric functions associated to it. For example, the unicellular $\LLT$-polynomial\footnote{The $\LLT$-polynomials are defined in a more general setting, see \cite{LLT}. The definition we are giving was introduced in \cite{CarlssonMellit} and \cite{AlexPanova}.} associated to $\m$ is defined as
  \begin{equation}
  \label{eq:LLTm}
  \LLT(\m)=\sum_{\kappa} q^{\asc(\kappa)}x_{\kappa},
  \end{equation}
  where the sum runs through all vertex colorings of $G_\m$ (not necessarily proper). The chromatic quasisymmetric function and the $\LLT$-polynomial are related via a plethystic relation (see \cite[Section 5.1]{HHL}, \cite[Proposition 3.5]{CarlssonMellit} and \cite[Lemma 12]{Alexandersson_2020}):
  \[
  \csf_q(\m))=\frac{LLT(\m)[\x(q-1)]}{(q-1)^n}.
  \]
   Or, equivalently
   \[
   \LLT(\m)=(q-1)^n\csf_q(\m)\Big[\frac{\x}{q-1}\Big].
   \]
   We could ask:

\begin{Question}
 We can define $\LLT$-like polynomials for non-codominant permutations $w$ via
 \[
 \LLT(w):=(q-1)^n(\omega(\ch(q^{\frac{\ell(w)}{2}}C'_w)))\Big[\frac{\x}{q-1}\Big].
 \]
 Do these symmetric function have an interesting combinatorial definition?
 \end{Question}
 
 For example, we have that
 \[
 \LLT(3412)=(q^4+q^3)s_{1,1,1,1}+(3q^3+3q^2)s_{2,1,1}+(q^3+2q^2+q)s_{2,2}+(3q^2+3q)s_{3,1}+(q+1)s_4.
 \]
 Since for codominant permutations $w$ we have that $\LLT(w)$ is Schur-positive (see \cite{HaimanGronj} and \cite{GP}) and shifted $e$-positive (see \cite{dadderio}, \cite{AS2020} and \cite{ANtree}), the same properties would be enjoyed by $\LLT(w)$ for every permutations $w\in S_n$, if (some modification of) Conjecture \ref{conj:haimansingular} was true.
 \begin{conjecture}
 For every $w\in S_n$ we have that $\LLT(w)$ is Schur-positive and shifted $e$-positive.
 \end{conjecture}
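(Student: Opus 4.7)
The plan is to reduce the general case to the known codominant case via a geometric decomposition of the intersection cohomology $IH^*(\h_w(X))$. For a codominant permutation $w_{\m}$ we have $\LLT(w_\m) = \LLT(\m)$, which is Schur-positive by Grojnowski--Haiman and shifted $e$-positive by the cited results of D'Adderio, Alexandersson--Sulzgruber, and the authors. By Theorem \ref{thm:mainsimply}, $\ch(q^{\frac{\ell(w)}{2}}C'_w) = \ch(IH^*(\h_w(X)))$ for $X$ regular semisimple, so it suffices to express this $S_n$-equivariant graded vector space as a non-negative integer combination of $H^*(\h_{\m_i}(X))$ for Hessenberg functions $\m_i$. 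Since the plethystic transform $f \mapsto (q-1)^n (\omega f)[\x/(q-1)]$ is linear, any such decomposition of graded $S_n$-characters would transfer Schur-positivity and shifted $e$-positivity from the codominant summands to $\LLT(w)$.

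To produce such a decomposition, I would first exploit the Bott-Samelson resolution $\alpha_{\si}\colon \hbs_\si(X) \to \h_w(X)$, whose cohomology by Proposition \ref{prop:pi1actW}(3) decomposes as
\[
H^*(\hbs_\si(X)) \cong IH^*(\h_w(X)) \oplus \bigoplus_{z<w} IH^*(\h_z(X)) \otimes L_z[-\ell(w)+\ell(z)]
\]
as $S_n$-modules. The paving in Theorem \ref{thm:hbsall} identifies $H^*(\hbs_\si(X))$ cell-by-cell via good words $(w_0, \bi)$, and in the codominant subcase these cells recover exactly the Tymoczko cells of a Hessenberg variety. The hope is to group the good cells for a general $\si$ into packets that are each isomorphic, as graded $S_n$-representations, to the cells of some Hessenberg variety $\h_{\m_i}(X)$; this would give $H^*(\hbs_\si(X)) = \bigoplus_i H^*(\h_{\m_i}(X))$ as graded $S_n$-modules. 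Combined with Proposition \ref{prop:springer} and induction on Bruhat order to invert the triangular Kazhdan--Lusztig expansion of $\prod(1+T_{s_i})$ in terms of $q^{\ell(u)/2}C'_u$, one would then obtain the desired expression of $\ch(q^{\frac{\ell(w)}{2}}C'_w)$ as a $\mathbb{Z}_{\geq 0}[q^{\frac{1}{2}}]$-combination of codominant characters $\ch(q^{\frac{\ell(w_{\m_i})}{2}}C'_{w_{\m_i}})$, which is the ``modification of Conjecture \ref{conj:haimansingular}'' the introduction alludes to.

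As a parallel combinatorial approach, I would attempt to construct an explicit $\LLT$-like formula for general $w$ by generalizing Equation \eqref{eq:LLTm}: given the combinatorial paving of $\hbs_\si(X)$ by pairs $(w_0, \bi)$ indexed by good words, one can read off a candidate ``colored $G$-tuple'' model with an ascent-like statistic, reflecting the monodromy of $\pi_1(G^{rs})$ on the cells. Proving that the resulting generating function is symmetric and matches $\LLT(w)$ would reduce positivity to a Lindström--Gessel--Viennot or sign-reversing involution argument, mimicking the codominant case.

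The main obstacle, and the reason a naive generalization of Conjecture \ref{conj:haimansingular} fails (as the authors note they will show), is that the graded $S_n$-module $IH^*(\h_w(X))$ need not split as a sum of Hessenberg cohomologies for singular $w$: the intersection cohomology contributions from smaller Schubert strata carry nontrivial multiplicities $L_z$ that mix across $W$-isotypic components. Finding the correct ``twisted'' decomposition -- perhaps by considering fiber bundles over Hessenberg varieties, or by proving Conjecture \ref{conj:IC} so that $IC_{\h_w(X)} = i^!(IC_{\Omega_w})$ inherits a local Kazhdan--Lusztig structure that becomes manageable after the plethystic substitution -- is the crux of the problem. I expect that in type $A$ one can leverage the modular law for chromatic quasisymmetric functions (announced in the paragraph after Equation \eqref{eq:CHSS}) to verify the positivity statement recursively on the Bruhat interval below $w$, thereby bypassing the need for a clean direct-sum decomposition.
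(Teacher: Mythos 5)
This statement is labeled as a conjecture in the paper (Section \ref{sec:questions}) and the paper does not prove it; it is stated as an open problem. Your submission is therefore not a proof but a research programme, and you correctly acknowledge as much in your final paragraph. There is no paper-proof to compare against.

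That said, there is a genuine obstruction in the route you sketch that deserves emphasis. Your first approach aims to write $\ch(q^{\ell(w)/2}C'_w)$ as a $\mathbb{Z}_{\ge 0}[q^{1/2}]$-linear combination of codominant characters $\ch(q^{\ell(w_{\m_i})/2}C'_{w_{\m_i}})$, and then inherit Schur-positivity and shifted $e$-positivity of $\LLT(w)$ from the codominant case by linearity of the plethystic transform. But this is, up to a $q$-shift, exactly Conjecture \ref{conj:haimansingular}, and the paper explicitly announces that the authors will produce a \emph{counter-example} to that conjecture in an upcoming work (see the paragraph after Conjecture \ref{conj:haimansingular}). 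So the decomposition you hope for does not exist in general; you cannot assume that the good-cell packets of $\hbs_\si(X)$ regroup into Hessenberg packets. You do flag this, but you then frame the failure as something a ``twist'' or Conjecture \ref{conj:IC} might repair, which is not established and is itself conjectural. Your alternative ideas (a combinatorial $\LLT$-model for general $w$, and a modular-law recursion over the Bruhat interval) are of the same open-ended nature as the questions the paper poses around this conjecture; they are plausible directions, not arguments. In short: the conjecture remains open, and the one concrete path you propose is the one the paper tells us is blocked.
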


In \cite{ANtree}, for each Hessenberg function $\m$, a weight function $\wt_{\m}\col\{u\in S_n;u\leq \m\}\to \mathbb{Z}_{\geq0}$ was defined such that
\[
\LLT(\m)=\sum_{u \leq \m}(q-1)^{n-\ell(\lambda(u))}q^{\wt_{\m}(u)}e_{\lambda(u)}.
\]
\begin{Question}
For each permutation $w\in S_n$, we may ask if there is a function $\wt_w\col \{ u \in S_{n} ; u \leq w \}\to \mathbb{Z}_{\geq0}$ such that
\begin{equation}
    \label{eq:LLTw}
\LLT(w)=\sum_{u\leq w}(q-1)^{n-\ell(\lambda(u))}q^{\wt_{w}(u)}P_{u,w}(q)e_{\lambda(u)},
\end{equation}
where $P_{u,w}$ is the Kazhdan-Lusztig polnomial.\par

The above equation is equivalent to 
\[
  \ch(q^{\frac{\ell(w)}{2}}C'_w)=\sum_{u\leq w}q^{\wt_w(u)}P_{u,w}(q)\rho_{\lambda(u)},
\]
where $\rho_{\lambda}$ is defined by $\rho_{i}:=q^{j-1}P_i(q^{-1})$, $\rho_{\lambda}:=\prod \rho_{\lambda_i}$ and $P_j$ is the Hall-Littlewood polynomial (see \cite[Proposition 2.1]{ANtree}).
\end{Question}

\subsection{Hopf algebra of Dyck paths}In \cite{GP} a proof of the Shareshian-Wachs conjecture was given by means of a Hopf algebra of Dyck paths. Since each Dyck path corresponds to a codominant permutation, we ask:

\begin{Question}
Does there exist a Hopf algebra $\mathcal{S}$ of permutations extending the Hopf algebra of Dyck paths considered in \cite{GP}? Does there exist a multiplicative map $\zeta\col S\to \mathbb{C}(q)$ inducing a Hopf algebra morphism $\Psi_\zeta\col \mathcal{S}\to \mathcal{Q}Sym$ such that $\ch(q^{\frac{\ell(w)}{2}}C'_w)=\Psi_\zeta(w)$ (or for some other power of $q$). Can we give a proof of Theorem \ref{thm:mainsimply} using this Hopf algebra as in \cite{GP}? Are the $\LLT$-polynomials $\LLT(w)$ induced from a different multiplicative map $\zeta'$ as is the case in \cite{GP}? 
\end{Question}

\subsection{Log-concavity of $h$-coefficients} Finally, motivated by computational results for the codominant case (see \cite[Section 5]{AN}), we make one last conjecture:
\begin{conjecture}
The $h$-coefficients of $\ch(q^{\frac{\ell(w)}{2}}C'_w)$ are log-concave polynomials in $q$ for every $w\in S_n$.
\end{conjecture}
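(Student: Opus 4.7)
The plan is to reduce the log-concavity assertion, via the geometric description of Theorem \ref{thm:mainsimply}, to a Hodge--Riemann-type statement about intersection cohomology of Lusztig varieties, in the spirit of Huh--Katz and Adiprasito--Huh--Katz. By Theorem \ref{thm:mainsimply}, for any regular semisimple $X$ we have $\ch(q^{\ell(w)/2} C'_w) = \ch(IH^*(\h_w(X)))$. Writing the left-hand side in the complete-homogeneous basis as $\ch(q^{\ell(w)/2} C'_w) = \sum_\lambda d_\lambda(q)\, h_\lambda$, each coefficient is $d_\lambda(q) = \phi^\lambda(q^{\ell(w)/2} C'_w)$, where $\phi^\lambda = \sum_\mu K^{-1}_{\mu\lambda}\chi^\mu$ is the monomial virtual character of Section \ref{sec:immanants}. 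Equivalently, by $\omega$-duality, $d_\lambda(q)$ is the $e_\lambda$-coefficient of $\omega(\ch(q^{\ell(w)/2} C'_w))$, which for codominant $w_\m$ coincides with the $e_\lambda$-coefficient of the chromatic quasisymmetric function $\csf_q(G_\m)$ by Equation \eqref{eq:CHSS}.

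First I would seek to realise $d_\lambda(q)$ as the Poincaré polynomial of an honest graded vector space $V^\lambda_w = \bigoplus_k V^{\lambda,k}_w$ built from $IH^*(\h_w(X))$. One natural route is to apply the Schur functor $\mathbf{S}^\lambda$ to $IH^*(\h_w(X))$ viewed as a $W$-module; alternatively, by Theorem \ref{thm:invmappalin} and Proposition \ref{prop:conjugate}, one can write $d_\lambda(q)$ as an alternating combination $\sum_\mu K^{-1}_{\mu\lambda}\, \Poin(IH^*(\h_w(X_\mu)))$, and aim to produce a cancellation-free geometric model of this virtual sum via iterated palindromicity arguments. Either way, the palindromicity established in Corollary \ref{cor:bettipalin} together with the relative hard Lefschetz theorem furnishes a $W$-equivariant Lefschetz $\mathfrak{sl}_2$-action on $IH^*(\h_w(X))$, which upon restriction provides a Lefschetz operator on $V^\lambda_w$ and already gives unimodality of $d_\lambda(q)$.

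The decisive step is then to verify the Hodge--Riemann bilinear relations for a suitable Lefschetz class $L$ acting on $V^\lambda_w$; this implies log-concavity of $\dim V^{\lambda,k}_w$ as $k$ varies, because Hodge--Riemann forces the quadratic form $(a,b) \mapsto \int L^{N-2k}\, a\cup b$ to be definite on primitives in each degree and the Khovanskii--Teissier inequality follows. A natural candidate for $L$ is an ample class pulled back from a $W$-equivariant smooth compactification of the family $\hbs_\si^{rs}\to G^{rs}$ compatible with the affine paving of Section \ref{sec:hbspaving}, and transported to $\h_w(X)$ through the Bott--Samelson resolution and the decomposition of Equation \eqref{eq:decompCBS}; $W$-equivariance ensures that $L$ preserves the Schur-functor decomposition and in particular the subspace $V^\lambda_w$.

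The hard part will be establishing Hodge--Riemann on what is \emph{a priori} a virtual object. Hard Lefschetz, available on the pure Hodge structure $IH^*(\h_w(X))$, is strictly weaker than Hodge--Riemann, and the latter is notoriously delicate to transport through Schur-functor operations, in particular because $\phi^\lambda$ is a virtual character so that $V^\lambda_w$ may exist only as a formal difference of honest cohomology groups. A prerequisite is Haiman's Conjecture \ref{conj:haimanhpos}, which predicts $h$-positivity of $\ch(q^{\ell(w)/2}C'_w)$ and is itself open for singular $w$; before log-concavity one must realise $V^\lambda_w$ as a genuine graded subspace with non-negative dimensions. A reasonable first target is therefore the codominant case, where this positivity is equivalent to the Stanley--Stembridge Conjecture \ref{conj:StanStem} for $\csf_q(G_\m)$ and where $\h_\m(X)$ is smooth for regular semisimple $X$, so that the Hodge--Riemann package on its ordinary cohomology applies directly; the general singular case would then be bootstrapped via Conjecture \ref{conj:IC}, relating $IC_{\h_w(X)}$ to $IC_{\Omega_w}$ and $IH^*(\h_w(X))$ to the corresponding data for a smooth resolution.
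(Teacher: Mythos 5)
The statement you are addressing is labelled a conjecture in the paper's ``Further directions'' section and the authors give \emph{no} proof of it; it is motivated only by computational evidence from \cite{AN} in the codominant case. So there is no paper proof to compare against, and your submission, read honestly, is not a proof either: it is a programme that explicitly conditions on several other open conjectures and you say as much. That is a fair thing to write, but I should flag where the programme itself would break.

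The central gap is that $\phi^\lambda$ is a \emph{virtual} character, so the graded space $V^\lambda_w$ whose Poincar\'e polynomial is to equal $d_\lambda(q)$ has no given construction. You acknowledge that its existence with non-negative graded dimensions is exactly Haiman's Conjecture \ref{conj:haimanhpos}, itself open; but the problem is worse than merely waiting on that conjecture. Neither hard Lefschetz nor Hodge--Riemann transports through a signed sum like $\sum_\mu K^{-1}_{\mu\lambda}\Poin(IH^*(\h_w(X_\mu)))$ --- definiteness of a pairing is destroyed by alternating cancellation, so a ``cancellation-free geometric model'' would have to be built, and no mechanism for doing so is offered. Your alternative route, applying the Schur functor $\mathbf{S}^\lambda$ to $IH^*(\h_w(X))$ as a $W$-module, does not compute $d_\lambda(q)$ at all: the Schur functor of a representation is not the $h_\lambda$-coefficient of its Frobenius character, and what you actually need is a ``multiplicity space'' for the permutation module $\ind_{S_\lambda}^{S_n}\mathbf 1$, which does not exist in the ordinary sense because permutation modules are reducible. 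Beyond this, you supply no candidate Lefschetz class: a ``$W$-equivariant smooth compactification of $\hbs_\si^{rs}\to G^{rs}$ compatible with the affine paving'' is not constructed in the paper or the literature, and relative hard Lefschetz for $\hbs_\si\to\h_w$ is weaker than the Hodge--Riemann relations you need on $IH^*(\h_w(X))$ itself. The appeal to Khovanskii--Teissier is also misplaced: that inequality concerns mixed intersection numbers of nef classes and does not by itself yield log-concavity of a Poincar\'e polynomial; the Adiprasito--Huh--Katz mechanism requires the full K\"ahler package on a purpose-built graded ring. Finally, invoking Conjecture \ref{conj:IC} to handle singular $w$ stacks yet another open conjecture on top. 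In short, the direction of thought is not unreasonable and is consistent with how combinatorial log-concavity has been proven elsewhere, but as written it reduces one open conjecture to several others and leaves the decisive input --- Hodge--Riemann on an as-yet-unconstructed space --- entirely unaddressed.
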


\bibliographystyle{amsalpha}
\bibliography{bibli}

\end{document}